\newtheorem{theorem}{Theorem}[section]
\newtheorem{proposition}[theorem]{Proposition}
\newtheorem{corollary}[theorem]{Corollary}
\newtheorem{definition}[theorem]{Definition}
\newtheorem{lemma}[theorem]{Lemma}
\newtheorem{remark}[theorem]{Remark}
\newtheorem{question}[theorem]{Question}
\newtheorem{statement}[theorem]{Statement}
\newtheorem{ttable}[theorem]{Table}
\newtheorem*{ac}{Acknowledgements}
\newcommand\blfootnote[1]{%
  \begingroup
  \renewcommand\thefootnote{}\footnote{#1}%
  \addtocounter{footnote}{-1}%
  \endgroup
}
\newcommand{\Rep}{\mathrm{Rep}}
\newcommand{\VVec}{\mathrm{Vec}}
\newcommand{\PSL}{\mathrm{PSL}}
\newcommand{\SL}{\mathrm{SL}}
\newcommand{\Max}{\mathrm{max}}
\newcommand{\FPdim}{\mathrm{FPdim}}
\newcommand{\spec}{i_0} % point spectrum of PE
\title{Interpolated family of non group-like simple integral fusion rings of Lie type}
\author{Zhengwei Liu}
\address{Z. Liu, Yau Mathematical Sciences Center, Department of Mathematics, Tsinghua University, and Yanqi Lake Beijing Institute of Mathematical Sciences and Applications, Beijing, 100084, China}
\email{liuzhengwei@mail.tsinghua.edu.cn}
\author{Sebastien Palcoux}
\address{S. Palcoux, Yanqi Lake Beijing Institute of Mathematical Sciences and Applications, Huairou District, Beijing, China}
\email{sebastien.palcoux@gmail.com}
\urladdr{https://sites.google.com/view/sebastienpalcoux}
\author{Yunxiang Ren}
\address{Y. Ren, Department of Physics, Harvard University, Cambridge, 02138, USA}
\email{renyunxiang@gmail.com}
\begin{document}

\maketitle

\begin{abstract}%
This paper is motivated by the quest of a non-group irreducible finite index depth $2$ maximal subfactor. We compute the generic fusion rules of the Grothendieck ring of $\Rep(\PSL(2,q))$, $q$ prime-power, by applying a Verlinde-like formula on the generic character table. We then prove that this family of fusion rings $(\mathcal{R}_q)$ interpolates to all integers $q \ge 2$, providing (when $q$ is not prime-power) the first example of infinite family of non group-like simple integral fusion rings. Furthermore, they pass all the known criteria of (unitary) categorification. This provides infinitely many serious candidates for solving the famous open problem of whether there exists an integral fusion category which is not weakly group-theoretical. We prove that a complex categorification (if any) of an interpolated fusion ring $\mathcal{R}_q$ (with $q$ non prime-power) cannot be braided, and so its Drinfeld center must be simple. In general, this paper proves that a non-pointed simple fusion category is non-braided if and only if its Drinfeld center is simple; and also that every simple integral fusion category is weakly group-theoretical if and only if every simple integral modular fusion category is pointed.%\\ \vspace*{-.1cm} \\
%\textit{Keywords}: fusion category, quantum group, Lie group, character table, interpolation, simple integral fusion ring.
\end{abstract}

\blfootnote{{\bf MSC2020:} primary 46L37, 18M20; secondary 20C33, 20D06.

{\bf Keywords:} Subfactor, fusion category, finite simple group of Lie type, character table, interpolation, simple integral fusion ring.
}

\section{Introduction}

%The original motivations of this work are in subfactor theory \cite{JonesSunder}. 
A subfactor \cite{JonesSunder} encodes a Galois-like quantum generalization of the notion of symmetry \cite{EvaKawa, Jones2009}, analogous to a field extension, where its planar algebra \cite{Jon99} and its fusion category \cite{EGNO15} are the analogous of the Galois group and its representation category, respectively. The index of a subfactor \cite{Jon83} is multiplicative with its intermediates, so a subfactor without proper intermediate (called a \emph{maximal subfactor} \cite{Bisch}) can be seen as a quantum analogous of the notion of prime number (about a quantum analogous of the notion of natural number, see \cite{Pal1,Pal2,Pal3}). A group is a notion of classical symmetry. A finite group subfactor planar algebra is in the bigger class of irreducible finite index depth $2$ subfactor planar algebras, which is exactly the class of finite quantum group subfactor planar algebras \cite{Szy94,Longo94, David96,DasKod}, encoding the finite dimensional Hopf $\mathrm{C}^*$-algebras (also called Kac algebras \cite{IzuKos}). The set of maximal ones in this class can be seen as a very close extension of the set of prime numbers, because the only known ones are those coming from groups, which then must be cyclic of prime order, and it is an open problem whether there are other ones \cite[Problem 4.12]{Pal1}. The intermediates \cite{Bak,Lan} of a finite quantum group subfactor planar algebra correspond to the left coideal $*$-subalgebras of the Kac algebra $K$ \cite{ILP}, and the normal left coideal $*$-subalgebras correspond to the fusion subcategories of $\Rep(K)$ \cite{Bur}. A fusion category without nontrivial proper fusion subcategory is called \emph{simple}. Thus, a necessary condition for a subfactor planar algebra of this class to be maximal is that its fusion category be simple. Moreover, $\Rep(K)$ is a unitary integral fusion category. The Grothendieck ring of a fusion category is a fusion ring \cite{Lus}. So we are looking for simple integral fusion rings, not coming from a group, but with a unitary categorification. More generally, a fusion category which can be ``cooked up'' from group theory is called \emph{weakly-group theoretical} \cite{ENO11}, which  means that (up to equivalence) it has the same Drinfeld center as one coming from a sequence of group extensions from $\VVec$. But a weakly group-theoretical simple fusion category (over the complex field) is precisely Grothendieck equivalent to $\Rep(G)$ with $G$ a finite simple group \cite[Proposition 9.11]{ENO11}. So we are looking for integral fusion categories which are not weakly group-theoretical; their existence is one of the most famous open problem of fusion category theory \cite[Question 2]{ENO11}. Finally, the extension of Kaplansky's 6th conjecture \cite{Kap75} to every complex fusion category $\mathcal{C}$ \cite[Question 1]{ENO11} states that $\frac{\mathrm{FPdim}(\mathcal{C})}{\mathrm{FPdim}(x)}$ is an algebraic integer, for every simple object $x$ of $\mathcal{C}$ (property called \emph{Frobenius type}).

The paper \cite{LPW20} applies the quantum Schur product theorem \cite[Theorem 4.1]{Liuex} of quantum Fourier analysis \cite{QFA} as a criterion of unitary categorification, called Schur product criterion (see Theorem \ref{thm:schur}), and classifies all the simple integral fusion rings of Frobenius type up to rank $8$, $\FPdim$ less than $4080$ (and $\neq p^aq^b, pqr$, by \cite{ENO11}). There are $4$ group-like ones given by $\PSL(2,q)$ with $q \in \{4,7,9,11 \}$, $28$ ones excluded from unitary categorification by Schur product criterion, and exactly $2$ other ones, denoted $\mathcal{F}_{210}$ and  $\mathcal{F}_{660}$ (indexed by their $\FPdim$). The second one is excluded from any categorification by the zero spectrum criterion (see Theorem \ref{thm:zerocrit}) and also the one spectrum criterion (see Theorem \ref{thm:onecrit}), whereas the first one passes all the criteria known to us (listed in \S\ref{sec:criteria}), and we will now see the conceptual reason why. The family of finite simple groups of Lie type $\PSL(2,q)$, with $q$ prime-power, admits a generic character table depending on whether $q$ is even, $q \equiv -1$ or $1 \mod 4$. These character tables, written in \S\ref{sec:tab}, interpolate directly into formal tables for every integer $q \ge 2$. The use of  a Verlinde-like formula (Corollary \ref{coro:Verlinde}) on these interpolated tables provides rules (displayed in Theorems \ref{thm:q=0(2)}, \ref{thm:q=-1(4)} and \ref{thm:q=1(4)}, but computed in \S\ref{sec:cal}), which turn out to be those of fusion rings (Theorem \ref{thm:general}) and which have these formal tables as eigentables (Definition \ref{def:eigentable}), by reconstruction Theorem \ref{thm:TableFusion}. These \emph{interpolated} fusion rings $(\mathcal{R}_q)$ are integral, simple for $q \ge 4$, and non group-like when $q$ is not a prime-power. They inherit all the good arithmetical properties of the group-like ones, more precisely \S\ref{sec:criteria} proves that they pass all the known criteria. The smallest non prime-power is $q=6$, and it turns out that $\mathcal{R}_6 = \mathcal{F}_{210}$.

Here are observations on the fusion matrices of $\mathcal{R}_q$ (where the \emph{multiplicity} is the maximal entry): 
\begin{itemize}
\item for $q$ even: all self-adjoint, the non-trivial ones are of multiplicity $1$ or $2$,
\item for $q \equiv -1 \mod 4$: two non self-adjoint ones, the non-trivial ones are of multiplicity $1$, $2$ or $3$,
\item for $q \equiv 1 \mod 4$:  all self-adjoint, the non-trivial ones are of multiplicity $2$ or $3$ (except $q=5$).
\end{itemize}
This led to an interesting group-theoretical result: let $G$ be a non-abelian finite simple group, then the Grothendieck ring of $\Rep(G)$ is of multiplicity $\le 3$ if and only if $G$ is isomorphic to some $\PSL(2,q)$ with $q$ prime-power (see \cite{Pal4} for more details). Moreover, the family $(\PSL(2,q))$ is the only one, among the usual infinite families of non-abelian finite simple groups, where the fusion multiplicity is bounded above (because it is the only family of Lie type of rank $1$). This makes this family quite special, in the sense of fusion friendly. 

\begin{question}
Is there a non prime-power $q \ge 2$ for which the interpolated fusion ring $\mathcal{R}_q$ is categorifiable?
\end{question}

P. Etingof observed \cite{eti20} that it is still premature to believe in a positive answer of above question, by pointing out that an interpolated family $(\mathcal{T}_q)$ also exists from $\Rep(\mathbb{F}_q \rtimes \mathbb{F}_q^{\star})$, whereas by \cite[Corollary 7.4]{EGO04} there is a complex categorification (if and) only if $q$ is a prime-power. This family also pass all the criteria (proved in \S\ref{EtiMod}). Now, the category $\Rep(\mathbb{F}_q \rtimes \mathbb{F}_q^{\star})$ is just the near-group category \cite{Izu} $G+m$ with $G=C_{q-1}$ (cyclic group) and $m=|G|-1 = q-2$, but in general for all finite group $G$ and all $m \ge 0$, the corresponding fusion bialgebra $\mathcal{F}(G,m)$, or fusion ring when $m$ is also an integer, admits a \emph{subfactorization} (see \cite[Definition 2.26]{LPW20}) as the free product of the finite group and the Temperley-Lieb-Jones subfactor planar algebras (see \cite[\S2.4]{Liuex}), denoted  $\mathcal{S}^G * \mathcal{TLJ}_{\delta}$, with $\delta$ given by $\FPdim(\mathcal{F}(G,m)) = \delta^2 |G|$, so in particular, if $m=|G|-1$ (as above) then $\delta^2 = |G|+1$. So, Etingof's interpolated fusion ring $\mathcal{T}_q$ is categorifiable if and only if $q$ is a prime-power, but subfactorizable for all $q \ge 2$.

\begin{question}
Are the interpolated fusion rings $\mathcal{R}_q$ subfactorizable for all $q \ge 2$?
\end{question}

The paper \cite{LPR1} already excluded $\mathcal{R}_6 = \mathcal{F}_{210}$ from a categorification in characteristic zero. It was also excluded in positive characteristic, assuming pivotal (it is open otherwise). The problem is open for every non prime-power $q \neq 6$ (in every characteristic). To solve this problem we can try to generalize the localization strategy initiated in \cite{LPR1} and/or compute the generic F-symbols of the symmetric fusion category $\Rep(\PSL(2,q))$, $q$ prime-power, and see how and where they can be interpolated in the non prime-power case (non-symmetric, so some adjustments may be required). Note that a notion of interpolated tensor categories already appeared in the literature (see \cite{De3} and \cite[\S9.12]{EGNO15}), but of quite different nature. In fact Deligne interpolated the family of integral fusion categories $(\Rep(S_n))_{n \in \mathbb{Z}_{>0}}$ to the family of tensor categories $(\Rep(S_t))_{t \in \mathbb{C}}$, idem for some families of Lie type, but when $t$ is not an integer then they are neither integral nor fusion (in fact not even of subexponential growth).

Next \S\ref{sec:simple} proves general results on simple fusion categories (over $\mathbb{C}$). It proves that a non-pointed simple fusion category is non-braided if and only if its Drinfeld center is simple. In particular, a simple fusion category Morita equivalent to a simple fusion category with a noncommutative Grothendieck ring, is non-braided and has a simple Drinfeld center. This section also proves that every simple integral fusion category is weakly group-theoretical if and only if every simple integral modular fusion category is pointed.  We checked by computer assistance that every simple integral modular fusion category of rank up to $12$ is pointed, see \cite{ABPP}.
\begin{question}
Is there a non-pointed simple integral modular fusion category?
\end{question}
Finally, \S\ref{sec:braid} proves that if the interpolated fusion ring $\mathcal{R}_q$ admits a complex categorification, for some $q$ non prime-power, then it cannot be braided, and so its Drinfeld center must be simple.

In this paper, we discuss the interpolated fusion rings for $\Rep(\PSL(2,q))$, but it is natural to expect that, for every family of Lie type, the representation rings admit such interpolation (as suggested by the data in \cite{lub}). 

\begin{ac}
Thanks to Pavel Etingof, Dmitri Nikshych, Victor Ostrik and Arthur Jaffe for their encouraging interest on this work. Thanks to Pavel for his moderation. Thanks to Victor for the detailed proof of Theorem \ref{simpleZ}, and to Dmitri for his explanation about central functor. Thanks to Sebastian Burciu and Yilong Wang for useful discussions. The reference \cite[Example 2.1]{Na} was pointed out to us by Sebastian. Thanks to Jean Michel and Frank L\"ubeck, Jean led us to Frank who helped us with GAP.  The first author was supported by 2020YFA0713000 from NKPs and Grant 100301004 from Tsinghua University, the second one by BIMSA Start-up Research Fund and Foreign Youth Talent Program from the Ministry of Sciences and Technology of China, and the thrid one by Grant TRT 0159, ARO Grants W911NF-19-1-0302 and W911NF-20-1-0082.
\end{ac}
%National Key Research and Development Program of China (

\tableofcontents

\section{Preliminaries}

This section recalls some basic notions about fusion rings (we refer to \cite[Chapter 3]{EGNO15}). We then show that the data of a commutative fusion ring is equivalent to the data of a formal table satisfying some assumptions. 

A \emph{fusion ring} $\mathcal{F}$ of \emph{rank} $r$ is defined by a finite set $\mathcal{B}=\{b_1, \dots, b_r\}$ called its \emph{basis}, and \emph{fusion rules} on $\mathbb{Z} \mathcal{B}$: $$ b_i b_j = \sum_{k=1}^r N_{i,j}^k b_k$$ with $N_{i,j}^k \in \mathbb{Z}_{\ge 0}$, satisfying axioms slightly extending the group axioms:
\begin{itemize}
\item \emph{Associativity.} $b_i (b_j b_k) = (b_i b_j) b_k $,
\item \emph{Neutral.} $b_1 b_i = b_i b_1 = b_i$,
\item \emph{Dual.} $\forall i \  \exists!i^* $ with $N_{i,k}^{1} = N_{k,i}^{1} = \delta_{i^*,k}$,
\item \emph{Frobenius reciprocity.} $N_{ij}^k = N_{i^*k}^j = N_{kj^*}^i$.
\end{itemize}
The dual structure $*$ induces an antihomomorphism of algebra, providing a structure of $*$-algebra on $\mathbb{C}\mathcal{B}$ defined by $b_i^* = b_{i^*}$, and by Frobenius-Perron theorem, there is a unique $*$-homomorphism $d:\mathbb{C}\mathcal{B} \to \mathbb{C}$ with $d(\mathcal{B}) \subset (0,\infty)$. The number $\FPdim(b_i):=d(b_i)$ is called the \emph{Frobenius-Perron dimension} of $b_i$, and $\FPdim(\mathcal{F}):=\sum_i d(b_i)^2$ is the Frobenius-Perron dimension of $\mathcal{F}$. The ordered sequence $(\FPdim(b))_{b \in \mathcal{B}}$ is called the \emph{type} of $\mathcal{F}$. The fusion ring $\mathcal{F}$ is called of \emph{Frobenius type} if  $\frac{\mathrm{FPdim}(\mathcal{F})}{\FPdim(b_i)}$ is an algebraic integer for all $i$. It is called \emph{integral} if $\FPdim(b_i)$ is an integer for all $i$. It is \emph{commutative} if $b_i b_j = b_j b_i$ for all $i,j$. The fusion matrices $M_i:= (N_{i,j}^k)_{k,j}$ provides a representation of $\mathcal{F}$ because $M_i M_j = \sum_k N_{ij}^k M_k$, moreover $M_i^* = M_{i^*}$ and $\FPdim(b_i)=\Vert M_i \Vert$. In the commutative case, $M_iM_j = M_j M_i$ for all $i,j$, in particular $M_iM_i^* = M_i^* M_i$, in other words, the fusion matrices are pairwise commutative and normal, so are simultaneously diagonalizable.

\begin{definition}[Eigentable] \label{def:eigentable}
Let $\mathcal{F}$ be a commutative fusion ring. Let $(M_i)$ be its fusion matrices, and let $(D_i) = (diag(\lambda_{i,j}))$ be their simultaneous diagonalization. The \emph{eigentable} of $\mathcal{F}$ is the table given by $(\lambda_{i,j})$.
\end{definition}

Note that the eigentable of the Grothendieck ring of $\Rep(G)$, with $G$ finite group, is the character table of $G$.

%\begin{theorem} \label{thm:TableFusion}
%Let $(\lambda_{i,j})$ be a formal $r \times r$ table. 
%%Let $(\alpha_i)$ be positive numbers.
%%For all $j$, consider $$c_j := \sum_i |\lambda_{i,j}|^2,$$ called the \emph{formal codegrees}, and assume that they are all nonzero. 
%Consider the space of functions from $\{1, \dots, r \}$ to $\mathbb{C}$ with some inner product $ \langle f , g \rangle $.
%% \langle f , g \rangle := \sum_j \alpha_j f(j) \overline{g(j)}.$$
%Consider the functions $(\lambda_i)$ defined by $\lambda_i(j) = \lambda_{i,j}$, and assume that $\langle \lambda_i , \lambda_j \rangle = \delta_{i,j}$. 
%%Assume that for all $i$ there is $k$ (automatically unique, denoted $i^*$) such that $\overline{\lambda_{i,j}} = \lambda_{k,j}$, for all $j$. 
%Consider the pointwise multiplication $(fg)(i) = f(i) g(i)$, and  the multiplication operator $M_f: g \mapsto fg$. Consider $M_i:=M_{\lambda_i}$, and assume that for all $i$ there is $j$ (automatically unique, denoted $i^*$) such that $M_i^* = M_j$. Assume that $M_1$ is the identity. Assume that for all $i,j,k$,  $ N_{i,j}^k:= \langle \lambda_i \lambda_j, \lambda_k \rangle$ is a nonnegative integer. Then $(N_{i,j}^k)$ are the structure constants of a commutative fusion ring and $(\lambda_{i,j})$ is its eigentable. Moreover, every eigentable of a commutative fusion ring satisfies all the assumptions above. 
%\end{theorem}  

\begin{theorem}[Reconstruction] \label{thm:TableFusion}
Let $(\lambda_{i,j})$ be a formal $r \times r$ table. Consider
\begin{itemize}
\item the space of functions from $\{1, \dots, r \}$ to $\mathbb{C}$ with some inner product $ \langle f , g \rangle $,
\item the functions $(\lambda_i)$ defined by $\lambda_i(j) = \lambda_{i,j}$,
\item the pointwise multiplication $(fg)(i) = f(i) g(i)$, 
\item the multiplication operator $M_f: g \mapsto fg$, and $M_i:=M_{\lambda_i}$,
\end{itemize}
and assume that 
\begin{itemize}
\item $\langle \lambda_i , \lambda_j \rangle = \delta_{i,j}$,
\item for all $i$ there is $j$ (automatically unique, denoted $i^*$) such that $M_i^* = M_j$,
\item $M_1$ is the identity,
\item  $ N_{i,j}^k:= \langle \lambda_i \lambda_j, \lambda_k \rangle$ is a nonnegative integer for all $i,j,k$.
\end{itemize}
Then $(N_{i,j}^k)$ are the structure constants of a commutative fusion ring and $(\lambda_{i,j})$ is its eigentable. Moreover, every eigentable of a commutative fusion ring satisfies all the assumptions above.
\end{theorem}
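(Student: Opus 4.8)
The plan is to exploit the fact that, under the four hypotheses, the functions $(\lambda_i)$ form an orthonormal basis of the $r$-dimensional space $\mathrm{Fun}(\{1,\dots,r\},\mathbb{C})$, which is a commutative associative unital algebra under pointwise multiplication. The forward direction then amounts to reading off the fusion ring axioms from this algebra structure together with the eigentable identification, while the converse amounts to producing, from an abstract commutative fusion ring, the inner product making its eigentable satisfy the hypotheses.

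For the forward direction, I would first expand $\lambda_i\lambda_j=\sum_k \langle \lambda_i\lambda_j,\lambda_k\rangle\,\lambda_k=\sum_k N_{i,j}^k\lambda_k$ in the orthonormal basis, so that $N_{i,j}^k$ are genuinely the structure constants of pointwise multiplication in the basis $(\lambda_i)$; associativity and commutativity are inherited from pointwise multiplication, and nonnegativity and integrality are exactly the fourth hypothesis. The neutral axiom follows by observing that $M_1=M_{\lambda_1}=\mathrm{id}$ forces $\lambda_1\equiv 1$ (the constant function), whence $\lambda_1\lambda_i=\lambda_i$ and $N_{1,i}^k=\delta_{i,k}$. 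For the dual and Frobenius reciprocity axioms I would use the second hypothesis $M_i^*=M_{i^*}$ together with $\lambda_1\equiv1$:
\[
N_{i,k}^1=\langle \lambda_i\lambda_k,\lambda_1\rangle=\langle \lambda_k,\, M_i^*\lambda_1\rangle=\langle\lambda_k,\lambda_{i^*}\rangle=\delta_{i^*,k},
\]
and similarly $N_{i,j}^k=\langle M_i\lambda_j,\lambda_k\rangle=\langle\lambda_j,\lambda_{i^*}\lambda_k\rangle=N_{i^*,k}^j$ (the structure constants being real), the remaining identity following by commutativity; uniqueness of $i^*$ comes from $M_j\mapsto M_j\lambda_1=\lambda_j$ being injective. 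Finally, to identify the eigentable, I note that the matrix of $M_{\lambda_i}$ in the basis $(\lambda_j)$ is exactly the fusion matrix $(N_{i,j}^k)_{k,j}$, whereas in the basis of indicator functions $(e_j)$ it is diagonal, since $\lambda_i e_j=\lambda_{i,j}e_j$. As the change of basis is independent of $i$, this exhibits a simultaneous diagonalization with eigenvalues $\lambda_{i,j}$, so the eigentable is $(\lambda_{i,j})$.

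For the converse, given a commutative fusion ring $\mathcal{F}$ I would use the Gelfand transform of its finite-dimensional commutative $C^*$-algebra $\mathbb{C}\mathcal{B}$ (the fusion matrices being normal, commuting and satisfying $M_i^*=M_{i^*}$): its $r$ characters $\chi_j$ satisfy $\chi_j(b_i)=\lambda_{i,j}$, and $\Phi:\mathbb{C}\mathcal{B}\to\mathrm{Fun}(\{1,\dots,r\},\mathbb{C})$, $\Phi(a)(j)=\chi_j(a)$, is a unital $*$-algebra isomorphism sending $b_i\mapsto\lambda_i$, products to pointwise products, and $*$ to complex conjugation (so $\lambda_{i^*,j}=\overline{\lambda_{i,j}}$). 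The inner product I would transport along $\Phi$ is the canonical trace form $\langle a,a'\rangle_\tau=\tau\big(a\,(a')^*\big)$, where $\tau(a)$ reads off the coefficient of $b_1$; the dual axiom yields $\langle b_i,b_{i'}\rangle_\tau=N_{i,(i')^*}^1=\delta_{i,i'}$, so $(b_i)$ and hence $(\lambda_i)$ is orthonormal, while $\tau$ being a faithful positive $*$-trace gives $M_{b_i}^*=M_{b_{i^*}}$, that is the second hypothesis. The third and fourth hypotheses then transport immediately, since $\Phi(b_1)=1$ and $\Phi(b_ib_j)=\sum_k N_{i,j}^k\lambda_k$.

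The main obstacle is conceptual rather than computational: keeping straight the two bases in play, namely the orthonormal basis $(\lambda_i)$ in which the structure constants live and the indicator basis $(e_j)$ in which the multiplication operators are diagonal, and checking that the trace form transported through $\Phi$ is genuinely an inner product obeying the adjoint hypothesis. Everything else is a routine matching of the fusion axioms against the algebra identities; the one place demanding care is the verification that $\tau(a)=[\text{coefficient of }b_1]$ is a faithful positive $*$-trace, which is exactly where the dual and Frobenius reciprocity axioms of $\mathcal{F}$ are consumed.
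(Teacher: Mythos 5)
Your proof is correct. The forward direction is essentially the paper's own argument: expand $\lambda_i\lambda_j$ in the orthonormal basis $(\lambda_k)$, read commutativity and associativity off pointwise multiplication, obtain Frobenius reciprocity from $M_i^*=M_{i^*}$ together with reality of the $N_{i,j}^k$, and diagonalize all the $M_i$ simultaneously on the indicator functions; your only deviations are cosmetic (you prove the dual axiom directly from $M_i^*\lambda_1=\lambda_{i^*}$ where the paper deduces it from Frobenius reciprocity, and you make explicit that $M_1=\mathrm{id}$ forces $\lambda_1\equiv 1$). The genuine difference is the converse, which the paper compresses into one sentence (``define $\langle\lambda_i,\lambda_j\rangle=\delta_{i,j}$; the assumptions follow easily''): you construct the inner product by transporting the trace form $\langle a,a'\rangle_\tau=\tau\bigl(a\,(a')^*\bigr)$ through the Gelfand transform $\Phi:\mathbb{C}\mathcal{B}\to\mathrm{Fun}(\{1,\dots,r\},\mathbb{C})$. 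This lands on exactly the same inner product, namely the one making the rows of the eigentable orthonormal, but it supplies the two points the paper leaves implicit: that the rows $(\lambda_i)$ are linearly independent (bijectivity of $\Phi$, coming from faithfulness of the regular representation and semisimplicity of $\mathbb{C}\mathcal{B}$), so decreeing them orthonormal is well posed; and that the adjoint hypothesis $M_{\lambda_i}^*=M_{\lambda_{i^*}}$ actually holds for this inner product (your trace-property computation, which in the paper's formulation amounts to Frobenius reciprocity once one knows the columns of the eigentable are characters). So your converse is a rigorous filling-in of the paper's one-line claim by a slightly more structural, $C^*$-algebraic route; both consume the same fusion-ring axioms and prove the same statement.
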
 

\begin{proof}
By assumption, the set $\{ \lambda_i \ | \ i=1,\dots,r \}$ is an orthonormal basis of the $r$-dimensional space it generates, so this space must be the $r$-dimensional space of functions from $\{1, \dots, r \}$ to $\mathbb{C}$. It follows that for all $i,j$, $\lambda_i \lambda_j$ decomposes into a linear sum of $\lambda_k$, and so by construction $$\lambda_i \lambda_j = \sum_k N_{i,j}^k \lambda_k.$$ 
%Consider the operator $M_i$ given by the pointwise multiplication by $\lambda_i$. Then: 
%$$ \langle M_i \lambda_j, \lambda_k  \rangle =  \langle \lambda_i \lambda_j, \lambda_k  \rangle = \sum_s \alpha_s \lambda_i(s) \lambda_j(s) \overline{\lambda_k(s)} = \sum_s \alpha_s  \lambda_j(s) \overline{\overline{\lambda_i(s)}\lambda_k(s)} = \langle \lambda_j, M_{i^*} \lambda_k  \rangle,$$
%It follows that $M_i^* = M_{i^*}$. 
We can now show that $(N_{i,j}^k)$ are the structure constants of a commutative fusion ring: 
\begin{itemize}
\item Commutativity. $\lambda_i \lambda_j = \lambda_j \lambda_i$ because the multiplication is pointwise and $\mathbb{C}$ is commutative,
\item Associativity. $(\lambda_i \lambda_j)\lambda_k = \lambda_i (\lambda_j \lambda_k)$ because the multiplication is pointwise and $\mathbb{C}$ is associative,
\item Frobenius reciprocity. $N_{i,j}^k  = \langle M_i \lambda_j , \lambda_k \rangle  = \langle  \lambda_j , M_i^* \lambda_k \rangle = \overline{\langle  M_i^* \lambda_k  , \lambda_j \rangle} = \overline{N_{i^*,k}^j} = N_{i^*,k}^j = \dots = N_{k,j^*}^i$,
\item Neutral. $N_{1,i}^k = N_{i,1}^k = \langle \lambda_i \lambda_1, \lambda_k \rangle = \langle \lambda_i , \lambda_k \rangle = \delta_{i,k}$,
\item Dual. $N_{i,j}^1 = N_{i^*,1}^j = \delta_{i^*,j}$.
\end{itemize}

Finally, consider the function $\delta_i: j \mapsto \delta_{i,j}$. Then $M_i \delta_j = \lambda_{i,j} \delta_j $. It follows that $(diag(\lambda_{i,j}))$ is a simultaneous diagonalization of $(M_i)$. So $(\lambda_{i,j})$ is an eigentable of the commutative fusion ring defined by $(N_{i,j}^k)$.

Reciprocally, let $(\lambda_{i,j})$ be the eigentable of a commutative fusion ring. Define  $(\lambda_i)$ as above and the inner product by $\langle \lambda_i , \lambda_j \rangle = \delta_{i,j}$. Then all the assumptions in the statement of the theorem follows easily.
\end{proof}

\begin{definition}[Formal codegrees] \label{def:codeg}
The \emph{formal codegrees} of a commutative fusion ring $\mathcal{F}$ with  eigentable $(\lambda_{i,j})$ are 
$$\mathfrak{c}_j:=\sum_i \vert \lambda_{i,j} \vert ^2.$$
\end{definition}

\begin{theorem}[Schur orthogonality relations] \label{thm:SchurOrtho}
Following the notations of Definition \ref{def:codeg}:
$$\sum_i \lambda_{i,j} \overline{\lambda_{i,j'}} = \delta_{j,j'} \mathfrak{c}_{j} \ \text{ and } \  \sum_j \frac{1}{\mathfrak{c}_j} \lambda_{i,j} \overline{\lambda_{i',j}} = \delta_{i,i'}.$$
\end{theorem}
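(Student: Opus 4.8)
The plan is to work inside the ``function model'' already set up in the proof of Theorem \ref{thm:TableFusion}: identify the fusion ring with $\mathbb{C}^r$ via $b_i \mapsto \lambda_i$, so that the reconstruction inner product makes $\{\lambda_i\}$ orthonormal, pointwise products are the multiplications, and the indicator functions $\delta_j$ are the common eigenvectors, $M_i \delta_j = \lambda_{i,j}\delta_j$, with $\lambda_i = \sum_j \lambda_{i,j}\delta_j$. Collecting the table into the matrix $\Lambda := (\lambda_{i,j})$ and writing $T := \mathrm{diag}(t_j)$ for a diagonal matrix to be determined, the two assertions become the two factorizations $\Lambda T \Lambda^* = I$ (the second relation, once $t_j = 1/\mathfrak{c}_j$) and $\Lambda^*\Lambda = \mathrm{diag}(\mathfrak{c}_j)$ (the first relation). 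These are \emph{formally dual}, so the bulk of the work is to produce one of them and then read off the weights by inverting a square matrix; I expect the only genuinely load-bearing point to be the orthogonality of the idempotents $\delta_j$.

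First I would record the elementary fact $\lambda_{i^*,j} = \overline{\lambda_{i,j}}$, i.e. $\lambda_{i^*} = \overline{\lambda_i}$. Indeed $M_i$ is multiplication by $\lambda_i$, hence normal (multiplication operators commute, and $M_i^* = M_{i^*}$ is again one of them), and $\delta_j$ is an eigenvector with eigenvalue $\lambda_{i,j}$; for a normal operator the same vector is an eigenvector of the adjoint with the conjugate eigenvalue, so $M_{i^*}\delta_j = M_i^*\delta_j = \overline{\lambda_{i,j}}\,\delta_j$, which gives the claim. This promotes the hypothesis $M_i^* = M_{i^*}$ to the $*$-compatibility $\langle \lambda_i f, g\rangle = \langle f, \overline{\lambda_i}\, g\rangle$, and since the $\lambda_i$ span all of $\mathbb{C}^r$, to $\langle hf, g\rangle = \langle f, \overline{h}\,g\rangle$ for arbitrary functions $h,f,g$.

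The key step is then the orthogonality of the idempotents. Taking $h = f = \delta_j$ and $g = \delta_{j'}$ in the $*$-compatibility and using $\delta_j^2 = \delta_j$, $\overline{\delta_j} = \delta_j$ and $\delta_j \delta_{j'} = \delta_{j,j'}\delta_j$, I obtain $\langle \delta_j, \delta_{j'}\rangle = \delta_{j,j'}\, t_j$ with $t_j := \langle \delta_j,\delta_j\rangle > 0$ (positivity because the reconstruction inner product is genuinely positive definite, $\langle \lambda_i,\lambda_{i'}\rangle = \delta_{i,i'}$). Expanding the orthonormality of the rows through $\lambda_i = \sum_j \lambda_{i,j}\delta_j$ now yields $\delta_{i,i'} = \langle \lambda_i,\lambda_{i'}\rangle = \sum_j t_j\, \lambda_{i,j}\overline{\lambda_{i',j}}$, that is, $\Lambda T \Lambda^* = I$.

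It remains to identify the weights and deduce column orthogonality, which is where the square-matrix duality does double duty. Since $\Lambda$ is square, $\Lambda (T\Lambda^*) = I$ forces $(T\Lambda^*)\Lambda = I$, i.e. $\Lambda^*\Lambda = T^{-1}$. Its diagonal reads $\sum_i |\lambda_{i,j}|^2 = t_j^{-1}$; but the left-hand side is exactly $\mathfrak{c}_j$ by Definition \ref{def:codeg}, so $t_j = 1/\mathfrak{c}_j$. Substituting back, $\Lambda T\Lambda^* = I$ becomes the second relation $\sum_j \frac{1}{\mathfrak{c}_j}\lambda_{i,j}\overline{\lambda_{i',j}} = \delta_{i,i'}$, while $\Lambda^*\Lambda = T^{-1} = \mathrm{diag}(\mathfrak{c}_j)$ reads $\sum_i \overline{\lambda_{i,j}}\lambda_{i,j'} = \delta_{j,j'}\mathfrak{c}_j$; conjugating (the codegrees being real) gives the first relation. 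Thus the genuine obstacle is confined to the idempotent orthogonality of the previous paragraph, everything afterwards being the formal inversion of a square matrix that simultaneously pins the weights to $1/\mathfrak{c}_j$ and converts weighted row-orthogonality into plain column-orthogonality; the whole argument is the fusion-ring analogue of the passage between the row and column orthogonality of a finite group character table, with the codegrees $\mathfrak{c}_j$ playing the role of the centralizer orders.
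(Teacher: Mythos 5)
Your proof is correct, but it reaches the two relations by a genuinely different route than the paper. The paper's proof takes the \emph{first} (column) relation as the seed, obtaining it by observing that each column $i \mapsto \lambda_{i,j}$ is a character of $\mathcal{F}$ and citing the character-orthogonality result \cite[Lemma 8.14.1]{EGNO15}; it then gets the second relation by the same square-matrix duality you use, phrased as ``an isometry of a finite-dimensional space is unitary'' applied to $U = (\tfrac{1}{\sqrt{\mathfrak{c}_j}}\overline{\lambda_{i,j}})$. You instead take the \emph{second} (weighted row) relation as the seed, deriving it internally from the reconstruction setup of Theorem \ref{thm:TableFusion}: the hypothesis $M_i^* = M_{i^*}$ (which, in the ``Moreover'' direction of that theorem, is Frobenius reciprocity in disguise) promotes to the $*$-compatibility $\langle hf,g\rangle = \langle f,\overline{h}g\rangle$, which forces the idempotents $\delta_j$ to be orthogonal, and expanding $\lambda_i = \sum_j \lambda_{i,j}\delta_j$ gives $\Lambda T \Lambda^* = I$ with unknown positive weights; the square-matrix inversion then simultaneously pins $t_j = 1/\mathfrak{c}_j$ and yields the first relation. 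What your route buys: it is self-contained (no external citation beyond Theorem \ref{thm:TableFusion}, which precedes this statement), and it proves along the way that the reconstruction inner product is necessarily $\langle f,g\rangle = \sum_j \frac{1}{\mathfrak{c}_j}f(j)\overline{g(j)}$, i.e.\ you get Corollary \ref{coro:inner} for free rather than as a separate consequence. What the paper's route buys: brevity, and it makes transparent that the first relation is just the classical orthogonality of distinct characters of a based ring, with your idempotent-orthogonality step outsourced to the cited lemma.
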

\begin{proof}
Observe that the map $i \mapsto \lambda_{i,j}$ induces a character on $\mathcal{F}$, so the first relation follows from \cite[Lemma 8.14.1]{EGNO15}. Next, let $U$ be the matrix $(\frac{1}{\sqrt{\mathfrak{c}_j}}\overline{\lambda_{i,j}})$. The first relation means that $U^*U = id$, i.e. $U$ is an isometry. But in the finite dimensional case, an isometry is unitary, so $UU^* = id$ also, which means that:
$$  \sum_j \frac{\overline{\lambda_{i,j}}}{\sqrt{\mathfrak{c}_j}} \frac{\lambda_{i',j}}{\sqrt{\mathfrak{c}_j}} = \delta_{i,i'}.  $$ The second relation follows.
\end{proof}

\begin{corollary}[Inner product] \label{coro:inner}
In Theorem \ref{thm:TableFusion}, the inner product can always be taken of the form
$$\langle f , g \rangle := \sum_s \frac{1}{\mathfrak{c}_s} f(s) \overline{g(s)}.$$
\end{corollary}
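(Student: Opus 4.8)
The plan is to verify that the concrete weighted form $\langle f,g\rangle = \sum_s \frac{1}{\mathfrak{c}_s} f(s)\overline{g(s)}$ satisfies every condition imposed on ``the inner product'' in Theorem \ref{thm:TableFusion}, so that it is always an admissible choice. The only hypothesis that genuinely constrains the inner product (beyond its being a positive-definite Hermitian form) is the orthonormality $\langle \lambda_i,\lambda_j\rangle = \delta_{i,j}$, and the whole point is that this is exactly the content of the second Schur orthogonality relation of Theorem \ref{thm:SchurOrtho}. So I would work in the setting where $(\lambda_{i,j})$ is the eigentable of a commutative fusion ring (the situation covered by the last sentence of Theorem \ref{thm:TableFusion}), with formal codegrees $\mathfrak{c}_s = \sum_i |\lambda_{i,s}|^2$ as in Definition \ref{def:codeg}.

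First I would check that the proposed form is a genuine inner product. Since $M_1$ is the identity matrix, all its eigenvalues equal $1$, so $\lambda_{1,s} = 1$ for every $s$; hence $\mathfrak{c}_s = \sum_i |\lambda_{i,s}|^2 \geq |\lambda_{1,s}|^2 = 1 > 0$. Thus each weight $1/\mathfrak{c}_s$ is a well-defined positive real and $\langle \cdot,\cdot\rangle$ is positive-definite and Hermitian. Next I would read off orthonormality directly: unwinding $\lambda_i(s)=\lambda_{i,s}$ gives $\langle \lambda_i,\lambda_{i'}\rangle = \sum_s \frac{1}{\mathfrak{c}_s}\lambda_{i,s}\overline{\lambda_{i',s}}$, which is precisely the left-hand side of the second relation in Theorem \ref{thm:SchurOrtho}, hence equal to $\delta_{i,i'}$. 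So the rows are orthonormal for this inner product, as required.

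Finally I would confirm that the remaining hypotheses survive this choice. The condition $M_1 = \mathrm{id}$ is a property of the table alone. For the adjoint condition, I would use that the commuting normal fusion matrices are simultaneously diagonalized, so the eigenvalue of $M_i^{*}=M_{i^*}$ on the $j$-th common eigenvector is $\overline{\lambda_{i,j}}$, giving the conjugation relation $\lambda_{i^*,j} = \overline{\lambda_{i,j}}$; a one-line computation then shows $\langle M_i f, g\rangle = \sum_s \frac{1}{\mathfrak{c}_s}\lambda_{i,s}f(s)\overline{g(s)} = \langle f, M_{i^*} g\rangle$, i.e.\ $M_i^{*}=M_{i^*}$ with respect to this inner product. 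Once orthonormality holds, $\langle \lambda_i\lambda_j, \lambda_k\rangle$ recovers the original nonnegative integer fusion coefficients, so the last assumption holds too. I do not expect a serious obstacle here: the statement is essentially a repackaging of the second Schur orthogonality relation, the only mild points of care being the strict positivity of the $\mathfrak{c}_s$ (which secures an honest inner product) and the conjugation relation $\lambda_{i^*,j} = \overline{\lambda_{i,j}}$ (which makes the adjoint come out correctly).
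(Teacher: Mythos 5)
Your proof is correct and follows exactly the route the paper intends: Corollary \ref{coro:inner} is stated as an immediate consequence of the second Schur orthogonality relation in Theorem \ref{thm:SchurOrtho}, which is precisely the orthonormality $\langle \lambda_i,\lambda_{i'}\rangle=\delta_{i,i'}$ that you verify for the weighted form. Your additional checks (strict positivity of the weights $1/\mathfrak{c}_s$, the adjoint condition via $\lambda_{i^*,j}=\overline{\lambda_{i,j}}$, and recovery of the integers $N_{i,j}^k$) simply make explicit what the paper leaves implicit in calling this a corollary.
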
  

\begin{corollary}[Verlinde-like formula] \label{coro:Verlinde}
Let $\mathcal{F}$ be a commutative fusion ring with basis $(b_i)$, fusion rules $(N_{i,j}^k)$, eigentable $(\lambda_{i,j})$ and formal codegrees $(\mathfrak{c}_j)$. Then
$$ N_{i,j}^k  =  \langle b_i b_j, b_k \rangle = \sum_s \frac{1}{\mathfrak{c}_s}\lambda_{i,s} \lambda_{j,s} \overline{\lambda_{k,s}} = \sum_s \frac{\lambda_{i,s} \lambda_{j,s} \overline{\lambda_{k,s}}}{\sum_l \vert \lambda_{l,s} \vert ^2}. $$
\end{corollary}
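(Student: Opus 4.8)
The plan is to obtain the formula purely by assembling the results already established, since the Verlinde-like identity is essentially a repackaging of the reconstruction theorem together with the explicit inner product. The starting observation is that $N_{i,j}^k = \langle b_i b_j, b_k \rangle$ holds by definition: expanding $b_i b_j = \sum_l N_{i,j}^l b_l$ in the fusion ring and using that the basis $(b_l)$ is orthonormal for the standard inner product on $\mathbb{C}\mathcal{B}$ immediately isolates the coefficient $N_{i,j}^k$. This gives the first equality for free.

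For the middle expression, I would invoke Theorem \ref{thm:TableFusion}: with $\lambda_i$ the function $j \mapsto \lambda_{i,j}$ and the pointwise product, the reconstruction theorem identifies the structure constants as $N_{i,j}^k = \langle \lambda_i \lambda_j, \lambda_k \rangle$, where the inner product is any one making $(\lambda_i)$ orthonormal. The key point is then that, by Corollary \ref{coro:inner}, this inner product may be taken to be the explicit weighted form $\langle f , g \rangle = \sum_s \frac{1}{\mathfrak{c}_s} f(s) \overline{g(s)}$; the legitimacy of this choice is precisely the content of the second Schur orthogonality relation in Theorem \ref{thm:SchurOrtho}, which guarantees $\langle \lambda_i , \lambda_j \rangle = \delta_{i,j}$ for that weighting.

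Substituting the explicit inner product and evaluating pointwise then yields
$$N_{i,j}^k = \langle \lambda_i \lambda_j, \lambda_k \rangle = \sum_s \frac{1}{\mathfrak{c}_s} (\lambda_i \lambda_j)(s) \overline{\lambda_k(s)} = \sum_s \frac{1}{\mathfrak{c}_s} \lambda_{i,s} \lambda_{j,s} \overline{\lambda_{k,s}},$$
which is the second expression. The final form follows by unfolding the definition of the formal codegrees, $\mathfrak{c}_s = \sum_l \vert \lambda_{l,s} \vert^2$ (Definition \ref{def:codeg}), in the denominator.

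Since every step is a direct citation of the preceding results, there is no serious obstacle to anticipate; the only point demanding care is the bookkeeping of which inner product is in force at each stage, ensuring that the abstract orthonormal inner product of Theorem \ref{thm:TableFusion} is legitimately replaced by the concrete weighted one of Corollary \ref{coro:inner}. Once that identification is in place, the computation is entirely mechanical.
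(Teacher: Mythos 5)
Your proposal is correct and takes essentially the same route as the paper: the corollary is stated there without a separate proof precisely because, as you argue, it follows by combining Theorem \ref{thm:TableFusion} (which gives $N_{i,j}^k = \langle \lambda_i \lambda_j, \lambda_k \rangle$) with Corollary \ref{coro:inner} (whose legitimacy rests on the second Schur orthogonality relation of Theorem \ref{thm:SchurOrtho}), and then unfolding Definition \ref{def:codeg} for the last equality. Your bookkeeping of which inner product is in force at each stage is exactly the point the paper's chain of results is designed to settle, so there is no gap.
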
  

\begin{remark} \label{rem:orb-stab}
Let $G$ be a finite group, let $g$ be an element of $G$, let $Cl(g)$ be its conjugacy class and let $C_G(g)$ be its centralizer. By the orbit-stabilizer theorem $|G| =  |Cl(g)| \times |C_G(g)| $. By \cite[Theorem 2.18]{Isa}, the formal codegree is $|C_G(g)|$, whereas (by definition) the class size is $|Cl(g)|$. 
\end{remark}
Now the conjugacy classes form a partition of $G$, so $\sum_i |G|/\mathfrak{c}_i = |G|$, which provides the Egyptian fraction $\sum_i 1/\mathfrak{c}_i = 1$. It generalizes as follows:

\begin{corollary} \label{coro:egy}
Let $\mathcal{F}$ be a commutative fusion ring and let $(\mathfrak{c}_i)$ be its formal codegrees. Then $\sum_i 1/\mathfrak{c}_i = 1$.
\end{corollary}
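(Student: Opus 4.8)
The plan is to reduce the identity to a single, well-chosen evaluation of the Verlinde-like formula. First I would pin down the row of the eigentable corresponding to the unit: since $b_1$ is neutral, its fusion matrix $M_1$ is the identity matrix, so all of its eigenvalues equal $1$. Concretely, the proof of Theorem \ref{thm:TableFusion} shows $M_i\delta_j=\lambda_{i,j}\delta_j$, whence $\lambda_{1,j}$ is an eigenvalue of the identity, i.e. $\lambda_{1,s}=1$ for every $s$. In the group-theoretic picture underlying Remark \ref{rem:orb-stab}, this is nothing but the fact that the trivial character takes the value $1$ on every conjugacy class.

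Next I would apply Corollary \ref{coro:Verlinde} with $i=j=k=1$. Because $\lambda_{1,s}=1$ for all $s$, the right-hand side collapses to
$$ N_{1,1}^1 = \sum_s \frac{1}{\mathfrak{c}_s}\,\lambda_{1,s}\,\lambda_{1,s}\,\overline{\lambda_{1,s}} = \sum_s \frac{1}{\mathfrak{c}_s}. $$
On the other hand, the neutral axiom gives $b_1 b_1 = b_1$, so the left-hand side is $N_{1,1}^1 = 1$. Comparing the two expressions yields $\sum_s 1/\mathfrak{c}_s = 1$. Equivalently, one may instead invoke the second Schur orthogonality relation of Theorem \ref{thm:SchurOrtho} with $i=i'=1$, where $|\lambda_{1,s}|^2=1$ gives the same collapse.

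I do not expect a genuine obstacle here: the only point requiring care is the normalization $\lambda_{1,\cdot}\equiv 1$, after which the statement is immediate. It is worth noting that the argument specializes exactly to the Egyptian-fraction identity $\sum_i 1/|C_G(g_i)| = 1$ recalled just before the statement, since there $\mathfrak{c}_i = |C_G(g_i)|$ and $\sum_i |C_G(g_i)|^{-1} = \sum_i |Cl(g_i)|/|G| = 1$ by the orbit-stabilizer partition of $G$.
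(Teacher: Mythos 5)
Your proof is correct and is essentially the paper's own argument: the paper simply applies Corollary \ref{coro:inner} to $f=g=\lambda_1$, which---since $\lambda_{1,s}=1$ for all $s$---is exactly your evaluation of the Verlinde-like formula at $i=j=k=1$ (indeed Corollary \ref{coro:Verlinde} is just Corollary \ref{coro:inner} applied to $f=\lambda_i\lambda_j$, $g=\lambda_k$, and here $\lambda_1\lambda_1=\lambda_1$). Both versions rest on the same two ingredients you identify: the normalization $\lambda_{1,\cdot}\equiv 1$ coming from $M_1=\mathrm{id}$, and the unit relation $\langle\lambda_1,\lambda_1\rangle=N_{1,1}^1=1$.
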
  
\begin{proof}
Apply Corollary \ref{coro:inner} to $f=g=\lambda_1$.
\end{proof}

In particular, $\FPdim(\mathcal{F}) = \mathfrak{c}_1 = \sum_i \mathfrak{c}_1/\mathfrak{c}_i$, so we may see $(\mathfrak{c}_1/\mathfrak{c}_i)$ as \emph{formal class sizes} (and see \S\ref{sub:drinfeld}).

\section{Generic character table of $\PSL(2,q)$} \label{sec:tab}
The generic character table for $\PSL(2,q)$ requires to consider three cases depending on $q \mod 4$. We will use the data provided by \cite{GAP4} as a reference for these tables, but for more usual references, see for example \cite[\S 5.2]{FuHa}, \cite[\S 12.5]{DiMi} or \cite{Bon}. Let us briefly recall that in the character table of a finite group, each line is labelled by an irreducible complex character $\chi$, each column is labelled by a conjugacy class $K$, and the entry at $(\chi,K)$ is just $\chi(g)$ with $g \in K$. The class size of $K$ is its order. In the tables displayed below some irreducible characters are grouped together in a single line and identified by a parameter $c$ (replacing the parameter `i' or its half in GAP), idem for some conjugacy classes with a parameter $k$. Let $\zeta_n$ be the root of unity $\exp(\frac{2\pi i}{n})$, where $i$ denotes the imaginary unit.

\begin{ttable} \label{table:q=0(2)}
 Here is the generic character table of $\PSL(2,q)$ with $q$ even:
$$
\begin{tabular}{ |c||c|c|c|c|c| } 
\hline
\backslashbox{\text{charparam }$c$}{\text{classparam }$k$} & $\{1\}$ & $\{1\}$ & $\{1, \dots, \frac{q-2}{2}\}$ & $\{1, \dots, \frac{q}{2}\}$ \\   \hline \hline
 $\{1\}$ & $1$ & $1$ & $1$ & $1$  \\ 		 
 $\{1, \dots, \frac{q}{2}\}$ & $q-1$ & $-1$ & $0$ & $-\zeta_{q+1}^{kc} -\zeta_{q+1}^{-kc}$  \\ 		  
 $\{1\}$ & $q$ & $0$ & $1$ & $-1$    \\ 		  
 $\{1, \dots, \frac{q-2}{2}\}$ & $q+1$ & $1$ & $\zeta_{q-1}^{kc} + \zeta_{q-1}^{-kc}$ & $0$    \\  \hline \hline	  
  \text{ class size } & $1$ & $q^2-1$ & $q(q+1)$  & $q(q-1)$ \\    \hline
\end{tabular}
$$
\end{ttable}
\begin{proof}
Here is the GAP code providing these data:

\begin{verbatim}
gap> Print(CharacterTableFromLibrary("SL2even"));
\end{verbatim}
Recall that for $q$ even, $\PSL(2,q) = \SL(2,q)$.
\end{proof}

\begin{ttable} \label{table:q=-1(4)}
 Here is the generic character table of $\PSL(2,q)$ with $q \equiv -1 \mod 4$:
 
$$
\begin{tabular}{ |c||c|c|c|c|c|c|c| } 
\hline
\backslashbox{\text{charparam }$c$}{\text{classparam }$k$} & $\{1\}$ & $\{1,2\}$ & $\{1, \dots, \frac{q-3}{4}\}$ & $\{1, \dots, \frac{q-3}{4}\}$  & $\{\frac{q+1}{4}\}$ \\   \hline \hline
$\{1\}$ & $1$ & $1$ & $1$ & $1$ & $1$ \\ 
$\{1,2\}$ & $\frac{q-1}{2}$&$\frac{-1+i(-1)^{k+c}\sqrt{q}}{2}$&$0$&$(-1)^{k+1}$&$(-1)^{k+1}$ \\ 
$\{1, \dots, \frac{q-3}{4}\}$ & $q-1$ & $-1$ & $0$  & $-\zeta_{q+1}^{2kc}-\zeta_{q+1}^{-2kc}$ & $-2(-1)^c$ \\
$\{1\}$ & $q$ & $0$ & $1$ & $-1$ & $-1$  \\ 		  
$\{1, \dots, \frac{q-3}{4}\}$ & $q+1$ & $1$ & $\zeta_{q-1}^{2kc}+\zeta_{q-1}^{-2kc}$   & $0$ & $0$ \\ 
 \hline \hline	  
  \text{ class size } & $1$ & $\frac{q^2-1}{2}$ & $q(q+1)$& $q(q-1)$ & $\frac{q(q-1)}{2}$ \\    \hline
\end{tabular}
$$
\end{ttable}

\begin{proof}
Here is the GAP code providing these data:

\begin{verbatim}
gap> Print(CharacterTableFromLibrary("PSL2odd"));
\end{verbatim}
Above table is a slight simplification of the one provided by GAP in the sense that the line labelled $\{1,2\}$ is a grouping of two lines (idem for columns). The notation \verb_EB(q)_ used in GAP means $\frac{-1+i\sqrt{q}}{2}$ when $q \equiv -1 \mod 4$.
\end{proof}

\begin{ttable}  \label{table:q=1(4)}
 Here is the generic character table of $\PSL(2,q)$ with $q \equiv 1 \mod 4$:

$$
\begin{tabular}{ |c||c|c|c|c|c|c| } 
\hline
\backslashbox{\text{charparam }$c$}{\text{classparam }$k$} & $\{1\}$ & $\{1,2\}$ & $\{1, \dots, \frac{q-5}{4}\}$ & $\{\frac{q-1}{4}\}$ & $\{1, \dots, \frac{q-1}{4}\}$ \\   \hline \hline
 $\{1\}$ & $1$ & $1$ & $1$ & $1$ & $1$ \\ 	
 $\{1,2\}$&$\frac{q+1}{2}$&$\frac{1+(-1)^{k+c}\sqrt{q}}{2}$&$(-1)^k$&$(-1)^k$ & $0$ \\ 
 $\{1, \dots, \frac{q-1}{4}\}$ & $q-1$  & $-1$ & $0$ & $0$ & $-\zeta_{q+1}^{2kc}-\zeta_{q+1}^{-2kc}$ \\ 		  
 $\{1\}$ & $q$ & $0$ & $1$ & $1$ & $-1$  \\ 		  
 $\{1, \dots, \frac{q-5}{4}\}$ & $q+1$  & $1$ & $\zeta_{q-1}^{2kc}+\zeta_{q-1}^{-2kc}$   & $2(-1)^c$ & $0$ \\ 
  \hline \hline	  
  \text{ class size } & $1$ & $\frac{q^2-1}{2}$ & $q(q+1)$ & $\frac{q(q+1)}{2}$ & $q(q-1)$ \\    \hline
\end{tabular}
$$
\end{ttable}

\begin{proof}
Here is the GAP code providing these data:

\begin{verbatim}
gap> Print(CharacterTableFromLibrary("PSL2even"));
\end{verbatim}
Same simplification than for Table \ref{table:q=-1(4)}. The notation \verb_EB(q)_ used in GAP means $=\frac{-1+\sqrt{q}}{2}$ when $q \equiv 1 \mod 4$. Moreover $2(-1)^c$ simplifies the entry \verb_E(4)^i + E(4)^(-i)_ in GAP,
where the parameter `i' is $2c$.
\end{proof}

\section{Interpolated fusion rings} \label{sec:interpol}
The generic character table of $\PSL(2,q)$ as provided by GAP can be interpolated to $q$ non prime-power. In this section, we show that this interpolated formal table is in fact the character table of an interpolated ``$\PSL(2,q)$'', more precisely, we will prove that the application of the Verlinde-like formula (Corollary \ref{coro:Verlinde}) to these formal tables provides a family of fusion rings ($\mathcal{R}_q$) which are exactly the Grothendieck rings of $\Rep(\PSL(2,q))$ when $q$ is a prime-power, but otherwise, new non group-like simple integral fusion rings, whose eigentables are exactly these formal tables.

\begin{theorem} \label{thm:general}
The generic character table of $\PSL(2,q)$, interpolated to every integer $q \ge 2$, is the eigentable of an integral fusion ring, called the \emph{interpolated fusion ring} $\mathcal{R}_q$. Moreover, if $q \ge 4$ then it is simple, and if $q$ is not a prime-power then it is not group-like.
\end{theorem}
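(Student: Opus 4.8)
The plan is to regard $q\ge 2$ as a formal integer parameter and feed the interpolated table into the Reconstruction Theorem \ref{thm:TableFusion}, then to treat simplicity and non-group-likeness separately. First I would fix one of the three residue classes of $q$ and check that the interpolated table is square: the index ranges such as $\{1,\dots,\frac{q-2}{2}\}$ are integer intervals in each case, and a direct count gives equal numbers of rows and columns (for instance $q+1$ when $q$ is even). I would then dispatch the three easy hypotheses of Theorem \ref{thm:TableFusion}: the constant first row $\equiv 1$ gives $M_1=\mathrm{id}$; complex conjugation permutes the rows --- every character is real except the conjugate pair of degree $\frac{q-1}{2}$ occurring for $q\equiv-1\bmod 4$ --- which provides the dual involution $i\mapsto i^*$; and orthonormality of the rows for the codegree inner product of Corollary \ref{coro:inner} is exactly the Schur orthogonality of Theorem \ref{thm:SchurOrtho}. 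For the interpolated table these relations reduce to finite geometric sums of roots of unity $\sum_k\zeta_{q\pm1}^{mk}$, which evaluate by elementary orthogonality to the formal codegrees $\mathfrak c_s\in\{\,|G|,q,q-1,q+1,\dots\,\}$; these are strictly positive for $q\ge2$, so the inner product is well defined.

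The heart of the matter, and the main obstacle, is the last hypothesis: that $N_{i,j}^k:=\langle\lambda_i\lambda_j,\lambda_k\rangle$ is a nonnegative integer for all $i,j,k$ and all integer $q\ge2$. I would compute these coefficients straight from the Verlinde-like formula of Corollary \ref{coro:Verlinde}. The sum over the column index $s$ breaks up according to the column types; on the generic columns parameterized by $k$ the summand is a product of factors $\zeta_{q\pm1}^{\pm k(\cdots)}$ over a codegree, so the inner sum over $k$ collapses by root-of-unity orthogonality into short closed forms. Carrying this through --- the content of \S\ref{sec:cal} --- produces piecewise formulas in $q$ and in the character parameters for the $N_{i,j}^k$, from which nonnegativity and integrality can be read off for every integer $q\ge2$; these are the rules recorded in Theorems \ref{thm:q=0(2)}, \ref{thm:q=-1(4)} and \ref{thm:q=1(4)}. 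The difficulty here is bookkeeping rather than conceptual: organizing the case splits (parity and residue of $q$, boundary values of the parameter $c$, and the coincidences among the $\zeta$-sums) and verifying nonnegativity uniformly in $q$, whereas at prime powers it is automatic from the genuine category $\Rep(\PSL(2,q))$. Once this is settled, Theorem \ref{thm:TableFusion} yields a commutative fusion ring $\mathcal R_q$ with the interpolated table as eigentable, and integrality is immediate because the Frobenius--Perron dimensions are the first-column entries $1,\frac{q\pm1}{2},q-1,q,q+1$, all integers.

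For simplicity when $q\ge4$ I would prove that $\mathcal R_q$ has no proper nontrivial fusion subring, equivalently that every non-unit basis element generates the whole ring under repeated fusion and duality. From the explicit rules one checks that, for any non-trivial $b$, the fusion-closed set generated by $b$ and $b^*$ already contains the degree-$q$ (Steinberg-type) element together with a spanning family of the principal- and discrete-series elements, hence exhausts the basis. At the level of the eigentable this mirrors the fact --- visible by inspection for $q\ge4$, since then $q-1,q,q+1>2$ --- that no non-trivial row attains its first-column value $\lambda_{i,1}$ on any other column, so there is no nontrivial ``formal kernel,'' the analogue of $\PSL(2,q)$ having no nontrivial normal subgroup.

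Finally, to show $\mathcal R_q$ is not group-like when $q$ is not a prime power, I would argue by contradiction. If $\mathcal R_q$ were Grothendieck equivalent to $\Rep(G)$ for a finite group $G$, then since $\mathcal R_q$ is simple and has a basis element of Frobenius--Perron dimension $>1$, $G$ would be a non-abelian finite simple group (fusion subrings of $\Rep(G)$ corresponding to normal subgroups of $G$) whose character table is the interpolated table, with $|G|=\FPdim(\mathcal R_q)=\frac{q(q-1)(q+1)}{\gcd(2,q-1)}$ and irreducible degrees $1,\frac{q\pm1}{2},q-1,q,q+1$. Since the internal arithmetic (sum of squares of degrees, sum of class sizes) is consistent by construction, the obstruction is not numerical; instead I would match this order together with the degree pattern against the classification of finite simple groups (a recognition theorem for $\PSL(2,q)$ from its character table) to force $G\cong\PSL(2,q)$, which exists only when $q$ is a prime power --- a contradiction. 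I expect this last step to be the conceptual, as opposed to computational, difficulty, since it leans on the classification rather than on the self-contained reconstruction used for the first three claims.
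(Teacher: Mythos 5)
Your construction of $\mathcal{R}_q$ (reconstruction via Theorem \ref{thm:TableFusion}, coefficients via the Verlinde-like formula of Corollary \ref{coro:Verlinde}, with the case-by-case root-of-unity computations of \S\ref{sec:cal}) and your simplicity argument (every non-trivial basis element generates the whole ring) are exactly the paper's route, so those two claims are fine. The problem is the third claim, non-group-likeness, where your proposal has a genuine gap: you defer the entire content to ``a recognition theorem for $\PSL(2,q)$ from its character table'' that would ``force $G \cong \PSL(2,q)$.'' No such off-the-shelf theorem applies here. The recognition and Huppert-type results in the literature (e.g., recovering $\PSL(2,q')$ from its set of character degrees) all presuppose that the degree set is that of an \emph{actual} group, i.e.\ that the parameter is a prime power; when $q$ is not a prime power, the interpolated degree set $\{1,\tfrac{q\pm1}{2},q-1,q,q+1\}$ is not the degree set of any $\PSL(2,q')$, so these theorems say nothing. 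Moreover the logic ``force $G\cong\PSL(2,q)$, which does not exist, contradiction'' cannot be run literally: what must be proved is that \emph{no} finite simple group realizes the interpolated data, and that is a statement requiring its own CFSG case analysis, not a citation.

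That case analysis is precisely the paper's Lemma \ref{lem:NoOverlap}, and its technical core is not routine. For $G$ of Lie type in characteristic $p$, the paper uses the Steinberg representation: its degree is the order of a $p$-Sylow of $G$, hence a prime power. Note that this alone does not finish the job, because $q\pm1$ (and $\tfrac{q\pm1}{2}$) can perfectly well be prime powers when $q$ is not (e.g.\ $q=6$ gives degrees $5$ and $7$); the paper must then evaluate the Steinberg character on elements whose centralizer order is coprime to the candidate degree, forcing character values $\pm1$, i.e.\ $2\cos(\tfrac{2\pi kc}{q+1})=\pm1$ for all $k$, which is contradictory for $q\ge 6$. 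Alternating groups are excluded because for $n\ge 7$ they have a \emph{unique} non-trivial irreducible of minimal degree, while in $\mathcal{R}_q$ the minimal non-trivial degree always has multiplicity at least $2$; sporadic groups are excluded by a GAP check that they have more than $6$ distinct character degrees, while $\mathcal{R}_q$ has at most $5$. None of this appears in your sketch, and without it the non-group-like statement is unproved.
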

\begin{proof}
The rules displayed in Theorems \ref{thm:q=0(2)}, \ref{thm:q=-1(4)}, \ref{thm:q=1(4)} (and computed in \S \ref{sec:cal} from Tables \ref{table:q=0(2)}, \ref{table:q=-1(4)}, \ref{table:q=1(4)}) satisfy all the axioms of a fusion ring by applying Theorem \ref{thm:TableFusion} together with Propositions \ref{prop:ortho}, \ref{prop:ortho2}, \ref{prop:ortho3} (with the inner product as in Corollary \ref{coro:inner}), and admit these tables as eigentables, again by Theorem \ref{thm:TableFusion}. To show that $\mathcal{R}_q$ is simple if $q \ge 4$, just observe that every non-trivial simple object generates all ones, so that there is no proper non-trivial fusion subring. Finally, it is not group-like when $q$ is not a prime-power by Lemma \ref{lem:NoOverlap}.
\end{proof}

%\sebastien{Note that about the formulas involving some $\delta_{i,j}$ for the structure constants in Theorems \ref{thm:q=0(2)}, \ref{thm:q=-1(4)} and \ref{thm:q=1(4)}, then $i$ and $j$ always correspond to objects of same $\FPdim$, and otherwise, it is always constant.}

%The \emph{multiplicity} of a fusion ring given by $(N_{i,j}^k)$ is $\max_{i,j,k}(N_{i,j}^k)$. Let the \emph{fusion multiplicity} $m(G)$ of a finite group $G$ be the multiplicity of the Grothendieck ring of $\Rep(G)$. The computation of the generic fusion rules made in this section leads to the following interesting result in finite group theory.
%
%\begin{theorem} \label{tenchara}
%Let $G$ be a non-abelian finite simple group. Then $m(G) \le 3$ if and only if $G \simeq \PSL(2,q)$ for some prime-power $q$. More precisely, $m(G) = \left\{
%    \begin{array}{ll}
%        2 & \text{ iff } q \text{ even or } q=5, \\
%        3 & \text{ iff } q \text{ odd and } q \neq 5. 
%    \end{array}
%\right.$
%\end{theorem} 
%\begin{proof}
%One way follows from Theorems \ref{thm:q=0(2)}, \ref{thm:q=-1(4)} and \ref{thm:q=1(4)}. Next, let $d_+(G)$ be the maximum among the degrees of complex irreducible characters of $G$, and let $\Sigma(G)$ be their sum. By the contributions of Mikko Korhonen, Geoff Robinson, Denis Chaperon de Lauzi\`eres and Frank L\"ubeck in \cite{Pal4}, if $G$ is non-abelian simple and non-isomorphic to some $\PSL(2,q)$ then $d_+^2(G)>3\Sigma(G)$; but $m(G) \ge d_+^2/\Sigma(G)$.
%\end{proof}

\begin{lemma} \label{lem:NoOverlap}
For $q$ non prime-power, the interpolated fusion ring $\mathcal{R}_q$ is not the Grothendieck ring of some $\Rep(G)$ with $G$ a finite group.
\end{lemma}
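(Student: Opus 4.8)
The plan is to argue by contradiction: suppose $\mathcal{R}_q$ is the Grothendieck ring of $\Rep(G)$ for a finite group $G$. As noted after Definition \ref{def:eigentable}, the interpolated character table is then literally the character table of $G$, so all its group-theoretic content is at my disposal. First I would read off the global invariants. The value $\FPdim(\mathcal{R}_q)=\mathfrak{c}_1$ equals $|G|$, which is $q(q^2-1)$ for $q$ even and $q(q^2-1)/2$ for $q$ odd. Since the trivial character is the only character of degree $1$, $G$ is perfect; since the only class of size $1$ is the identity, $Z(G)=1$; and since $\mathcal{R}_q$ is simple (Theorem \ref{thm:general}), $\Rep(G)$ has no nontrivial fusion subcategory, i.e.\ $G$ has no nontrivial normal subgroup. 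Hence $G$ is a nonabelian finite simple group. By Remark \ref{rem:orb-stab} the formal codegrees are the centralizer orders $|C_G(g)|$, so from the tables every nonidentity class has its centralizer order read off directly; the key feature I will exploit is that, apart from $|G|$ itself, \emph{exactly one} nonidentity centralizer value equals $q$ (the ``unipotent'' value), while every other one is coprime to $q$ (indeed $\gcd(q,q^2-1)=1$). Since $\langle g\rangle\subseteq C_G(g)$, the order of each element divides its centralizer order.

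Next I would isolate the unipotent part. Fix $x$ in a class of centralizer order $q$ (the class of size $q^2-1$ when $q$ is even) and put $P:=C_G(x)$, a subgroup of order $q$. As $|P|=q$, every $y\in P$ has $o(y)\mid q$; combined with $o(y)\mid |C_G(y)|$ and the fact that $q$ is coprime to all centralizer values other than $q$, this forces $|C_G(y)|=q$ for all $y\in P\setminus\{1\}$, and in particular $x\in Z(P)$. Let $N:=N_G(P)$. Because $q$ is exactly the part of $|G|$ built from primes dividing $q$, the index $|C|:=|N|/q$ is coprime to $q$, so by Schur--Zassenhaus $N=P\rtimes C$. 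I claim $C$ acts fixed-point-freely on $P$: if $1\ne c\in C$ fixed some $1\ne u\in P$, then $u\in C_G(c)$, but $o(u)\mid q$ while $|C_G(c)|$ is coprime to $q$ (it cannot equal $q$, since $o(c)\mid |C|$ is coprime to $q$), forcing $o(u)=1$, a contradiction. Thus $N$ is a Frobenius group with kernel $P$ and complement $C$, and by Thompson's theorem $P$ is nilpotent.

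The contradiction then comes from counting orbits. The complement $C$ acts freely on the $q-1$ nonidentity elements of $P$, so the number of orbits is $(q-1)/|C|$ and all elements in one orbit share a common order. The crucial input is that $|C|$ is as large as the corresponding torus, namely $|C|=q-1$ for $q$ even and $|C|=(q-1)/2$ for $q$ odd; then there is a single orbit (even case) or two orbits (odd case), so the nonidentity elements of $P$ take at most two distinct orders. But a nilpotent group of non-prime-power order contains, for two distinct primes $p_1,p_2\mid|P|$, commuting elements giving orders $p_1$, $p_2$ and $p_1p_2$ --- at least three distinct orders. (In the even case one gets more directly that a single orbit forces all nonidentity elements of $P$ to have one common order, necessarily a prime, so $P$ is an elementary abelian $p$-group.) Either way $q=|P|$ is a prime power, contradicting the hypothesis and proving the lemma.

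I expect the crucial input of the previous paragraph to be the main obstacle: proving $|N_G(P)|=q(q-1)$ for $q$ even and $q(q-1)/2$ for $q$ odd, equivalently that the $G$-conjugates of $P$ form a trivial-intersection family, equivalently that $P$ is abelian. The orbit count by itself only yields $|C|\mid q-1$, which does not bound the number of orbits below three. There are two natural routes. The first is to treat $P$ as a trivial-intersection subgroup and run the Frobenius--Suzuki theory of exceptional characters to pin down $N_G(P)$ from the induced characters already visible in the table. The second, more conceptual, is to note that the pairwise-coprime centralizer structure makes the maximal abelian subgroups of $G$ a nontrivial partition, and to invoke the Baer--Kegel--Suzuki classification of finite groups admitting a partition, which forces a nonabelian simple such $G$ to be some $\PSL(2,p^f)$ and hence $q=p^f$ directly. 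Everything preceding this step is elementary and uniform across the three congruence classes of $q$.
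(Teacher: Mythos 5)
Your route is entirely different from the paper's, and much of what you actually prove is correct: simplicity of $\mathcal{R}_q$ does force $G$ to be a non-abelian simple group; the coprimality of $q$ with all other formal codegrees does force $|C_G(y)|=q$ for every $y\in P\setminus\{1\}$ where $P:=C_G(x)$; Schur--Zassenhaus applies; your fixed-point-freeness argument for $C$ is sound; and Thompson's theorem gives nilpotency of $P$ \emph{provided} $C\neq 1$ (a case you never exclude: nothing so far rules out $N_G(P)=P$). But the proof then stops at exactly the point you flag yourself: the claim $|C|=q-1$ (resp. $(q-1)/2$) is never established, and without it the free action only gives $|C|\mid q-1$, so the number of $C$-orbits on $P\setminus\{1\}$ --- hence the number of element orders in $P$ --- is not bounded by two, and no contradiction follows. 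The two repairs you sketch are not proofs: the exceptional-character route amounts to redoing Suzuki's analysis of CN/CA-groups, a substantial theory; and the partition route is close to circular, since invoking Baer--Kegel--Suzuki first requires proving that $G$ admits a nontrivial partition, which is essentially the missing trivial-intersection property, and even granting it the list of simple partitioned groups contains the Suzuki groups $\mathrm{Sz}(2^{2n+1})$ as well as $\PSL(2,p^f)$, so an additional order comparison would still be needed. So there is a genuine gap at the heart of the argument.

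Two further remarks. First, you overlook a cheaper way to get your orbit bound: the eigentable itself shows there is exactly one conjugacy class with formal codegree $q$ when $q$ is even and exactly two when $q$ is odd; since every $y\in P\setminus\{1\}$ satisfies $|C_G(y)|=q$, these elements fill at most two $G$-classes and therefore carry at most two distinct orders, with no knowledge of $|N_G(P)|$ required. Combined with nilpotency of $P$ (primes $p_1,p_2\mid q$ would give commuting elements of orders $p_1$, $p_2$, $p_1p_2$) this yields the contradiction --- but it still rests on $C\neq 1$, so the gap moves rather than closes. Second, for contrast: the paper's proof is along completely different lines; it invokes the classification of finite simple groups and eliminates each family directly from the interpolated table --- Lie type via Humphreys' description of the Steinberg character values, alternating groups via the uniqueness of the irreducible character of minimal degree $n-1$, and sporadic groups by comparing the number of distinct character degrees (a GAP computation). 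Your strategy, if completed, would be of independent interest precisely because it avoids the classification, but as written it is not a proof.
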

\begin{proof}
Assume that for $q$ non prime-power (and so $\ge 6$), there exists such a finite group $G$ (obviously non-abelian). The interpolated fusion ring $\mathcal{R}_q$ being simple, the Grothendieck ring of $\Rep(G)$ and $G$ itself must be so. So by the classification of finite simple groups, $G$ must be of Lie type, alternating or sporadic.   

Firstly, assume that $G$ is of Lie type, so defined over a finite field of characterisitc $p$. By \cite{hum}, the character values for the Steinberg representation (complex, irreducible and non-trivial) on an element $g$ equals, up to a sign, the order of a $p$-Sylow subgroup of the centralizer of $g$ if $p$ does not divide the order of $g$, and zero otherwise (in particular all integral). But it never fit with $\mathcal{R}_q$. Let us show why for $q$ even (the odd case is similar). Consider Table \ref{table:q=0(2)}. The Steinberg representation can neither be the one of degree $1$ (because it is non-trivial), nor the one of degree $q$ (because the degree is a character value and $q$ is not prime-power, whereas the order of $p$-Sylow is so). There remain the ones of degree $q \pm 1$. Assume it is of degree $q-1$ (the proof for degree $q+1$ is similar). Let $c \in \{1, \dots, \frac{q}{2}\}$ be the parameter of the Steinberg character, and consider an element $g$ with character value $-\zeta_{q+1}^{kc} -\zeta_{q+1}^{-kc}$. Its class size is $q(q-1)$, so the order of its centralizer is $q+1$, which is coprime with $q-1$, but the order of $g$ divides the order of its centralizer and $q-1$ must be a power of $p$ (by assumption), so $p$ does not divide the order of $g$. Then its character value is the order of a $p$-Sylow subgroup of the centralizer, up to a sign. It follows that $\zeta_{q+1}^{kc} +\zeta_{q+1}^{-kc} = 2\cos(\frac{2\pi kc}{q+1}) = \pm 1$, i.e. $\frac{kc}{q+1} \equiv \pm \frac{1}{3}$ or $ \pm \frac{1}{6} \mod 1$, for all $k \in \{1, \dots, \frac{q}{2}\}$. Take $k=1$ and multiply by $3$, we get that $\frac{3c}{q+1} \equiv 0$ or $\frac{1}{2} \mod 1$. But $q \ge 6$, so we can directly take $k=3$ to get $\frac{3c}{q+1} \equiv \pm \frac{1}{3}$ or $ \pm \frac{1}{6} \mod 1$, contradiction.

Secondly, assume that $G = A_n$. It is well-known that for $n \ge 7$, there is a single non-trivial irreducible representation of smallest degree $n-1$ (see \cite{gill} for more details about this folklore result). This contradicts with the type of $\mathcal{R}_q$ where the smallest non-trivial degree is not unique. So $n = 5,6$, but $A_5 \simeq \PSL(2,4)$ and $A_6 \simeq \PSL(2,9)$, both of Lie type.

Finally, assume that $G$ is sporadic. The following GAP computation
\begin{verbatim}
gap> L:=["M11","M12","M22","M23","M24","J1","J2","J3","J4","Co1","Co2","Co3",
"Fi22","Fi23","Fi24'","Suz","HS","McL","He","HN","Th","B","M","ON","Ly","Ru","Tits"];;
gap> Minimum(List(L, N -> Length(CharacterDegrees(CharacterTable(N)))));
7
\end{verbatim}
\noindent shows that the number of different character degrees is greater than $6$, whereas it is less than $6$ for $\mathcal{R}_q$, contradiction.
%But the interpolated fusion rings are of multiplicity $\le 3$, so by Theorem \ref{tenchara}, $G$ must be isomorphic to $\PSL(2,q')$ for some prime-power $q'$, but by identifying their type, we see that $[q,1]$ must be identified to $[q',1]$, contradiction.
\end{proof}

Let $\mathcal{F}$ be a fusion ring with basis $\mathcal{B} = \{b_1, \dots, b_r\}$. As mentioned above, the type of $\mathcal{F}$ is the sequence $(\FPdim(b_i))$. Let $1=d_1 < \dots < d_s$ such that the set $\{d_1, \dots, d_s\} = \{\FPdim(b_1), \dots, \FPdim(b_r)\}$. Note that $s \le r$ without equality in general because two elements in the basis can have the same $\FPdim$. Let $x_{d_i,1}, \dots, x_{d_i,m_i}$ be all the elements in the basis with $\FPdim = d_i$.  The list $[[d_1,m_1],\dots,[d_s,m_s]]$ is an alternative way to write the type of $\mathcal{F}$, and it will be called \emph{type} as well. In the rest of the paper, $x_{d,c}$ denotes the c-th basis element with $\FPdim = d$.

%\subsection{The case $q$ even}
%This subsection computes the fusion rules for the Grothendieck ring of $\Rep(\PSL(2,q))$ for every even prime-power $q$, and proves that the fusion rings interpolate to every even $q$. 
\begin{theorem} \label{thm:q=0(2)} % put the statement in introduction and just the lemmas here?
If $q$ is even, the interpolated fusion ring $\mathcal{R}_q$ has rank $q+1$, $\FPdim \ q(q^2-1)$, type $$[[1,1],[q-1,q/2],[q,1],[q+1,(q-2)/2]],$$ and fusion rules
\begin{align*}
 x_{q-1,c_1} x_{q-1,c_2} & = \delta_{c_1,c_2}x_{1,1} + \hspace*{-.8cm} \sum_{\substack{c_3 \text{ such that } \\ c_1+c_2+c_3 \neq q+1 \\ \text{ and } 2\Max(c_1,c_2,c_3)}} \hspace*{-.8cm} x_{q-1,c_3} + (1- \delta_{c_1,c_2})x_{q,1} + \sum_{c_3} x_{q+1,c_3}, \\
 x_{q-1,c_1} x_{q,1} & = \sum_{c_2} (1- \delta_{c_1,c_2})x_{q-1,c_2} + x_{q,1} + \sum_{c_2} x_{q+1,c_2}, \\
 x_{q-1,c_1} x_{q+1,c_2} & = \sum_{c_3} x_{q-1,c_3} + x_{q,1} + \sum_{c_3} x_{q+1,c_3}, \\
 x_{q,1} x_{q,1} & = x_{1,1} + \sum_c x_{q-1,c} + x_{q,1} + \sum_c x_{q+1,c}, \\
 x_{q,1} x_{q+1,c_1} & =  \sum_{c_2} x_{q-1,c_2} + x_{q,1} + \sum_{c_2} (1 + \delta_{c_1,c_2}) x_{q+1,c_2}, \\
 x_{q+1,c_1} x_{q+1,c_2} & =  \delta_{c_1,c_2}x_{1,1} + \sum_{c_3} x_{q-1,c_3}  + (1 + \delta_{c_1,c_2})x_{q,1} \hspace*{.2cm} + \hspace*{-.8cm} \sum_{\substack{c_3 \text{ such that } \\ c_1+c_2+c_3 \neq q-1 \\ \text{ and } 2\Max(c_1,c_2,c_3)}} \hspace*{-.8cm}  x_{q+1,c_3} \hspace*{.2cm} + \hspace*{-.6cm}  \sum_{\substack{c_3 \text{ such that } \\ c_1+c_2+c_3 = q-1 \\ \text{ or } 2\Max(c_1,c_2,c_3)}} \hspace*{-.8cm}  2x_{q+1,c_3}. 
\end{align*}
The other rules follow by commutativity.
\end{theorem}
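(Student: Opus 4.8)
The plan is to realize the claimed fusion data as the output of the Reconstruction Theorem (Theorem \ref{thm:TableFusion}) applied to the interpolated formal table of Theorem \ref{table:q=0(2)}, and to compute the structure constants by the Verlinde-like formula (Corollary \ref{coro:Verlinde}). The rank $q+1$, the Frobenius--Perron dimension $q(q^2-1)$, and the type are read off directly from the table: the first column lists the degrees $1, q-1, q, q+1$ with the stated multiplicities $1, q/2, 1, (q-2)/2$, and $\FPdim(\mathcal{R}_q)$ is the codegree $\mathfrak{c}_1 = \sum_i |\lambda_{i,1}|^2$ of the identity column, which one checks equals $q^3-q$. Before invoking Corollary \ref{coro:Verlinde} I would first verify that the interpolated table satisfies the hypotheses of Theorem \ref{thm:TableFusion}: the trivial character gives $M_1 = \mathrm{id}$; each row is self-dual (for $q$ even all class values are real, so $i^* = i$); and the two orthogonality relations of Theorem \ref{thm:SchurOrtho} hold formally. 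With the inner product of Corollary \ref{coro:inner} built from the formal codegrees, these are exactly the hypotheses that make the reconstruction applicable.

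First I would compute the formal codegrees from Definition \ref{def:codeg}. Column by column they come out to $\mathfrak{c} = q(q^2-1),\, q,\, q-1,\, q+1$, i.e. exactly $q(q^2-1)$ divided by the respective class sizes $1,\, q^2-1,\, q(q+1),\, q(q-1)$, matching the group-theoretic identity $\mathfrak{c} = |C_G(g)|$ of Remark \ref{rem:orb-stab} in the prime-power case. Establishing the orthonormality $\langle \lambda_i, \lambda_{i'}\rangle = \delta_{i,i'}$ then reduces, after isolating the contributions of the identity column and the class of size $q^2-1$, to evaluating sums over the class parameter $k$ of products $(\zeta_{q+1}^{kc}+\zeta_{q+1}^{-kc})(\zeta_{q+1}^{kc'}+\zeta_{q+1}^{-kc'})$ and their $\zeta_{q-1}$-analogues. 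These are cyclic-group character sums and collapse to Kronecker deltas by the usual geometric-series identity, once one accounts for the \emph{folding} of the index range (the parameter $c$ labels the pair $\{c,-c\}$, so $k$ runs only over $\{1,\dots,q/2\}$ rather than a full residue system).

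The heart of the proof is the evaluation of the Verlinde sums $N_{i,j}^k = \sum_s \frac{1}{\mathfrak{c}_s}\lambda_{i,s}\lambda_{j,s}\overline{\lambda_{k,s}}$ for each of the six product types. In every case the contributions of the identity column and of the $(q^2-1)$-column produce the integer ``bulk'' of the coefficient, while the contributions of the $q(q\pm 1)$-columns are symmetrized triple sums $\sum_k (\zeta^{kc_1}+\zeta^{-kc_1})(\zeta^{kc_2}+\zeta^{-kc_2})(\zeta^{kc_3}+\zeta^{-kc_3})$ over the relevant cyclic index. Expanding into eight exponentials, each term $\zeta^{k(\pm c_1 \pm c_2 \pm c_3)}$ summed over $k$ is nonzero precisely when $\pm c_1 \pm c_2 \pm c_3 \equiv 0$ modulo $q+1$ (resp. $q-1$); this is exactly where the statement's conditions $c_1+c_2+c_3 \neq q+1$ and $2\Max(c_1,c_2,c_3) \neq c_1+c_2+c_3$ originate. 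The doubled coefficients $2x_{q+1,c_3}$ in the last rule arise when two of these resonance conditions are met at once, i.e. at the boundary cases forced by the folded range.

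The step I expect to be the main obstacle is precisely this bookkeeping of the folded character sums: because $k$ ranges over a half-period and the characters are the symmetric combinations $\zeta^{kc}+\zeta^{-kc}$, the naive orthogonality over a full cyclic group must be corrected at the fixed points of $c \mapsto -c$ and at the endpoints of the range, and it is these corrections that generate both the $\Max$ conditions and the multiplicity-two entries. A clean way to organize the argument is to lift each sum to a full residue system modulo $q\pm1$, apply exact cyclic orthogonality there, then divide by the two-to-one folding while tracking the self-paired indices separately. Throughout, I would confirm that every resulting $N_{i,j}^k$ is a nonnegative integer; this is automatic from the closed forms obtained, and it is exactly the remaining hypothesis of Theorem \ref{thm:TableFusion} needed to conclude that $(N_{i,j}^k)$ are the structure constants of a commutative fusion ring with the prescribed eigentable. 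As an independent check, specializing $q$ to prime powers must recover the genuine fusion rules of the Grothendieck ring of $\Rep(\PSL(2,q))$, which holds on infinitely many $q$ and corroborates the interpolated formulas.
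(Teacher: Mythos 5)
Your proposal follows essentially the same route as the paper: the paper likewise reads the rank, $\FPdim$ and type off the interpolated table of Theorem \ref{table:q=0(2)}, computes each structure constant via the Verlinde-like formula of Corollary \ref{coro:Verlinde} by unfolding the half-range sums $\sum_{k=1}^{q/2}$ to full residue systems modulo $q\pm 1$ and applying cyclic orthogonality (Lemmas \ref{lem:fullcomput} to \ref{lem:last}), and then invokes the Reconstruction Theorem \ref{thm:TableFusion} with the orthogonality relations of Proposition \ref{prop:ortho} and the inner product of Corollary \ref{coro:inner}. Your identification of the folding bookkeeping as the source of the $\Max$ conditions and the multiplicity-two coefficients, and your prime-power consistency check, both match the paper's treatment.
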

\begin{proof}
The fusion rules follow from Table \ref{table:q=0(2)} and Lemmas \ref{lem:fullcomput} to \ref{lem:last}, together with the Verlinde-like formula in Corollary \ref{coro:Verlinde}.
\end{proof}

We deduce the following unexpected combinatorial result: 
\begin{corollary} Consider an even integer $q \ge 2$. For $1 \le c_i \le q/2$, the following map is invariant by permutation. $$ (c_1,c_2,c_3,c_4) \mapsto \delta_{c_1,c_2}\delta_{c_3,c_4} - \# \{ |q+1-2|x||, \ x \in \{c_1+c_2, c_3+c_4, c_1-c_2, c_3-c_4\} \}$$ 
\end{corollary}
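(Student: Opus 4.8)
The plan is to recognize the displayed map as a constant shift of the pairing $\langle x_{q-1,c_1} x_{q-1,c_2},\, x_{q-1,c_3} x_{q-1,c_4}\rangle$ inside $\mathcal{R}_q$, and to exploit that such a pairing is automatically invariant under the full symmetric group $S_4$ acting on the four indices. Everything will then reduce to an explicit evaluation of this pairing.

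First I would record the abstract symmetry. In any commutative fusion ring one has $\langle b_i b_j, b_k b_l\rangle = \sum_m N_{i,j}^m N_{k,l}^m$, using the orthonormal basis $\mathcal{B}$. For $\mathcal{R}_q$ with $q$ even every basis element is self-adjoint (the eigentable of Theorem \ref{table:q=0(2)} is real), so Frobenius reciprocity makes $N_{i,j}^k$ totally symmetric in $(i,j,k)$, and self-duality gives $\langle b_i b_j, b_k b_l\rangle = \langle b_i b_j b_k b_l,\, x_{1,1}\rangle$, i.e.\ the coefficient of the unit in the fourfold product. By commutativity and associativity this coefficient is independent of the order of the factors, so $\langle b_i b_j, b_k b_l\rangle$ is $S_4$-invariant.

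Next I would compute this pairing explicitly for $b_i = x_{q-1,c_i}$ from the first fusion rule of Theorem \ref{thm:q=0(2)}, grouping $\sum_m N^m N^m$ by the type of $m$. The unit contributes exactly $\delta_{c_1,c_2}\delta_{c_3,c_4}$; the object $x_{q,1}$ contributes $(1-\delta_{c_1,c_2})(1-\delta_{c_3,c_4})$; each $x_{q+1,c}$ contributes $1$, for the symmetric total $\tfrac{q-2}{2}$; and the $(q-1)$-family contributes the overlap $\Sigma := \sum_{c=1}^{q/2} t(c_1,c_2,c)\, t(c_3,c_4,c)$, where $t(a,b,c)$ is the indicator of the two conditions $a+b+c\neq q+1$ and $2\Max(a,b,c)\neq a+b+c$.

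The main work is to evaluate $\Sigma$. By inclusion--exclusion $\Sigma = \tfrac{q}{2} - \#\{c\in\{1,\dots,q/2\} : t(c_1,c_2,c)=0 \text{ or } t(c_3,c_4,c)=0\}$. I would check that $t(a,b,c)=0$ precisely when $c$ is the reduction of $a+b$ or of $a-b$ modulo $q+1$ under the folding $c\sim -c\sim c+(q+1)$, so the excluded parameters are exactly the nonzero folds of the four numbers $c_1\pm c_2,\ c_3\pm c_4$. Since these numbers have absolute value $\le q$, the map $x\mapsto |q+1-2|x|| = (q+1)-2\,\mathrm{fold}(x)$ is injective on folds, so distinct folds correspond to distinct values $|q+1-2|x||$. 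The only subtlety is the fold $0$, which occurs iff $c_1=c_2$ or $c_3=c_4$ and is counted by $\#\{|q+1-2|x||\}$ but is not a valid parameter; accounting for this discrepancy produces exactly the $\delta$-terms. Assembling everything yields $\langle x_{q-1,c_1}x_{q-1,c_2}, x_{q-1,c_3}x_{q-1,c_4}\rangle = \delta_{c_1,c_2}\delta_{c_3,c_4} - \#\{|q+1-2|x||\} + q$. As the left side is $S_4$-invariant and $q$ is constant, the displayed map is invariant under every permutation of $(c_1,c_2,c_3,c_4)$. I expect the main obstacle to be precisely this last identification: matching the boundary/degenerate conditions defining $t$ with the folding arithmetic modulo $q+1$ and controlling the fold-$0$ exception, so that the nonsymmetric-looking pieces recombine and the additive constant comes out exactly $q$.
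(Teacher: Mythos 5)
Your proof is correct, and it rests on the same engine as the paper's proof --- associativity/commutativity of $\mathcal{R}_q$ together with the explicit fusion rules of Theorem \ref{thm:q=0(2)} --- but the packaging is genuinely different and in places more complete. The paper evaluates the two associations in $\langle (x_{q-1,c_1} x_{q-1,c_2}) x_{q-1,c_3}, x_{q-1,c_4} \rangle = \langle x_{q-1,c_1} (x_{q-1,c_2} x_{q-1,c_3}), x_{q-1,c_4} \rangle$ separately via the fusion rules and declares that equating the outcomes gives the statement ``after straightforward reformulations''; strictly speaking, that single identity only yields invariance under a $3$-cycle, and full $S_4$-invariance still requires the manifest symmetries of the displayed expression (swapping within each pair and swapping the two pairs). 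You instead interpret $\langle x_{q-1,c_1}x_{q-1,c_2},\, x_{q-1,c_3}x_{q-1,c_4}\rangle$ as the coefficient of the unit in the fourfold product --- legitimate since every basis element is self-dual for $q$ even --- which is $S_4$-invariant on the nose, so a single evaluation suffices. Moreover, you actually carry out the reformulation the paper leaves implicit: the type-by-type contribution count, the translation of the vanishing conditions ($c_1+c_2+c_3=q+1$ or $2\Max(c_1,c_2,c_3)$) into folding modulo $q+1$, the injectivity of $x \mapsto |q+1-2|x||$ on folds, and the fold-$0$ bookkeeping that creates the $\delta$-terms; your final identity (pairing $= \delta_{c_1,c_2}\delta_{c_3,c_4} - \#\{\cdot\} + q$) is exactly right, as the cross term $(1-\delta_{c_1,c_2})(1-\delta_{c_3,c_4})$ and the correction $\delta_{c_1,c_2}+\delta_{c_3,c_4}-\delta_{c_1,c_2}\delta_{c_3,c_4}$ recombine with $\tfrac{q-2}{2}+\tfrac{q}{2}$ to the constant $q$. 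Net comparison: your route buys immediate full $S_4$-symmetry, one computation instead of two, and a principle that generalizes (the unit coefficient of an $n$-fold product is $S_n$-invariant); the paper's route is shorter on the page and never needs to invoke self-duality explicitly.
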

\begin{proof}
The following equality holds by associativity $$\langle (x_{q-1,c_1} x_{q-1,c_2}) x_{q-1,c_3},  x_{q-1,c_4} \rangle = \langle x_{q-1,c_1} (x_{q-1,c_2} x_{q-1,c_3}), x_{q-1,c_4} \rangle,$$
now, by computing the LHS and RHS independently and directly by the fusion rules of $\mathcal{R}_q$, the equality of the outcome provides the result after straighforward reformulations.
\end{proof}

\begin{remark}
Other such combinatorial results can be obtained by considering other associativity equalities.
\end{remark}

The smallest even non prime-power is $q=6$, and $\mathcal{R}_{6}$ has rank $7$, $\FPdim \ 210$, type $$[[1,1],[5,3],[6,1],[7,2]],$$  and fusion matrices (respectively corresponding to the basis elements $x_{1,1}$, $x_{5,1}$, $x_{5,2}$, $x_{5,3}$, $x_{6,1}$, $x_{7,1}$, $x_{7,2}$):
$$
\begin{smallmatrix} 1&0&0&0&0&0&0 \\ 0&1&0&0&0&0&0 \\ 0&0&1&0&0&0&0 \\ 0&0&0&1&0&0&0 \\ 0&0&0&0&1&0&0 \\ 0&0&0&0&0&1&0 \\ 0&0&0&0&0&0&1 \end{smallmatrix} , \ \begin{smallmatrix} 0&1&0&0&0&0&0 \\ 1&1&0&1&0&1&1 \\ 0&0&1&0&1&1&1 \\ 0&1&0&0&1&1&1 \\ 0&0&1&1&1&1&1 \\ 0&1&1&1&1&1&1 \\ 0&1&1&1&1&1&1 \end{smallmatrix} , \ \begin{smallmatrix} 0&0&1&0&0&0&0 \\ 0&0&1&0&1&1&1 \\ 1&1&1&0&0&1&1 \\ 0&0&0&1&1&1&1 \\ 0&1&0&1&1&1&1 \\ 0&1&1&1&1&1&1 \\ 0&1&1&1&1&1&1 \end{smallmatrix} , \ \begin{smallmatrix} 0&0&0&1&0&0&0 \\ 0&1&0&0&1&1&1 \\ 0&0&0&1&1&1&1 \\ 1&0&1&1&0&1&1 \\ 0&1&1&0&1&1&1 \\ 0&1&1&1&1&1&1 \\ 0&1&1&1&1&1&1 \end{smallmatrix} , \ \begin{smallmatrix} 0&0&0&0&1&0&0 \\ 0&0&1&1&1&1&1 \\ 0&1&0&1&1&1&1 \\ 0&1&1&0&1&1&1 \\ 1&1&1&1&1&1&1 \\ 0&1&1&1&1&2&1 \\ 0&1&1&1&1&1&2 \end{smallmatrix} , \ \begin{smallmatrix} 0&0&0&0&0&1&0 \\ 0&1&1&1&1&1&1 \\ 0&1&1&1&1&1&1 \\ 0&1&1&1&1&1&1 \\ 0&1&1&1&1&2&1 \\ 1&1&1&1&2&1&2 \\ 0&1&1&1&1&2&2 \end{smallmatrix} , \ \begin{smallmatrix} 0&0&0&0&0&0&1 \\ 0&1&1&1&1&1&1 \\ 0&1&1&1&1&1&1 \\ 0&1&1&1&1&1&1 \\ 0&1&1&1&1&1&2 \\ 0&1&1&1&1&2&2 \\ 1&1&1&1&2&2&1 \end{smallmatrix} 
$$

%\subsection{The case  $q \equiv -1 \mod 4$} This subsection computes the fusion rules for the Grothendieck ring of $\Rep(\PSL(2,q))$ for every prime-power $q \equiv -1 \mod 4$, and proves that the fusion rings interpolate to every such $q$ non prime-power. 
\begin{theorem} \label{thm:q=-1(4)}
If $q \equiv -1 \mod 4$, the interpolated fusion ring $\mathcal{R}_q$ has rank $(q+5)/2$, $\FPdim \ q(q^2-1)/2$, type $$[[1,1],[(q-1)/2,2],[q-1,(q-3)/4],[q,1],[q+1,(q-3)/4]],$$ and fusion rules
\begin{align*}
x_{\frac{q-1}{2},c_1} x_{\frac{q-1}{2},c_2} & = (1-\delta_{c_1,c_2})x_{1,1} + \delta_{c_1,c_2}\sum_{c_3}(1-\delta_{c_1,c_3}) x_{\frac{q-1}{2},c_3} + \delta_{c_1,c_2}\sum_{c_3}x_{q-1,c_3} + (1 - \delta_{c_1,c_2})\sum_{c_3}x_{q+1,c_3}, \\
x_{\frac{q-1}{2},c_1} x_{q-1,c_2} & = \sum_{c_3}(1-\delta_{c_1,c_3})x_{\frac{q-1}{2},c_3} + \sum_{c_3}(1-\delta_{c_2+c_3,\frac{q+1}{4}})x_{q-1,c_3} + x_{q,1} + \sum_{c_3} x_{q+1,c_3}, \\
x_{\frac{q-1}{2},c_1} x_{q,1} & = \sum_{c_2} x_{q-1,c_2} + x_{q,1} + \sum_{c_2} x_{q+1,c_2}, \\
x_{\frac{q-1}{2},c_1} x_{q+1,c_2} & = \sum_{c_3} \delta_{c_1,c_3}x_{\frac{q-1}{2},c_3} + \sum_{c_3}x_{q-1,c_3} + x_{q,1} + \sum_{c_3} x_{q+1,c_3}, \\
x_{q-1,c_1} x_{q-1,c_2} & = \delta_{c_1,c_2}x_{1,1} + (1-\delta_{c_1+c_2,\frac{q+1}{4}})\sum_{c_3}x_{\frac{q-1}{2},c_3} + \hspace{-.9cm} \sum_{\substack{c_3 \text{ such that } \\ c_1+c_2+c_3 = \frac{q+1}{2} \\ \text{ or } 2\Max(c_1,c_2,c_3)}} \hspace{-.8cm} x_{q-1,c_3} +  2 \hspace{-.9cm}  \sum_{\substack{c_3 \text{ such that } \\ c_1+c_2+c_3 \neq \frac{q+1}{2} \\ \text{ and } 2\Max(c_1,c_2,c_3)}} \hspace{-1cm} x_{q-1,c_3} +  (2-\delta_{c_1,c_2})x_{q,1} + 2\sum_{c_3} x_{q+1,c_3}, \\
x_{q-1,c_1} x_{q,1} & = \sum_{c_2} x_{\frac{q-1}{2},c_2} + \sum_{c_2} (2-\delta_{c_1,c_2})x_{q-1,c_2} + 2x_{q,1} + 2 \sum_{c_2} x_{q+1,c_2}, \\
x_{q-1,c_1} x_{q+1,c_2} & = \sum_{c_3} x_{\frac{q-1}{2},c_3}  + 2\sum_{c_3} x_{q-1,c_3} + 2x_{q,1} + 2\sum_{c_3} x_{q+1,c_3}, \\
x_{q,1} x_{q,1} & = x_{1,1} + \sum_{c_1} x_{\frac{q-1}{2},c_1} + 2\sum_{c_1} x_{q-1,c_1} + 2x_{q,1} + 2\sum_{c_1} x_{q+1,c_1}, \\
x_{q,1} x_{q+1,c_1} & = \sum_{c_2} x_{\frac{q-1}{2},c_2} + 2\sum_{c_2} x_{q-1,c_2} + 2x_{q,1} + \sum_{c_2}(2+\delta_{c_1,c_2})x_{q+1,c_2}, \\
x_{q+1,c_1} x_{q+1,c_2} & = \delta_{c_1,c_2}x_{1,1} + \sum_{c_3} x_{\frac{q-1}{2},c_3} + 2\sum_{c_3} x_{q-1,c_3} + (2+\delta_{c_1,c_2})x_{q,1} + \hspace{.4cm}  2 \hspace{-.9cm}  \sum_{\substack{c_3 \text{ such that } \\ c_1+c_2+c_3 \neq \frac{q-1}{2} \\ \text{ and } 2\Max(c_1,c_2,c_3)}} \hspace{-1cm} x_{q+1,c_3} + \hspace{.4cm}  3 \hspace{-.9cm}\sum_{\substack{c_3 \text{ such that } \\ c_1+c_2+c_3 = \frac{q-1}{2} \\ \text{ or } 2\Max(c_1,c_2,c_3)}} \hspace{-.8cm} x_{q+1,c_3}.
\end{align*}
\end{theorem}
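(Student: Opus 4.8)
The plan is to follow the same template as the even case (Theorem \ref{thm:q=0(2)}): read the interpolated table of Theorem \ref{table:q=-1(4)} as a formal eigentable $(\lambda_{i,j})$, feed it into the Verlinde-like formula of Corollary \ref{coro:Verlinde} to define candidate structure constants $N_{i,j}^k$, show these are nonnegative integers matching the claimed rules, and then invoke the Reconstruction Theorem \ref{thm:TableFusion} (with the inner product of Corollary \ref{coro:inner}) to conclude that they are the fusion rules of a commutative fusion ring having exactly this table as its eigentable. The rank, $\FPdim$ and type are read off directly: the five row-families of degrees $1,(q-1)/2,q-1,q,q+1$ with multiplicities $1,2,(q-3)/4,1,(q-3)/4$ give rank $(q+5)/2$, and $\FPdim = \mathfrak{c}_1$ equals the sum of squares of the degrees, namely $q(q^2-1)/2$.

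First I would compute the formal codegrees $\mathfrak{c}_s$ of Definition \ref{def:codeg} from the class-size row via the orbit-stabilizer heuristic of Remark \ref{rem:orb-stab}, i.e. $\mathfrak{c} = \FPdim/(\text{class size})$: these are $q(q^2-1)/2$ for the identity column, $q$ for each split class, $(q-1)/2$ and $(q+1)/2$ for the two generic families, and $q+1$ for the exceptional class $\{(q+1)/4\}$, with the sanity check $\sum_s 1/\mathfrak{c}_s = 1$ of Corollary \ref{coro:egy}. With these in hand, every $N_{i,j}^k$ becomes a sum over columns of products of entries divided by $\mathfrak{c}_s$, and I would organize the verification by the family-type of the triple $(i,j,k)$. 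The bulk of the entries are the root-of-unity values $\zeta_{q-1}^{2kc}+\zeta_{q-1}^{-2kc}$ and $\zeta_{q+1}^{2kc}+\zeta_{q+1}^{-2kc}$, so the column sums reduce to the same geometric evaluations already isolated in \S\ref{sec:cal} for the even case (Lemmas \ref{lem:fullcomput} to \ref{lem:last}): expanding a triple product of cosine-type entries into eight exponentials $\zeta^{2k(\pm c_1\pm c_2\pm c_3)}$ and summing over $k$ contributes a block of size $\approx(q\pm1)/2$ exactly when $\pm c_1\pm c_2\pm c_3\equiv 0$, which is precisely what produces the piecewise conditions $c_1+c_2+c_3=(q\pm1)/2$ and the degenerate-triangle condition $2\Max(c_1,c_2,c_3)=c_1+c_2+c_3$ together with the factors $2$ and $3$.

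The genuinely new ingredient, and where I expect the main difficulty, is the pair of split characters of degree $(q-1)/2$, whose entries on the split classes are the Gauss-sum values $\frac{-1+i(-1)^{k+c}\sqrt{q}}{2}$. Any $N_{i,j}^k$ in which one or more of $i,j,k$ is a split character picks up contributions involving $\sqrt{q}$ and the imaginary unit through the codegree-$q$ column, and the point is to show these irrational and imaginary parts cancel, leaving the nonnegative integers displayed (for instance the $(1-\delta_{c_1,c_2})x_{1,1}$ and $\delta_{c_1,c_2}\sum_{c_3}(1-\delta_{c_1,c_3})x_{\frac{q-1}{2},c_3}$ pattern in the first rule, and the $\delta_{c_1,c_3}$ and $(1-\delta_{c_2+c_3,\frac{q+1}{4}})$ conditions in the mixed rules). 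Concretely I would expand $\frac{1}{q}\sum_{k\in\{1,2\}}\left(\tfrac{-1+i(-1)^{k+c_1}\sqrt{q}}{2}\right)\left(\tfrac{-1+i(-1)^{k+c_2}\sqrt{q}}{2}\right)\overline{(\cdots)}$, use that $(-1)^{k+c}$ flips sign as $k$ runs over $\{1,2\}$ to kill the terms linear in $i\sqrt{q}$, and combine the surviving $\pm q$ and constant terms with the contributions of the exceptional class $\{(q+1)/4\}$, whose entries $-2(-1)^c$ and attendant signs interact with the parities $(-1)^{k+c}$. Isolating these Gauss-sum cancellations and matching the residual parities to the Kronecker-delta bookkeeping in the statement is the crux; the rest is the same root-of-unity arithmetic as the even case.

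Finally, once all $N_{i,j}^k$ are confirmed to be nonnegative integers, the fusion-ring axioms are automatic: orthonormality of the interpolated rows (established as trigonometric identities in Proposition \ref{prop:ortho}, matching the second Schur relation of Theorem \ref{thm:SchurOrtho}) supplies the hypothesis $\langle\lambda_i,\lambda_j\rangle=\delta_{i,j}$ of Theorem \ref{thm:TableFusion}, the trivial character gives the unit $M_1=\mathrm{id}$, complex conjugation of rows furnishes the duality $M_i^*=M_{i^*}$, and the theorem then delivers associativity, Frobenius reciprocity, neutrality and duality, with $(\lambda_{i,j})$ as eigentable. Integrality is visible from the integer degrees in the first column, while simplicity for $q\ge 4$ and non-group-likeness for non-prime-power $q$ are already recorded in Theorem \ref{thm:general} and Lemma \ref{lem:NoOverlap}.
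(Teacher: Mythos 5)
Your proposal follows essentially the same route as the paper: its proof likewise reads the interpolated table of Theorem \ref{table:q=-1(4)} as an eigentable, computes every $\langle x_{d_1,c_1} x_{d_2,c_2}, x_{d_3,c_3}\rangle$ via the Verlinde-like formula of Corollary \ref{coro:Verlinde} with codegrees from the class sizes (these are exactly Lemmas \ref{lem:first2} to \ref{lem:last2}, including the Gauss-sum cancellations for the two degree-$\frac{q-1}{2}$ characters that you correctly identify as the new ingredient), and then invokes the Reconstruction Theorem \ref{thm:TableFusion} with the inner product of Corollary \ref{coro:inner} and the orthogonality relations. The only nitpick is that the orthogonality relations for this congruence class are Proposition \ref{prop:ortho2} rather than Proposition \ref{prop:ortho} (the even case), but since the former simply defers to the latter's method (see Remark \ref{rk:interational}) this is immaterial.
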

\begin{proof}
The rules are given by Table \ref{table:q=-1(4)} and Lemmas \ref{lem:first2} to \ref{lem:last2}.
\end{proof}

The smallest non prime-power $q \equiv -1 \mod 4$ is $q=15$, and $\mathcal{R}_{15}$ has rank $10$, $\FPdim \ 1680$, type $$[[1,1],[7,2],[14,3],[15,1],[16,3]],$$ and fusion matrices:
$$
\begin{smallmatrix} 1&0&0&0&0&0&0&0&0&0 \\ 0&1&0&0&0&0&0&0&0&0 \\ 0&0&1&0&0&0&0&0&0&0 \\ 0&0&0&1&0&0&0&0&0&0 \\ 0&0&0&0&1&0&0&0&0&0 \\ 0&0&0&0&0&1&0&0&0&0 \\ 0&0&0&0&0&0&1&0&0&0 \\ 0&0&0&0&0&0&0&1&0&0 \\ 0&0&0&0&0&0&0&0&1&0 \\ 0&0&0&0&0&0&0&0&0&1 \end{smallmatrix} , \ \begin{smallmatrix} 0&1&0&0&0&0&0&0&0&0 \\ 0&0&1&1&1&1&0&0&0&0 \\ 1&0&0&0&0&0&0&1&1&1 \\ 0&0&1&1&1&0&1&1&1&1 \\ 0&0&1&1&0&1&1&1&1&1 \\ 0&0&1&0&1&1&1&1&1&1 \\ 0&0&0&1&1&1&1&1&1&1 \\ 0&1&0&1&1&1&1&1&1&1 \\ 0&1&0&1&1&1&1&1&1&1 \\ 0&1&0&1&1&1&1&1&1&1 \end{smallmatrix} , \ \begin{smallmatrix} 0&0&1&0&0&0&0&0&0&0 \\ 1&0&0&0&0&0&0&1&1&1 \\ 0&1&0&1&1&1&0&0&0&0 \\ 0&1&0&1&1&0&1&1&1&1 \\ 0&1&0&1&0&1&1&1&1&1 \\ 0&1&0&0&1&1&1&1&1&1 \\ 0&0&0&1&1&1&1&1&1&1 \\ 0&0&1&1&1&1&1&1&1&1 \\ 0&0&1&1&1&1&1&1&1&1 \\ 0&0&1&1&1&1&1&1&1&1 \end{smallmatrix} , \ \begin{smallmatrix} 0&0&0&1&0&0&0&0&0&0 \\ 0&0&1&1&1&0&1&1&1&1 \\ 0&1&0&1&1&0&1&1&1&1 \\ 1&1&1&2&1&2&1&2&2&2 \\ 0&1&1&1&2&1&2&2&2&2 \\ 0&0&0&2&1&2&2&2&2&2 \\ 0&1&1&1&2&2&2&2&2&2 \\ 0&1&1&2&2&2&2&2&2&2 \\ 0&1&1&2&2&2&2&2&2&2 \\ 0&1&1&2&2&2&2&2&2&2 \end{smallmatrix} , \ \begin{smallmatrix} 0&0&0&0&1&0&0&0&0&0 \\ 0&0&1&1&0&1&1&1&1&1 \\ 0&1&0&1&0&1&1&1&1&1 \\ 0&1&1&1&2&1&2&2&2&2 \\ 1&0&0&2&2&2&1&2&2&2 \\ 0&1&1&1&2&1&2&2&2&2 \\ 0&1&1&2&1&2&2&2&2&2 \\ 0&1&1&2&2&2&2&2&2&2 \\ 0&1&1&2&2&2&2&2&2&2 \\ 0&1&1&2&2&2&2&2&2&2 \end{smallmatrix},$$
\vspace*{.1cm}
$$\begin{smallmatrix} 0&0&0&0&0&1&0&0&0&0 \\ 0&0&1&0&1&1&1&1&1&1 \\ 0&1&0&0&1&1&1&1&1&1 \\ 0&0&0&2&1&2&2&2&2&2 \\ 0&1&1&1&2&1&2&2&2&2 \\ 1&1&1&2&1&2&1&2&2&2 \\ 0&1&1&2&2&1&2&2&2&2 \\ 0&1&1&2&2&2&2&2&2&2 \\ 0&1&1&2&2&2&2&2&2&2 \\ 0&1&1&2&2&2&2&2&2&2 \end{smallmatrix} , \ \begin{smallmatrix} 0&0&0&0&0&0&1&0&0&0 \\ 0&0&0&1&1&1&1&1&1&1 \\ 0&0&0&1&1&1&1&1&1&1 \\ 0&1&1&1&2&2&2&2&2&2 \\ 0&1&1&2&1&2&2&2&2&2 \\ 0&1&1&2&2&1&2&2&2&2 \\ 1&1&1&2&2&2&2&2&2&2 \\ 0&1&1&2&2&2&2&3&2&2 \\ 0&1&1&2&2&2&2&2&3&2 \\ 0&1&1&2&2&2&2&2&2&3 \end{smallmatrix} , \ \begin{smallmatrix} 0&0&0&0&0&0&0&1&0&0 \\ 0&1&0&1&1&1&1&1&1&1 \\ 0&0&1&1&1&1&1&1&1&1 \\ 0&1&1&2&2&2&2&2&2&2 \\ 0&1&1&2&2&2&2&2&2&2 \\ 0&1&1&2&2&2&2&2&2&2 \\ 0&1&1&2&2&2&2&3&2&2 \\ 1&1&1&2&2&2&3&2&3&2 \\ 0&1&1&2&2&2&2&3&2&3 \\ 0&1&1&2&2&2&2&2&3&3 \end{smallmatrix} , \ \begin{smallmatrix} 0&0&0&0&0&0&0&0&1&0 \\ 0&1&0&1&1&1&1&1&1&1 \\ 0&0&1&1&1&1&1&1&1&1 \\ 0&1&1&2&2&2&2&2&2&2 \\ 0&1&1&2&2&2&2&2&2&2 \\ 0&1&1&2&2&2&2&2&2&2 \\ 0&1&1&2&2&2&2&2&3&2 \\ 0&1&1&2&2&2&2&3&2&3 \\ 1&1&1&2&2&2&3&2&2&3 \\ 0&1&1&2&2&2&2&3&3&2 \end{smallmatrix} , \ \begin{smallmatrix} 0&0&0&0&0&0&0&0&0&1 \\ 0&1&0&1&1&1&1&1&1&1 \\ 0&0&1&1&1&1&1&1&1&1 \\ 0&1&1&2&2&2&2&2&2&2 \\ 0&1&1&2&2&2&2&2&2&2 \\ 0&1&1&2&2&2&2&2&2&2 \\ 0&1&1&2&2&2&2&2&2&3 \\ 0&1&1&2&2&2&2&2&3&3 \\ 0&1&1&2&2&2&2&3&3&2 \\ 1&1&1&2&2&2&3&3&2&2 \end{smallmatrix} \ 
$$

%\subsection{The case  $q \equiv 1 \mod 4$} This subsection computes the fusion rules for the Grothendieck ring of $\Rep(\PSL(2,q))$ for every prime-power $q \equiv 1 \mod 4$, and proves that the fusion rings interpolate to every such $q$ non prime-power. 
\begin{theorem} \label{thm:q=1(4)} %%%%
If $q \equiv 1 \mod 4$, the interpolated fusion ring $\mathcal{R}_q$ has rank $(q+5)/2$, $\FPdim \ q(q^2-1)/2$, type $$[[1,1],[(q+1)/2,2],[q-1,(q-1)/4],[q,1],[q+1,(q-5)/4]],$$ and fusion rules
\begin{align*}
x_{\frac{q+1}{2},c_1} x_{\frac{q+1}{2},c_2} & = \delta_{c_1,c_2}x_{1,1} + \delta_{c_1,c_2} \sum_{c_3} \delta_{c_1,c_3}x_{\frac{q+1}{2},c_3} + (1-\delta_{c_1,c_2})\sum_{c_3} x_{q-1,c_3} + x_{q,1} + \delta_{c_1,c_2} \sum_{c_3} x_{q+1,c_3}, \\
x_{\frac{q+1}{2},c_1} x_{q-1,c_2} & = \sum_{c_3} (1-\delta_{c_1,c_3}) x_{\frac{q+1}{2},c_3} + \sum_{c_3}x_{q-1,c_3} +  x_{q,1} + \sum_{c_3} x_{q+1,c_3}, \\
x_{\frac{q+1}{2},c_1} x_{q,1} & = \sum_{c_2} x_{\frac{q+1}{2},c_2} + \sum_{c_2}x_{q-1,c_2} + x_{q,1} + \sum_{c_2} x_{q+1,c_2}, \\
x_{\frac{q+1}{2},c_1} x_{q+1,c_2} & = \sum_{c_3} \delta_{c_1,c_3} x_{\frac{q+1}{2},c_3} + \sum_{c_3}x_{q-1,c_3} +  x_{q,1} +  \sum_{c_3}(1+ \delta_{c_2+c_3,\frac{q-1}{4}})x_{q+1,c_3}, \\
x_{q-1,c_1} x_{q-1,c_2} & = \delta_{c_1,c_2}x_{1,1} + \sum_{c_3} x_{\frac{q+1}{2},c_3}  +  \hspace{-.7cm} \sum_{\substack{c_3 \text{ such that } \\ c_1+c_2+c_3 = \frac{q+1}{2} \\ \text{ or } 2\Max(c_1,c_2,c_3)}} \hspace{-.8cm} x_{q-1,c_3} +  2 \hspace{-.9cm}  \sum_{\substack{c_3 \text{ such that } \\ c_1+c_2+c_3 \neq \frac{q+1}{2} \\ \text{ and } 2\Max(c_1,c_2,c_3)}} \hspace{-1cm} x_{q-1,c_3} + (2-\delta_{c_1,c_2})x_{q,1} +   2\sum_{c_3} x_{q+1,c_3}, \\
x_{q-1,c_1} x_{q,1} & = \sum_{c_2} x_{\frac{q+1}{2},c_2} + \sum_{c_2} (2-\delta_{c_1,c_2})x_{q-1,c_2} + 2x_{q,1} + 2 \sum_{c_2}x_{q+1,c_2}, \\
x_{q-1,c_1} x_{q+1,c_2} & = \sum_{c_3} x_{\frac{q+1}{2},c_3} + 2\sum_{c_3} x_{q-1,c_3} + 2x_{q,1} + 2 \sum_{c_3}x_{q+1,c_3}, \\
x_{q,1} x_{q,1} & = x_{1,1} +  \sum_{c} x_{\frac{q+1}{2},c} + 2\sum_{c} x_{q-1,c} + 2 x_{q,1} + 2 \sum_{c}x_{q+1,c}, \\
x_{q,1} x_{q+1,c_1} & = \sum_{c_2} x_{\frac{q+1}{2},c_2} + 2\sum_{c_2} x_{q-1,c_2} + 2 x_{q,1} + \sum_{c_2}(2+\delta_{c_1,c_2}) x_{q+1,c_2}, \\
x_{q+1,c_1} x_{q+1,c_2} & = \delta_{c_1,c_2}x_{1,1} + (1+ \delta_{c_1+c_2,\frac{q-1}{4}})\sum_{c_3}x_{\frac{q+1}{2},c_3} + 2\sum_{c_3} x_{q-1,c_3} + (2+\delta_{c_1,c_2})x_{q,1} + 3 \hspace{-.9cm} \sum_{\substack{c_3 \text{ such that } \\ c_1+c_2+c_3 = \frac{q-1}{2} \\ \text{ or } 2\Max(c_1,c_2,c_3)}} \hspace{-.8cm} x_{q+1,c_3} +  2 \hspace{-.9cm}  \sum_{\substack{c_3 \text{ such that } \\ c_1+c_2+c_3 \neq \frac{q-1}{2} \\ \text{ and } 2\Max(c_1,c_2,c_3)}} \hspace{-1cm} x_{q+1,c_3}. 
\end{align*}
\end{theorem}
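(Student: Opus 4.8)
The plan is to proceed exactly as for the two previous cases and reduce everything to the Verlinde-like formula of Corollary~\ref{coro:Verlinde} applied to the interpolated table of Theorem~\ref{table:q=1(4)}. First I would record the formal codegrees. By Remark~\ref{rem:orb-stab} (reading the interpolated ``class sizes'' as formal class sizes) together with $\FPdim(\mathcal{R}_q)=q(q^2-1)/2$, the weight $\mathfrak{c}_s$ attached to each column is the total dimension divided by its class size: $q(q^2-1)/2$ on the identity column, $q$ on the $\{1,2\}$ column, $(q-1)/2$ on the $q(q+1)$ columns, $q-1$ on the $\{(q-1)/4\}$ column, and $(q+1)/2$ on the $q(q-1)$ columns. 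Each fusion coefficient $N^{k}_{i,j}$ is then the single sum $\sum_s \mathfrak{c}_s^{-1}\lambda_{i,s}\lambda_{j,s}\overline{\lambda_{k,s}}$ over these five column families.

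Next I would evaluate these sums family by family. On the two ``generic'' families of columns (class sizes $q(q-1)$ and $q(q+1)$) the relevant entries are the real cosines $-\zeta_{q+1}^{2kc}-\zeta_{q+1}^{-2kc}$ and $\zeta_{q-1}^{2kc}+\zeta_{q-1}^{-2kc}$, so a triple product expands into eight exponentials $\zeta^{2k(\pm c_1\pm c_2\pm c_3)}$; summing over the admissible range of $k$ and using $\zeta^{2kc}=\zeta^{2k(-c)}$ turns the vanishing of the sum into the congruence $c_1\pm c_2\pm c_3\equiv 0$, which after folding into the range $1\le c_i\le (q-1)/4$ (resp. $(q-5)/4$) is exactly the alternative ``$c_1+c_2+c_3=\frac{q\mp 1}{2}$ or $2\Max(c_1,c_2,c_3)$'' appearing in the statement, and the number of surviving exponentials produces the multiplicities $1$, $2$ and $3$. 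The genuinely new ingredient compared with the pointwise cosine bookkeeping is the pair of half-dimensional characters of degree $(q+1)/2$, whose entries carry the Gauss-sum term $\tfrac{1}{2}(1+(-1)^{k+c}\sqrt{q})$ and the signs $(-1)^k$; here I would isolate the $\sqrt{q}$-contribution (which must cancel for $N^k_{i,j}$ to be rational) and track the residual sign sums, and this is what yields the Kronecker factors $\delta_{c_1,c_2}$, $\delta_{c_1,c_3}$ and the shifted conditions $\delta_{c_1+c_2,(q-1)/4}$, $\delta_{c_2+c_3,(q-1)/4}$.

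Finally I would promote the resulting array of numbers to a bona fide fusion ring by invoking the Reconstruction Theorem~\ref{thm:TableFusion}: with the inner product of Corollary~\ref{coro:inner} weighted by the $\mathfrak{c}_s$ above, the rows of the interpolated table must be shown to be orthonormal and to admit the required dual, which here is trivial since every entry is real so that $i^*=i$; this orthonormality is an explicit computation of weighted row inner products, reducing once more to the same cosine and Gauss sums, and I would package it as an orthonormality statement analogous to Proposition~\ref{prop:ortho2}. Granting orthonormality, Theorem~\ref{thm:TableFusion} guarantees associativity, Frobenius reciprocity and the neutral and dual axioms automatically, so the only remaining point is that each computed $N^k_{i,j}$ is a nonnegative integer. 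I expect this last integrality-and-positivity check to be the main obstacle: for $q$ not a prime-power there is no underlying group forcing it, so it has to be read off directly from the closed formulas, and the delicate part is the uniform handling of the folding conditions near the boundary of the index range together with the verification that the $\sqrt{q}$ Gauss-sum terms cancel in every product involving the degree-$(q+1)/2$ characters.
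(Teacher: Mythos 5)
Your proposal follows essentially the same route as the paper: the paper's proof of this theorem consists precisely of applying the Verlinde-like formula of Corollary~\ref{coro:Verlinde} (with the inner product of Corollary~\ref{coro:inner} weighted by the interpolated formal codegrees you list, which are correct) to the table of Theorem~\ref{table:q=1(4)}, carrying out exactly the cosine/exponential-folding and Gauss-sum computations you describe in Lemmas~\ref{lem:first3} through~\ref{lem:last3}, and then invoking the Reconstruction Theorem~\ref{thm:TableFusion} together with the orthogonality relations (Proposition~\ref{prop:ortho3}). The integrality-and-positivity check you flag as the main obstacle is indeed handled in the paper exactly as you anticipate, by reading the nonnegative integer values off the closed-form evaluations of those sums (with Remark~\ref{rk:interational} offering the additional shortcut that these rational functions of $q$ agree with the known group values at prime powers and hence everywhere).
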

\begin{proof}
The rules are given by Table \ref{table:q=1(4)} and Lemmas \ref{lem:first3} to \ref{lem:last3}.
\end{proof}

The smallest non prime-power $q \equiv 1 \mod 4$ is $q=21$, and $\mathcal{R}_{21}$ has rank $13$, $\FPdim \ 4620$, type $$[[1,1],[11,2],[20,5],[21,1],[22,4]],$$ and fusion matrices:
$$
\begin{smallmatrix} 1&0&0&0&0&0&0&0&0&0&0&0&0 \\ 0&1&0&0&0&0&0&0&0&0&0&0&0 \\ 0&0&1&0&0&0&0&0&0&0&0&0&0 \\ 0&0&0&1&0&0&0&0&0&0&0&0&0 \\ 0&0&0&0&1&0&0&0&0&0&0&0&0 \\ 0&0&0&0&0&1&0&0&0&0&0&0&0 \\ 0&0&0&0&0&0&1&0&0&0&0&0&0 \\ 0&0&0&0&0&0&0&1&0&0&0&0&0 \\ 0&0&0&0&0&0&0&0&1&0&0&0&0 \\ 0&0&0&0&0&0&0&0&0&1&0&0&0 \\ 0&0&0&0&0&0&0&0&0&0&1&0&0 \\ 0&0&0&0&0&0&0&0&0&0&0&1&0 \\ 0&0&0&0&0&0&0&0&0&0&0&0&1 \end{smallmatrix} , \ \begin{smallmatrix} 0&1&0&0&0&0&0&0&0&0&0&0&0 \\ 1&1&0&0&0&0&0&0&1&1&1&1&1 \\ 0&0&0&1&1&1&1&1&1&0&0&0&0 \\ 0&0&1&1&1&1&1&1&1&1&1&1&1 \\ 0&0&1&1&1&1&1&1&1&1&1&1&1 \\ 0&0&1&1&1&1&1&1&1&1&1&1&1 \\ 0&0&1&1&1&1&1&1&1&1&1&1&1 \\ 0&0&1&1&1&1&1&1&1&1&1&1&1 \\ 0&1&1&1&1&1&1&1&1&1&1&1&1 \\ 0&1&0&1&1&1&1&1&1&1&1&1&2 \\ 0&1&0&1&1&1&1&1&1&1&1&2&1 \\ 0&1&0&1&1&1&1&1&1&1&2&1&1 \\ 0&1&0&1&1&1&1&1&1&2&1&1&1 \end{smallmatrix} , \ \begin{smallmatrix} 0&0&1&0&0&0&0&0&0&0&0&0&0 \\ 0&0&0&1&1&1&1&1&1&0&0&0&0 \\ 1&0&1&0&0&0&0&0&1&1&1&1&1 \\ 0&1&0&1&1&1&1&1&1&1&1&1&1 \\ 0&1&0&1&1&1&1&1&1&1&1&1&1 \\ 0&1&0&1&1&1&1&1&1&1&1&1&1 \\ 0&1&0&1&1&1&1&1&1&1&1&1&1 \\ 0&1&0&1&1&1&1&1&1&1&1&1&1 \\ 0&1&1&1&1&1&1&1&1&1&1&1&1 \\ 0&0&1&1&1&1&1&1&1&1&1&1&2 \\ 0&0&1&1&1&1&1&1&1&1&1&2&1 \\ 0&0&1&1&1&1&1&1&1&1&2&1&1 \\ 0&0&1&1&1&1&1&1&1&2&1&1&1 \end{smallmatrix} , \ \begin{smallmatrix} 0&0&0&1&0&0&0&0&0&0&0&0&0 \\ 0&0&1&1&1&1&1&1&1&1&1&1&1 \\ 0&1&0&1&1&1&1&1&1&1&1&1&1 \\ 1&1&1&2&1&2&2&2&1&2&2&2&2 \\ 0&1&1&1&2&1&2&2&2&2&2&2&2 \\ 0&1&1&2&1&2&1&2&2&2&2&2&2 \\ 0&1&1&2&2&1&2&1&2&2&2&2&2 \\ 0&1&1&2&2&2&1&1&2&2&2&2&2 \\ 0&1&1&1&2&2&2&2&2&2&2&2&2 \\ 0&1&1&2&2&2&2&2&2&2&2&2&2 \\ 0&1&1&2&2&2&2&2&2&2&2&2&2 \\ 0&1&1&2&2&2&2&2&2&2&2&2&2 \\ 0&1&1&2&2&2&2&2&2&2&2&2&2 \end{smallmatrix} , \ \begin{smallmatrix} 0&0&0&0&1&0&0&0&0&0&0&0&0 \\ 0&0&1&1&1&1&1&1&1&1&1&1&1 \\ 0&1&0&1&1&1&1&1&1&1&1&1&1 \\ 0&1&1&1&2&1&2&2&2&2&2&2&2 \\ 1&1&1&2&2&2&1&2&1&2&2&2&2 \\ 0&1&1&1&2&2&2&1&2&2&2&2&2 \\ 0&1&1&2&1&2&2&1&2&2&2&2&2 \\ 0&1&1&2&2&1&1&2&2&2&2&2&2 \\ 0&1&1&2&1&2&2&2&2&2&2&2&2 \\ 0&1&1&2&2&2&2&2&2&2&2&2&2 \\ 0&1&1&2&2&2&2&2&2&2&2&2&2 \\ 0&1&1&2&2&2&2&2&2&2&2&2&2 \\ 0&1&1&2&2&2&2&2&2&2&2&2&2 \end{smallmatrix},$$
\vspace*{.1cm}
$$\begin{smallmatrix} 0&0&0&0&0&1&0&0&0&0&0&0&0 \\ 0&0&1&1&1&1&1&1&1&1&1&1&1 \\ 0&1&0&1&1&1&1&1&1&1&1&1&1 \\ 0&1&1&2&1&2&1&2&2&2&2&2&2 \\ 0&1&1&1&2&2&2&1&2&2&2&2&2 \\ 1&1&1&2&2&2&2&1&1&2&2&2&2 \\ 0&1&1&1&2&2&1&2&2&2&2&2&2 \\ 0&1&1&2&1&1&2&2&2&2&2&2&2 \\ 0&1&1&2&2&1&2&2&2&2&2&2&2 \\ 0&1&1&2&2&2&2&2&2&2&2&2&2 \\ 0&1&1&2&2&2&2&2&2&2&2&2&2 \\ 0&1&1&2&2&2&2&2&2&2&2&2&2 \\ 0&1&1&2&2&2&2&2&2&2&2&2&2 \end{smallmatrix} , \ \begin{smallmatrix} 0&0&0&0&0&0&1&0&0&0&0&0&0 \\ 0&0&1&1&1&1&1&1&1&1&1&1&1 \\ 0&1&0&1&1&1&1&1&1&1&1&1&1 \\ 0&1&1&2&2&1&2&1&2&2&2&2&2 \\ 0&1&1&2&1&2&2&1&2&2&2&2&2 \\ 0&1&1&1&2&2&1&2&2&2&2&2&2 \\ 1&1&1&2&2&1&2&2&1&2&2&2&2 \\ 0&1&1&1&1&2&2&2&2&2&2&2&2 \\ 0&1&1&2&2&2&1&2&2&2&2&2&2 \\ 0&1&1&2&2&2&2&2&2&2&2&2&2 \\ 0&1&1&2&2&2&2&2&2&2&2&2&2 \\ 0&1&1&2&2&2&2&2&2&2&2&2&2 \\ 0&1&1&2&2&2&2&2&2&2&2&2&2 \end{smallmatrix} , \ \begin{smallmatrix} 0&0&0&0&0&0&0&1&0&0&0&0&0 \\ 0&0&1&1&1&1&1&1&1&1&1&1&1 \\ 0&1&0&1&1&1&1&1&1&1&1&1&1 \\ 0&1&1&2&2&2&1&1&2&2&2&2&2 \\ 0&1&1&2&2&1&1&2&2&2&2&2&2 \\ 0&1&1&2&1&1&2&2&2&2&2&2&2 \\ 0&1&1&1&1&2&2&2&2&2&2&2&2 \\ 1&1&1&1&2&2&2&2&1&2&2&2&2 \\ 0&1&1&2&2&2&2&1&2&2&2&2&2 \\ 0&1&1&2&2&2&2&2&2&2&2&2&2 \\ 0&1&1&2&2&2&2&2&2&2&2&2&2 \\ 0&1&1&2&2&2&2&2&2&2&2&2&2 \\ 0&1&1&2&2&2&2&2&2&2&2&2&2 \end{smallmatrix} , \ \begin{smallmatrix} 0&0&0&0&0&0&0&0&1&0&0&0&0 \\ 0&1&1&1&1&1&1&1&1&1&1&1&1 \\ 0&1&1&1&1&1&1&1&1&1&1&1&1 \\ 0&1&1&1&2&2&2&2&2&2&2&2&2 \\ 0&1&1&2&1&2&2&2&2&2&2&2&2 \\ 0&1&1&2&2&1&2&2&2&2&2&2&2 \\ 0&1&1&2&2&2&1&2&2&2&2&2&2 \\ 0&1&1&2&2&2&2&1&2&2&2&2&2 \\ 1&1&1&2&2&2&2&2&2&2&2&2&2 \\ 0&1&1&2&2&2&2&2&2&3&2&2&2 \\ 0&1&1&2&2&2&2&2&2&2&3&2&2 \\ 0&1&1&2&2&2&2&2&2&2&2&3&2 \\ 0&1&1&2&2&2&2&2&2&2&2&2&3 \end{smallmatrix} , \ \begin{smallmatrix} 0&0&0&0&0&0&0&0&0&1&0&0&0 \\ 0&1&0&1&1&1&1&1&1&1&1&1&2 \\ 0&0&1&1&1&1&1&1&1&1&1&1&2 \\ 0&1&1&2&2&2&2&2&2&2&2&2&2 \\ 0&1&1&2&2&2&2&2&2&2&2&2&2 \\ 0&1&1&2&2&2&2&2&2&2&2&2&2 \\ 0&1&1&2&2&2&2&2&2&2&2&2&2 \\ 0&1&1&2&2&2&2&2&2&2&2&2&2 \\ 0&1&1&2&2&2&2&2&2&3&2&2&2 \\ 1&1&1&2&2&2&2&2&3&2&3&2&2 \\ 0&1&1&2&2&2&2&2&2&3&2&3&2 \\ 0&1&1&2&2&2&2&2&2&2&3&2&3 \\ 0&2&2&2&2&2&2&2&2&2&2&3&2 \end{smallmatrix},$$
\vspace*{.1cm}
$$\begin{smallmatrix} 0&0&0&0&0&0&0&0&0&0&1&0&0 \\ 0&1&0&1&1&1&1&1&1&1&1&2&1 \\ 0&0&1&1&1&1&1&1&1&1&1&2&1 \\ 0&1&1&2&2&2&2&2&2&2&2&2&2 \\ 0&1&1&2&2&2&2&2&2&2&2&2&2 \\ 0&1&1&2&2&2&2&2&2&2&2&2&2 \\ 0&1&1&2&2&2&2&2&2&2&2&2&2 \\ 0&1&1&2&2&2&2&2&2&2&2&2&2 \\ 0&1&1&2&2&2&2&2&2&2&3&2&2 \\ 0&1&1&2&2&2&2&2&2&3&2&3&2 \\ 1&1&1&2&2&2&2&2&3&2&2&2&3 \\ 0&2&2&2&2&2&2&2&2&3&2&2&2 \\ 0&1&1&2&2&2&2&2&2&2&3&2&3 \end{smallmatrix} , \ \begin{smallmatrix} 0&0&0&0&0&0&0&0&0&0&0&1&0 \\ 0&1&0&1&1&1&1&1&1&1&2&1&1 \\ 0&0&1&1&1&1&1&1&1&1&2&1&1 \\ 0&1&1&2&2&2&2&2&2&2&2&2&2 \\ 0&1&1&2&2&2&2&2&2&2&2&2&2 \\ 0&1&1&2&2&2&2&2&2&2&2&2&2 \\ 0&1&1&2&2&2&2&2&2&2&2&2&2 \\ 0&1&1&2&2&2&2&2&2&2&2&2&2 \\ 0&1&1&2&2&2&2&2&2&2&2&3&2 \\ 0&1&1&2&2&2&2&2&2&2&3&2&3 \\ 0&2&2&2&2&2&2&2&2&3&2&2&2 \\ 1&1&1&2&2&2&2&2&3&2&2&2&3 \\ 0&1&1&2&2&2&2&2&2&3&2&3&2 \end{smallmatrix} , \ \begin{smallmatrix} 0&0&0&0&0&0&0&0&0&0&0&0&1 \\ 0&1&0&1&1&1&1&1&1&2&1&1&1 \\ 0&0&1&1&1&1&1&1&1&2&1&1&1 \\ 0&1&1&2&2&2&2&2&2&2&2&2&2 \\ 0&1&1&2&2&2&2&2&2&2&2&2&2 \\ 0&1&1&2&2&2&2&2&2&2&2&2&2 \\ 0&1&1&2&2&2&2&2&2&2&2&2&2 \\ 0&1&1&2&2&2&2&2&2&2&2&2&2 \\ 0&1&1&2&2&2&2&2&2&2&2&2&3 \\ 0&2&2&2&2&2&2&2&2&2&2&3&2 \\ 0&1&1&2&2&2&2&2&2&2&3&2&3 \\ 0&1&1&2&2&2&2&2&2&3&2&3&2 \\ 1&1&1&2&2&2&2&2&3&2&3&2&2 \end{smallmatrix} \
$$

\section{General results on simple fusion categories} \label{sec:simple}

This section proves general results on simple fusion categories (always assumed over $\mathbb{C}$). It proves that a non-pointed simple fusion category is non-braided if and only if its Drinfeld center is simple. In particular, a simple fusion category Morita equivalent to a simple fusion category with a noncommutative Grothendieck ring, is non-braided and has a simple Drinfeld center. This section also provides a nice reformulation of the weakly group-theoretical conjecture in the simple case, namely, it proves that every simple integral fusion category is weakly group-theoretical if and only if every simple integral modular fusion category is pointed. We checked by computer assistance that every simple integral modular fusion category of rank up to $12$ is pointed, see \cite{ABPP}. We refer to \cite{EGNO15} for the notions of degenerate, braided, symmetric, modular fusion categories, M\"uger centralizer and Drinfeld center.  The beginning of the proof of \cite[Theorem 9.12]{ENO11} leads us to the following result. Its detailed proof is due to Victor Ostrik.
\begin{theorem} \label{simpleZ}
Let $\mathcal{C}$ be a non-pointed simple fusion category over $\mathbb{C}$. Then the Drinfeld center $\mathcal{Z}(\mathcal{C})$ is simple if and only if $\mathcal{C}$ is non-braided.
\end{theorem}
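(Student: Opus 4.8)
The plan is to prove both directions by translating ``$\mathcal{C}$ is braided'' into a statement about the forgetful functor $F\colon\mathcal{Z}(\mathcal{C})\to\mathcal{C}$. I will use two standard facts: $\mathcal{Z}(\mathcal{C})$ is non-degenerate with $\FPdim(\mathcal{Z}(\mathcal{C}))=\FPdim(\mathcal{C})^2$, and a braiding on $\mathcal{C}$ is exactly a central structure on $\mathrm{Id}_{\mathcal{C}}$, i.e. a tensor section $s\colon\mathcal{C}\to\mathcal{Z}(\mathcal{C})$ of $F$ (so $F\circ s\cong\mathrm{Id}_{\mathcal{C}}$). For the direction ``$\mathcal{Z}(\mathcal{C})$ simple $\Rightarrow$ non-braided'' I would argue the contrapositive: a braiding $c$ yields the fully faithful braided tensor functor $\iota\colon\mathcal{C}\hookrightarrow\mathcal{Z}(\mathcal{C})$, $X\mapsto(X,c_{-,X})$, whose image is a fusion subcategory equivalent to $\mathcal{C}$; it is nontrivial because $\mathcal{C}$ is non-pointed and proper because $\FPdim(\mathcal{C})<\FPdim(\mathcal{C})^2$, so $\mathcal{Z}(\mathcal{C})$ is not simple.

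For the converse, suppose $\mathcal{Z}(\mathcal{C})$ is not simple and pick a nontrivial proper fusion subcategory $\mathcal{D}\subsetneq\mathcal{Z}(\mathcal{C})$ (automatically braided). Since $F|_{\mathcal{D}}$ is a tensor functor and $\mathcal{C}$ is simple, the fusion subcategory $F(\mathcal{D})\subseteq\mathcal{C}$ is either $\VVec$ or $\mathcal{C}$. The whole argument hinges on the claim that, for $\mathcal{C}$ simple and non-pointed, the ``kernel'' subcategory $\mathcal{K}:=F^{-1}(\VVec)$ of objects forgetting to a multiple of the unit is trivial, $\mathcal{K}=\VVec$. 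Granting this, $F(\mathcal{D})\neq\VVec$ (otherwise $\mathcal{D}\subseteq\mathcal{K}=\VVec$), so $F|_{\mathcal{D}}\colon\mathcal{D}\to\mathcal{C}$ is dominant with trivial kernel $\mathcal{D}\cap\mathcal{K}=\VVec$; by multiplicativity of $\FPdim$ for dominant tensor functors this forces $\FPdim(\mathcal{D})=\FPdim(\mathcal{C})$ and makes $F|_{\mathcal{D}}$ a tensor equivalence. Then $s:=\iota_{\mathcal{D}}\circ(F|_{\mathcal{D}})^{-1}$ (with $\iota_{\mathcal{D}}$ the inclusion) is a tensor section of $F$, i.e. a braiding on $\mathcal{C}$.

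To prove $\mathcal{K}=\VVec$ I would proceed as follows. First, the invertible objects of $\mathcal{Z}(\mathcal{C})$ lying over the unit form the group $\widehat{U(\mathcal{C})}$ dual to the universal grading group, which is trivial since a simple fusion category has no nontrivial grading; hence $\mathcal{K}$ has no nontrivial invertible object. Next, $\mathcal{K}$ is symmetric (objects over the unit centralize one another), and having no nontrivial invertibles it is Tannakian, $\mathcal{K}\cong\Rep(H)$. A Tannakian subcategory of $\mathcal{Z}(\mathcal{C})$ lying over the unit exhibits $\mathcal{C}$ as an equivariantization $\mathcal{C}\cong\mathcal{C}_0^{H}$, so that $\Rep(H)$ is a fusion subcategory of $\mathcal{C}$. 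If $H\neq 1$, simplicity gives $\Rep(H)=\mathcal{C}$, i.e. $\mathcal{C}\cong\Rep(H)$ with $H$ a finite simple group, and non-pointedness forces $H$ non-abelian; but then a direct computation in $\mathcal{Z}(\Rep(H))$ — where the simple object indexed by a conjugacy class $[a]$ and an irreducible character $\psi$ of $C_H(a)$ forgets to $\mathrm{Ind}_{C_H(a)}^{H}\psi$, a multiple of the trivial representation only when $a=1$ and $\psi$ is trivial — yields $\mathcal{K}=\VVec$, contradicting $H\neq 1$. Therefore $H=1$ and $\mathcal{K}=\VVec$.

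The hard part will be this last claim, i.e. controlling the simple objects of $\mathcal{Z}(\mathcal{C})$ that forget to a multiple of the unit: verifying that $\mathcal{K}$ is symmetric and identifying a nontrivial Tannakian $\Rep(H)\subseteq\mathcal{Z}(\mathcal{C})$ over the unit with an equivariantization structure on $\mathcal{C}$. This is exactly where non-pointedness is indispensable: for the pointed simple category $\mathcal{C}=\Rep(\mathbb{Z}/p)=\VVec_{\mathbb{Z}/p}$ one has $\mathcal{K}=\Rep(\mathbb{Z}/p)\neq\VVec$, the reduction breaks, and indeed such $\mathcal{C}$ is braided with non-simple center, showing the statement genuinely needs the non-pointed hypothesis.
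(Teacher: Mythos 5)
Your easy direction is correct and is the same as the paper's. The converse, however, contains two genuine errors. The pivotal step -- ``$F|_{\mathcal{D}}$ is dominant with trivial kernel $\mathcal{D}\cap\mathcal{K}=\VVec$, so by multiplicativity of $\FPdim$ it is a tensor equivalence'' -- rests on a false principle. For a dominant tensor functor between fusion categories, $\FPdim$ is \emph{not} multiplicative with respect to the kernel subcategory $F^{-1}(\VVec)$; that holds only for \emph{normal} functors. Restriction $\Rep(S_3)\to\Rep(C_2)$ to a non-normal subgroup is dominant with trivial kernel but is not an equivalence; closer to your setting, the forgetful functor $\mathcal{Z}(\Rep(G))\to\Rep(G)$ for a centerless finite group $G$ is a dominant \emph{central} functor from a \emph{braided} category with trivial kernel, yet is far from an equivalence, so neither braidedness of $\mathcal{D}$ nor centrality of $F$ rescues the step. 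This is precisely where the paper's proof needs the M\"uger centralizer: by \cite[Theorem 3.12]{DNO}, surjectivity of $F$ on $\mathcal{B}$ forces injectivity of $F$ on the centralizer $\mathcal{B}'$, and then the same image dichotomy (using the $\mathcal{D}=\VVec$ case to rule out image $\VVec$) shows $F|_{\mathcal{B}'}$ is also surjective, hence an equivalence $\mathcal{C}\simeq\mathcal{B}'$; the braiding (equivalently, your section $s$) comes from the centralizer, not from $\mathcal{B}$ itself.

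Your treatment of $\mathcal{K}$ also has the key correspondence backwards. A Tannakian subcategory $\Rep(H)\subseteq\mathcal{Z}(\mathcal{C})$ that maps to $\VVec$ under the forgetful functor does \emph{not} exhibit $\mathcal{C}$ as an $H$-equivariantization with $\Rep(H)\subseteq\mathcal{C}$; it yields a faithful $H$-\emph{grading} of $\mathcal{C}$ (this is \cite[Proposition 2.9]{ENO11}, which the paper invokes). Equivariantizations correspond to the opposite extreme, namely Tannakian subcategories on which the forgetful functor is fully faithful. Your claimed implication already fails for $\mathcal{C}=\VVec_G$ with $G$ non-abelian: $\Rep(G)$ sits in $\mathcal{Z}(\VVec_G)$ over the unit, yet $\Rep(G)$ is not a fusion subcategory of $\VVec_G$ and $\VVec_G\not\simeq \VVec^G=\Rep(G)$. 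With the correct statement the contradiction is immediate and your detour through $\mathcal{C}\simeq\Rep(H)$ and the explicit description of $\mathcal{Z}(\Rep(H))$ evaporates: a nontrivial faithful grading of a simple category forces the trivial component to be $\VVec$, hence $\mathcal{C}$ pointed, contradiction. Two smaller points: the claim ``symmetric with no nontrivial invertibles implies Tannakian'' is also false in general ($\Rep(\SL(2,5),u)$ with $u=-I$ is symmetric, non-Tannakian, and has only the trivial invertible); the correct reason $\mathcal{K}$ (or $\mathcal{B}$ in the paper) is Tannakian is that $F$ restricted to it is a symmetric fiber functor, which is exactly the paper's argument via the central structure of $F$ and $\mathcal{Z}(\VVec)=\VVec$.
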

\begin{proof}
One way is well-known: if $\mathcal{C}$ is braided (without further assumption) then $\mathcal{Z}(\mathcal{C})$ has a subcategory equivalent to $\mathcal{C}$, so is not simple.
%(no assumption on $\mathcal{C}$ required here).
Assume now that $\mathcal{Z}(\mathcal{C})$ is not simple, so it contains a proper non-trivial subcategory $\mathcal{B}$. The image of $\mathcal{B}$ in $\mathcal{C}$ under the forgetful functor $F:\mathcal{Z}(\mathcal{C}) \to \mathcal{C}$ is some subcategory $\mathcal{D}$ of $\mathcal{C}$. Since $\mathcal{C}$ is simple, then $\mathcal{D}$ is $\VVec$ or $\mathcal{C}$. 
%The functor F: B --> Vec  is central, as a restriction  of the central functor Z(C)-->C. So it coincides with the braided functor B-->Z(Vec)=Vec.

If $\mathcal{D} = \VVec$ then the functor $F_{|B}: B \to Vec$ is central as a restriction of the central functor $F$, so it coincides with the braided tensor functor $\mathcal{B} \to \mathcal{Z}(\VVec) = \VVec$. Then $B$ is symmetric (because $F$ is faithful) and Tannakian by \cite[Definition 9.9.16]{EGNO15}. So $\mathcal{C}$ has a non-trivial grading by \cite[Proposition 2.9]{ENO11}. Since $\mathcal{C}$ is simple, it must be pointed, contradiction.
Next, if $\mathcal{D} = \mathcal{C}$, then the functor $\mathcal{B} \to \mathcal{C}$ is surjective. So by \cite[Theorem 3.12]{DNO}, the functor $\mathcal{B}' \to \mathcal{C}$ is injective, where $\mathcal{B}'$ is the M\"uger centralizer of $\mathcal{B}$. But it is also surjective as above. Hence the functor $\mathcal{B}' \to \mathcal{C}$ is both injective and surjective, so is an equivalence. Thus $\mathcal{C}$ is equivalent to something braided, so is braided itself.
%We need \cite[Theorem 3.12]{DNO} from attached paper: if B is subcategory
%of Z(C) and restriction of forgetful functor Z(C)->C to B is surjective
%if and only if restriction of forgetful functor to B’ (Muger centralizer of B)
%is injective. 
\end{proof}  

The following result should be well-known to experts: % modular or symmetric (so...) may be nicer

\begin{theorem} \label{thm:braiding}
A simple spherical braided fusion category $\mathcal{C}$ (over $\mathbb{C}$) must be modular or equivalent (as braided fusion category) to $\Rep(G)$, for some finite simple group $G$.
\end{theorem}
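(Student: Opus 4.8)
The plan is to run everything through the \emph{M\"uger center} $\mathcal{C}'$ of $\mathcal{C}$, i.e.\ the full subcategory of objects that centralize every object of $\mathcal{C}$. Since $\mathcal{C}$ is braided, $\mathcal{C}'$ is a symmetric fusion subcategory; and since $\mathcal{C}$ is simple, $\mathcal{C}'$ must be either $\VVec$ or all of $\mathcal{C}$. This single dichotomy is exactly the split between the two conclusions of the theorem: $\mathcal{C}'=\VVec$ will yield ``modular'', while $\mathcal{C}'=\mathcal{C}$ will yield the symmetric (Tannakian) case. So the whole proof is a matter of translating simplicity through this dichotomy.

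First I would treat the nondegenerate case $\mathcal{C}'=\VVec$. Here $\mathcal{C}$ is a nondegenerate braided fusion category, and it remains only to promote it to a \emph{modular} category, which requires a spherical (ribbon) structure. This is precisely where the weak-integrality hypothesis is used: a weakly integral fusion category is pseudo-unitary \cite{EGNO15}, hence carries a canonical spherical structure in which $\dim(X)=\FPdim(X)>0$ for every simple $X$. A braided spherical fusion category is ribbon, and a nondegenerate ribbon fusion category is by definition modular; therefore $\mathcal{C}$ is modular.

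Next I would treat the symmetric case $\mathcal{C}'=\mathcal{C}$, so that $\mathcal{C}$ is itself symmetric. By Deligne's theorem \cite{EGNO15}, a symmetric fusion category over $\bC$ is super-Tannakian: $\mathcal{C}\simeq \Rep(G,z)$ for a finite group $G$ and a central element $z\in G$ with $z^2=1$, the braiding being twisted by the parity detected by $z$. I would then exploit that the fusion subcategories here are governed by group data: the even part (objects on which $z$ acts trivially) is the Tannakian subcategory $\Rep(G/\langle z\rangle)$, and more generally the fusion subcategories of $\Rep(G)$ correspond to normal subgroups via $N\mapsto \Rep(G/N)$. If $z=1$ then $\mathcal{C}\simeq\Rep(G)$, and simplicity of $\mathcal{C}$ forces $G$ to have no proper nontrivial normal subgroup, i.e.\ $G$ is a finite simple group, which is the desired conclusion.

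The delicate point — and the step I expect to be the main obstacle — is the genuinely super case $z\neq 1$. Then the even part $\Rep(G/\langle z\rangle)$ is a fusion subcategory that is proper (some irreducible object is odd), so simplicity forces it to be $\VVec$, whence $G=\langle z\rangle\cong\bZ/2\bZ$ and $\mathcal{C}$ is the category $\mathrm{sVec}$ of super-vector spaces. This borderline symmetric example is neither modular (it is degenerate) nor Tannakian (hence not any $\Rep(G)$), so it has to be explicitly addressed: either ruled out under the stated hypotheses or recorded as an exceptional symmetric case. Making this $\mathrm{sVec}$ analysis clean, together with fully justifying the ``nondegenerate braided $+$ pseudo-unitary $\Rightarrow$ modular'' promotion, are the two places where real care is needed; the rest is a direct reading of simplicity off the M\"uger-center dichotomy.
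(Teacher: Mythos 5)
Your route is essentially the paper's own: the same M\"uger-center dichotomy driven by simplicity, the same use of weak integrality (pseudo-unitarity, hence a spherical structure, hence premodularity) to upgrade the nondegenerate case to modularity, and Deligne's theorem in the symmetric case. The one divergence is at the very end, and it is to your credit: the paper passes directly from ``$\mathcal{C}$ symmetric and simple'' to ``$\mathcal{C}\simeq\Rep(G)$ as braided category, $G$ finite simple'' (citing Deligne and Natale), silently skipping the super-Tannakian possibility $\Rep(G,z)$ with $z\neq 1$ that you analyze explicitly.

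The obstacle you flag is genuine, and you should know that it cannot be ``ruled out under the stated hypotheses'': $\mathrm{sVec}$ \emph{is} a counterexample to the literal statement. It is integral, braided, and simple in the paper's sense (its only proper fusion subcategory is $\VVec$), yet it is degenerate (being symmetric, its M\"uger center is everything, so it is not modular) and it is not braided-equivalent to $\Rep(\bZ/2\bZ)$ or to any $\Rep(G)$: a braided equivalence between symmetric categories preserves symmetrizers, and in $\mathrm{sVec}$ the symmetric square of the odd line vanishes, which never happens for a nonzero object of a Tannakian category (equivalently, $\mathrm{sVec}$ admits no symmetric fiber functor to $\VVec$). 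So the correct conclusion is ``modular, or $\Rep(G)$ with $G$ finite simple, or $\mathrm{sVec}$''; alternatively one adds the hypothesis ``non-pointed'' (as in Theorem \ref{simpleZ}), which kills the super case for free: simplicity plus non-pointedness forces $G$ to be nonabelian simple, hence centerless, so $z=1$ in Deligne's classification --- exactly your even-part argument. The paper's applications are unaffected by this correction: in the proof of Theorem \ref{thm:S2S3} non-pointedness is assumed before the theorem is invoked, and in Corollary \ref{cor:nobraiding} the categories at hand have rank at least $7$. In short, your proposal follows the paper's approach, and once you record the $\mathrm{sVec}$ alternative explicitly rather than leaving it open, it is not only correct but repairs an imprecision in the paper's own formulation and proof.
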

\begin{proof}
Because $\mathcal{C}$ is braided and spherical, it is also pre-modular by \cite[Definition 8.13.1]{EGNO15}. If it is non-degenerate (i.e.  M\"uger centralizer $\mathcal{C}' = \VVec$, by \cite[Theorem 8.20.7]{EGNO15}) then it is modular by \cite[Definition 8.13.4 and Remark 8.19.3]{EGNO15}.  If it is degenerate (i.e. $\mathcal{C}' \neq \VVec $) then by simplicity $\mathcal{C}'=\mathcal{C}$, so $\mathcal{C}$ is symmetric, and by \cite{Del} and simplicity, $\mathcal{C} \simeq \Rep(G)$ as braided fusion category (see \cite[Example 2,1]{Na}), with $G$ a finite simple group.
\end{proof}
% Warning: the negative verison of Rep(G) is iso to Rep(G) as braided fusion cat, but not as spherical.

\begin{proposition} \label{prop:IntSph}
A weakly-integral fusion category $\mathcal{C}$ (over $\mathbb{C}$) is spherical.
\end{proposition}
\begin{proof}
Because $\mathcal{C}$ is weakly-integral, it is also pseudo-unitary by \cite[Proposition 9.6.5]{EGNO15}, and so spherical by \cite[Proposition 9.5.1]{EGNO15}.
\end{proof}

\begin{corollary}  \label{cor:braiding}
A simple weakly-integral braided fusion category $\mathcal{C}$ (over $\mathbb{C}$) must be modular or equivalent (as braided fusion category) to $\Rep(G)$, for some finite simple group $G$.
\end{corollary}
\begin{proof}
Immediate from Proposition \ref{prop:IntSph} and Theorem \ref{thm:braiding}.
\end{proof}

The weakly group-theoretical conjecture (supporting a negative answer to \cite[Question 2]{ENO11}) states as follows:
\begin{statement}[Open] \label{sta:wgtconj}
Every integral fusion category is weakly group-theoretical.
\end{statement}

We wonder whether Statement \ref{sta:wgtconj} can be deduced from its following simple case.

\begin{statement}[Open] \label{sta:wgtconjsimple}
Every simple integral fusion category is weakly group-theoretical.
\end{statement}

The next theorem shows that Statement \ref{sta:wgtconjsimple} can be reformulated in the following nice way:

\begin{statement}[Open] \label{sta:wgtconjsimplemodular}
Every simple integral modular fusion category is pointed.
\end{statement}

\begin{theorem} \label{thm:S2S3}
Statement \ref{sta:wgtconjsimple} is equivalent to Statement \ref{sta:wgtconjsimplemodular}.
\end{theorem}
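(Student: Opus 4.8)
The plan is to prove the two implications separately, using Theorem \ref{simpleZ} and Theorem \ref{thm:braiding} for the categorical reductions and the structure of formal codegrees for the arithmetic input.

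First I would prove that Statement \ref{sta:wgtconjsimplemodular} implies Statement \ref{sta:wgtconjsimple}. Let $\mathcal{C}$ be a simple integral fusion category. If $\mathcal{C}$ is pointed, then being simple it must be $\VVec_{\bZ/p}^{\omega}$ for a prime $p$ (fusion subcategories of a pointed category correspond to subgroups), hence group-theoretical and in particular weakly group-theoretical. So assume $\mathcal{C}$ is non-pointed. If $\mathcal{C}$ is non-braided, then by Theorem \ref{simpleZ} its center $\mathcal{Z}(\mathcal{C})$ is simple; moreover $\mathcal{Z}(\mathcal{C})$ is always modular, and it is integral because the center of an integral fusion category is integral (it is $\Rep(D(H))$ for a semisimple quasi-Hopf algebra $H$ with $\mathcal{C}\simeq\Rep(H)$). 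Thus $\mathcal{Z}(\mathcal{C})$ is a simple integral modular category, so by Statement \ref{sta:wgtconjsimplemodular} it is pointed; but the forgetful functor $\mathcal{Z}(\mathcal{C})\to\mathcal{C}$ is dominant and preserves $\FPdim$, so it sends the invertible simple objects of $\mathcal{Z}(\mathcal{C})$ to invertibles and forces $\mathcal{C}$ to be pointed, a contradiction. Hence $\mathcal{C}$ is braided, and Theorem \ref{thm:braiding} gives that $\mathcal{C}$ is modular or braided-equivalent to some $\Rep(G)$ with $G$ finite simple. The modular case is impossible, as it would produce a non-pointed simple integral modular category against Statement \ref{sta:wgtconjsimplemodular}; so $\mathcal{C}\simeq\Rep(G)$, which is Morita equivalent to $\VVec_{G}$ and therefore weakly group-theoretical.

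For the converse I would assume Statement \ref{sta:wgtconjsimple} and take a simple integral modular category $\mathcal{C}$. By assumption $\mathcal{C}$ is weakly group-theoretical, so being simple it is Grothendieck equivalent to $\Rep(G)$ for some finite simple group $G$, by \cite[Proposition 9.11]{ENO11}; it then suffices to show $G$ is abelian, for then $G=\bZ/p$ and $\mathcal{C}$ is pointed. The key tool is that the multiset of formal codegrees is an invariant of the fusion ring: for the ring of $\Rep(G)$ it is $\{|C_G(g)|\}$ ranging over conjugacy classes (Remark \ref{rem:orb-stab}). On the other hand, for a modular category the codegree attached to a simple object $X$ is $\FPdim(\mathcal{C})/\FPdim(X)^2$, since $\mathfrak{c}_j=1/|S_{0j}|^2$ and $S_{0j}=\FPdim(X_j)/\sqrt{\FPdim(\mathcal{C})}$. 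As $\FPdim(\mathcal{C})=|G|$ and the dimensions are the character degrees, equating the two computations of the same multiset yields $\{|Cl(g)|\}=\{\chi(1)^2\}$; in particular $\chi(1)^2\mid |G|$ for every irreducible character $\chi$.

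The main obstacle is exactly this last step: ruling out $G$ non-abelian simple. Here I would argue arithmetically. For each prime $p\mid|G|$, simplicity forbids a normal Sylow $p$-subgroup, so by the Ito--Michler theorem some degree $\chi(1)$ is divisible by $p$; if in addition $p\,\|\,|G|$, then $\chi(1)^2$ is divisible by $p^2\nmid|G|$, contradicting $\chi(1)^2\mid|G|$. One is thus reduced to the fact that every non-abelian finite simple group has a prime dividing its order to the first power; lacking a uniform elementary proof, I would invoke the classification of finite simple groups (as in Lemma \ref{lem:NoOverlap}) together with the explicit generic degrees to finish. Once $G$ is forced to be abelian, it is $\bZ/p$ and $\mathcal{C}$ is pointed, which completes the equivalence. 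I expect this character-theoretic/number-theoretic step to be the only genuinely delicate part, the categorical reductions being formal consequences of Theorems \ref{simpleZ} and \ref{thm:braiding}.
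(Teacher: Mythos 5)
Your first implication (Statement \ref{sta:wgtconjsimplemodular} $\Rightarrow$ Statement \ref{sta:wgtconjsimple}) is correct and is essentially the paper's own argument: it uses exactly Theorem \ref{simpleZ}, Theorem \ref{thm:braiding}, integrality and modularity of $\mathcal{Z}(\mathcal{C})$, and the dominance of the forgetful functor to rule out $\mathcal{Z}(\mathcal{C})$ pointed; your write-up just fills in details the paper leaves implicit.

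The converse implication is where you diverge, and where there is a genuine gap. You quote \cite[Proposition 9.11]{ENO11} only as a \emph{Grothendieck} equivalence, but that result (stated in the paper as Theorem \ref{thm:wgtsimple}) gives an actual equivalence of fusion categories $\mathcal{C} \simeq \Rep(G)$ with $G$ a non-abelian finite simple group. With the full statement the proof is two lines, as in the paper: any braiding on $\Rep(G)$, $G$ non-abelian simple, is the symmetric one (by \cite[Example 2.1]{Na}, there are no nontrivial R-matrices), hence $\Rep(G)$ is never modular, contradicting the modularity of $\mathcal{C}$; so $\mathcal{C}$ must be pointed. Your hedge to Grothendieck equivalence forces the long arithmetic detour, and that detour is not complete: the crucial step is your reduction to the claim that every non-abelian finite simple group has a prime dividing its order exactly once, which you do not prove and which is in fact problematic. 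For $G = \PSL(2,2^f)$ the characteristic $2$ divides $|G|$ to the power $f$, so your claim amounts to asserting that $2^f-1$ and $2^f+1$ are never both powerful numbers --- a Wieferich-type question of number theory that is, to our knowledge, open; so the proposed reduction cannot be carried out as stated. Your fallback (``CFSG together with the explicit generic degrees'') can likely be pushed through by a different route --- e.g.\ for groups of Lie type the Steinberg character has degree equal to the full $p$-part $q^N$ of $|G|$, so $\chi(1)^2 = q^{2N}$ cannot equal a class size (whose $p$-part divides $q^N$), contradicting your multiset identity $\{|Cl(g)|\} = \{\chi(1)^2\}$; alternating groups follow from Bertrand's postulate and sporadics from a finite check --- but as written this is an unexecuted program, not a proof, and it also quietly relies on Ito--Michler (itself classification-based). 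In short: the categorical half of your proposal matches the paper, but the arithmetic half should be replaced by the full strength of Theorem \ref{thm:wgtsimple} together with the non-modularity of $\Rep(G)$.
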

\begin{proof}
First, let us prove that Statement \ref{sta:wgtconjsimple} implies Statement \ref{sta:wgtconjsimplemodular}: let $\mathcal{C}$ be a simple integral modular fusion category, then by Statement \ref{sta:wgtconjsimple}, it is weakly group-theoretical, so by Theorem \ref{thm:wgtsimple}, if it is non-pointed then it is $\Rep(G)$ with $G$ a non-abelian finite simple group, contradiction with the modular assumption (see \cite[Example 2.1]{Na}).

Next, let us prove that Statement \ref{sta:wgtconjsimplemodular} implies Statement \ref{sta:wgtconjsimple}: let $\mathcal{C}$ be a simple integral fusion category. If it is pointed then it is weakly group-theoretical, so we can assume that it is non-pointed, and so by Statement \ref{sta:wgtconjsimplemodular}, it is not modular. If it is still braided, then by Corollary \ref{cor:braiding} it must be equivalent to $\Rep(G)$, so weakly group-theoretical. Thus we can assume that it is non-braided, and then by Theorem \ref{simpleZ}, $\mathcal{Z}(\mathcal{C})$ must be simple, but it is also integral and modular, so by Statement \ref{sta:wgtconjsimplemodular}, it must be pointed, contradiction with $\mathcal{C}$ non-pointed.
\end{proof}
%in above Theorem and Statements, we can replace 'integral' by 'weakly-integral'.
Let us state here three theorems used in above proof or below remark.  

\begin{theorem}[Proposition 8.14.6 in \cite{EGNO15}] \label{thm:modalg}
Let $\mathcal{C}$ be a modular category and let $X$ be a simple object. Then $\dim(\mathcal{C})/\dim(X)^2$ is an algebraic integer.
\end{theorem}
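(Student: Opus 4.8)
The plan is to recognize that the quantity $\dim(\mathcal{C})/\dim(x)^2$ is nothing but a \emph{formal codegree} (Definition \ref{def:codeg}) of the Grothendieck ring of $\mathcal{C}$, and then to prove the general fact that the formal codegrees of any commutative fusion ring are algebraic integers. Since $\mathcal{C}$ is modular, it is in particular braided, so its Grothendieck ring is a commutative fusion ring and the machinery of Definitions \ref{def:eigentable} and \ref{def:codeg} applies directly. First I would identify the eigentable: let $S=(S_{ij})$ be the $S$-matrix of $\mathcal{C}$ indexed by the simple objects $(x_i)$ with $x_0=\mathbf{1}$. The Verlinde formula writes the fusion coefficients as $N_{ij}^k=\sum_l S_{il}S_{jl}\overline{S_{kl}}/S_{0l}$, which says exactly that the fusion matrix $M_i$ has eigenvalue $\lambda_{i,j}=S_{ij}/S_{0j}$ on the $j$-th common eigenvector; in other words $(\lambda_{i,j})$ is the eigentable of the Grothendieck ring. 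Using $S_{0j}=\dim(x_j)$, sphericality ($\dim(x_{j^*})=\dim(x_j)$) and the symmetry $\overline{S_{0j}}=S_{0j^*}$, one obtains $\overline{\dim(x_j)}=\dim(x_{j^*})=\dim(x_j)$, so the dimensions are real and $|S_{0j}|^2=\dim(x_j)^2$.

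Next I would compute the formal codegree. By Definition \ref{def:codeg} and the column orthogonality of the $S$-matrix, namely $\sum_i S_{ij}\overline{S_{ij'}}=\delta_{j,j'}\dim(\mathcal{C})$ (the statement of non-degeneracy, i.e.\ modularity), one gets
$$\mathfrak{c}_j=\sum_i |\lambda_{i,j}|^2=\frac{1}{|S_{0j}|^2}\sum_i |S_{ij}|^2=\frac{\dim(\mathcal{C})}{\dim(x_j)^2}.$$
Thus the target quantity is precisely the $j$-th formal codegree of the Grothendieck ring, and it remains only to prove integrality of formal codegrees in general.

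For the algebraic-integrality step, consider the central element $c:=\sum_i b_i b_{i^*}$ of the Grothendieck ring, whose fusion matrix is $M_c=\sum_i M_i M_i^*$, a matrix with nonnegative integer entries. Since the commuting normal matrices $(M_i)$ are simultaneously diagonalized, with $M_i$ acting by $\lambda_{i,j}$ and $M_{i^*}=M_i^*$ acting by $\overline{\lambda_{i,j}}$ on the $j$-th joint eigenvector, the matrix $M_c$ acts there by $\sum_i |\lambda_{i,j}|^2=\mathfrak{c}_j$. Hence each $\mathfrak{c}_j$ is an eigenvalue of the integer matrix $M_c$, i.e.\ a root of its monic integer-coefficient characteristic polynomial, and therefore an algebraic integer. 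Combining this with the previous paragraph yields that $\dim(\mathcal{C})/\dim(x)^2$ is an algebraic integer.

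The main obstacle is purely the bookkeeping of modular-category conventions: the normalization of the $S$-matrix, the Verlinde identification $\lambda_{i,j}=S_{ij}/S_{0j}$, and the orthogonality relation $\sum_i S_{ij}\overline{S_{ij'}}=\delta_{j,j'}\dim(\mathcal{C})$, which is exactly the point where modularity (non-degeneracy) is used. These are standard inputs from the theory of modular categories (EGNO, Chapter 8); once they are in place, the final conclusion is elementary, resting only on the nonnegative integrality of the fusion matrices and the fact that eigenvalues of integer matrices are algebraic integers.
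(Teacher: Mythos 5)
Your proof is correct. The paper gives no proof of its own here — it quotes the result as Proposition 8.14.6 of \cite{EGNO15} — and your argument is essentially the standard one behind that citation: use the Verlinde formula to identify the eigentable as $\lambda_{i,j}=S_{ij}/S_{0j}$ and $S$-matrix column orthogonality (the precise point where modularity enters) to compute the formal codegree $\mathfrak{c}_j=\dim(\mathcal{C})/\dim(x_j)^2$, then note that each $\mathfrak{c}_j$ is an eigenvalue of the nonnegative-integer matrix $\sum_i M_iM_i^*$, hence an algebraic integer.
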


\begin{theorem}[Corollary 6.16 in \cite{nik}] \label{thm:PrimePower}
Let $\mathcal{C}$ be a non-degenerate integral braided complex fusion category. If there is a simple object of prime-power $\FPdim$ then there is a nontrivial symmetric subcategory.
\end{theorem}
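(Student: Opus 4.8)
The plan is to recognise this as the fusion-categorical incarnation of Burnside's classical theorem that a finite group possessing a nontrivial conjugacy class of prime-power size cannot be non-abelian simple, and to transport Burnside's argument through the dictionary of Remark \ref{rem:orb-stab}. First I would upgrade the hypotheses to the modular setting: since $\mathcal{C}$ is integral it is in particular weakly integral, hence pseudo-unitary by \cite[Proposition 9.6.5]{EGNO15} and spherical by \cite[Proposition 9.5.1]{EGNO15}, so being non-degenerate and braided it is modular. In a modular category the formal codegrees (Definition \ref{def:codeg}) of the Grothendieck ring are $\mathfrak{c}_j=\FPdim(\mathcal{C})/\FPdim(X_j)^2$, so the \emph{formal class size} $\mathfrak{c}_1/\mathfrak{c}_X$ attached to the given simple object $X$ equals $\FPdim(X)^2=p^{2k}$, a prime power (here $\FPdim(X)=p^k>1$, so $X$ is non-invertible); moreover Theorem \ref{thm:modalg} guarantees that $\mathfrak{c}_X=\FPdim(\mathcal{C})/p^{2k}$ is an honest integer. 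This is precisely the input that an ``$X$-conjugacy class of prime-power size'' would provide, and the distinguished eigentable $(\lambda_{i,j})$ (the normalised $S$-matrix) plus the orthogonality relations of Theorem \ref{thm:SchurOrtho} furnish the categorical replacement for the character table.

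The heart of the argument is the number-theoretic core. Column orthogonality (Theorem \ref{thm:SchurOrtho}) between the column of the unit object (whose entries are $\lambda_{i,\mathbf 1}=\FPdim(X_i)$) and the column indexed by $X$ yields $1+\sum_{i\neq 1}\FPdim(X_i)\,\overline{\lambda_{i,X}}=0$, the exact analogue of $\sum_\chi\chi(1)\overline{\chi(g)}=0$. Each $\lambda_{i,X}$ is an eigenvalue of the integer fusion matrix $M_i$, hence an algebraic integer all of whose Galois conjugates are bounded in absolute value by the spectral radius $\FPdim(X_i)$. For the indices $i$ with $p\nmid\FPdim(X_i)$, the coprimality of $\FPdim(X_i)$ with the prime-power class size $p^{2k}$, together with the integrality of the formal central character $p^{2k}\lambda_{i,X}/\FPdim(X_i)$ (the analogue of $|K|\chi(g)/\chi(1)$ being an algebraic integer), makes $\lambda_{i,X}/\FPdim(X_i)$ itself an algebraic integer; by Kronecker's theorem it is then $0$ or a root of unity, the latter forcing $|\lambda_{i,X}|=\FPdim(X_i)$. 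Reducing the orthogonality relation in a suitable cyclotomic ring modulo a prime above $p$ rules out the possibility that every such nontrivial $i$ lies in the vanishing case, and thereby produces a nontrivial simple object $Y$ with $p\nmid\FPdim(Y)$ and $|\lambda_{Y,X}|=\FPdim(Y)$.

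It remains to convert this extremal object $Y$ into a genuine symmetric subcategory, and here I would bring in Müger's centralizer calculus. The relation $|\lambda_{Y,X}|=\FPdim(Y)$ is equivalent to $|s_{Y,X}|=\FPdim(Y)\FPdim(X)/\sqrt{\dim(\mathcal{C})}$, the maximal possible value of an $S$-matrix entry, which is exactly the condition that the double braiding of $Y$ with $X$ be a scalar. To package this I would consider the braided fusion subcategory $\mathcal{D}=\langle X\rangle$ generated by $X$ and its Müger centralizer $\mathcal{D}'$: the intersection $\mathcal{D}\cap\mathcal{D}'$ is the Müger center of $\mathcal{D}$, hence symmetric, so it suffices to show $\mathcal{D}$ is degenerate; if instead $\mathcal{D}$ were non-degenerate then Müger's double-centralizer theorem \cite{EGNO15} would split $\mathcal{C}\simeq\mathcal{D}\boxtimes\mathcal{D}'$ with $\FPdim(\mathcal{D})\FPdim(\mathcal{D}')=\FPdim(\mathcal{C})$, allowing an induction on $\FPdim(\mathcal{C})$ down to the base case $\mathcal{C}=\langle X\rangle$. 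The projective centralisation witnessed by $Y$ is meant to feed exactly this degeneracy.

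The main obstacle I anticipate is precisely this last translation, from the extremal-entry condition to an honest nontrivial symmetric (Tannakian or super-Tannakian \cite{Del}) subcategory. In the group case the extremal value forces $\rho_\chi(g)$ to be scalar, placing $g$ in the centre modulo $\ker\rho_\chi$ and exhibiting a proper normal subgroup; categorically the analogous passage must distinguish genuine from merely projective centralisation, controlling the pointed and phase (ribbon) corrections that separate $|s_{Y,X}|$ being extremal from $Y$ actually lying in the Müger centralizer of $\mathcal{D}$. Making this step rigorous, and identifying the degenerate piece it produces as a symmetric subcategory via the classification of symmetric categories \cite{Del}, is the delicate technical heart of the proof; everything before it is a fairly mechanical categorification of Burnside's column-orthogonality computation.
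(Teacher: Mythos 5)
Two preliminary remarks. First, the paper itself gives \emph{no} proof of Theorem \ref{thm:PrimePower}: it is imported verbatim as Corollary 6.16 of \cite{nik}, so your attempt can only be measured against the standard argument in the literature, which is indeed the Burnside-style argument you outline. Second, your number-theoretic core is correct as a sketch and matches that standard argument: in the modular setting the eigentable is the normalised $S$-matrix, both $\tilde{s}_{i,X}/\FPdim(X)$ and $\tilde{s}_{i,X}/\FPdim(X_i)$ are algebraic integers, being eigenvalues of the integer fusion matrices $M_i$ and (via the symmetry $\tilde{s}_{i,X}=\tilde{s}_{X,i}$) $M_X$ respectively; the coprimality trick then gives the dichotomy $\lambda_{i,X}=0$ or $|\lambda_{i,X}|=\FPdim(X_i)$ whenever $p\nmid\FPdim(X_i)$, and column orthogonality reduced modulo $p$ produces a nontrivial simple $Y$ with $p\nmid\FPdim(Y)$ projectively centralizing $X$. (In fact the categorical setting is easier than the group one: $\tilde{s}_{i,X}/\FPdim(X_i)$ is an algebraic integer for \emph{every} $i$, no coprimality needed; coprimality only enters to divide once more by $\FPdim(X)$.)

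The genuine gap is the endgame, which you flag but do not close, and whose stated form would actually fail. You reduce to showing that $\mathcal{D}=\langle X\rangle$ is degenerate, and otherwise split $\mathcal{C}\simeq\mathcal{D}\boxtimes\mathcal{D}'$ and induct on $\FPdim$; but this induction bottoms out exactly at $\mathcal{C}=\langle X\rangle$, where $\mathcal{D}=\mathcal{C}$ is non-degenerate \emph{by hypothesis}, so ``show $\mathcal{D}$ is degenerate'' is precisely what cannot happen there, and the projective centralisation by $Y$ cannot ``feed this degeneracy''. Three concrete steps are missing. (a) Monodromy scalars are multiplicative: if the double braiding of $Y$ with $X$ is the scalar $\mu$, then on every simple summand of $X^{\otimes n}\otimes (X^{*})^{\otimes m}$ it is $\mu^{n}\bar{\mu}^{m}$; hence in the base case $Y$ projectively centralizes \emph{every} simple object of $\mathcal{C}=\langle X\rangle$. (b) Consequently $|\tilde{s}_{Y,Z}|=\FPdim(Y)\FPdim(Z)$ for all simple $Z$, and the row norm $\sum_Z|\tilde{s}_{Y,Z}|^2=\dim(\mathcal{C})$ forces $\FPdim(Y)=1$: your extremal object $Y$ is a nontrivial \emph{invertible}, so $\mathcal{C}_{pt}\neq\VVec$. (c) One must then still deal with $\mathcal{C}_{pt}$, which need not be symmetric: if $\mathcal{C}_{pt}$ is degenerate, its M\"uger center is the desired nontrivial symmetric subcategory; if it is non-degenerate, M\"uger's theorem splits $\mathcal{C}\simeq\mathcal{C}_{pt}\boxtimes\mathcal{C}_{pt}'$, and the induction hypothesis applies to $\mathcal{C}_{pt}'$, which is non-degenerate, integral, of strictly smaller dimension, and still contains a simple object of dimension $p^{k}$ (the second tensor factor of $X$). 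With (a)--(c) supplied, your strategy becomes the complete, standard proof; without them it is not a proof. Your closing worry about separating genuine from projective centralisation (ribbon/phase corrections) is, by contrast, a red herring: multiplicativity of the monodromy scalars (equivalently, replacing $Y$ by $Y\otimes Y^{*}$, which honestly centralizes $X$) is exactly what absorbs those phases.
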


%% TO BE ADDED? %% Note that a simple integral fusion category is positive (or negative, i.e. no C2-grading), and a positive symmetric fusion category is Tannakian (a negative fusion category should have a positive spherical structure).

\begin{theorem}[Proposition 9.11 in \cite{ENO11}] \label{thm:wgtsimple}
A non-pointed weakly group-theoretical simple fusion category is equal to $\Rep(G)$ with $G$ a non-abelian finite simple group.
\end{theorem}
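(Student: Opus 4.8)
The plan is to extract from the hypothesis only the one structural consequence of being weakly group-theoretical, and then let simplicity do all the remaining work. Write $\mathcal{C}$ for a non-pointed simple fusion category over $\bC$ that is weakly group-theoretical. The first step is to rule out gradings: the adjoint subcategory $\mathcal{C}_{\mathrm{ad}}$ (generated by the objects $X\otimes X^*$) is a fusion subcategory, so by simplicity it is either $\VVec$ or $\mathcal{C}$; since $\mathcal{C}_{\mathrm{ad}}=\VVec$ holds exactly when $\mathcal{C}$ is pointed, which is excluded, we get $\mathcal{C}_{\mathrm{ad}}=\mathcal{C}$. Equivalently the universal grading group is trivial, so $\mathcal{C}$ admits no nontrivial grading by a finite group.

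The crux is the structural dichotomy for weakly group-theoretical categories from \cite{ENO11}: a nontrivial weakly group-theoretical fusion category either has a nontrivial grading or contains a nontrivial Tannakian (in any case symmetric) subcategory. Having just excluded gradings, $\mathcal{C}$ must contain a nontrivial symmetric subcategory $\mathcal{E}$. Again $\mathcal{E}$ is a fusion subcategory of the simple $\mathcal{C}$, hence $\mathcal{E}=\VVec$ or $\mathcal{E}=\mathcal{C}$, and nontriviality forces $\mathcal{E}=\mathcal{C}$. Thus $\mathcal{C}$ is itself symmetric. By Deligne's classification \cite{Del} it is super-Tannakian, $\mathcal{C}\simeq\Rep(H,z)$; a nontrivial parity element $z$ would be a central element of order $2$ in $H$, which will turn out incompatible with the simplicity and non-abelianness obtained in the next step, so in fact $z=1$ and $\mathcal{C}\simeq\Rep(H)$ is Tannakian.

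Finally I would read off the group. The fusion subcategories of $\Rep(H)$ are exactly the $\Rep(H/N)$ for normal subgroups $N\trianglelefteq H$: given a fusion subcategory, let $N$ be the common kernel of its objects; their direct sum is then a faithful representation of $H/N$, and a faithful representation generates all of $\Rep(H/N)$ under tensor powers, duals and subobjects (Burnside--Brauer). Hence $\Rep(H)$ is simple as a fusion category precisely when $H$ has no proper nontrivial normal subgroup, i.e.\ when $H$ is a finite simple group, and it is non-pointed precisely when $H$ is non-abelian (equivalently, $H$ has an irreducible representation of dimension $>1$). A non-abelian finite simple group has trivial center, which retroactively justifies $z=1$ above. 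Therefore $H=G$ is a non-abelian finite simple group and $\mathcal{C}\simeq\Rep(G)$, as claimed.

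The hard part is the dichotomy invoked in the second paragraph; this is the only place where weakly-group-theoreticality is genuinely consumed, and it is where the substance of \cite{ENO11} lies. To prove it from scratch I would unwind the definition (Morita equivalence to a nilpotent fusion category): a nontrivial nilpotent category has a nontrivial grading coming from its upper central series, and one must then control how gradings and symmetric subcategories propagate through the operations generating weakly group-theoretical categories --- finite group extensions and (de-)equivariantizations --- using in particular that a central Tannakian subcategory yields a grading (the mechanism already exploited via \cite[Proposition 2.9]{ENO11} in the proof of Theorem \ref{simpleZ}). Once the dichotomy is granted, everything above is routine bookkeeping with fusion subcategories.
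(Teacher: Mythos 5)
The paper itself gives no proof of Theorem \ref{thm:wgtsimple}: it is imported verbatim from \cite[Proposition 9.11]{ENO11} and used as a black box, so there is no in-paper argument to compare yours with; the relevant yardstick is the machinery the paper does develop, namely the proof of Theorem \ref{simpleZ}. Measured against that, your outer layers are correct but routine: the adjoint-subcategory argument ($\mathcal{C}_{\mathrm{ad}}=\mathcal{C}$, hence no nontrivial grading), the simplicity bookkeeping, Deligne's theorem \cite{Del} together with the elimination of the parity element $z$, and the correspondence between fusion subcategories of $\Rep(H)$ and normal subgroups of $H$. The genuine gap sits exactly where you place all the weight: the dichotomy. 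It is not citable in the form you state it --- \cite{ENO11} does not assert that every nontrivial weakly group-theoretical fusion category either has a nontrivial grading or contains a nontrivial symmetric subcategory (statements of that shape exist in the literature for \emph{braided} categories, which is of no use here, since braidedness is precisely what cannot be assumed); note also that for a non-braided $\mathcal{C}$ one must even explain what ``symmetric subcategory'' means. Your sketch for proving the dichotomy does not close the gap: it treats weakly group-theoretical categories as if they were \emph{generated} from $\VVec$ by extensions and (de-)equivariantizations, whereas the definition is Morita equivalence to a nilpotent category, and what \cite{ENO11} provides is closure of the class under those operations --- the wrong direction. The whole difficulty of \cite[Proposition 9.11]{ENO11} is transporting the grading of the nilpotent category across a Morita equivalence, and that is what your sketch elides.

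The transport that works is the one the paper uses for Theorem \ref{simpleZ}. Morita equivalent categories have equivalent centers, so $\mathcal{Z}(\mathcal{C})\simeq\mathcal{Z}(\mathcal{D})$ with $\mathcal{D}$ nilpotent and nontrivial, and the universal grading of $\mathcal{D}$ produces a nontrivial Tannakian subcategory $\mathcal{E}\simeq\Rep(G)\subset\mathcal{Z}(\mathcal{C})$. Now apply the forgetful functor $F:\mathcal{Z}(\mathcal{C})\to\mathcal{C}$ to $\mathcal{E}$: by simplicity, $F(\mathcal{E})$ is $\VVec$ or $\mathcal{C}$. If $F(\mathcal{E})=\VVec$, then $\mathcal{E}$ induces a nontrivial faithful grading on $\mathcal{C}$ by \cite[Proposition 2.9]{ENO11}, forcing $\mathcal{C}$ to be pointed, which is excluded. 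If $F(\mathcal{E})=\mathcal{C}$, you still cannot conclude that $\mathcal{C}$ is symmetric merely because it is the image of a Tannakian category: $F$ is not braided, so the image carries no induced braiding --- this is exactly why your parenthetical ``in any case symmetric'' is unjustified. What closes the argument is the injectivity statement already exploited in Theorem \ref{simpleZ}: by \cite[Theorem 3.12]{DNO}, surjectivity of $F|_{\mathcal{E}}$ forces injectivity of $F$ on the M\"uger centralizer $\mathcal{E}'$, and since a Tannakian category centralizes itself, $\mathcal{E}\subset\mathcal{E}'$; hence $F|_{\mathcal{E}}$ is both injective and surjective, i.e. an equivalence $\mathcal{C}\simeq\Rep(G)$. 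Your final paragraph then applies verbatim to give $G$ simple and non-abelian. So the reduction skeleton is sound, but as written the crux is assumed rather than proved; it needs to be replaced by this center-plus-forgetful-functor argument (with \cite[Proposition 2.9]{ENO11} and \cite[Theorem 3.12]{DNO}), not by the extension/equivariantization induction you propose.
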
 

\begin{remark} By combining Theorems \ref{thm:modalg} and \ref{thm:PrimePower} together with a quick brute-force computation involving Egyptian fractions with squared denominators, we can show that a simple integral modular fusion category of rank up to $12$ is pointed,, see \cite{ABPP}. Then by Corollary \ref{cor:braiding}, a simple integral braided fusion category of rank up to $12$ must be $\Rep(G)$, for some finite simple group $G$.
\end{remark}

Here are some other nice consequences on simple fusion categories.
%\begin{itemize}
%\item If $\mathcal{C}$ is integral but not group-like, then it is modular or $\mathcal{Z}(\mathcal{C})$ is simple integral modular.

\begin{corollary} \label{cor:SimNC}
Let $\mathcal{C}$ be a simple fusion category with a noncommutative Grothendieck ring. Then the Drinfeld center $\mathcal{Z}(\mathcal{C})$ is simple.
\end{corollary}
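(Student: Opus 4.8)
The plan is to derive this statement directly from Theorem \ref{simpleZ}, which asserts that for a \emph{non-pointed} simple fusion category, $\mathcal{Z}(\mathcal{C})$ is simple precisely when $\mathcal{C}$ is non-braided. Thus the whole task reduces to checking that noncommutativity of the Grothendieck ring forces both hypotheses of that theorem: non-pointedness and non-braidedness.

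First I would establish non-braidedness. This is the standard observation that any braided fusion category has a \emph{commutative} Grothendieck ring: for objects $X,Y$ the braiding supplies an isomorphism $X \otimes Y \cong Y \otimes X$, so that $[X][Y] = [Y][X]$ holds in the Grothendieck ring. Hence a noncommutative Grothendieck ring is incompatible with the existence of a braiding, and $\mathcal{C}$ must be non-braided.

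Next I would establish non-pointedness, this time invoking simplicity. A pointed fusion category is equivalent to $\VVec_G^{\omega}$ for a finite group $G$ and a $3$-cocycle $\omega$, and its fusion subcategories are exactly the $\VVec_H^{\omega|_{H}}$ for subgroups $H \le G$. Simplicity therefore forces $G$ to have no nontrivial proper subgroup, i.e. $G \cong C_p$ for some prime $p$; but then the Grothendieck ring is the commutative group ring $\mathbb{Z}[C_p]$. So a simple \emph{pointed} fusion category has commutative Grothendieck ring, and contrapositively our $\mathcal{C}$, having a noncommutative one, must be non-pointed.

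With $\mathcal{C}$ now known to be simple, non-pointed and non-braided, Theorem \ref{simpleZ} applies verbatim and yields that $\mathcal{Z}(\mathcal{C})$ is simple, completing the argument. I do not anticipate a genuine obstacle here, since both reductions are elementary; the only structural input beyond Theorem \ref{simpleZ} is the classification of simple pointed fusion categories as $\VVec_{C_p}^{\omega}$ (equivalently, the correspondence between fusion subcategories of a pointed category and subgroups of $G$), which is standard.
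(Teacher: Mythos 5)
Your proof is correct and follows exactly the paper's route: the paper's own proof is the one-liner ``Immediate from Theorem \ref{simpleZ} as $\mathcal{C}$ is obviously non-pointed and non-braided,'' and you have simply supplied the two ``obvious'' verifications. Your care on the non-pointedness step is warranted (a pointed category $\VVec_G^\omega$ with $G$ nonabelian does have noncommutative Grothendieck ring, so simplicity genuinely is needed to force $G\cong C_p$), but this is a detail the paper elides rather than a different argument.
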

\begin{proof}
Immediate from Theorem \ref{simpleZ} as $\mathcal{C}$ is obviously non-pointed and non-braided.
\end{proof}
For example, the Extended Haagerup categories \cite{ExtHaag} $\mathcal{EH}_i$, $i \in \{2,3,4\}$ are simple with a noncommutative Grothendieck ring, so $\mathcal{Z}(\mathcal{EH}_i)$ is simple.

\begin{corollary}
Let $\mathcal{C}$ be a simple fusion category which is Morita equivalent to a simple fusion category with a noncommutative Grothendieck ring. Then it is non-braided.
\end{corollary}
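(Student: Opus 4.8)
The plan is to transport the simplicity of the Drinfeld center across the Morita equivalence and then feed it into the two results already proved. Write $\mathcal{D}$ for the simple fusion category with noncommutative Grothendieck ring to which $\mathcal{C}$ is Morita equivalent. The key structural input I would invoke is the standard fact that Morita equivalent fusion categories have braided equivalent Drinfeld centers, i.e. $\mathcal{Z}(\mathcal{C}) \simeq \mathcal{Z}(\mathcal{D})$ as braided fusion categories (see, e.g., \cite[\S 8.12]{EGNO15}). In particular, simplicity of the Drinfeld center is a Morita invariant, and the Frobenius--Perron dimension is preserved, so $\FPdim(\mathcal{C}) = \FPdim(\mathcal{D})$.

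First I would apply Corollary \ref{cor:SimNC} to $\mathcal{D}$: since $\mathcal{D}$ is simple with a noncommutative Grothendieck ring, $\mathcal{Z}(\mathcal{D})$ is simple. Combined with the braided equivalence $\mathcal{Z}(\mathcal{C}) \simeq \mathcal{Z}(\mathcal{D})$, this immediately yields that $\mathcal{Z}(\mathcal{C})$ is simple.

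It then remains to deduce that $\mathcal{C}$ is non-braided. The quickest route reuses the easy direction recalled at the start of the proof of Theorem \ref{simpleZ}: if a nontrivial fusion category is braided, then the braiding provides a fully faithful braided embedding of it into its Drinfeld center, whose image is a fusion subcategory of Frobenius--Perron dimension $\FPdim(\mathcal{C})$. Since $\FPdim(\mathcal{Z}(\mathcal{C})) = \FPdim(\mathcal{C})^2$ and $\mathcal{C}$ is nontrivial (indeed $\FPdim(\mathcal{C}) = \FPdim(\mathcal{D}) > 1$ because $\mathcal{D}$ has a noncommutative, hence non-pointed rank $\ge 3$, Grothendieck ring), that image would be a proper nontrivial fusion subcategory, contradicting the simplicity of $\mathcal{Z}(\mathcal{C})$. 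Alternatively, one first checks that $\mathcal{C}$ is non-pointed --- were it pointed and simple it would be $\VVec_{\mathbb{Z}/p}^{\omega}$, whose center is pointed of rank $p^2$ and hence not simple --- and then applies Theorem \ref{simpleZ} directly to obtain non-braidedness from the simplicity of $\mathcal{Z}(\mathcal{C})$.

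The only genuinely nontrivial ingredient is the invariance of the Drinfeld center as a braided category (and hence of its simplicity) under Morita equivalence; this is the step I would take most care to cite in its precise form, since everything else is a direct assembly of Corollary \ref{cor:SimNC} and Theorem \ref{simpleZ}. The main obstacle is thus not a computation but pinning down the correct statement of this Morita invariance and confirming that the braided structure, not merely the underlying fusion category, is transported.
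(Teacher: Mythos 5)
Your proof is correct and is essentially the paper's own argument: apply Corollary \ref{cor:SimNC} to the Morita partner $\mathcal{D}$, transport simplicity of the Drinfeld center through the braided equivalence $\mathcal{Z}(\mathcal{C})\simeq\mathcal{Z}(\mathcal{D})$, and conclude non-braidedness from Theorem \ref{simpleZ} (the paper, like your alternative route, notes that such a $\mathcal{C}$ cannot be pointed, while your first route just reuses the easy direction of that theorem). One minor caution: the parenthetical ``noncommutative, hence non-pointed'' is only valid because $\mathcal{D}$ is also \emph{simple} (a pointed simple category is $\VVec_{\mathbb{Z}/p}^{\omega}$, whose Grothendieck ring is commutative), since nonabelian group rings are noncommutative yet pointed; this does not affect your argument, as noncommutativity alone already gives rank $\ge 3$ and $\FPdim(\mathcal{D})>1$.
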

\begin{proof}
Immediate from Theorem \ref{simpleZ} and Corollary \ref{cor:SimNC}, as two Morita equivalent fusion categories have the same Drinfeld center (up to equivalence), and such $\mathcal{C}$ cannot be pointed.
\end{proof}
For example, the Extended-Haagerup fusion category $\mathcal{EH}_1$, the one with a commutative Grothendieck ring, is Morita equivalent to $\mathcal{EH}_i$ by \cite{ExtHaag}, so it is non-braided. It is a third proof of this result after the one using Morrison-Walker induction matrix \cite{MoWa}, and the one using \cite[Proposition 5.2]{ENO21}, i.e. a spherical braided fusion category is \emph{Isaacs} (see \S\ref{sub:isaacs}), together with \cite[Proposition 5.1]{BP}, i.e. $\mathcal{EH}_1$ is not Isaacs.
%\item By Theorems \ref{thm:braiding} and \ref{simpleZ}, the existence of a non group-like simple integral fusion category $\mathcal{C}$ implies the existence of a non-pointed modular simple integral fusion category ($\mathcal{C}$ or $\mathcal{Z}(\mathcal{C})$).
Let us finish this section by two remarks about the weakly group-theoretical conjecture.

\begin{remark} By \cite[Proposition 2.9]{ENO11} and Theorem \ref{thm:PrimePower}, to exclude a non group-like simple integral fusion ring from complex categorification, it is sufficient to show that its Drinfeld center would have a non-trivial simple object of prime-power dimension, which can be checked by computing the induction matrix. The paper \cite{MoWa} provides an algorithm to do so, unfortunately its complexity is too big on the simplest example of our interpolated family (i.e. $\mathcal{R}_6$).
\end{remark}

%% TO BE ADDED? %% to apply \cite[Proposition 2.9]{ENO11} we first need the symmetric subcategory D to be Tannakian (ok see comment below Theorem \ref{thm:PrimePower}), next the forgetful functor F must map D to Vec: by simplicity of C, F(D) = Vec or C, but if F(D) = C then C is group-like, contradiction.

\begin{remark}
By Theorem \ref{thm:S2S3}, a way to prove the weakly group-theoretical conjecture would be to first prove that it reduces to its simple case, and next to deduce a contradiction from the existence of a non-pointed simple integral modular fusion category.
\end{remark}
%\end{itemize}

\section{No braided complex categorification} \label{sec:braid}
%\sebastien{need to find an official name for the interpolated fusion rings, perhaps $\mathcal{F}$(PSL(2,q))} 
The interpolated fusion rings $(\mathcal{R}_q)$ pass all the known categorification criteria (checked in \S\ref{sec:criteria}). This leads to wonder whether some ones can be categorified (with $q$ non prime-power). We could naively expect that the F-symbols can be interpolated too, which is not clear, at least without modification. What is true is that the symmetric structure cannot be interpolated, so something else could be broken somewhere. What about the braiding structure? In this section we prove that a complex categorification (if any) of an interpolated fusion ring $\mathcal{R}_q$ (with $q$ non prime-power) cannot be braided, and so its Drinfeld center must be simple. The following result is partially due to an anonymous referee.

\begin{corollary} \label{cor:nobraiding}
For $q$ non prime-power, the interpolated fusion ring $\mathcal{R}_q$ admits no braided complex categorification.
\begin{proof}
Assume that it admits a braided complex categorification $\mathcal{C}$, then we can apply Corollary \ref{cor:braiding}, and by Lemma \ref{lem:NoOverlap}, $\mathcal{C}$ must be modular. Thus by Theorem \ref{thm:modalg}, pseudo-unitarity and integrality, for every simple object $x$ then $\FPdim(x)^2$ divides $\FPdim(\mathcal{C})$; but $\FPdim(x_{q,1}) = q$ whereas $\FPdim(\mathcal{C}) = q(q^2-1)$ or $q(q^2-1)/2$, contradiction.
\end{proof}
\end{corollary}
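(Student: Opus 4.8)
The plan is to assume for contradiction that $\mathcal{R}_q$ admits a braided complex categorification $\mathcal{C}$ and then to derive an impossible divisibility among Frobenius--Perron dimensions. Since $q$ is non prime-power we have $q\ge 6 > 4$, so by Theorem \ref{thm:general} the ring $\mathcal{R}_q$ is simple and integral; hence $\mathcal{C}$ is a simple integral (in particular weakly-integral) fusion category which, by hypothesis, is braided. This puts us squarely in the setting of Theorem \ref{thm:braiding}.

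First I would invoke Theorem \ref{thm:braiding} to conclude that $\mathcal{C}$ is either modular or equivalent, as a braided fusion category, to $\Rep(G)$ for some finite simple group $G$. The second alternative can be discarded immediately: an equivalence $\mathcal{C}\simeq\Rep(G)$ of braided categories would in particular make $\mathcal{R}_q$ Grothendieck equivalent to $\Rep(G)$, contradicting Lemma \ref{lem:NoOverlap} since $q$ is non prime-power. Therefore $\mathcal{C}$ must be modular.

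Next I would extract an arithmetic constraint from modularity via Theorem \ref{thm:modalg}. Because $\mathcal{C}$ is weakly-integral it is pseudo-unitary, so one may fix the spherical structure for which categorical and Frobenius--Perron dimensions agree; thus $\dim(\mathcal{C})=\FPdim(\mathcal{R}_q)$ and $\dim(x)=\FPdim(x)$ for every simple $x$. Applying Theorem \ref{thm:modalg} to the simple object $x_{q,1}$, whose dimension is $q$, the rational number $\FPdim(\mathcal{R}_q)/q^2$ is an algebraic integer, hence a genuine integer, i.e.\ $q^2 \mid \FPdim(\mathcal{R}_q)$. Since $\FPdim(\mathcal{R}_q)$ equals $q(q^2-1)$ or $q(q^2-1)/2$, this reduces to requiring $q \mid (q^2-1)$ or $q \mid (q^2-1)/2$, both impossible because $\gcd(q,q^2-1)=1$ and $q\ge 2$. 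This contradiction finishes the proof.

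The only genuinely delicate step is the passage from the \emph{algebraic-integer} conclusion of Theorem \ref{thm:modalg} to an honest \emph{integer} divisibility: it hinges on identifying categorical dimensions with Frobenius--Perron dimensions, which is exactly what pseudo-unitarity of a weakly-integral category supplies. Everything else---the reduction to the modular case and the final coprimality computation---is routine, so I expect this dimension identification to be the one point requiring care.
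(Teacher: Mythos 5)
Your proof is correct and follows essentially the same route as the paper's: invoke Theorem \ref{thm:braiding}, eliminate the $\Rep(G)$ alternative via Lemma \ref{lem:NoOverlap} to conclude modularity, then combine Theorem \ref{thm:modalg} with pseudo-unitarity and integrality to get the divisibility contradiction at $x_{q,1}$. You merely make explicit some details the paper leaves implicit (simplicity of $\mathcal{C}$ from Theorem \ref{thm:general}, and why the algebraic integer is an honest integer), which is fine.
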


\begin{remark} Assume the existence of $q$ non prime-power such that $\mathcal{R}_q$ admits a complex categorification $\mathcal{C}_q$. Then by Corollary \ref{cor:nobraiding} and Theorem \ref{simpleZ}, the Drinfeld center $\mathcal{Z}(\mathcal{C}_q)$ is simple. Contrariwise, if $q$ is a prime-power then $\Rep(\PSL(2,q))$ is braided so its Drinfeld center is not simple.
\end{remark}

\section{Categorification criteria checking} \label{sec:criteria}
In this section we check seven categorification criteria on the interpolated fusion rings $\mathcal{R}_q$.
\subsection{Schur product criterion}
Let $\mathcal{F}$ be a commutative fusion ring with eigentable $(\lambda_{i,j})$ and $\lambda_{i,1} = \max_j(|\lambda_{i,j}|)$. Here is the commutative Schur product criterion: 
\begin{theorem}[Corollary 8.5 in \cite{LPW20}] \label{thm:schur}  % should be stated in the positive way (because negative could be incomplete, it may be complex number)
If $\mathcal{F}$ admits a unitary categorification then for all triple $(j_1,j_2,j_3)$ we have $$\sum_i \frac{\lambda_{i,j_1}\lambda_{i,j_2}\lambda_{i,j_3}}{\lambda_{i,1}} \ge 0.$$
\end{theorem}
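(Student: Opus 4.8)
The plan is to deduce the inequality from the quantum Schur product theorem \cite[Theorem 4.1]{Liuex}, using the group case as a guide. When $\mathcal{F}$ is the Grothendieck ring of $\Rep(G)$, the quantity $\sum_i \lambda_{i,j_1}\lambda_{i,j_2}\lambda_{i,j_3}/\lambda_{i,1}$ equals $\sum_i \chi_i(g_{j_1})\chi_i(g_{j_2})\chi_i(g_{j_3})/\chi_i(1)$, which is a positive multiple of a class-algebra structure constant, hence a nonnegative integer counting factorizations in $G$. The role of a unitary categorification is precisely to supply a replacement for this counting positivity in the general, non group-like case.

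First I would pass from the abstract hypothesis to an operator setting: a unitary categorification realizes $\mathcal{F}$ as the Grothendieck ring of a unitary fusion category, equivalently it equips the associated $2$-box space with the structure of a finite-dimensional $C^*$-algebra carrying both a multiplication (from the tensor product) and a convolution (its Fourier dual), in which the simple objects correspond to minimal projections. In the commutative case this is the abelian $C^*$-algebra $\mathbb{C}\mathcal{B}$ with the fusion product, whose characters are exactly the columns $s$ of the eigentable, sending $b_i$ to $\lambda_{i,s}$, with Perron value $\lambda_{i,1} = \FPdim(b_i)$.

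Next I would identify the positive elements feeding the theorem. By the Schur orthogonality relations (Theorem \ref{thm:SchurOrtho}) the elements $P_s := \tfrac{1}{\mathfrak{c}_s}\sum_i \overline{\lambda_{i,s}}\, b_i$ are self-adjoint orthogonal idempotents, hence positive, and they are the minimal projections of $\mathbb{C}\mathcal{B}$. In the Fourier-dual picture these become the positive operators to which the Schur (entrywise) product applies, so \cite[Theorem 4.1]{Liuex} yields that $P_{j_1}\circ P_{j_2}$ is positive and therefore pairs nonnegatively with the positive element $P_{j_3}$. Expanding this trace-pairing with the Verlinde-like formula (Corollary \ref{coro:Verlinde}), and using that the Markov/spherical trace weights $b_i$ by $\FPdim(b_i)=\lambda_{i,1}$, collapses the expression to a positive scalar multiple of $\sum_i \lambda_{i,j_1}\lambda_{i,j_2}\lambda_{i,j_3}/\lambda_{i,1}$, which gives the claim.

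The main obstacle is bookkeeping rather than conceptual: one must pin down which of the two algebra structures makes the column elements $P_s$ genuine positive projections, since this is exactly where \emph{unitarity}, as opposed to mere categorifiability, enters, and then verify that the trace appearing in the Schur product theorem carries the factor $1/\lambda_{i,1}$, so that the Frobenius--Perron normalization in the statement emerges with the correct sign and no residual positive constant that could spoil the inequality.
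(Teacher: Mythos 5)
Your outline is correct and follows exactly the route the paper itself indicates for this statement: the paper does not reprove it but cites \cite[Corollary 8.5]{LPW20}, explaining that it is the commutative specialization of \cite[Proposition 8.3]{LPW20}, an application of the quantum Schur product theorem \cite[Theorem 4.1]{Liuex} — precisely your argument of taking the minimal $*$-idempotents $P_s=\tfrac{1}{\mathfrak{c}_s}\sum_i\overline{\lambda_{i,s}}\,b_i$ (positive by Schur orthogonality), applying Schur-product positivity to $P_{j_1}\circ P_{j_2}$, and pairing with $P_{j_3}$ to extract the weighted character sum. The bookkeeping you flag (which product carries the positivity, and the $1/\lambda_{i,1}$ normalization of the Schur product) is exactly what is handled in \cite{LPW20}, so your proposal matches the cited proof in both structure and substance.
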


Note that Theorem \ref{thm:schur} is the corollary of a (less tractable) noncommutative version \cite[Proposition 8.3]{LPW20}, which is an application of the quantum Schur product theorem on subfactors \cite[Theorem 4.1]{Liuex}. See also a new proof in \cite{ENO21}.

\begin{lemma} \label{lem:supremum}
If $\sum_{i=0}^n a_ix^i = 0$ and $|a_n| \ge 1$ then $|x| < 1 + \max_i |a_i|$.
%Let $m$ be a positive integer. A positive root of a polynomial with coefficients in the interval $[-m,m]$ is less than $m+1$.
%Let $m$ be a positive integer. The supremum on the positive roots of all the polynomials with integer coefficients in the interval $[-m,m]$ is $m+1$.
\end{lemma}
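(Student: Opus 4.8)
The plan is to recognize the statement as the classical Cauchy bound on the modulus of a root of a polynomial, and to prove it by contradiction. I would write $M := \max_i |a_i|$ and observe that since $|a_n| \ge 1$ we have $M \ge 1$; in particular the hypothesis forces $a_n \neq 0$, so $n \ge 1$ and $x$ is genuinely a root of a degree-$n$ polynomial. Then I would suppose, for contradiction, that $|x| \ge 1 + M$. Because $M \ge 1$, this already gives $|x| \ge 2 > 1$, which is exactly what makes the geometric-series estimate below legitimate.

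The key step is to isolate the leading term. From $\sum_{i=0}^n a_i x^i = 0$ one gets $a_n x^n = -\sum_{i=0}^{n-1} a_i x^i$, and taking moduli, using the triangle inequality together with $|a_i| \le M$,
$$|a_n|\,|x|^n \le \sum_{i=0}^{n-1} |a_i|\,|x|^i \le M \sum_{i=0}^{n-1} |x|^i = M\,\frac{|x|^n - 1}{|x|-1} < M\,\frac{|x|^n}{|x|-1},$$
where the geometric summation is valid because $|x| > 1$. Next I would invoke $|a_n| \ge 1$ to bound the left-hand side below by $|x|^n$, obtaining $|x|^n < M\,|x|^n/(|x|-1)$; dividing through by $|x|^n > 0$ yields $|x| - 1 < M$, that is $|x| < 1 + M$, contradicting the assumption. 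Hence $|x| < 1 + M$, as claimed.

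I do not expect a genuine obstacle, as the argument is entirely elementary. The only point requiring a little care is securing the \emph{strict} inequality in the conclusion rather than the weak bound $|x| \le 1 + M$; this is exactly what the strict estimate $(|x|^n-1)/(|x|-1) < |x|^n/(|x|-1)$ provides, the numerator being strictly smaller. A degenerate case worth a one-line remark is $n = 0$: there the hypothesis reads $a_0 = 0$ with $|a_0| \ge 1$, which is impossible, so the statement is vacuous and we may freely assume $n \ge 1$.
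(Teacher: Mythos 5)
Your proof is correct and follows essentially the same route as the paper's: isolate the leading term, apply the triangle inequality and the geometric sum bound $\sum_{i=0}^{n-1}|x|^i = (|x|^n-1)/(|x|-1) < |x|^n/(|x|-1)$, then use $|a_n|\ge 1$ and divide by $|x|^n$. The only cosmetic differences are that you wrap the argument in a proof by contradiction (the paper instead reduces directly to the case $|x|>1$) and that you note the vacuous case $n=0$, which the paper leaves implicit.
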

\begin{proof}
We can assume $|x|>1$ (otherwise we are done). Let $m:=\max_i |a_i|$. We have $a_n x^n =  - \sum_{i=0}^{n-1} a_i x^i$, so $|a_n x^n| = |\sum_{i=0}^{n-1} a_i x^i|$ and then by triangle inequality
$$|a_n| |x|^n \le \sum_{i=0}^{n-1} |a_i||x|^i \le m \sum_{i=0}^{n-1} |x|^i = m \frac{|x|^n - 1}{|x|-1} < m \frac{|x|^n}{|x|-1},$$
but $|a_n| \ge 1$, thus
$$|x|^n  < \frac{m}{|a_n|}  \frac{|x|^n}{|x|-1} \le m \frac{|x|^n}{|x|-1},$$
It follows that $1  < \frac{m}{|x|-1}$, so  $|x|-1 < m$ and  $|x|<m+1$.
%$|a_n|x^n \le \sum_{i=0}^{n-1} |a_i|x^i$. So we only need to consider $X^n-m\sum_{i=0}^{n-1}X^i$.
\end{proof}

\begin{proposition} All the fusion rings of the interpolated family ($\mathcal{R}_q$) pass Schur product criterion.
\end{proposition}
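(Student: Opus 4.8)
The plan is to reduce Schur product criterion to the positivity of a family of ``formal class-algebra structure constants'' and then to verify that positivity case by case on the three interpolated tables. Writing $\lambda_{i,1}=\FPdim(b_i)$ for the identity column, Theorem \ref{thm:schur} requires that
\[
S(j_1,j_2,j_3):=\sum_i \frac{\lambda_{i,j_1}\lambda_{i,j_2}\lambda_{i,j_3}}{\lambda_{i,1}}\ \ge\ 0
\]
for every triple of columns. The guiding observation is that this is the Fourier dual of the Verlinde-like formula (Corollary \ref{coro:Verlinde}): whereas $N_{i,j}^k$ sums a triple product of eigentable entries over the \emph{rows} weighted by $1/\mathfrak{c}_s$, the quantity $S$ sums over the \emph{columns} weighted by $1/\lambda_{i,1}$. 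Using that the bottom row of each table records the formal class sizes $\mathfrak{c}_1/\mathfrak{c}_j$ (see the remark after Corollary \ref{coro:egy}), one checks that $S(j_1,j_2,j_3)=\frac{\mathfrak{c}_{j_1}\mathfrak{c}_{j_2}}{\mathfrak{c}_1}\,a_{j_1,j_2}^{j_3^*}$ for a dual column $j_3^*$, where $a_{j_1,j_2}^{j_3^*}$ is the formal class-algebra constant; when $q$ is a prime-power and $\mathcal{R}_q=\Rep(\PSL(2,q))$ these are genuine class-algebra structure constants, hence nonnegative integers. So the statement is automatic on the prime-power locus, and the content is to show that the \emph{same} formulas stay nonnegative at every integer $q\ge 2$.

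First I would compute $S(j_1,j_2,j_3)$ in closed form for each combinatorial shape of the triple, that is, for each choice of the row and column types in Theorems \ref{table:q=0(2)}, \ref{table:q=-1(4)} and \ref{table:q=1(4)}. The trivial row always contributes $+1$, the Steinberg row $x_{q,1}$ contributes a bounded rational term, and the generic families (of dimensions $q-1$, $q+1$, and in the odd cases $\tfrac{q\mp1}{2}$) contribute sums over the character parameter $c$ of products of the entries $\pm(\zeta_{q\pm1}^{kc}+\zeta_{q\pm1}^{-kc})$. Expanding each such factor and summing the resulting geometric series over a full period via $\sum_c \zeta_{q\pm1}^{mc}=(q\pm1)\,\mathbf{1}[m\equiv 0]$, together with the even symmetry $c\mapsto (q\pm1)-c$ relating the displayed half-range to the full range, collapses $S$ into the form $P(q)/D(q)$ with $D(q)>0$ a product of the dimensions (and a power of $2$), whose numerator is controlled by the integer
\[
n=\#\{\,\varepsilon\in\{\pm1\}^3\ :\ \varepsilon_1k_1+\varepsilon_2k_2+\varepsilon_3k_3\equiv 0 \bmod (q\pm1)\,\}
\]
counting admissible sign patterns of the three class parameters, plus bounded corrections from the exceptional columns (the $\{1,2\}$ columns carrying $\tfrac{-1\pm i\sqrt q}{2}$, and the singleton classes $\{\tfrac{q\pm1}{4}\}$).

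The combinatorial heart is to bound $n$. Since each $k_t$ lies in a range of length about $q/2$, the signed sum $\varepsilon_1k_1+\varepsilon_2k_2+\varepsilon_3k_3$ has absolute value below $2(q\pm1)$, so it can only be $0$ or $\pm(q\pm1)$; pairing $\varepsilon$ with $-\varepsilon$ shows $n$ is even, and a short elimination argument using the parity of $q\pm1$ (odd when $q$ is even) together with $1\le k_t\le \tfrac{q\mp1}{2}$ rules out $n\ge 4$ except in degenerate configurations forcing some $k_t\equiv 0$, which is impossible. This is precisely the bound that keeps the (negative) main term $\sim -n/2$ from overwhelming the constant $+1$ and the corrections, and it is where the admissible ranges and the residue of $q$ modulo $4$ must be used with care. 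With $n$ pinned down, each $S$ becomes an explicit rational function of $q$ whose sign is transparent in most cases.

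The main obstacle is the remaining cases, principally in the $q\equiv\pm1\bmod 4$ tables, where the exceptional columns inject $\sqrt q$-terms and the higher fusion multiplicity makes the numerator $P(q)$ a genuine polynomial whose sign is not visible by inspection. For these I would clear the positive denominator, so that $S\ge 0$ becomes $P(q)\ge 0$ with $P$ of bounded degree, integer (or half-integer) coefficients, and nonzero integer leading coefficient, so that $|a_n|\ge 1$. Lemma \ref{lem:supremum} then bounds the modulus of every root of $P$ by $1+\max_i|a_i|$, so $P$ keeps the sign of its leading term for all $q$ beyond an explicit threshold, and the infinitely many prime-power values at which $P\ge 0$ fix that sign as nonnegative; this reduces each inequality to a finite check over the small $q$ below the threshold, which is routine. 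The delicate bookkeeping of the exceptional-column corrections, and confirming that the root bound leaves only a genuinely finite checkable residue, is where the real work lies.
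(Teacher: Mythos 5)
Your proposal is correct and takes essentially the same route as the paper: both reduce each Schur sum to a rational function of $q$ (uniform on each residue class and combinatorial shape of the triple), use its nonnegativity at the infinitely many prime powers --- where $\mathcal{R}_q$ is the Grothendieck ring of $\Rep(\PSL(2,q))$ --- to fix the sign for large $q$, and apply the root bound of Lemma \ref{lem:supremum} to the integer-coefficient numerator to reduce everything to a finite check at small $q$ (in the paper: coefficients in $[-9,9]$, hence threshold $10$, so the only non-prime-power case is $q=6$, already settled as $\mathcal{F}_{210}$ in \cite{LPW20}). Your additional combinatorial bounding of the sign-pattern count $n$ essentially reproduces the explicit calculations the paper carries out in its calculation section, and is not needed once the prime-power specialization plus root-bound argument is in place.
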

\begin{proof}
According to the calculations in \S\ref{sec:cal} and the fact that the variables $k$ and $c$ play the same role in the tables of \S\ref{sec:tab}, it is clear that we are reduced to consider rational functions and to show that they are non-negative. Let first compute one example of such rational funtion, say for $q$ even and $j_1,j_2,j_3$ given by the last column of Table \ref{table:q=0(2)} (then we will see a general argument). As for the proof of Lemma \ref{lem:fullcomput}: 
\begin{align*}
1 + \frac{1}{q-1}\sum_{c=1}^{q/2} \prod_{i=1}^3(-\zeta_{q+1}^{k_ic} - \zeta_{q+1}^{-k_ic}) - \frac{1}{q} & = 1-\frac{1}{q} - \frac{1}{q+1} \frac{1}{2} \sum_{\epsilon_i=\pm 1} \sum_{c=1}^{q}  (\zeta_{q+1}^{\sum_{i=1}^3 \epsilon_i k_i})^c  \\
& =
 \left\{
    \begin{array}{lll}
        1 - \frac{1}{q} - \frac{1}{q+1} \frac{1}{2}(-1) = \frac{2q^2+q-2}{2q(q+1)}& \mbox{if}  & \gcd(q+1, \sum_{i=1}^3 \epsilon_i k_i)<q+1 \\
        1 - \frac{1}{q} - \frac{1}{q+1} \frac{1}{2}(q) = \frac{q^2-2}{2q(q+1)} & \mbox{else}&
    \end{array}
\right.
\end{align*} 

In general, we already know that these rational functions are non-negative for $q$ prime-power (because if $G$ is a finite group then $\Rep(G)$ is a unitary fusion category), so they must be non-negative for $q$ large enough. It remains to show that they are also non-negative for $q$ small.  Let $P/Q$ be such a rational function, and assume the existence of $q_0$ small such that $(P/Q)(q_0)<0$. We know that for $q$ large enough $(P/Q)(q)>0$, so it must exist $x_0$ such that  $q_0 < x_0 < q$ and $P(x_0)=0$. Now, it is clear according to the calculations in \S\ref{sec:cal} that the polynomial $P$ can be chosen with integer coefficients in the interval $[-9,9]$. So $x_0 < 10$ by Lemma \ref{lem:supremum}, but the only positive integer $q < 10$ which is not a prime-power is $q=6$, and $\mathcal{R}_6$ was checked in \cite{LPW20} as $\mathcal{F}_{210}$. The result follows.
%$[-m,m]$ with $m$ equal to (say) $50$; so by Lemma \ref{lem:supremum}, $x_0 < 51$, but we checked by computer that the criterion is passed for all $q \le 70$. The result follows.
\end{proof}

\subsection{Ostrik criterion}
Here is the commutative version of a criterion by V. Ostrik, using the formal codegrees $(\mathfrak{c}_i)$.

\begin{theorem}[Theorem 2.21 in \cite{Ost15}]
Let $\mathcal{F}$ be a commutative fusion ring. If it admits a pseudo-unitary complex categorification, then $2\sum_j \frac{1}{\mathfrak{c}_j^2} \le 1+\frac{1}{\mathfrak{c}_1}$.
\end{theorem}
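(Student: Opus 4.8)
The plan is to use an actual pseudo-unitary categorification $\mathcal{C}$ of $\mathcal{F}$ and transport the inequality into its Drinfeld center, which is where formal codegrees acquire a geometric meaning. Write $D := \FPdim(\mathcal{F})$; since $\mathcal{C}$ is pseudo-unitary we have $\mathfrak{c}_1 = D = \dim(\mathcal{C})$. By pseudo-unitarity $\mathcal{C}$ is spherical, so $\mathcal{Z}(\mathcal{C})$ is a modular fusion category with $\dim(\mathcal{Z}(\mathcal{C})) = D^2$. The key structural input I would invoke is Ostrik's realization of formal codegrees: if $F\colon \mathcal{Z}(\mathcal{C}) \to \mathcal{C}$ is the forgetful functor and $I$ its adjoint, then $A := I(\mathbf{1})$ is the canonical connected \'etale (commutative) algebra, and its simple summands $Z_1 = \mathbf{1}, \dots, Z_r$ are in bijection with the columns of the eigentable, with $\dim(Z_j) = D/\mathfrak{c}_j$. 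In particular $\sum_j \dim(Z_j) = \dim(A) = D$ recovers Corollary \ref{coro:egy}, and $\sum_j 1/\mathfrak{c}_j^2 = D^{-2}\sum_j \dim(Z_j)^2$. Clearing denominators, the target inequality becomes the clean statement $2\sum_j \dim(Z_j)^2 \le D^2 + D = \dim(\mathcal{Z}(\mathcal{C})) + \dim(A)$.

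The heart of the argument is then a Gauss-sum computation in the modular category $\mathcal{Z}(\mathcal{C})$. Let $\theta_Z$ denote the ribbon twist. Two standard facts drive the proof: first, the Drinfeld center has trivial central charge, so its Gauss sum is real and positive, $\sum_Z \theta_Z \dim(Z)^2 = \sqrt{\dim(\mathcal{Z}(\mathcal{C}))} = D$ (the sum running over all simple objects $Z$); second, the simple summands of a connected \'etale algebra have trivial twist, so $\theta_{Z_j} = 1$ for each $j$. Taking real parts and splitting the sum into the summands of $A$ and the remaining simples $R$, using $\mathrm{Re}(\theta_Z) \ge -1$ together with $\sum_{Z \in R}\dim(Z)^2 = D^2 - \sum_j \dim(Z_j)^2$, I obtain
$$ D = \sum_j \dim(Z_j)^2 + \sum_{Z \in R}\mathrm{Re}(\theta_Z)\dim(Z)^2 \ge \sum_j \dim(Z_j)^2 - \Big(D^2 - \sum_j \dim(Z_j)^2\Big) = 2\sum_j \dim(Z_j)^2 - D^2. $$
Rearranging gives exactly $2\sum_j \dim(Z_j)^2 \le D^2 + D$, and dividing by $D^2$ returns $2\sum_j 1/\mathfrak{c}_j^2 \le 1 + 1/\mathfrak{c}_1$.

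The main obstacle, and the step I would scrutinize most, is the categorical dictionary between columns of the eigentable and summands of $I(\mathbf{1})$: one must justify that $I(\mathbf{1})$ is multiplicity-free (so that the $r$ codegrees match the $r$ distinct summands, even when several $\mathfrak{c}_j$ coincide) and that $\dim(Z_j) = D/\mathfrak{c}_j$ holds on the nose in the pivotal/pseudo-unitary normalization. The two modular inputs --- that $\mathcal{Z}(\mathcal{C})$ has vanishing central charge (hence real positive Gauss sum equal to $D$) and that \'etale-algebra summands are fixed by the twist --- are classical but should be cited carefully (e.g. via \cite{EGNO15} and \cite{DNO}). Everything else is the elementary rearrangement above; notably, the inequality genuinely uses categorification, since it fails for abstract codegree data not arising from a center (the bound $\mathrm{Re}(\theta_Z) \ge -1$ has no counterpart at the level of the fusion ring alone).
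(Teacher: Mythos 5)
The paper itself offers no proof of this statement---it is imported verbatim as Theorem 2.21 of \cite{Ost15}---and your argument is correct and essentially reconstructs Ostrik's own proof: the dictionary between formal codegrees and the simple summands $Z_j$ of $I(\mathbf{1})\in\mathcal{Z}(\mathcal{C})$ with $\dim(Z_j)=D/\mathfrak{c}_j$ (multiplicity-free precisely because the Grothendieck ring is assumed commutative, so all irreducible representations are one-dimensional), the triviality of the twist on $I(\mathbf{1})$, M\"uger's identity $\sum_Z \theta_Z\dim(Z)^2=\dim(\mathcal{C})$ for the Drinfeld center, and the bound $\mathrm{Re}(\theta_Z)\ge -1$ are exactly the ingredients of the cited proof. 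The one point to polish is the trivial-twist step: cite it for the canonical Lagrangian algebra $I(\mathbf{1})$ specifically (where it is classical), rather than for general connected \'etale algebras, since in that generality the ribbon element is a priori only an algebra automorphism of $A$.
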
 

\begin{proposition} All the fusion rings of the interpolated family ($\mathcal{R}_q$) pass Ostrik criterion.
\end{proposition}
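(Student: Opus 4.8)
The plan is to avoid evaluating the sum $\sum_j \mathfrak{c}_j^{-2}$ explicitly, and instead to bound it using only the Egyptian-fraction identity $\sum_j \mathfrak{c}_j^{-1} = 1$ of Corollary \ref{coro:egy} together with a lower bound on the smallest formal codegree.

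First I would record the formal codegrees of $\mathcal{R}_q$. By Definition \ref{def:codeg} they are $\mathfrak{c}_j = \sum_i |\lambda_{i,j}|^2$, and a direct evaluation from the tables of \S\ref{sec:tab} (the character-sum computations being exactly those of \S\ref{sec:cal}) gives $\mathfrak{c}_j = \FPdim(\mathcal{R}_q)/s_j$, where $s_j$ is the formal class size in the bottom row of the relevant table. Equivalently this identity holds by interpolation: each $\mathfrak{c}_j$ is a rational function of $q$, it equals $|C_G(g)| = |G|/|Cl(g)|$ for every prime-power $q$ by Remark \ref{rem:orb-stab}, and two rational functions agreeing at infinitely many points coincide. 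Reading off the class sizes then gives, for $q$ even, the codegrees $q(q^2-1)$ (once), $q$ (once), $q-1$ (with multiplicity $\tfrac{q-2}{2}$) and $q+1$ (with multiplicity $\tfrac{q}{2}$); for $q \equiv -1 \bmod 4$, the codegrees $q(q^2-1)/2$, $q$ (twice), $\tfrac{q-1}{2}$ and $\tfrac{q+1}{2}$ (each with multiplicity $\tfrac{q-3}{4}$) and $q+1$; and analogously for $q \equiv 1 \bmod 4$ the codegrees $q(q^2-1)/2$, $q$ (twice), $\tfrac{q-1}{2}$ (multiplicity $\tfrac{q-5}{4}$), $q-1$ and $\tfrac{q+1}{2}$ (multiplicity $\tfrac{q-1}{4}$).

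The key elementary step is then uniform across all three cases. Writing $\mathfrak{c}_{\min}$ for the smallest codegree, since $\mathfrak{c}_j \ge \mathfrak{c}_{\min}$ for every $j$ we have $\mathfrak{c}_j^{-2} \le \mathfrak{c}_{\min}^{-1}\,\mathfrak{c}_j^{-1}$, whence
$$\sum_j \frac{1}{\mathfrak{c}_j^2} \ \le \ \frac{1}{\mathfrak{c}_{\min}} \sum_j \frac{1}{\mathfrak{c}_j} \ = \ \frac{1}{\mathfrak{c}_{\min}}$$
by Corollary \ref{coro:egy}. Consequently $2\sum_j \mathfrak{c}_j^{-2} \le 2/\mathfrak{c}_{\min}$, and it suffices to prove $\mathfrak{c}_{\min} \ge 2$: for then $2\sum_j \mathfrak{c}_j^{-2} \le 1 \le 1 + 1/\mathfrak{c}_1$, which is exactly Ostrik's inequality.

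Finally I would verify $\mathfrak{c}_{\min} \ge 2$ from the lists above, the only genuine subtlety being the small, degenerate values of $q$ at which some families of codegrees are empty and a naive reading of the ``generic'' minimum would wrongly produce $1$. For $q$ even with $q \ge 4$ the minimum is $q-1 \ge 3$, while for $q=2$ the family equal to $q-1$ has multiplicity $\tfrac{q-2}{2}=0$ and is absent, leaving $\min\{6,2,3\}=2$. For $q \equiv -1 \bmod 4$ the minimum is $\tfrac{q-1}{2} \ge 3$ as soon as $q \ge 7$, and for $q=3$ the two middle families are empty, leaving $\min\{12,3,4\}=3$. For $q \equiv 1 \bmod 4$ the minimum is $\tfrac{q-1}{2} \ge 4$ once $q \ge 9$, and for $q=5$ it is $\min\{60,5,4,3\}=3$. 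Thus $\mathfrak{c}_{\min} \ge 2$ in every case, and the proof concludes. The main obstacle is therefore not analytic but bookkeeping: confirming the rational-function identity $\mathfrak{c}_j = \FPdim(\mathcal{R}_q)/s_j$ and handling the degenerate small $q$ so that no spurious codegree $1$ is introduced.
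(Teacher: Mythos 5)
Your proof is correct, and it takes a genuinely different route from the paper's at the decisive step. Both arguments rest on the same input, namely that the formal codegrees of $\mathcal{R}_q$ are $\FPdim(\mathcal{R}_q)$ divided by the class sizes in the tables of \S\ref{sec:tab}; your rational-function interpolation justification for this is precisely the paper's Remark \ref{rk:interational} (see also Propositions \ref{prop:cod}, \ref{prop:cod2}, \ref{prop:cod3}), so that part is shared. The difference is what happens next: the paper evaluates $\sum_j \mathfrak{c}_j^{-2}$ explicitly in each congruence class as a rational function of $q$ and checks it is $< 1/2$, whereas you bypass the evaluation entirely via the soft bound $\sum_j \mathfrak{c}_j^{-2} \le \mathfrak{c}_{\min}^{-1}\sum_j \mathfrak{c}_j^{-1} = \mathfrak{c}_{\min}^{-1}$, using the Egyptian-fraction identity of Corollary \ref{coro:egy} (which the paper proves but never uses for this proposition), thereby reducing everything to the check $\mathfrak{c}_{\min}\ge 2$. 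Your route buys generality and transparency: it isolates the clean lemma that \emph{any} commutative fusion ring whose formal codegrees are all $\ge 2$ passes Ostrik's criterion, and it forces an explicit and correct treatment of the degenerate values $q\in\{2,3,5\}$, where some codegree families are empty and a naive generic minimum would be wrong (your case analysis there is accurate: the minima are $2$, $3$, $3$ respectively). What the paper's computation buys is the sharper quantitative bound $\sum_j \mathfrak{c}_j^{-2} < 1/2$, with the degenerate $q$ handled automatically because the multiplicities enter the generic formulas as coefficients that simply vanish; as a side remark, the paper's displayed intermediate expressions for $q\equiv\pm 1 \pmod 4$ contain a typo (cubes $(q\mp 1)^3$ where squares are meant), though its final rational functions are correct, which your codegree lists independently confirm.
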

\begin{proof} The formal codegrees can be computed from the class sizes in \S\ref{sec:tab}, see Remark \ref{rem:orb-stab} and \ref{rk:interational}.
\begin{align*} \sum_j 1/\mathfrak{c}_j^2 = & \left\{
    \begin{array}{lll}
      \frac{1}{q^2(q^2-1)^2}+\frac{1}{q^2}+\frac{q-2}{2}\frac{1}{(q-1)^2} + \frac{q}{2}\frac{1}{(q+1)^2}=\frac{q^5-q^3-3q^2+2}{q^2(q^2-1)^2}  & \mbox{if} & q \text{ even } \\
 \frac{4}{q^2(q^2-1)^2} + \frac{2}{q^2} + \frac{4}{(q-1)^3}\frac{q-3}{4} + \frac{4}{(q+1)^3}\frac{q-3}{4}+\frac{1}{(q+1)^2} = \frac{2q^5 - 3q^4 - 9q^2 + 6}{q^2(q^2-1)^2}    & \mbox{if} & q \equiv -1 \mod 4 \\
\frac{4}{q^2(q^2-1)^2} + \frac{2}{q^2} + \frac{4}{(q-1)^3}\frac{q-5}{4} + \frac{4}{(q+1)^3}\frac{q-1}{4}+\frac{1}{(q-1)^2} = \frac{2q^5 - 3q^4 - 4q^3 - 9q^2 + 6}{q^2(q^2-1)^2}        & \mbox{if} & q \equiv 1 \mod 4
    \end{array}
\right. \\ < & \frac{1}{2} < \frac{1}{2}(1+\frac{1}{\mathfrak{c}_1}).   \qedhere
\end{align*}
\end{proof}

\subsection{Drinfeld center criterion} \label{sub:drinfeld}
This criterion is also a commutative version of a criterion by V. Ostrik involving the formal codegrees $(\mathfrak{c}_i)$. Recall that an integral fusion category over $\mathbb{C}$ is pseudo-unitary by \cite[Propositions 9.6.5]{EGNO15}.

\begin{theorem} \label{thm:drinfeld}
Let $\mathcal{F}$ be a commutative fusion ring. If it admits a pseudo-unitary complex categorification then $\mathfrak{c}_1/\mathfrak{c}_i$ is an algebraic integer for all $i$.
\end{theorem}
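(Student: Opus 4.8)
The plan is to realize the ``formal class sizes'' $\mathfrak c_1/\mathfrak c_j$ as honest Frobenius--Perron dimensions of simple objects in the Drinfeld center of a categorification, and then invoke that Frobenius--Perron dimensions are always algebraic integers. Suppose $\mathcal C$ is a pseudo-unitary complex categorification of $\mathcal F$. First I would equip $\mathcal C$ with its canonical spherical structure, for which the categorical dimension of every object coincides with its Frobenius--Perron dimension; this is exactly what pseudo-unitarity provides (by \cite[Proposition 9.5.1]{EGNO15}). In particular $\dim(\mathcal C)=\FPdim(\mathcal C)=\FPdim(\mathcal F)=\mathfrak c_1$, the last equality because column $1$ of the eigentable is the Frobenius--Perron character $\lambda_{i,1}=\FPdim(b_i)$, so that $\mathfrak c_1=\sum_i\FPdim(b_i)^2=\FPdim(\mathcal F)$.

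The key step is Ostrik's identification of the formal codegrees of a fusion category with dimensions coming from its Drinfeld center. Concretely, let $F:\mathcal Z(\mathcal C)\to\mathcal C$ be the forgetful functor and $I$ its two-sided adjoint induction functor; then $I(\mathbf 1)$ is a commutative algebra in $\mathcal Z(\mathcal C)$ whose simple constituents $Z_j$ are in bijection with the characters of the commutative Grothendieck ring $K(\mathcal C)\cong\mathcal F$, and the formal codegree attached to the character $j$ is $\mathfrak c_j=\dim(\mathcal C)/\dim(Z_j)$. The group case is the sanity check: for $\mathcal C=\Rep(G)$ the center is $\Rep(D(G))$, the object $I(\mathbf 1)$ decomposes as $\bigoplus_{[g]}([g],\mathbf 1_{C_G(g)})$, and $\dim([g],\mathbf 1_{C_G(g)})=|G|/|C_G(g)|=\dim(\mathcal C)/\mathfrak c_{[g]}$, in agreement with Remark \ref{rem:orb-stab}. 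Rearranging and using sphericality gives $\dim(\mathcal C)/\mathfrak c_j=\dim(Z_j)=\FPdim(Z_j)$.

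With this in hand the statement is immediate. Since $\mathfrak c_1=\dim(\mathcal C)$, we obtain
\[
\frac{\mathfrak c_1}{\mathfrak c_j}=\frac{\dim(\mathcal C)}{\mathfrak c_j}=\FPdim(Z_j),
\]
and the Frobenius--Perron dimension of an object of a fusion category is always an algebraic integer, which proves the claim.

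The hard part, and the only place where the hypotheses genuinely enter, is matching two a priori different notions of formal codegree: the one computed in the excerpt purely from the eigentable of $\mathcal F$ (a Frobenius--Perron quantity, namely $\sum_i|\lambda_{i,j}|^2$ of Definition \ref{def:codeg}), and the categorical one produced by the center argument (a statement about the categorical dimensions $\dim(Z_j)$). Pseudo-unitarity is precisely the bridge, since it forces categorical and Frobenius--Perron dimensions to coincide; without it one only controls $\dim(\mathcal C)/\mathfrak c_j$ through categorical dimensions, which need not equal the combinatorial codegrees $\sum_i|\lambda_{i,j}|^2$. I would therefore concentrate the care on verifying that, under the chosen spherical structure, the character of $K(\mathcal C)$ indexing $Z_j$ has formal codegree exactly $\sum_i|\lambda_{i,j}|^2$, so that the categorical identity transfers verbatim to the $\mathfrak c_j$ of Definition \ref{def:codeg}.
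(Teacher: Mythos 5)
Your proposal is correct and follows essentially the same route as the paper: pseudo-unitarity identifies the categorical dimension of $\mathcal{C}$ with $\FPdim(\mathcal{C})=\mathfrak{c}_1$, and then Ostrik's identification of the formal codegrees with $\dim(\mathcal{C})/\dim(Z_j)$ for the simple constituents $Z_j$ of $I(\mathbf 1)$ in $\mathcal{Z}(\mathcal{C})$ (this is \cite[Theorem 2.13 and Corollary 2.14]{Ost15}, which the paper cites as a black box and you re-derive) gives that $\mathfrak{c}_1/\mathfrak{c}_j=\FPdim(Z_j)$ is an algebraic integer. The only difference is expository: you unpack the cited corollary, including the matching of Definition \ref{def:codeg} with Ostrik's character-theoretic codegrees, which the paper leaves implicit.
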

\begin{proof}
By pseudo-unitarity, the categorical dim is $\mathfrak{c}_1$ (the $\FPdim$), so the result follows by \cite[Corollary 2.14]{Ost15}.
\end{proof}

Recall that a pseudo-unitary fusion category over $\mathbb{C}$ is spherical by \cite[Propositions 9.5.1]{EGNO15}. If $\mathcal{F}$ admits a complex spherical categorification $\mathcal{C}$, then $(\mathfrak{c}_1/\mathfrak{c}_i)$ are exactly the $\FPdim$ of the simple objects of the Drinfeld center which contain the trivial object in $\mathcal{C}$ under the forgetful functor, by \cite[Theorem 2.13]{Ost15}.   

\begin{proposition} All the fusion rings of the interpolated family ($\mathcal{R}_q$) pass Drinfeld center criterion.
\end{proposition}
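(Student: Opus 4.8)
The plan is to verify directly the arithmetic conclusion of Theorem \ref{thm:drinfeld}: to \emph{pass} the Drinfeld center criterion one only needs the formal class sizes $\mathfrak{c}_1/\mathfrak{c}_i$ to be algebraic integers for every $i$, and no categorification has to be assumed. So it suffices to show these numbers are in fact (rational) integers, which is even stronger.

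First I would identify the formal codegrees of $\mathcal{R}_q$. By the Schur orthogonality relations (Theorem \ref{thm:SchurOrtho}), $\mathfrak{c}_j = \sum_i |\lambda_{i,j}|^2$ is the sum of squared moduli down the $j$-th column of the interpolated character table of \S\ref{sec:tab}; exactly as in the genuine group case (Remark \ref{rem:orb-stab}), this computes to the ``centralizer order'' $\mathfrak{c}_1/(\text{class size})_j$, where $\mathfrak{c}_1 = \FPdim(\mathcal{R}_q)$ equals $q(q^2-1)$ for $q$ even and $q(q^2-1)/2$ for $q$ odd. This identification is forced by the reconstruction of Theorem \ref{thm:TableFusion} (the codegrees are exactly the weights making column orthogonality hold) and is already used implicitly in the Ostrik criterion computation above. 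Consequently the formal class sizes $\mathfrak{c}_1/\mathfrak{c}_j$ are precisely the entries of the bottom ``class size'' row of the tables in Theorems \ref{table:q=0(2)}, \ref{table:q=-1(4)} and \ref{table:q=1(4)}.

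It then remains to observe that these entries are positive integers for every integer $q \ge 2$ in the relevant congruence class. For $q$ even they are $1$, $q^2-1$, $q(q+1)$, $q(q-1)$, all integers. For $q \equiv \pm 1 \mod 4$ the list additionally contains $\frac{q^2-1}{2}$ together with one of $\frac{q(q-1)}{2}$, $\frac{q(q+1)}{2}$; since $q$ is odd, each of $q^2-1$ and $q(q\pm 1)$ is even, so every halved quantity is again an integer. Hence $\mathfrak{c}_1/\mathfrak{c}_j \in \bZ$ for all $j$, in particular an algebraic integer, and the Drinfeld center criterion is passed.

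I expect no serious obstacle here: once the formal codegrees are recognised as the centralizer orders, the statement reduces to the trivial remark that class sizes are integers. The only point needing a line of care is the parity argument guaranteeing integrality of the halved class sizes in the two odd cases.
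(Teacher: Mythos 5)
Your proposal is correct and follows essentially the same route as the paper: identify the formal codegrees of $\mathcal{R}_q$ with the interpolated centralizer orders (via Remark \ref{rem:orb-stab} and the interpolation argument of Remark \ref{rk:interational}, made rigorous in Propositions \ref{prop:cod}, \ref{prop:cod2}, \ref{prop:cod3}), so that the formal class sizes $\mathfrak{c}_1/\mathfrak{c}_j$ are exactly the class-size rows of the tables in \S\ref{sec:tab}, and then observe these are integers. Your explicit parity check for the halved class sizes in the odd cases simply fleshes out what the paper dismisses as ``clearly integers.''
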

\begin{proof}
The numbers $(\mathfrak{c}_1/\mathfrak{c}_i)$ are precisely the \textit{class sizes} mentioned in the tables of \S\ref{sec:tab}, they are clearly integers, and they interpolate as expected by Remarks \ref{rem:orb-stab} and \ref{rk:interational}.
\end{proof}

\subsection{Extended cyclotomic criterion}
The following theorem is a slight extension of the usual cyclotomic criterion (on the simple object FPdims) of a fusion ring to all the entries of its eigentable, in the commutative case.  

\begin{theorem} \label{thm:cyclo}
Let $\mathcal{F}$ be a commutative fusion ring, and let $(\lambda_{i,j})$ be its eigentable. If $\mathcal{F}$ admits a complex categorification then $\lambda_{i,j}$ is a cyclotomic integer, for all $i,j$.
\end{theorem}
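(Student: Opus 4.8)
The plan is to establish, for each entry $\lambda_{i,j}$, the two properties that together are equivalent to being a cyclotomic integer: that $\lambda_{i,j}$ is an algebraic integer, and that it lies in some cyclotomic field $\mathbb{Q}(\zeta_n)$. The first property needs no categorification at all: by Definition \ref{def:eigentable} each $\lambda_{i,j}$ is an eigenvalue of the fusion matrix $M_i=(N_{i,j}^k)_{k,j}$, whose entries are nonnegative integers, so $\lambda_{i,j}$ is a root of the monic integer polynomial $\det(xI-M_i)$. Hence the entire content lies in the second property. To exploit a complex categorification $\mathcal{C}$ of $\mathcal{F}$, I would first recast the eigentable as a table of character values: since $\mathcal{F}$ is commutative, $\mathbb{C}\mathcal{B}\cong\prod_j\mathbb{C}$, and the columns of the eigentable are exactly the algebra characters $\chi_j\colon\mathbb{C}\mathcal{B}\to\mathbb{C}$ with $\chi_j(b_i)=\lambda_{i,j}$. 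So it suffices to show that every character of the Grothendieck ring $K_0(\mathcal{C})=\mathbb{Z}\mathcal{B}$ takes values in a fixed cyclotomic field.

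Next I would pass to the Drinfeld center $\mathcal{Z}(\mathcal{C})$, which for any fusion category over $\mathbb{C}$ is a braided (hence commutative) modular fusion category, together with the forgetful functor $F\colon\mathcal{Z}(\mathcal{C})\to\mathcal{C}$ and its induction adjoint $I$. The crucial point is that the induced ring homomorphism $[F]\colon K_0(\mathcal{Z}(\mathcal{C}))\to K_0(\mathcal{C})$ becomes surjective after $\otimes\,\mathbb{Q}$, and this can be proved using the formal codegrees of Definition \ref{def:codeg}. Indeed, the standard formula $FI(X)\cong\bigoplus_{Y\in\mathrm{Irr}(\mathcal{C})}Y\otimes X\otimes Y^{*}$ gives $[FI(X)]=[X]\cdot c$ in the image of $[F]$, where $c:=\sum_{Y}[Y\otimes Y^{*}]$ and commutativity of $K_0(\mathcal{C})$ is used to factor out $[X]$. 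Applying $\chi_j$ yields $\chi_j(c)=\sum_{Y}\lambda_{Y,j}\overline{\lambda_{Y,j}}=\mathfrak{c}_j>0$ (using $M_{Y^{*}}=M_Y^{*}$, so $\lambda_{Y^{*},j}=\overline{\lambda_{Y,j}}$), so $c$ is invertible in $K_0(\mathcal{C})\otimes\mathbb{Q}$; hence $c^{-1}$ and then every $[X]=([X]c)c^{-1}$ lie in the image of $[F]\otimes\mathbb{Q}$, proving surjectivity.

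Granting this, each character $\chi_j$ of $K_0(\mathcal{C})$ factors as $\chi_j=\tilde\chi_j\circ[F]$ for a character $\tilde\chi_j$ of the modular category $\mathcal{Z}(\mathcal{C})$, and choosing a $\mathbb{Q}$-rational preimage $z_i$ of $b_i$ gives $\lambda_{i,j}=\tilde\chi_j(z_i)$. Since the characters of a modular category are precisely the normalised $S$-matrix ratios $Y\mapsto S_{Y,\ell}/S_{0,\ell}$, each $\lambda_{i,j}$ is a $\mathbb{Q}$-linear combination of such ratios, so it lies in the field they generate. The hard part --- the only genuinely deep input --- is the cyclotomicity of this modular data: the ratios $S_{Y,\ell}/S_{0,\ell}$ generate an abelian, hence (by Kronecker--Weber) cyclotomic, extension of $\mathbb{Q}$. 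This is the Galois symmetry of modular data (Coste--Gannon, de Boer--Goeree), established for arbitrary modular categories via the congruence subgroup property of Ng--Schauenburg. Combining this field containment with the algebraic-integer property of the first paragraph shows that every $\lambda_{i,j}$ is a cyclotomic integer, as claimed.
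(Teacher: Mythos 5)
Your proposal follows, in substance, the same route as the paper's own proof, which is a one-line citation of \cite[Theorem 8.51]{ENO05} together with \cite[Theorem 2.11]{LPR2}: the first is precisely the de Boer--Goeree/Coste--Gannon Galois argument for cyclotomicity of modular data that you correctly isolate as the only deep input (Ng--Schauenburg is not needed for mere cyclotomicity), and the second plays exactly the role of your middle step, transporting characters of the Grothendieck ring to the Drinfeld center in the spirit of Ostrik's formal-codegree correspondence \cite{ostrik}. Your replacement for that second citation --- proving surjectivity of $[F]\otimes\mathbb{Q}$ from $[FI(X)]=[X]\cdot c$ with $c=\sum_Y[Y][Y^*]$, the computation $\chi_j(c)=\mathfrak{c}_j>0$ (valid since $\lambda_{Y^*,j}=\overline{\lambda_{Y,j}}$, by $M_{Y^*}=M_Y^*$ and normality), and the finite-dimensionality argument putting $c^{-1}$ inside the unital image subalgebra --- is correct in the commutative setting considered here, and is a clean self-contained substitute.

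There is, however, one genuine gloss: the unqualified assertion that $\mathcal{Z}(\mathcal{C})$ is a \emph{modular} fusion category for any complex fusion category $\mathcal{C}$. Modularity presupposes a spherical (ribbon) structure; M\"uger's theorem yields modularity of $\mathcal{Z}(\mathcal{C})$ when $\mathcal{C}$ is spherical, but for a general fusion category the center is only known to be a non-degenerate braided fusion category, and the existence of a spherical structure on it is not known unconditionally (this is tied to the open conjecture that every fusion category is pivotal). Without modularity, your identification of the characters of $K_0(\mathcal{Z}(\mathcal{C}))$ with normalized $S$-matrix ratios, and hence the appeal to the Galois symmetry of modular data, does not literally apply. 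Since Theorem \ref{thm:cyclo} assumes nothing beyond the existence of a complex categorification, your argument as written covers only the spherical case --- which does include every categorification of an \emph{integral} fusion ring, since integrality forces pseudo-unitarity and hence sphericality by \cite[Propositions 9.6.5 and 9.5.1]{EGNO15}, so all applications in this paper (the rings $\mathcal{R}_q$) are unaffected. For the statement in its full generality one must either first reduce to the spherical case (e.g. via the sphericalization of $\mathcal{C}$) or invoke \cite[Theorem 2.11]{LPR2} and \cite[Theorem 8.51]{ENO05} in the forms proved there, which are formulated so as to avoid this issue.
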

\begin{proof}
It follows from \cite[Theorem 8.51]{ENO05} and \cite[Theorem 2.11]{LPR2}.
\end{proof}

\begin{proposition} All the fusion rings of the interpolated family ($\mathcal{R}_q$) pass the extended cyclotomic criterion.
\end{proposition}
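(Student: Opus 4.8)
The plan is to verify the hypothesis of Theorem \ref{thm:cyclo} directly on the eigentables, that is, to show that every entry $\lambda_{i,j}$ occurring in the interpolated tables of Theorems \ref{table:q=0(2)}, \ref{table:q=-1(4)} and \ref{table:q=1(4)} is a cyclotomic integer, for every integer $q \ge 2$. First I would dispose of the easy entries: the values $0, \pm 1, q-1, q, q+1$ and $\pm 2(-1)^c$ are rational integers, hence cyclotomic integers; and the entries of the form $\pm(\zeta_{q-1}^{kc} + \zeta_{q-1}^{-kc})$ and $\pm(\zeta_{q+1}^{kc} + \zeta_{q+1}^{-kc})$ are $\mathbb{Z}$-linear combinations of roots of unity, hence cyclotomic integers by definition. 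This already settles the whole even case (Theorem \ref{table:q=0(2)}) and every column except one in each of the two odd cases.

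The only entries requiring genuine work are the ``half'' entries $\frac{-1 + i(-1)^{k+c}\sqrt{q}}{2}$ arising when $q \equiv -1 \mod 4$ and $\frac{1 + (-1)^{k+c}\sqrt{q}}{2}$ arising when $q \equiv 1 \mod 4$. Since $(-1)^{k+c} = \pm 1$, it is enough to treat $\alpha := \frac{-1 \pm i\sqrt{q}}{2}$ and $\beta := \frac{1 \pm \sqrt{q}}{2}$. I would first confirm that these are algebraic integers by exhibiting monic integer quadratics they satisfy, namely $\alpha^2 + \alpha + \frac{q+1}{4} = 0$ and $\beta^2 - \beta + \frac{1-q}{4} = 0$; the constant terms are integers precisely because $q \equiv -1 \mod 4$ (so $4 \mid q+1$), respectively $q \equiv 1 \mod 4$ (so $4 \mid q-1$).

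It then remains to see that these algebraic integers are cyclotomic. Both lie in a quadratic field, $\mathbb{Q}(\sqrt{-q})$ respectively $\mathbb{Q}(\sqrt{q})$, which is abelian over $\mathbb{Q}$, so by the Kronecker--Weber theorem it embeds into some cyclotomic field $\mathbb{Q}(\zeta_n)$; since the ring of integers of $\mathbb{Q}(\zeta_n)$ is exactly $\mathbb{Z}[\zeta_n]$, every algebraic integer lying in it is a cyclotomic integer, which finishes the argument. Alternatively, and more explicitly, I would observe that $\sqrt{q}$ is a cyclotomic integer for every positive integer $q$, by factoring $q$ into primes and invoking the classical quadratic Gauss sums $\sum_{a}\left(\frac{a}{p}\right)\zeta_p^a \in \{\sqrt{p},\, i\sqrt{p}\}$ for odd primes $p$, together with $\sqrt{2} = \zeta_8 + \zeta_8^{-1}$ and closure of cyclotomic integers under products and integer multiples. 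The one subtlety that survives even this explicit route is the division by $2$, i.e.\ the integrality of $\alpha$ and $\beta$ themselves, which is controlled entirely by the residue of $q$ modulo $4$ as in the previous step. I expect this division-by-$2$ integrality to be the only real point of the proof, everything else being immediate from the explicit shape of the tables (and consistent with the fact that for prime-power $q$ these entries are genuine character values of $\PSL(2,q)$, hence automatically cyclotomic).
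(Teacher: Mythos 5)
Your proposal is correct and takes essentially the same approach as the paper, whose entire proof is the one-line observation that all table entries are cyclotomic integers, ``for the ones with a square root, consider the quadratic Gauss sums.'' You have simply filled in the details that this one-liner leaves implicit: the reduction to the entries $\frac{-1\pm i\sqrt{q}}{2}$ and $\frac{1\pm\sqrt{q}}{2}$, their algebraic integrality via the monic quadratics (where the congruence on $q \bmod 4$ enters), and the Gauss-sum/Kronecker--Weber justification that they lie in $\mathbb{Z}[\zeta_n]$ for some $n$.
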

\begin{proof}
All the entries in the tables of \S\ref{sec:tab} are clearly cyclotomic integers (for the ones with a square root, consider the quadratic Gauss sums).
\end{proof}

\subsection{Isaacs criterion} \label{sub:isaacs}
Isaacs' \cite[Theorem 3.7]{Isa} let us wondered about a stronger version of Theorem \ref{thm:drinfeld}.

\begin{definition}[Isaacs property] A commutative fusion ring $\mathcal{F}$ with eigentable $(\lambda_{i,j})$ is called \emph{Isaacs} if $\displaystyle{\frac{\lambda_{i,j}\mathfrak{c}_1}{\lambda_{i,1}\mathfrak{c}_j}}$ is an algebraic integer for all $i,j$.
\end{definition} 

As observed by P. Etingof \cite{eti20} (see the details in \cite{LPR2} or \cite{ENO21}), if a commutative fusion ring is Isaacs then it is of \emph{Frobenius type} (i.e. $\frac{\mathfrak{c}_1}{\lambda_{i,1}}$ is an algebraic integer, for all $i$), which is related to the generalization of Kaplansky's 6th conjecture to complex fusion categories.

\begin{theorem}[Proposition 5.2 in \cite{ENO21}] A complex spherical braided fusion category is \emph{Isaacs}.
\end{theorem}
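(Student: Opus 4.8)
The plan is to generalize the classical fact underlying Isaacs' theorem: for a finite group $G$, the central character value $\omega_i(\widehat{K}_j)=\frac{|K_j|\,\chi_i(g_j)}{\chi_i(1)}$ is an algebraic integer \emph{because} the class sums $\widehat{K}_j$ form a basis of $Z(\bC G)$ with nonnegative integer structure constants, hence are integral over $\bZ$, and $\omega_i(\widehat{K}_j)$ is an eigenvalue of the (integer) matrix of multiplication by $\widehat{K}_j$. In the notation of the excerpt this number is exactly $\mu_{i,j}:=\frac{\lambda_{i,j}\mathfrak{c}_1}{\lambda_{i,1}\mathfrak{c}_j}$ (using Remark \ref{rem:orb-stab}), so the Isaacs property asks that the ``dual table'' $(\mu_{i,j})$ consist of algebraic integers. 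I would therefore try to exhibit $(\mu_{i,j})$ as the table of character values of a commutative algebra possessing a distinguished basis with nonnegative integer structure constants. Note that the braiding is used already to make sense of the statement, since it forces the Grothendieck ring to be commutative.

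The clean core is the \emph{modular} case. Here $\lambda_{i,j}=S_{ij}/S_{0j}$ for the (symmetric) $S$-matrix, $\lambda_{i,1}=d_i$, and by the identification of formal codegrees with the dimensions of the simple objects $X_k\boxtimes X_k^*$ of $\mathcal{Z}(\mathcal{C})\simeq \mathcal{C}\boxtimes\mathcal{C}^{\mathrm{rev}}$ lying in $I(\mathbf 1)$, one gets $\mathfrak{c}_1/\mathfrak{c}_j=d_j^2$. A direct substitution, using only the symmetry $S_{ij}=S_{ji}$, then gives
\[ \mu_{i,j}=\frac{\lambda_{i,j}\,\mathfrak{c}_1}{\lambda_{i,1}\,\mathfrak{c}_j}=\frac{S_{ij}\,d_j}{d_i}=\lambda_{j,i}\,\lambda_{j,1}. \]
The right-hand side is a product of two eigenvalues of the fusion matrix $M_j$, which has nonnegative integer entries; a product of algebraic integers being an algebraic integer, the modular case is done. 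Sphericality is what guarantees here a well-defined ribbon/$S$-matrix structure.

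For a general spherical braided $\mathcal{C}$ (possibly degenerate, e.g.\ Tannakian), I would reduce to the modular situation by passing to the Drinfeld center $\mathcal{Z}(\mathcal{C})$, which is modular. The braiding supplies a braided tensor functor $\mathcal{C}\to\mathcal{Z}(\mathcal{C})$ splitting the forgetful functor, and by \cite[Theorem 2.13]{Ost15} the numbers $\mathfrak{c}_1/\mathfrak{c}_j$ are precisely the Frobenius--Perron dimensions of the simple objects $Z_j$ of $\mathcal{Z}(\mathcal{C})$ occurring in $I(\mathbf 1)$. These $Z_j$ span a based subring $L\subseteq \mathrm{Gr}(\mathcal{Z}(\mathcal{C}))$ --- the categorical analogue of the class algebra $Z(\bC G)$ --- whose structure constants are nonnegative integers because $\mathcal{Z}(\mathcal{C})$ is an honest fusion category. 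The plan is then to show that the $\mu_{i,j}$ are exactly the values of the characters of $L$ on the basis $\{Z_j\}$, with the characters indexed by the simple objects $b_i$ of $\mathcal{C}$ via the modular $S$-matrix pairing of $\mathcal{Z}(\mathcal{C})$ between $Z_j$ and the image of $b_i$ under $\mathcal{C}\to\mathcal{Z}(\mathcal{C})$; granting this, each $\mu_{i,j}$ is an eigenvalue of an integer matrix and hence an algebraic integer. In the Tannakian case $\mathcal{C}=\Rep(G)$ this $L$ is literally $Z(\bC G)$, recovering Isaacs' original statement.

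The hard part will be the last step: establishing the precise $S$-matrix formula inside $\mathcal{Z}(\mathcal{C})$ that identifies the character table of $L$ with $(\mu_{i,j})$, together with the bookkeeping of normalizations forced by sphericality, and the verification that the characters of $L$ are correctly indexed by the simple objects of $\mathcal{C}$ (this is where the braiding, rather than mere sphericality, is essential, and where the degenerate columns --- invisible to the double braiding of $\mathcal{C}$ itself --- are recovered from the genuinely modular data of the center). A cleaner alternative worth attempting is to avoid the center entirely and verify directly that the transpose table $(\mu_{i,j})$ satisfies the hypotheses of the Reconstruction Theorem \ref{thm:TableFusion} with nonnegative integer structure constants, which would simultaneously prove integrality; but I expect the positivity of those structure constants to be exactly as hard as the center computation.
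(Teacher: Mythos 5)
First, a point of order: the paper does not prove this theorem at all --- it is imported verbatim from \cite[Proposition 5.2]{ENO21} --- so your proposal can only be measured against that proof, whose route (Drinfeld center plus Ostrik's formal-codegree correspondence) is essentially the one you sketch. Your modular case is correct and complete: writing $\tilde S$ for the unnormalized $S$-matrix, one has $\lambda_{i,j}=\tilde S_{ij}/d_j$, $\lambda_{i,1}=d_i$ and $\mathfrak{c}_1/\mathfrak{c}_j=d_j^2$, so symmetry of $\tilde S$ gives $\mu_{i,j}=\lambda_{j,i}\lambda_{j,1}$, a product of two eigenvalues of the nonnegative integer matrix $M_j$, hence an algebraic integer.

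The general case, however, has two genuine gaps. (i) Your ``based subring $L$'' does not exist: the span of the simple summands $Z_j$ of $I(\mathbf 1)$ (the induction of the unit to $\mathcal{Z}(\mathcal{C})$) is \emph{not} closed under multiplication in $\mathrm{Gr}(\mathcal{Z}(\mathcal{C}))$. Already for $\mathcal{C}=\Rep(S_3)$, where $\mathcal{Z}(\mathcal{C})\simeq\Rep(D(S_3))$ and $I(\mathbf 1)=(e,\mathbf 1)\oplus(K_2,\mathbf 1)\oplus(K_3,\mathbf 1)$ ($K_2$ the transpositions, $K_3$ the $3$-cycles, each paired with the trivial representation of the centralizer), the product $(K_2,\mathbf 1)\otimes(K_2,\mathbf 1)$ contains the simple $(e,\mathrm{std})$, which is not a summand of $I(\mathbf 1)$; the categorical avatar of the class algebra $Z(\bC G)$ is $\mathrm{Hom}_{\mathcal{C}}(\mathbf 1,F(I(\mathbf 1)))$ with its convolution product, not a based subring of the Grothendieck ring. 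So ``eigenvalue of the integer multiplication matrix of $L$'' is not an available mechanism. (ii) The step you defer as ``the hard part'' is the entire content of the theorem: the identity $\phi_j(b_i)=\tilde S^{\mathcal{Z}}_{\iota(b_i),Z_j}/\dim(Z_j)$, where $\iota:\mathcal{C}\to\mathcal{Z}(\mathcal{C})$ is the braided embedding and $Z_j$ is Ostrik's simple attached to the character $\phi_j$ with $\mathfrak{c}_j=\dim(\mathcal{C})/\dim(Z_j)$ \cite[Theorem 2.13]{Ost15}; this is exactly where braiding and sphericality enter, and it is unproven in your text. Note that once (ii) is established, gap (i) evaporates and no subring is needed: $\mu_{i,j}=\tilde S^{\mathcal{Z}}_{\iota(b_i),Z_j}/d_i$ is then the value of the ring homomorphism $[Y]\mapsto\tilde S^{\mathcal{Z}}_{\iota(b_i),Y}/d_{\iota(b_i)}$ of the \emph{whole} fusion ring $\mathrm{Gr}(\mathcal{Z}(\mathcal{C}))$ (normalized rows of the $S$-matrix of a braided spherical category are characters) at the basis element $[Z_j]$, and any character value at a basis element is an algebraic integer, since basis elements satisfy the monic integer characteristic polynomials of their fusion matrices. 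There is also an unaddressed normalization point: the Isaacs property is defined with the Frobenius--Perron column ($\lambda_{i,1}=\FPdim(b_i)$, $\mathfrak{c}_1=\FPdim(\mathcal{F})$), while the $S$-matrix argument naturally produces categorical dimensions, and these agree only in the pseudo-unitary case. In sum: right strategy (the same as \cite{ENO21}), correct modular case, but the proposed integrality mechanism for the general case is broken and the key lemma is missing, so this is a plan rather than a proof.
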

\noindent It  makes the Isaacs property a criterion for complex spherical braided categorification of commutative fusion rings. Now, we know (\S\ref{sec:braid}) that the interpolated fusion rings $\mathcal{R}_q$ are non-braided for $q$ non prime-power, but they are still Isaacs:

\begin{proposition} All the fusion rings of the interpolated family ($\mathcal{R}_q$) are Isaacs.
\end{proposition}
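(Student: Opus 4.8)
The plan is to reduce the Isaacs property, exactly as was done for the previous criteria, to the explicit eigentables of \S\ref{sec:tab}. The first step is to rewrite the quantity under control. Since $\lambda_{i,1}$ is the degree $d_i$ realizing $\max_j|\lambda_{i,j}|$ (the first column), and $\mathfrak{c}_1/\mathfrak{c}_j$ is the formal class size $s_j$ listed at the bottom of each table --- an integer, as already recorded in the proof of the Drinfeld center criterion via Remarks \ref{rem:orb-stab} and \ref{rk:interational} --- the Isaacs quantity factors as
$$\frac{\lambda_{i,j}\,\mathfrak{c}_1}{\lambda_{i,1}\,\mathfrak{c}_j}=\lambda_{i,j}\cdot\frac{s_j}{d_i}.$$
So it suffices to show that $\lambda_{i,j}\,s_j/d_i$ is an algebraic integer for every cell $(i,j)$ of each of the three tables.

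Next I would dispatch the cells according to the shape of $\lambda_{i,j}$. For prime-power $q$ this number is the classical central character of $\Rep(\PSL(2,q))$, an algebraic integer by \cite[Theorem 3.7]{Isa}; but the entries, degrees and class sizes are given by the same closed formulas for every integer $q\ge2$, so I can simply verify algebraic integrality of these formulas directly, table by table. Zero entries are trivial. Whenever $\lambda_{i,j}$ carries a cyclotomic factor $-\zeta_{q\pm1}^{\bullet}-\zeta_{q\pm1}^{-\bullet}$, the accompanying ratio is exactly $s_j/d_i=q$ (for instance in Theorem \ref{table:q=0(2)} the degree-$(q-1)$ row and last column give $q(q-1)/(q-1)=q$), so the quantity is $q$ times a real cyclotomic integer, hence algebraic. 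For the rational entries the ratio $s_j/d_i$ is an honest integer by polynomial identities such as $(q^2-1)/(q-1)=q+1$ and $q(q\pm1)/q=q\pm1$; in the two odd tables of Theorems \ref{table:q=-1(4)} and \ref{table:q=1(4)} the stray factors of $2$ cancel because $q$ is odd (e.g. $\frac{(q^2-1)/2}{(q-1)/2}=q+1$, while an entry like $-2(-1)^c$ against $s_j/d_i=q/2$ clears the half to give $-(-1)^c q$).

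The step I expect to be the only genuine obstacle is the column of class size $(q^2-1)/2$ in the two odd tables, whose entries involve $\sqrt q$: the value $\tfrac12(-1+i(-1)^{k+c}\sqrt q)$ (for $q\equiv-1$) or $\tfrac12(1+(-1)^{k+c}\sqrt q)$ (for $q\equiv1$) sits in the degree-$\frac{q\mp1}{2}$ row, so $d_i$ is a half-integer and I must check that the $2$'s balance and that the surviving radical is integral. Here $s_j/d_i=\frac{(q^2-1)/2}{(q\mp1)/2}=q\pm1$ is an integer, and multiplying by the entry gives $\frac{q+1}{2}(-1+i(-1)^{k+c}\sqrt q)$ (resp. $\frac{q-1}{2}(1+(-1)^{k+c}\sqrt q)$). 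Since $q$ is odd the prefactor $\frac{q\pm1}{2}$ is an integer, and the radical $i\sqrt q=\sqrt{-q}$ (resp. $\sqrt q$) is an algebraic integer as a root of $x^2+q$ (resp. $x^2-q$); hence each such quantity is an integer combination of $1$ and an algebraic integer, so algebraic. Assembling the three tables then finishes the proof, and I note that --- unlike for the Schur product and Ostrik criteria --- no large-$q$ interpolation is needed here, the integrality being manifest from the closed forms.
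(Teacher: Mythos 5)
Your proof is correct and follows essentially the same route as the paper's: write the Isaacs quantity as the entry $\lambda_{i,j}$ times the ratio $\mathfrak{c}_1/(\lambda_{i,1}\mathfrak{c}_j)$ (formal class size over degree) and check integrality cell by cell on the three tables of \S\ref{sec:tab}. You are in fact more careful than the paper at the one delicate spot: the paper's proof asserts that $\mathfrak{c}_1/(\lambda_{i,1}\mathfrak{c}_j)$ is an integer whenever $j\neq 1$ and $\lambda_{i,j}\neq 0$, and this is literally false at the cells with entry $\pm 2(-1)^c$ in Theorems \ref{table:q=-1(4)} and \ref{table:q=1(4)} (row of degree $q-1$, resp. $q+1$, against the column of class size $q(q-1)/2$, resp. $q(q+1)/2$), where the ratio equals $q/2$; your observation that the factor $2$ in the entry absorbs the half is precisely the justification the paper omits. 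The remaining differences are immaterial: you certify the $\sqrt{q}$-entries directly as roots of $x^2\mp q$ rather than citing Theorem \ref{thm:cyclo} (quadratic Gauss sums), and your blanket claim that rational entries come with an integer ratio $s_j/d_i$ does not cover the column $j=1$ (where the ratio is $1/d_i$), but there the Isaacs quantity is identically $1$, a case the paper dispatches as obvious.
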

\begin{proof}
The case $j=1$ or $\lambda_{i,j}=0$ are obvious. If $j \neq 1$ and $\lambda_{i,j}\neq 0$, then observe on the tables that $\frac{\mathfrak{c}_1}{\lambda_{i,1}\mathfrak{c}_j}$ is an integer, whereas $\lambda_{i,j}$ is a cyclotomic integer (by Theorem \ref{thm:cyclo}).
\end{proof}

%\begin{conjecture} \label{conj:isa}
%If $\mathcal{F}$ admits a complex categorification, then 
%\end{conjecture} 

\subsection{Zero spectrum criterion}
Here is a general categorification criterion over every field. It corresponds to the existence of a pentagon equation of the form $xy=0$, with $x,y \neq 0$ (see examples as $\mathcal{F}_{660}$ in \cite{LPR1}).

\begin{theorem}[see \cite{LPR1}] \label{thm:zerocrit} % $xy=0$ with $x,y \neq 0$
Let $\mathcal{F}$ be a fusion ring with fusion rules $(N_{i,j}^{k})$. If there are indices $i_1, \dots, i_9$, such that
\begin{align}
N_{i_4,i_1}^{i_6}, \ N_{i_5,i_4}^{i_2}, \ N_{i_5,i_6}^{i_3},& \ N_{i_7,i_9}^{i_1}, \ N_{i_2,i_7}^{i_8}, \ N_{i_8,i_9}^{i_3} \neq 0, \label{line:1} \\
\sum_{k} N_{i_4,i_7}^{k} N_{i_5^*,i_8}^{k} N_{i_6,i_9^*}^{k}&=0, \label{line:2} \\  
N_{i_2,i_1}^{i_{3}}&=1, \label{line:3}  \\ 
\sum_{k} N_{i_5,i_4}^{k} N_{i_3,i_1^*}^{k} &=1 
~\text{or}~
\sum_{k} N_{i_2,i_4^*}^{k} N_{i_3,i_6^*}^{k} =1 
~\text{or}~
\sum_{k} N_{i_5^*,i_2}^{k} N_{i_6,i_1^*}^{k} =1, 
\\
\sum_{k} N_{i_2,i_7}^{k} N_{i_3,i_9^*}^{k} &=1 
~\text{or}~
\sum_{k} N_{i_8,i_7^*}^{k} N_{i_3,i_1^*}^{k} =1
~\text{or}~
\sum_{k} N_{i_2^*,i_8}^{k} N_{i_1,i_9^*}^{k} =1.
\end{align}
then $\mathcal{F}$ cannot be categorified at all, i.e. it is not the Grothendieck ring of a fusion category, over any field.
\end{theorem}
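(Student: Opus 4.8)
The plan is to assume a categorification and to read off a single scalar entry of one pentagon identity, arranging the combinatorial hypotheses so that this entry is at once a product $xy$ of two nonzero associativity coefficients and equal to $0$. So suppose $\mathcal{F}$ is the Grothendieck ring of a fusion category $\mathcal{C}$ over some field $K$. I would fix representatives $X_1,\dots,X_r$ of the isomorphism classes of simple objects (with $X_i$ corresponding to $b_i$) and choose bases of all multiplicity spaces $\mathrm{Hom}(X_i\otimes X_j,X_k)$, so that the associativity constraint is encoded by invertible $F$-matrices $[F^{abc}_d]$ (components of the associator between the two bracketings of $a\otimes b\otimes c$ landing in $d$) satisfying the pentagon axiom. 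The point to keep in mind throughout is that any \emph{structurally forbidden} entry, i.e.\ one indexed by an intermediate object $m$ whose associated $\mathrm{Hom}$ space has dimension $0$, is automatically $0$, while invertibility forbids a zero row or a zero column.

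Next I would specialize the pentagon identity to the external quadruple $(a,b,c,d)=(X_{i_5},X_{i_4},X_{i_7},X_{i_9})$ with target $e=X_{i_3}$, and to the fixed intermediate channels $f=X_{i_2}$, $g=X_{i_8}$, $k=X_{i_6}$, $l=X_{i_1}$. The six nonvanishing conditions of \eqref{line:1} are exactly what makes these channels admissible: $N_{i_5,i_4}^{i_2}$, $N_{i_2,i_7}^{i_8}$, $N_{i_8,i_9}^{i_3}$ render the tree $((ab)c)d\to e$ through $f,g$ nonvacuous, while $N_{i_7,i_9}^{i_1}$, $N_{i_4,i_1}^{i_6}$, $N_{i_5,i_6}^{i_3}$ render the tree $(ab)(cd)\to e$ through $f,l,k$ nonvacuous. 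In the standard form
\[
\sum_{h}[F^{abc}_{g}]_{f,h}\,[F^{ahd}_{e}]_{g,k}\,[F^{bcd}_{k}]_{h,l}=[F^{fcd}_{e}]_{g,l}\,[F^{abl}_{e}]_{f,k},
\]
the right-hand side is then the single product $x\cdot y$, with $x=[F^{X_{i_2}X_{i_7}X_{i_9}}_{X_{i_3}}]_{X_{i_8},X_{i_1}}$ and $y=[F^{X_{i_5}X_{i_4}X_{i_1}}_{X_{i_3}}]_{X_{i_2},X_{i_6}}$.

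I would then show the left-hand side vanishes. A summand indexed by $h$ is nonzero only if all three factors are structurally present, which by Frobenius reciprocity means $N_{i_4,i_7}^{h}\neq0$ (so $h\subset b\otimes c$), $N_{i_5^*,i_8}^{h}\neq0$ (so $g\subset a\otimes h$) and $N_{i_6,i_9^*}^{h}\neq0$ (so $k\subset h\otimes d$) simultaneously. The vanishing hypothesis \eqref{line:2} says that no such $h$ exists, so every summand carries a $0$-dimensional factor and the sum is $0$. Hence $xy=0$. Finally I would force $x\neq0$ and $y\neq0$: by Frobenius reciprocity the first alternative of the fourth displayed condition reads $\sum_h N_{i_5,i_4}^{h}N_{i_3,i_1^*}^{h}=\dim\mathrm{Hom}(X_{i_5}\otimes X_{i_4}\otimes X_{i_1},X_{i_3})=1$, so $[F^{X_{i_5}X_{i_4}X_{i_1}}_{X_{i_3}}]$ is a $1\times1$ invertible matrix whose unique entry $y$ is nonzero (its single admissible channel being $f=X_{i_2},k=X_{i_6}$, guaranteed by \eqref{line:1} and $N_{i_2,i_1}^{i_3}=1$ from \eqref{line:3}); symmetrically the first alternative of the fifth displayed condition gives $\dim\mathrm{Hom}(X_{i_2}\otimes X_{i_7}\otimes X_{i_9},X_{i_3})=1$, making $x\neq0$. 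This contradicts $xy=0$ and rules out any categorification over any field.

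The main obstacle is the multiplicity bookkeeping in the last step: one must verify that each of the two \emph{remaining} alternatives in the fourth and fifth displayed conditions — which count a partial recoupling rather than the full triple-$\mathrm{Hom}$ dimension — still confines the entry to a $1\times1$ row or column block, so that invertibility (no zero row, no zero column) forces it nonzero even when the ambient $F$-matrix is larger; this is what the three-fold ``or'' is designed to capture. The other delicate point is aligning the precise multiplicity-index conventions of the pentagon with the $\mathrm{Hom}$-space bases, and checking rigorously that a structurally forbidden factor is genuinely $0$ (the relevant $\mathrm{Hom}$ space being $0$-dimensional) rather than merely conjecturally small.
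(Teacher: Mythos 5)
Your strategy coincides with the one this paper ascribes to its source: Theorem \ref{thm:zerocrit} is quoted here from \cite{LPR1} without proof, the only indication given being that it encodes ``a pentagon equation of the form $xy=0$, with $x,y \neq 0$'', and your pentagon specialization realizes exactly that. Your identification of the two sides is correct: \eqref{line:1} makes both parenthesization trees admissible, \eqref{line:2} annihilates every summand of the multiplicity sum (each summand contains a factor indexed by a zero-dimensional Hom space), and the product side is $x\,y$ with $x=[F^{i_2 i_7 i_9}_{i_3}]_{i_8,i_1}$ and $y=[F^{i_5 i_4 i_1}_{i_3}]_{i_2,i_6}$. Your treatment of the first alternative of each of the two ``or'' conditions is also right: by Frobenius reciprocity that sum equals the full size of the relevant $F$-matrix, so the matrix is $1\times 1$ and invertibility makes its unique entry nonzero. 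One refinement you should make explicit: in the multiplicity-resolved pentagon the product side is in general a sum over a basis of $\mathrm{Hom}(X_{i_3}, X_{i_2}\otimes X_{i_1})$, and it is precisely \eqref{line:3} that collapses this sum to the single product $xy$; so \eqref{line:3} does more work than the admissibility role you assign to it.

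The genuine gap is your proposed mechanism for the two remaining alternatives, and it would fail as stated. Take the second alternative of the fourth condition, $\sum_{k} N_{i_2,i_4^*}^{k} N_{i_3,i_6^*}^{k}=1$. By Frobenius reciprocity and semisimplicity this equals $\dim\mathrm{Hom}(X_{i_2}\otimes X_{i_4}^{*}\otimes X_{i_6}, X_{i_3})$, i.e.\ the total size of the rotated matrix $[F^{i_2 i_4^{*} i_6}_{i_3}]$; the summand indexed by $k$ is nonzero exactly when $X_{i_2}\subset X_k\otimes X_{i_4}$ and $X_{i_3}\subset X_k\otimes X_{i_6}$, so this condition constrains which simple objects can play the role of $X_{i_5}$ in the tetrahedron $\{i_5,i_4,i_1,i_3,i_2,i_6\}$. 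It puts no bound at all on the number of columns of the ambient matrix $[F^{i_5 i_4 i_1}_{i_3}]$ (those are indexed by simples $X_k\subset X_{i_4}\otimes X_{i_1}$ with $X_{i_3}\subset X_{i_5}\otimes X_k$), so the row containing $y$ may have many admissible entries, and ``no zero row'' of an invertible matrix only yields that \emph{some} entry of that row is nonzero --- not that $y$ is. What the alternative really buys is that a \emph{different} $F$-matrix, obtained by bending one external strand of the same tetrahedron with evaluation/coevaluation maps, is $1\times 1$; to conclude $y\neq 0$ one must know that an $F$-symbol entry vanishes if and only if its bent counterpart does. That duality covariance of $F$-symbols does hold in any fusion category over any field (it rests on rigidity, via the mixed pentagon identities involving duality morphisms), but it is a theorem --- it is the technical core of the framework of \cite{LPR1} --- and it is the ingredient missing from your sketch; it cannot be replaced by invertibility of a single $F$-matrix. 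As written, your argument establishes the criterion only in the special case where the first alternatives of the fourth and fifth conditions hold.
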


Let us take the index $1$ for the trivial object.

\begin{lemma} \label{lem:interzero}
Let $\mathcal{F}$ be a fusion ring, and assume the existence of indices $i_1, \dots, i_9$ such that (\ref{line:1}) and (\ref{line:2}) hold. Then $i_j \neq 1$,  for all $j \in \{4,\dots,9 \}$.
\end{lemma}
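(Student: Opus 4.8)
The plan is to argue by contradiction: suppose $i_j = 1$ for some $j \in \{4,\dots,9\}$ and derive a violation of (\ref{line:2}). The guiding observation is that the trivial object is neutral, so $N_{1,m}^{k} = N_{m,1}^{k} = \delta_{m,k}$ and $1^* = 1$. Hence, whenever one of the three fusion coefficients $N_{i_4,i_7}^{k}$, $N_{i_5^*,i_8}^{k}$, $N_{i_6,i_9^*}^{k}$ occurring in (\ref{line:2}) carries an index equal to $1$, that coefficient degenerates to a Kronecker delta in $k$, so the sum over $k$ collapses to a single surviving term. I would then show that this surviving term is a product of \emph{nonzero} nonnegative integers, which contradicts the vanishing assumed in (\ref{line:2}). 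The two inputs that make this work are, first, the two relations among (\ref{line:1}) in which $i_j$ appears (each forces an equality of indices once $i_j=1$), and second, Frobenius reciprocity, which reconstructs the two remaining factors of the surviving term as coefficients already known to be nonzero.

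Concretely, take the representative case $i_4 = 1$. From (\ref{line:1}), $N_{i_4,i_1}^{i_6} = \delta_{i_1,i_6} \neq 0$ forces $i_6 = i_1$, and $N_{i_5,i_4}^{i_2} = \delta_{i_5,i_2} \neq 0$ forces $i_5 = i_2$. In (\ref{line:2}) the factor $N_{i_4,i_7}^{k} = N_{1,i_7}^{k} = \delta_{i_7,k}$ collapses the sum to the single term at $k=i_7$, namely $N_{i_5^*,i_8}^{i_7}\, N_{i_6,i_9^*}^{i_7} = N_{i_2^*,i_8}^{i_7}\, N_{i_1,i_9^*}^{i_7}$. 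By Frobenius reciprocity $N_{ij}^{k} = N_{i^*k}^{j}$ one has $N_{i_2^*,i_8}^{i_7} = N_{i_2,i_7}^{i_8}$, which is nonzero by (\ref{line:1}); and by $N_{ij}^{k} = N_{kj^*}^{i}$ one has $N_{i_1,i_9^*}^{i_7} = N_{i_7,i_9}^{i_1}$, again nonzero by (\ref{line:1}). Thus the surviving term is a product of two positive integers, contradicting (\ref{line:2}). This settles $i_4 \neq 1$.

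The remaining five cases ($i_5,i_6,i_7,i_8,i_9$) are handled by exactly the same mechanism: setting $i_j = 1$ collapses one of the three factors of (\ref{line:2}) to a delta, the two equations of (\ref{line:1}) containing $i_j$ pin down two index identifications, and Frobenius reciprocity (together with $1^*=1$ and the involutivity of $*$) rewrites the two surviving factors as two of the nonzero coefficients listed in (\ref{line:1}). The bookkeeping respects the evident symmetry $i_4 \leftrightarrow i_7,\ i_5\leftrightarrow i_8,\ i_6\leftrightarrow i_9$ of the configuration, so the six cases fall into three mirror pairs and only three genuinely distinct computations occur. I expect the only real obstacle to be this clerical matching: in each case one must apply the correct instance of Frobenius reciprocity so that the two leftover factors are recognized as entries of the list (\ref{line:1}) rather than unrelated structure constants. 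There is no deeper difficulty, since every step is forced once the degenerate factor is identified.
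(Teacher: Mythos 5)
Your proposal is correct and follows essentially the same route as the paper's own proof: assume $i_j=1$, use the two relations of (\ref{line:1}) containing $i_j$ plus neutrality to collapse the sum in (\ref{line:2}) to a single product, and then identify its two factors, via Frobenius reciprocity, with coefficients declared nonzero in (\ref{line:1}), yielding the contradiction. The worked case $i_4=1$ matches the paper's computation exactly, and the paper likewise dispatches the remaining cases as "similar."
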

\begin{proof}
Assume that there is $j \in \{4,\dots,9 \}$ such that $i_j=1$. We will reach a contradiction. Let write the details for $j=4$ (the other cases are similar): assume that $i_4=1$, by (\ref{line:1}) and the neutral axiom of fusion ring, $i_1=i_6$ and $i_2=i_5$; in addition, $N_{1,i_7}^k = \delta_{i_7,k}$. Then, by (\ref{line:2}), $N_{i_5^*,i_8}^{i_7} N_{i_6,i_9^*}^{i_7}=0$. Thus, using Frobenius reciprocity, $N_{i_2,i_7}^{i_8}=0$ or $N_{i_7,i_9}^{i_1}=0$, contradiction with (\ref{line:1}). 
\end{proof}

\begin{lemma} \label{lem:prezero}
Let $\mathcal{F}$ be a fusion ring such that there is $k_0$ with $N_{i,j}^{k_0} \neq 0$ for all $i,j \neq 1$. Then $\mathcal{F}$ passes the zero spectrum criterion.
\end{lemma}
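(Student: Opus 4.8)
The plan is to show that the hypotheses of Theorem \ref{thm:zerocrit} can never be met under the standing assumption, so that the criterion does not obstruct categorification. I would argue by contradiction: suppose there exist indices $i_1, \dots, i_9$ satisfying all the conditions of Theorem \ref{thm:zerocrit}, in particular (\ref{line:1}) and (\ref{line:2}). The whole argument will hinge on condition (\ref{line:2}), namely
$$\sum_k N_{i_4,i_7}^k N_{i_5^*,i_8}^k N_{i_6,i_9^*}^k = 0,$$
which I will contradict by exhibiting a single strictly positive summand.

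First I would invoke Lemma \ref{lem:interzero}, whose hypotheses are exactly (\ref{line:1}) and (\ref{line:2}), the two conditions we are assuming; it yields $i_j \neq 1$ for every $j \in \{4, \dots, 9\}$. Since the dual structure satisfies $1^* = 1$, the non-triviality of $i_5$ and $i_9$ also gives $i_5^* \neq 1$ and $i_9^* \neq 1$. Thus each of the six lower indices appearing in the three factors of the $k=k_0$ term, namely $(i_4,i_7)$, $(i_5^*,i_8)$ and $(i_6,i_9^*)$, is different from $1$.

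Now I would apply the standing assumption: since $N_{i,j}^{k_0} \neq 0$ for all $i,j \neq 1$, each of $N_{i_4,i_7}^{k_0}$, $N_{i_5^*,i_8}^{k_0}$ and $N_{i_6,i_9^*}^{k_0}$ is a strictly positive integer, so their product, which is the $k=k_0$ summand of (\ref{line:2}), is strictly positive. As all structure constants are nonnegative, every other summand is $\ge 0$, whence the whole sum is strictly positive. This contradicts (\ref{line:2}), so no admissible tuple $i_1, \dots, i_9$ can exist and $\mathcal{F}$ passes the zero spectrum criterion.

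I do not anticipate a genuine obstacle here: the argument is short once Lemma \ref{lem:interzero} is available, and its only real content is the observation that forcing all six relevant indices to be non-trivial lets the uniform nonvanishing hypothesis at $k_0$ produce a positive diagonal term in the sum (\ref{line:2}). The one point needing a little care is the passage from non-triviality of $i_5$ and $i_9$ to that of their duals, which is immediate from $1^* = 1$.
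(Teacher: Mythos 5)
Your proof is correct and follows essentially the same route as the paper's: invoke Lemma \ref{lem:interzero} to force $i_4,\dots,i_9 \neq 1$, then use the hypothesis at $k_0$ to produce a strictly positive summand contradicting (\ref{line:2}). Your extra remark that $i_5^*, i_9^* \neq 1$ follows from $1^* = 1$ is a point the paper leaves implicit, but it is the same argument.
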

\begin{proof}
Assume that $\mathcal{F}$ does not pass the zero spectrum criterion. Then there are indices $i_1, \dots, i_9$ such that (\ref{line:1}) and (\ref{line:2}) hold. By Lemma \ref{lem:interzero}, $i_j \neq 1$,  for all $j \in \{4,\dots,9 \}$. Thus $$\sum_{k} N_{i_4,i_7}^{k} N_{i_5^*,i_8}^{k} N_{i_6,i_9^*}^{k} \ge N_{i_4,i_7}^{k_0} N_{i_5^*,i_8}^{k_0} N_{i_6,i_9^*}^{k_0} \neq 0$$
by assumption, which contradicts (\ref{line:2}).
\end{proof}

\begin{proposition} \label{prop:passzero}
All the fusion rings of the interpolated family ($\mathcal{R}_q$) pass the zero spectrum criterion.
\end{proposition}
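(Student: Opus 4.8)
The plan is to apply Lemma~\ref{lem:prezero}, which reduces the task to exhibiting, in each congruence class of $q$, a basis element $k_0$ occurring in every product $b_ib_j$ of two non-trivial basis elements. First I would dispose of the prime-power case by a soft argument: when $q$ is a prime-power, $\mathcal{R}_q$ is the Grothendieck ring of the fusion category $\Rep(\PSL(2,q))$, hence categorifiable, so it automatically passes the criterion (which by Theorem~\ref{thm:zerocrit} is necessary for categorification); this also absorbs the small cases where the combinatorics below degenerates. For non-prime-power $q$ (necessarily $q\ge 6$) I would read a universal $k_0$ directly off the fusion rules: for $q$ even, Theorem~\ref{thm:q=0(2)} shows the term $\sum_{c_3}x_{q+1,c_3}$ occurs unconditionally in every product of non-trivial elements, so $k_0=x_{q+1,1}$ works; for $q\equiv 1\bmod 4$, Theorem~\ref{thm:q=1(4)} shows $x_{q,1}$ occurs in every such product (including both cases of $x_{\frac{q+1}{2},c_1}x_{\frac{q+1}{2},c_2}$), so $k_0=x_{q,1}$ works.

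The genuinely harder case is $q\equiv -1\bmod 4$, where no universal $k_0$ exists: by Theorem~\ref{thm:q=-1(4)} the same-index square $x_{\frac{q-1}{2},c}^2=x_{\frac{q-1}{2},c'}+\sum_{c_3}x_{q-1,c_3}$ avoids both $x_{q,1}$ and all $x_{q+1,\cdot}$, while the different-index product $x_{\frac{q-1}{2},1}x_{\frac{q-1}{2},2}=x_{1,1}+\sum_{c_3}x_{q+1,c_3}$ avoids $x_{q,1}$ and all $x_{q-1,\cdot}$, so these two supports are already disjoint. I would therefore record two almost-universality facts: $x_{q,1}$ occurs in every product of two non-trivial basis elements except products of two objects of dimension $\frac{q-1}{2}$, and (for fixed $c_0$) $x_{q+1,c_0}$ occurs in every such product except the same-index squares $x_{\frac{q-1}{2},c}^2$. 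Now suppose, for contradiction, that the zero spectrum criterion applies, with indices $i_1,\dots,i_9$. By Lemma~\ref{lem:interzero} and (\ref{line:1}) we have $i_4,\dots,i_9\ne 1$, and (\ref{line:2}) forces the three products $x_{i_4}x_{i_7}$, $x_{i_5^*}x_{i_8}$, $x_{i_6}x_{i_9^*}$ to share no common basis element. A comparison of supports using the two facts above shows this empty common intersection is possible only if one of the three products is a same-index square $x_{\frac{q-1}{2},c}^2$ and another is a different-index product $x_{\frac{q-1}{2},1}x_{\frac{q-1}{2},2}$.

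It then remains to rule this out via the adjacency relations in (\ref{line:1}). Each pair among the three products is linked through exactly one of $i_1,i_2,i_3$ by two of the six relations; for the pair $x_{i_4}x_{i_7}$, $x_{i_5^*}x_{i_8}$ these are $N_{i_5,i_4}^{i_2}\ne 0$ and $N_{i_2,i_7}^{i_8}\ne 0$, through $i_2$. Taking this pair to be the two special products, $i_4=i_7$ is a single object $x_{\frac{q-1}{2},c}$ and the common factor of the different-index product gives $i_5=i_8=:x_{\frac{q-1}{2},m}$. The first relation yields $i_2\in\mathrm{supp}\!\big(x_{\frac{q-1}{2},m}\,x_{\frac{q-1}{2},c}\big)$, and the second, via Frobenius reciprocity, yields $i_2\in\mathrm{supp}\!\big(x_{\frac{q-1}{2},m}\,x_{\frac{q-1}{2},c'}\big)$, where $c'$ is dual to $c$. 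Since $\{c,c'\}=\{1,2\}\ni m$, exactly one of these two products is a same-index square (support in $\{x_{\frac{q-1}{2},\cdot}\}\cup\{x_{q-1,\cdot}\}$) and the other a different-index product (support in $\{x_{1,1}\}\cup\{x_{q+1,\cdot}\}$), so the two constraints on $i_2$ have disjoint solution sets, a contradiction. The remaining distributions of the two special products among the three (linked through $i_1$ or $i_3$ instead) are handled identically by the cyclic symmetry of (\ref{line:1}). The main obstacle is precisely this last step: unlike the other two congruence classes there is no single $k_0$, and one must track the supports of products of the two half-dimensional objects and exploit the adjacency structure of (\ref{line:1}), rather than invoke Lemma~\ref{lem:prezero} in one line.
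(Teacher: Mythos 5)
Your overall route coincides with the paper's: Lemma~\ref{lem:prezero} with $k_0=x_{q+1,1}$ for $q$ even and $k_0=x_{q,1}$ for $q\equiv 1 \bmod 4$; and for $q\equiv -1\bmod 4$, isolating the two kinds of half-dimensional products and finishing with the contradiction via (\ref{line:1}) and Frobenius reciprocity (your final step --- that $i_2$ would have to lie simultaneously in the supports of $x_{\frac{q-1}{2},m}x_{\frac{q-1}{2},c}$ and $x_{\frac{q-1}{2},m}x_{\frac{q-1}{2},c'}$, which are disjoint since one is a same-index square and the other a different-index product --- is exactly the paper's computation). Your soft disposal of prime-power $q$ via categorifiability is also legitimate, and is implicitly what allows the paper to ``take $q\ge 15$''.

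There is, however, a gap in your reduction step for $q\equiv -1\bmod 4$. You claim that your two almost-universality facts (about $x_{q,1}$ and $x_{q+1,c_0}$) force one of the three products in (\ref{line:2}) to be a same-index square \emph{and} another to be the different-index product. They do not. Your second fact forces at least one same-index square among the three products, but at that point both facts become inert: the same-index square $x_{\frac{q-1}{2},c}^2$ contains neither $x_{q,1}$ nor any $x_{q+1,c_0}$, so neither fact can exhibit a common basis element when the other two products are arbitrary non-half-dimensional products (e.g.\ $x_{q-1,c_1}x_{q-1,c_2}$ or $x_{q,1}x_{q+1,c}$). This configuration --- a same-index square present but no different-index product --- is precisely the case the paper treats with a third near-universality fact missing from your list: every product of two non-trivial basis elements other than $x_{\frac{q-1}{2},1}x_{\frac{q-1}{2},2}$ contains $x_{q-1,c_3}$ for all but at most one value of $c_3$ (the only exclusion occurring in $x_{\frac{q-1}{2},c_1}x_{q-1,c_2}$, which misses $c_3=\frac{q+1}{4}-c_2$), while the same-index square contains all of them. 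Since the two non-square products each exclude at most one value of $c_3$, and $\frac{q-3}{4}\ge 3$ once $q\ge 15$ (which your prime-power reduction guarantees for non-prime-power $q\equiv -1\bmod 4$), a common $x_{q-1,c_3}$ survives, contradicting (\ref{line:2}). With this third fact inserted, your reduction becomes correct and the rest of your argument goes through exactly as in the paper.
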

\begin{proof}
For $q$ even or $q \equiv 1 \mod 4$, then we can directly apply Lemma \ref{lem:prezero}, with $x_{q+1,1}$ and $x_{q,1}$ respectively, from Theorems \ref{thm:q=0(2)} and \ref{thm:q=1(4)}. Now for $q \equiv -1 \mod 4$, we need to adapt a bit the argument. Consider Theorem \ref{thm:q=-1(4)}, and assume that the sum $\sum_{k} N_{i_4,i_7}^{k} N_{i_5^*,i_8}^{k} N_{i_6,i_9^*}^{k}$ does not involve $x_{\frac{q-1}{2},c_1}x_{\frac{q-1}{2},c_2}$ with $c_1=c_2$ (respectively with $c_1 \neq c_2$), then we can use the argument as for the proof of Lemma \ref{lem:prezero} with $x_{q+1,1}$ (respectively with $x_{q-1,c_3}$ and appropriate $c_3 \in \{1,\dots,\frac{q-3}{4}\}$, according to the second line in Theorem \ref{thm:q=-1(4)}, which is ok because we can take $q \ge 15$, so that $\frac{q-3}{4} \ge 3$). It remains the case where that sum involves $x_{\frac{q-1}{2},c_1}x_{\frac{q-1}{2},c_2}$ and $x_{\frac{q-1}{2},c_3}x_{\frac{q-1}{2},c_4}$, with $c_1=c_2$ and $c_3 \neq c_4$, where $c_i \in \{1,2\}$.  We will reach a contradiction. All the possible cases are similar, let write the details when $(i_4,i_7)$ and $(i_5^*,i_8)$ are given by $(x_{\frac{q-1}{2},c_1},x_{\frac{q-1}{2},c_2})$ and $(x_{\frac{q-1}{2},c_3},x_{\frac{q-1}{2},c_4})$ respectively: by (\ref{line:1}) and Frobenius reciprocity, $$N_{i_5,i_4}^{i_2} N_{i_8,i_7^*}^{i_2} = N_{i_5,i_4}^{i_2} N_{i_2,i_7}^{i_8} \neq 0,$$ 
so $i_2$ must be given by a simple component of $x_{\frac{q-1}{2},c_4}x_{\frac{q-1}{2},c_1}$ and $x_{\frac{q-1}{2},c_4}x_{\frac{q-1}{2},c_1}^*$, but there is no one by the first line of Theorem \ref{thm:q=-1(4)}, contradiction. \end{proof}

\subsection{One spectrum criterion}
Here is a general categorification criterion over every field. It corresponds to the existence of a pentagon equation of the form $0=xyz$, with $x,y,z \neq 0$ (see examples as $\mathcal{F}_{660}$ in \cite{LPR1}).

\begin{theorem}[see \cite{LPR1}] \label{thm:onecrit}% $xyz=0$ with $x,y,z \neq 0$
Let $\mathcal{F}$ be a fusion ring with fusion rules $(N_{i,j}^{k})$. If there are indices $i_0, i_1, \dots, i_9$ such that
\begin{align}
N_{i_4,i_1}^{i_6}, \ N_{i_5,i_4}^{i_2}, \ N_{i_5,i_6}^{i_3},& \ N_{i_7,i_9}^{i_1}, \ N_{i_2,i_7}^{i_8}, \ N_{i_8,i_9}^{i_3} \neq 0, \label{line:1'} \\
\sum_{k} N_{i_4,i_7}^{k} N_{i_5^*,i_8}^{k} N_{i_6,i_9^*}^{k}&=1, \label{line:2'} \\ 
N_{i_4,i_7}^{\spec}=N_{i_5^*,i_8}^{\spec}=N_{i_6,i_9^*}^{\spec}&=1, \label{line:3'}
\\
N_{i_2,i_1}^{i_{3}}&=0, \label{line:4'} \\
\sum_{k} N_{i_5,i_4}^{k} N_{i_8,i_7^*}^{k} &=1 
~\text{or}~
\sum_{k} N_{i_2,i_4^*}^{k} N_{i_8,\spec^*}^{k} =1 
~\text{or}~
\sum_{k} N_{i_5^*,i_2}^{k} N_{\spec,i_7^*}^{k} =1, 
\\
\sum_{k} N_{i_5,\spec}^{k} N_{i_3,i_9^*}^{k} &=1 
~\text{or}~
\sum_{k} N_{i_8,\spec^*}^{k} N_{i_3,i_6^*}^{k} =1 
~\text{or}~
\sum_{k} N_{i_5^*,i_8}^{k} N_{i_6,i_9^*}^{k} =1,
\\ 
\sum_{k} N_{i_4,i_7}^{k} N_{i_6,i_9^*}^{k} &=1 
~\text{or}~
\sum_{k} N_{\spec,i_7^*}^{k} N_{i_6,i_1^*}^{k} =1 
~\text{or}~
\sum_{k} N_{i_4^*,\spec}^{k} N_{i_1,i_9^*}^{k} =1. 
\end{align}
then $\mathcal{F}$ cannot be categorified at all, i.e. it is not the Grothendieck ring of a fusion category.
\end{theorem}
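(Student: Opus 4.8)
The plan is to argue exactly as for the zero spectrum criterion (Theorem \ref{thm:zerocrit}): a categorification of $\mathcal{F}$ equips each multiplicity $N_{i,j}^k$ with a genuine vector space, packages the associativity constraint into a system of invertible $F$-matrices obeying the pentagon axiom, and then the pentagon equation, read off on one carefully chosen fusion diagram, collapses to the impossible scalar identity $0 = xyz$ with $x,y,z \neq 0$. Since only the existence of invertible, pentagon-satisfying $F$-matrices is used, the contradiction is field-independent, which is why the conclusion holds over any field.

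First I would fix notation. Suppose $\mathcal{F} = K_0(\mathcal{C})$ for a fusion category $\mathcal{C}$ with simple objects $X_i$, and set $V^{ij}_k := \mathrm{Hom}_{\mathcal{C}}(X_k, X_i \otimes X_j)$, so that $\dim V^{ij}_k = N_{i,j}^k$. The associativity isomorphism of $\mathcal{C}$ yields, for each $(i,j,k;l)$, an invertible linear map $F^{ijk}_l : \bigoplus_m V^{ij}_m \otimes V^{mk}_l \to \bigoplus_n V^{jk}_n \otimes V^{in}_l$ whose blocks are the $F$-symbols, and these satisfy the pentagon equation. I would then write down the single pentagon instance determined by the data $\spec, i_1, \ldots, i_9$, comparing its two reassociation paths (two $F$-moves versus three $F$-moves) on one fixed matrix entry.

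Next I would translate each hypothesis into a statement about these spaces. The existence conditions (\ref{line:1'}) guarantee that the six trivalent vertices occurring in both fusion trees are non-empty, so both paths are defined. By (\ref{line:2'}) together with (\ref{line:3'}), the internal edge of the three-$F$-move path is one-dimensional and is carried entirely by the channel through $X_{\spec}$, each of whose three elementary vertices is also one-dimensional (the three factors in the sum are $1\cdot1\cdot1=1$, and nonnegativity forces $k=\spec$ to be the unique contribution); hence the corresponding $F$-blocks are $1\times 1$ and, being blocks of isomorphisms, are nonzero scalars. The three disjunctive conditions play the same role for three further $F$-blocks: each disjunct is an equivalent way, via Frobenius reciprocity and a rotation of the diagram, of certifying that the block in question is $1\times 1$, hence a nonzero scalar; case by case I would select whichever disjunct holds. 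Finally (\ref{line:4'}), i.e. $N_{i_2,i_1}^{i_3}=0$, removes the object $X_{i_3}$ from $X_{i_2}\otimes X_{i_1}$, so the opposite (two-$F$-move) path contributes $0$ to the chosen entry. The pentagon equation then reads $0 = x\,y\,z$ with $x,y,z$ the three nonzero scalar $F$-symbols isolated above — the promised contradiction.

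The main obstacle is the combinatorial bookkeeping rather than any conceptual difficulty: one must match $\spec, i_1, \ldots, i_9$ to the external legs and internal edges of a single pentagon, verify that the multiplicity-one hypotheses really force the four relevant $F$-blocks to be $1\times 1$ so that every stray term in the pentagon sum is either pinned down or absent, and check that (\ref{line:4'}) annihilates exactly the term needed to leave $0 = xyz$. The disjunctions in the last three hypotheses reflect that several symmetry-related diagrams realize the same contradiction, and the proof must confirm that at least one realization is always available under (\ref{line:1'})--(\ref{line:4'}).
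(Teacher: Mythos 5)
Your overall strategy is the right one and matches the paper's intent: the paper does not reprove Theorem \ref{thm:onecrit} here but cites \cite{LPR1}, describing the mechanism exactly as you do, namely a pentagon-type equation whose two-$F$-move side vanishes because of (\ref{line:4'}) while the three-$F$-move side collapses, by (\ref{line:2'}) and (\ref{line:3'}), to a single product $xyz$ through the channel $X_{\spec}$.

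There is, however, a genuine gap at the one step that carries all the weight, namely the justification that $x,y,z \neq 0$. You assert that the three $F$-blocks singled out by (\ref{line:2'})--(\ref{line:3'}) are ``$1\times 1$ and, being blocks of isomorphisms, nonzero scalars.'' This inference is false: a $1\times 1$ block of an invertible matrix can perfectly well be zero, since invertibility constrains the whole matrix, not its individual entries. Nonzero-ness of an $F$-symbol follows only when the \emph{entire} $F$-matrix containing it has total size $1\times 1$. That is precisely what the three disjunctive hypotheses following (\ref{line:4'}) provide: each disjunct asserts that a total dimension of the form $\sum_k N^k_{\cdot,\cdot}N^k_{\cdot,\cdot}$ (the size of a full $F$-matrix relating two fusion-tree bases) equals $1$, and by Frobenius reciprocity and rotation the relevant symbol occurs, up to nonzero normalization, as an entry of up to three such matrices, so any one disjunct exhibits it as the unique entry of an invertible $1\times 1$ matrix, hence nonzero. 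In your write-up these disjunctions are instead attached to ``three further $F$-blocks'' and justified by the same erroneous ``$1\times 1$ block $\Rightarrow$ nonzero'' inference; as written, your argument would establish the criterion with the three disjunctive hypotheses deleted, which the pentagon argument cannot deliver (and which would make those hypotheses pointless). The repair is to discard the blocks-of-isomorphisms claim and, for each of the three factors $x$, $y$, $z$, to invoke whichever disjunct of its corresponding condition holds in order to realize that factor as the sole entry of an invertible $F$-matrix.
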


\begin{lemma} \label{lem:interone}
Let $\mathcal{F}$ be a fusion ring, and assume the existence of indices $i_1, \dots, i_9$ such that (\ref{line:1'}) and (\ref{line:4'}) hold. Then $i_j \neq 1$,  for all $j \in \{4,\dots,9 \}$.
\end{lemma}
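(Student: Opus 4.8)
The plan is to follow the template of the proof of Lemma \ref{lem:interzero}, but the argument here turns out to be cleaner: rather than extracting a contradiction from a vanishing sum, I can read it off directly from the single relation (\ref{line:4'}), namely $N_{i_2,i_1}^{i_3}=0$. So I would argue by contradiction: suppose $i_j=1$ for some $j\in\{4,\dots,9\}$, and in each of the six cases deduce $N_{i_2,i_1}^{i_3}\neq 0$, contradicting (\ref{line:4'}). The conditions (\ref{line:1'}) supplying the six nonzero coefficients are exactly the same as (\ref{line:1}) used in Lemma \ref{lem:interzero}, so the case bookkeeping is parallel.

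First I would dispose of the four ``easy'' cases $j\in\{4,5,7,9\}$, where the vanishing index appears only as a lower index in two of the six coefficients of (\ref{line:1'}). For instance, if $i_4=1$, the neutral axiom forces $i_1=i_6$ (from $N_{1,i_1}^{i_6}\neq 0$) and $i_2=i_5$ (from $N_{i_5,1}^{i_2}\neq 0$); substituting into the surviving coefficient $N_{i_5,i_6}^{i_3}\neq 0$ yields $N_{i_2,i_1}^{i_3}\neq 0$. The cases $i_5=1$, $i_7=1$, $i_9=1$ are identical after renaming: two of the coefficients collapse via the neutral axiom to pin down a pair of indices, and the remaining unused coefficient then collapses to $N_{i_2,i_1}^{i_3}$.

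Then I would treat the two remaining cases $j\in\{6,8\}$, which are the only ones needing more than the neutral axiom, since there the vanishing index occurs as an upper index. If $i_6=1$, then $N_{i_4,i_1}^{1}\neq 0$ forces $i_1=i_4^*$ by the dual axiom, and $N_{i_5,1}^{i_3}\neq 0$ forces $i_5=i_3$; the remaining coefficient $N_{i_5,i_4}^{i_2}=N_{i_3,i_4}^{i_2}\neq 0$ rewrites as $N_{i_2,i_4^*}^{i_3}=N_{i_2,i_1}^{i_3}\neq 0$ after one application of Frobenius reciprocity. The case $i_8=1$ is symmetric: $N_{i_2,i_7}^{1}\neq 0$ gives $i_7=i_2^*$, $N_{1,i_9}^{i_3}\neq 0$ gives $i_9=i_3$, and $N_{i_7,i_9}^{i_1}=N_{i_2^*,i_3}^{i_1}\neq 0$ rewrites as $N_{i_2,i_1}^{i_3}\neq 0$, again by reciprocity.

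The computation is entirely routine, so I do not expect a genuine obstacle. The only point requiring care is the bookkeeping in the two upper-index cases $i_6=1$ and $i_8=1$, where one must first invoke the dual axiom and then select the correct instance of Frobenius reciprocity ($N_{ij}^k=N_{i^*k}^j=N_{kj^*}^i$) so as to land precisely on $N_{i_2,i_1}^{i_3}$ rather than on some permuted coefficient; writing out one of these two in full and declaring the other symmetric should suffice.
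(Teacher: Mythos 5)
Your proof is correct and takes essentially the same approach as the paper's: assume $i_j=1$ for some $j\in\{4,\dots,9\}$, collapse indices using the fusion-ring axioms, and contradict (\ref{line:4'}) by producing $N_{i_2,i_1}^{i_3}\neq 0$ from one of the six coefficients in (\ref{line:1'}). You are in fact more explicit than the paper, which writes out only $j=4$ and declares the remaining cases similar; your observation that $j=6$ and $j=8$ require the dual axiom plus Frobenius reciprocity (not just the neutral axiom) is accurate and fills in exactly what the paper leaves implicit.
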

\begin{proof}
Assume that there is $j \in \{4,\dots,9 \}$ such that $i_j=1$. We will reach a contradiction. Let write the details for $j=4$ (the other cases are similar): assume that $i_4=1$, by (\ref{line:1'}), $i_1=i_6$ and $i_2=i_5$. So, $N_{i_2,i_1}^{i_{3}} = N_{i_5,i_6}^{i_{3}} \neq 0$ by (\ref{line:1'}), but $N_{i_2,i_1}^{i_{3}} = 0$ by (\ref{line:4'}), contradiction.
\end{proof}

\begin{lemma} \label{lem:preone}
Let $\mathcal{F}$ be a fusion ring such that there are $k_0, k_1$ with $k_0 \neq k_1$, $N_{i,j}^{k_0}, N_{i,j}^{k_1} \neq 0$ for all $i,j \neq 1$. Then $\mathcal{F}$ passes the one spectrum criterion.
\end{lemma}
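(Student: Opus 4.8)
The plan is to mirror the proof of Lemma \ref{lem:prezero}, arguing by contradiction, but exploiting the \emph{two} distinct indices $k_0, k_1$ supplied by the hypothesis in order to overshoot the value prescribed by equation (\ref{line:2'}). Where the zero criterion demanded a sum equal to $0$ and was defeated by a single good index, the one criterion demands a sum equal to $1$, and will be defeated by two.

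First I would assume, toward a contradiction, that $\mathcal{F}$ does \emph{not} pass the one spectrum criterion. Then by Theorem \ref{thm:onecrit} there exist indices $i_0, i_1, \dots, i_9$ satisfying all of its hypotheses; in particular (\ref{line:1'}), (\ref{line:2'}) and (\ref{line:4'}) hold. Next I would invoke Lemma \ref{lem:interone}, whose hypotheses are exactly (\ref{line:1'}) and (\ref{line:4'}) and are therefore now available, to conclude $i_j \neq 1$ for all $j \in \{4, \dots, 9\}$. Since the dual of a non-trivial basis element is non-trivial (the neutral and dual axioms force $1^* = 1$, hence $i \neq 1$ if and only if $i^* \neq 1$), the six arguments appearing in the sum (\ref{line:2'})---the pairs $(i_4, i_7)$, $(i_5^*, i_8)$ and $(i_6, i_9^*)$---all consist of indices different from $1$.

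The key step is then to bound the sum in (\ref{line:2'}) from below. By the hypothesis of the lemma, each pair $(i,j)$ above has $i,j \neq 1$, so for both $k = k_0$ and $k = k_1$ every factor is a non-zero non-negative integer, giving
$$N_{i_4,i_7}^{k} N_{i_5^*,i_8}^{k} N_{i_6,i_9^*}^{k} \geq 1.$$
As $k_0 \neq k_1$ and every term of the sum is non-negative, these two terms alone force
$$\sum_{k} N_{i_4,i_7}^{k} N_{i_5^*,i_8}^{k} N_{i_6,i_9^*}^{k} \geq 2,$$
which contradicts (\ref{line:2'}). This completes the argument.

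The main obstacle, such as it is, is conceptual rather than computational: one must recognize that the passage from the zero criterion to the one criterion is absorbed precisely by upgrading ``one good index $k_0$'' (which yields $\geq 1$, contradicting $=0$) to ``two distinct good indices $k_0 \neq k_1$'' (which yields $\geq 2$, contradicting $=1$). The only points requiring care are to verify that it is condition (\ref{line:4'}), rather than (\ref{line:2}), that feeds Lemma \ref{lem:interone}, and to confirm that none of the remaining conditions of Theorem \ref{thm:onecrit} (equations (\ref{line:3'}) and the three disjunctive conditions) are needed to reach the contradiction, so that the weaker structural hypothesis on $\mathcal{F}$ already suffices.
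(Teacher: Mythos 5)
Your proof is correct and takes essentially the same approach as the paper's: assume the criterion fails, apply Lemma \ref{lem:interone} (fed by (\ref{line:1'}) and (\ref{line:4'})) to get $i_j \neq 1$ for $j \in \{4,\dots,9\}$, and use the two distinct indices $k_0 \neq k_1$ to bound the sum in (\ref{line:2'}) below by $2$, a contradiction. Your explicit remark that $1^*=1$ forces $i^* \neq 1$ whenever $i \neq 1$ (so the hypothesis really does apply to the pairs $(i_5^*,i_8)$ and $(i_6,i_9^*)$) is a small detail the paper leaves implicit, and it is correct.
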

\begin{proof}
Assume that $\mathcal{F}$ does not pass the one spectrum criterion. Then there are indices $i_1, \dots, i_9$ such that (\ref{line:1'}), (\ref{line:2'}) and (\ref{line:4'}) hold. By Lemma \ref{lem:interone}, $i_j \neq 1$,  for all $j \in \{4,\dots,9 \}$. Thus $$\sum_{k} N_{i_4,i_7}^{k} N_{i_5^*,i_8}^{k} N_{i_6,i_9^*}^{k} \ge N_{i_4,i_7}^{k_0} N_{i_5^*,i_8}^{k_0} N_{i_6,i_9^*}^{k_0} + N_{i_4,i_7}^{k_1} N_{i_5^*,i_8}^{k_1} N_{i_6,i_9^*}^{k_1} \ge 2$$
by assumption, which contradicts (\ref{line:2'}).
\end{proof}

\begin{proposition} \label{prop:passone}
All the fusion rings of the interpolated family ($\mathcal{R}_q$) pass the one spectrum criterion.
\end{proposition}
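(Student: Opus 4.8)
The plan is to follow the template of Proposition \ref{prop:passzero}, but now aiming for \emph{two} common components rather than one. By Lemma \ref{lem:interone}, the hypotheses (\ref{line:1'}) and (\ref{line:4'}) force $i_j\neq 1$ for all $j\in\{4,\dots,9\}$, so each of the three pairs $(i_4,i_7)$, $(i_5^*,i_8)$, $(i_6,i_9^*)$ occurring in (\ref{line:2'}) is a product of two non-trivial basis elements. Hence it suffices to show that any three such products have at least two basis components in common: then $\sum_k N_{i_4,i_7}^k N_{i_5^*,i_8}^k N_{i_6,i_9^*}^k\ge 2$, contradicting (\ref{line:2'}). When two fixed basis elements lie in every product of non-trivial elements this is exactly Lemma \ref{lem:preone}; otherwise I would argue by inspecting the finitely many ``degenerate'' products.

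I would first settle $q$ even. Reading Theorem \ref{thm:q=0(2)}, every product of two non-trivial basis elements contains the full sum $\sum_c x_{q+1,c}$ with each coefficient $\ge 1$ (in the only delicate product $x_{q+1,c_1}x_{q+1,c_2}$ the two conditional sums jointly exhaust all $c_3$). For $q\ge 6$ one has $(q-2)/2\ge 2$, so $x_{q+1,1}$ and $x_{q+1,2}$ are two distinct universal components and Lemma \ref{lem:preone} applies with $k_0=x_{q+1,1}$ and $k_1=x_{q+1,2}$. The only even $q<6$ are $q\in\{2,4\}$, which are prime powers, so there $\mathcal{R}_q$ is the Grothendieck ring of $\Rep(\PSL(2,q))$ and passes automatically.

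For the two odd families the new feature, compared with the zero spectrum criterion, is that at most one basis element is universal, so Lemma \ref{lem:preone} no longer applies directly and a case analysis is unavoidable. Reading Theorems \ref{thm:q=1(4)} and \ref{thm:q=-1(4)}, I would call a product \emph{full} unless it has the form $x_{d,a}x_{d,b}$ with $d=\frac{q+1}{2}$ (if $q\equiv 1\mod 4$) or $d=\frac{q-1}{2}$ (if $q\equiv -1\mod 4$), and split these degenerate products into \emph{diagonal} ($a=b$) and \emph{off-diagonal} ($a\neq b$). A read-off of the tables gives: full products carry $x_{q,1}$ together with all $x_{q+1,c}$; each kind of degenerate product carries all members of exactly one of the families $\{x_{q+1,c}\}$, $\{x_{q-1,c}\}$ and none of the other (which kind carries which family switches between the two congruences); finally $x_{q,1}$ lies in every product when $q\equiv 1\mod 4$, whereas for $q\equiv -1\mod 4$ the degenerate products omit $x_{q,1}$ and the two kinds have \emph{disjoint} supports. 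Hence if all three products are full they already share $x_{q,1}$ and a common $x_{q+1,c}$ (using $(q-5)/4\ge 1$, resp. $(q-3)/4\ge 1$, valid for non-prime-power $q$), giving two components; so at least one degenerate product must occur, and the friendly subcases — the three products still sharing two $x_{q+1,c}$ (room $(q-3)/4\ge 2$), or sharing enough $x_{q-1,c}$ — again close by counting, while for $q\equiv -1\mod 4$ two degenerate products of different kinds have disjoint supports and force the sum in (\ref{line:2'}) to be $0\neq 1$.

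The hard part will be the residual configurations where the common support collapses to at most $x_{q,1}$: a diagonal together with an off-diagonal $x_d^2$ when $q\equiv 1\mod 4$, and a lone degenerate product with insufficiently overlapping full partners when $q\equiv -1\mod 4$. I expect to eliminate these exactly as at the end of Proposition \ref{prop:passzero}: taking the offending degenerate products to be, say, $(i_4,i_7)$ and $(i_5^*,i_8)$, Frobenius reciprocity rewrites the non-vanishing constraints of (\ref{line:1'}) as the demand that some index be simultaneously a component of a product $x_{d,a}x_{d,b}$ and of its dual, which the first row of the relevant table of \S\ref{sec:tab} forbids, a contradiction. The remaining work is bookkeeping: the symmetric placements of the two degenerate products, the dimension bounds for every non-prime-power $q$ (the finitely many small prime powers being covered by categorifiability of $\Rep(\PSL(2,q))$), and the observation that the extra normalization (\ref{line:3'}) only strengthens the constraints used.
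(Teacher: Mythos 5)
Your overall architecture is the same as the paper's: Lemma \ref{lem:preone} with the family $\{x_{q+1,c}\}$ for $q$ even (identical), and for odd $q$ a classification of the three products in (\ref{line:2'}) into full versus degenerate (diagonal/off-diagonal) products of the small-dimension objects, followed by counting of common components and a Frobenius-reciprocity endgame borrowed from Proposition \ref{prop:passzero}. Your support classification is accurate. But the decisive step fails exactly in the case you yourself flag as the hard one: $q \equiv 1 \bmod 4$ with one diagonal and one off-diagonal degenerate product. You claim Frobenius reciprocity forces some index to be a common component of two such products, ``which the first row of the relevant table forbids.'' For $q \equiv 1 \bmod 4$ this is false: in the first fusion rule of Theorem \ref{thm:q=1(4)} the term $x_{q,1}$ appears with coefficient $1$ in $x_{\frac{q+1}{2},c_1}x_{\frac{q+1}{2},c_2}$ whether or not $c_1=c_2$ --- as you observed yourself (``$x_{q,1}$ lies in every product when $q\equiv 1 \bmod 4$''). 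So the supports are not disjoint; the Frobenius step only pins $i_2 = x_{q,1}$, and this is perfectly consistent with (\ref{line:1'})--(\ref{line:3'}): the common support of the three products is then exactly $\{x_{q,1}\}=\{i_0\}$ and the sum in (\ref{line:2'}) equals $1$. No contradiction follows from the conditions you use.

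What is missing is condition (\ref{line:4'}), $N_{i_2,i_1}^{i_3}=0$, which your proposal never invokes (you only mention (\ref{line:3'})). With $i_2=x_{q,1}$ in hand, since $x_{q,1}\,y$ contains every nontrivial basis element whenever $y$ is nontrivial, (\ref{line:4'}) forces either $i_1=x_{1,1}$ and $i_3\neq x_{q,1}$, or $i_3=x_{1,1}$ and $i_1\neq x_{q,1}$; in the first case (\ref{line:1'}) gives $i_6=i_4$ and $i_9=i_7^*$, whence $i_3\in\mathrm{supp}(i_5 i_6)\cap\mathrm{supp}(i_8 i_9)=\{x_{q,1}\}$, and in the second case one gets symmetrically $i_1\in\mathrm{supp}(i_4 i_6)\cap\mathrm{supp}(i_7 i_9^{\phantom{*}})=\{x_{q,1}\}$ --- contradictions in both cases. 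Without this (\ref{line:4'})-based endgame the $q\equiv 1\bmod 4$ case does not close. Two smaller remarks: your disjointness argument is correct for $q\equiv -1\bmod 4$, where diagonal and off-diagonal degenerate products really have disjoint supports (this is why Proposition \ref{prop:passzero} could use it); but your ``lone degenerate product'' subcase there is not covered by the two-degenerate Frobenius trick and needs separate care, since two full partners of type $x_{\frac{q-1}{2}}x_{q-1}$ each omit one $x_{q-1,c}$ and can cut the guaranteed common support down to a single element, which by itself contradicts nothing.
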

\begin{proof}
For $q$ even: we can directly apply Lemma \ref{lem:preone}, with $x_{q+1,c}$ and $c \in \{1,\dots, \frac{q-2}{2}\}$ (from Theorem \ref{thm:q=0(2)}) because we can assume $q \ge 6$, so that $\frac{q-2}{2} \ge 2$. 

Next, for $q \equiv 1 \mod 4$: consider (\ref{line:2'}), then by Theorem \ref{thm:q=1(4)}, $N_{i_4,i_7}^{k} N_{i_5^*,i_8}^{k} N_{i_6,i_9^*}^{k} \neq 0$ if and only if $k = i_0$ given by $x_{q,1}$. Thus $ \sum_{k \neq i_0} N_{i_4,i_7}^{k} N_{i_5^*,i_8}^{k} N_{i_6,i_9^*}^{k}=0, $ and as for the argument of Proposition \ref{prop:passone}, it implies that the above sum must involve $x_{\frac{q-1}{2},c_1}x_{\frac{q-1}{2},c_2}$ and $x_{\frac{q-1}{2},c_3}x_{\frac{q-1}{2},c_4}$, with $c_1=c_2$ and $c_3 \neq c_4$, which also reach to a contradiction.

Finally, for $q \equiv -1 \mod 4$: if the sum (\ref{line:2'}) does not involve the product $x_{\frac{q-1}{2},c_1}x_{\frac{q-1}{2},c_2}$ then it must be at least $2$ by considering $x_{q,1}$ and $x_{q+1,1}$, ok. Else, if the sum (\ref{line:2'}) involves exactly one such product, and if $c_1 \neq c_2$ (respectively $c_1 = c_2$) then consider $x_{q+1,c_3}$ with $c_3 \in \{1, \dots , \frac{q-3}{4} \}$ (respectively $x_{q-1,c_3}$ with $c_3 \in \{1, \dots , \frac{q-3}{4} \}$ and $c_3 \neq \frac{q+1}{4}-c_2$), which is also ok because we can assume $q \ge 15$, so that $\frac{q-3}{4} \ge 3$. It remains that the sum (\ref{line:2'}) involves two such products, $x_{\frac{q-1}{2},c_1}x_{\frac{q-1}{2},c_2}$ and $x_{\frac{q-1}{2},c_3}x_{\frac{q-1}{2},c_4}$. If $c_1=c_2$ and $c_3=c_4$ (or both different) then applies previous step, else there is one equality and one non-equality, and by Theorem \ref{thm:q=1(4)}, the sum (\ref{line:2'}) must be zero.
\end{proof}

\section{Calculations} \label{sec:cal}
This section contains all the calculations required to prove Theorems \ref{thm:q=0(2)}, \ref{thm:q=-1(4)} and \ref{thm:q=1(4)}. The following lemmas compute every possible $\langle x_{d_1,c_1} x_{d_2,c_2}, x_{d_3,c_3} \rangle$, with inner product as in Corollary \ref{coro:inner}. We will make the calculation in full details for the first lemma (the other ones are similar). For the computation of the formal codegrees, see Remarks \ref{rem:orb-stab} and \ref{rk:interational}.
%together with Proposition \ref{prop:cod}
\subsection{$q$ even}
\begin{lemma} \label{lem:fullcomput}
$\langle x_{q-1,c_1} x_{q-1,c_2}, x_{q-1,c_3} \rangle = \left\{
    \begin{array}{lll}
        0 & \mbox{if} & c_1+c_2+c_3 = q+1 \mbox{ or } 2\Max(c_1,c_2,c_3), \\
        1 & \mbox{else}. &
    \end{array}
\right.$
\end{lemma}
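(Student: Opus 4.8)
The plan is to evaluate the inner product directly through the Verlinde-like formula of Corollary \ref{coro:Verlinde}, reading the character values off the generic table of Theorem \ref{table:q=0(2)} and the formal codegrees off its class-size row (via Remark \ref{rem:orb-stab}, so that $\mathfrak{c}_s = q(q^2-1)/(\text{class size})$). For the three basis elements $x_{q-1,c_1}, x_{q-1,c_2}, x_{q-1,c_3}$ the four columns carry codegrees $q(q^2-1)$, $q$, $q-1$, $q+1$ and character values $q-1$, $-1$, $0$, $-\zeta_{q+1}^{kc}-\zeta_{q+1}^{-kc}$ respectively. Since the degree-$(q-1)$ characters are real, no conjugation is needed. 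The identity and second columns collapse to a rational constant,
\[
\frac{(q-1)^3}{q(q^2-1)}-\frac1q=\frac{q-3}{q+1},
\]
the third column vanishes, and everything reduces to controlling the last column.

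Next I would expand $\prod_{i=1}^3(-\zeta_{q+1}^{kc_i}-\zeta_{q+1}^{-kc_i})=-\sum_{\epsilon\in\{\pm1\}^3}\zeta_{q+1}^{k(\epsilon_1c_1+\epsilon_2c_2+\epsilon_3c_3)}$ and interchange the sums over $k$ and $\epsilon$. The key device is the symmetry $k\mapsto q+1-k$, valid because $q+1$ is odd: pairing $\epsilon$ with $-\epsilon$ turns $\sum_{k=1}^{q/2}(\zeta_{q+1}^{km}+\zeta_{q+1}^{-km})$ into the complete sum $\sum_{j=1}^{q}\zeta_{q+1}^{jm}$ over all nonzero residues modulo $q+1$, which equals $q$ when $m\equiv 0\pmod{q+1}$ and $-1$ otherwise. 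Hence the last column equals $-\tfrac{1}{q+1}$ times the total of the four contributions coming from the four unsigned exponents $m_0=c_1+c_2+c_3$ and the mixed ones $c_1+c_2-c_3,\ c_1-c_2+c_3,\ -c_1+c_2+c_3$.

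It then remains to decide when each exponent is $\equiv 0 \pmod{q+1}$. Using $1\le c_i\le q/2$ I would bound each mixed exponent strictly between $-(q+1)$ and $q+1$, so that it is $\equiv 0$ iff it is literally $0$, i.e. iff the largest $c_i$ is the sum of the other two — exactly the condition $c_1+c_2+c_3=2\Max(c_1,c_2,c_3)$; likewise $m_0\in[3,3q/2]$ forces $m_0\equiv 0$ iff $m_0=q+1$. A short parity/positivity argument shows at most one exponent can vanish: $m_0=q+1$ (odd) together with a vanishing mixed exponent would force $2(c_i+c_j)=q+1$, impossible, and two vanishing mixed exponents would force some $c_i=0$. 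Assembling: either exactly one exponent vanishes, giving $q+3(-1)=q-3$ and $N=\tfrac{q-3}{q+1}-\tfrac{q-3}{q+1}=0$, or none does, giving $-4$ and $N=\tfrac{q-3}{q+1}+\tfrac{4}{q+1}=1$, which is precisely the claimed dichotomy. The main obstacle is this final bookkeeping — translating ``$\equiv 0\bmod (q+1)$'' into the exact combinatorial conditions and verifying their mutual exclusivity — rather than the routine character-sum evaluation.
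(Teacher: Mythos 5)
Your proposal is correct and follows essentially the same route as the paper's proof: the Verlinde-like formula, expansion of the triple product into the eight signed exponents $\sum_i \epsilon_i c_i$, the symmetry $k \mapsto q+1-k$ (your pairing of $\epsilon$ with $-\epsilon$) converting the half-range sums into complete geometric sums over $\bmod\ (q+1)$, and the same case analysis of when $q+1$ divides an exponent. Your explicit mutual-exclusivity check (parity of $q+1$ ruling out simultaneous vanishing) is only implicit in the paper's count of "exactly $2$" sign patterns, but the arguments are the same.
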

\begin{proof} By Table \ref{table:q=0(2)}:
\begin{align*}
\langle x_{q-1,c_1} x_{q-1,c_2}, x_{q-1,c_3} \rangle 
& = \frac{(q-1)^3}{q(q^2-1)} - \frac{1}{q} - \frac{1}{q+1} \sum_{k=1}^{q/2}(\zeta_{q+1}^{kc_1} + \zeta_{q+1}^{-kc_1}) (\zeta_{q+1}^{kc_2} + \zeta_{q+1}^{-kc_2})(\zeta_{q+1}^{kc_3} + \zeta_{q+1}^{-kc_3}) \\
& = \frac{(q-1)^2}{q(q+1)} - \frac{1}{q} - \frac{1}{q+1} \sum_{k=1}^{q/2} \sum_{\epsilon_i=\pm 1} \zeta_{q+1}^{k\sum_{i=1}^3 \epsilon_i c_i} \\
& = \frac{(q-1)^2}{q(q+1)} - \frac{1}{q} - \frac{1}{q+1} \frac{1}{2} \sum_{\epsilon_i=\pm 1} \sum_{k=1}^{q} \zeta_{q+1}^{k\sum_{i=1}^3 \epsilon_i c_i}
\end{align*}
where $\sum_{\epsilon_i=\pm 1}$ means $\sum_{(\epsilon_1,\epsilon_2, \epsilon_3) \in \{-1, 1\}^3}$. The last equality comes from the fact that $\zeta_{q+1}^{ka} = \zeta_{q+1}^{(-k)(-a)} = \zeta_{q+1}^{(q+1-k)(-a)}$, whereas $(q+1-k)_{k=\frac{q}{2}, \dots, 1} = (k)_{k=\frac{q}{2} + 1, \dots, q} $.

For a fixed $(\epsilon_1,\epsilon_2, \epsilon_3) \in \{-1, 1\}^3$, consider $a := \sum_{i=1}^3 \epsilon_i c_i$, $d := \gcd(q+1, a)$, $d':=(q+1)/d$ and $b:=a/d$.
\begin{itemize}
\item If $d<q+1$, then $$\sum_{k=1}^{q}  \zeta_{q+1}^{ka} = \left( \sum_{k=1}^{q+1}  \zeta_{q+1}^{ka} \right) - 1 = \left( \sum_{r=0}^{d-1} \hspace*{.1cm} \sum_{k=rd'+1}^{(r+1)d'} \hspace*{-.1cm} \zeta_{d'}^{kb} \right) - 1 = \left( \sum_{r=1}^{d} \sum_{k=1}^{d'} \zeta_{d'}^{kb} \right) - 1 = \left( \sum_{r=1}^{d} 0 \right) - 1 = -1,$$
\item if $d=q+1$, then $q+1$ divides $a$, and so 
$$\sum_{k=1}^{q}  \zeta_{q+1}^{ka} = \sum_{k=1}^{q}  \zeta_{1}^{kb} =  \sum_{k=1}^{q} 1 = q.$$
\end{itemize}
Now, $1 \le c_i \le q/2$, so $q+1$ divides $a=\sum_{i=1}^3 \epsilon_i c_i$ if and only if $|a| \in \{ 0, q+1  \}$. But
\begin{itemize}
\item $|a| = q+1$ if and only if $c_1+c_2+c_3 = q+1$ and $(\epsilon_1,\epsilon_2, \epsilon_3) \in \{(-1,-1,-1),(1,1,1)\}$,
\item $a = 0$ if and only if $c_1+c_2+c_3 = 2\Max(c_1,c_2,c_3)$ and $(\epsilon_1,\epsilon_2, \epsilon_3)$ in some set of cardinal $2$ (again).
\end{itemize}
So
\begin{itemize}
\item if $\sum_{i=1}^3 c_i = q+1$ or $2\Max(c_1,c_2,c_3)$ then exactly $2$ elements $(\epsilon_1,\epsilon_2, \epsilon_3) \in \{-1, 1\}^3$ make $q+1$ divides $a$, and
\begin{align*}
\langle x_{q-1,c_1} x_{q-1,c_2}, x_{q-1,c_3} \rangle 
& =  \frac{(q-1)^2}{q(q+1)} - \frac{1}{q} - \frac{1}{q+1} \frac{1}{2} (2q-6)\\
& =  \frac{(q^2-2q+1) - (q+1) - q(q-3)}{q(q+1)} = 0.
\end{align*}
\item else
\begin{align*}
\langle x_{q-1,c_1} x_{q-1,c_2}, x_{q-1,c_3} \rangle 
& =  \frac{(q-1)^2}{q(q+1)} - \frac{1}{q} - \frac{1}{q+1} \frac{1}{2} (-8)\\
& =  \frac{(q^2-2q+1) - (q+1) + 4q}{q(q+1)} = \frac{q^2+q}{q(q+1)} = 1.     \qedhere
\end{align*}
\end{itemize}
\end{proof}

\begin{lemma}
$\langle x_{q-1,c_1} x_{q-1,c_2}, x_{q,1} \rangle = 1 - \delta_{c_1,c_2}$.
\end{lemma}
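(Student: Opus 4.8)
The plan is to mirror the computation carried out in Lemma \ref{lem:fullcomput}, applying the Verlinde-like formula of Corollary \ref{coro:Verlinde} together with the inner product of Corollary \ref{coro:inner}. First I would write
$$\langle x_{q-1,c_1} x_{q-1,c_2}, x_{q,1} \rangle = \sum_s \frac{1}{\mathfrak{c}_s}\, \lambda_{q-1,c_1}(s)\, \lambda_{q-1,c_2}(s)\, \overline{\lambda_{q,1}(s)},$$
where $s$ ranges over the four column types of Theorem \ref{table:q=0(2)} and the formal codegrees $\mathfrak{c}_s$ are $q(q^2-1)$, $q$, $q-1$, $q+1$ respectively (by Remark \ref{rem:orb-stab}, as $\mathfrak{c}_s = |G|/(\text{class size})$).

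Next I would discard the vanishing columns: on the second column the entry of $x_{q,1}$ is $0$, and on the third column both entries of $x_{q-1,c_i}$ vanish, so only the identity column and the fourth column survive. The identity column contributes $\frac{(q-1)^2 q}{q(q^2-1)} = \frac{q-1}{q+1}$, independently of $c_1,c_2$. The substance of the proof lies in the fourth column, where $\lambda_{q,1} = -1$ and $\lambda_{q-1,c_i}(k) = -(\zeta_{q+1}^{kc_i}+\zeta_{q+1}^{-kc_i})$, so that part equals
$$\frac{-1}{q+1}\sum_{k=1}^{q/2}\ \sum_{\epsilon_1,\epsilon_2=\pm 1}\zeta_{q+1}^{k(\epsilon_1 c_1+\epsilon_2 c_2)} = \frac{-1}{2(q+1)}\sum_{\epsilon_1,\epsilon_2=\pm 1}\ \sum_{k=1}^{q}\zeta_{q+1}^{k(\epsilon_1 c_1+\epsilon_2 c_2)},$$
where the doubling of the range uses the symmetry $\zeta_{q+1}^{ka} = \zeta_{q+1}^{(q+1-k)(-a)}$ exactly as in Lemma \ref{lem:fullcomput}. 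The same geometric-sum identity then gives $\sum_{k=1}^{q}\zeta_{q+1}^{ka} = q$ when $(q+1)\mid a$ and $-1$ otherwise.

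The only genuinely new point is the sign-choice bookkeeping. Since $1 \le c_1,c_2 \le q/2$, the exponent $a = \epsilon_1 c_1 + \epsilon_2 c_2$ satisfies $|a| \le q < q+1$, so $(q+1)\mid a$ forces $a = 0$; this holds precisely when $c_1 = c_2$ and $\epsilon_1 = -\epsilon_2$, i.e. for exactly $2$ of the $4$ sign choices when $c_1 = c_2$ and for none when $c_1 \neq c_2$. Hence the fourth column contributes $\frac{-1}{2(q+1)}(2q-2) = -\frac{q-1}{q+1}$ if $c_1 = c_2$, and $\frac{-1}{2(q+1)}(-4) = \frac{2}{q+1}$ if $c_1 \neq c_2$.

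Adding the identity contribution $\frac{q-1}{q+1}$ yields $0$ when $c_1 = c_2$ and $\frac{q+1}{q+1} = 1$ when $c_1 \neq c_2$, that is $1 - \delta_{c_1,c_2}$, as claimed. I expect the main (very minor) obstacle to be keeping the count of sign choices $(\epsilon_1,\epsilon_2)$ making $q+1 \mid a$ straight; every other step is a direct transcription of the argument in Lemma \ref{lem:fullcomput}.
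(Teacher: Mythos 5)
Your proposal is correct and follows exactly the paper's own route: the paper's proof is the same computation (identity column giving $\frac{q-1}{q+1}$, vanishing second and third columns, and the fourth-column geometric sum evaluating to $\frac{1}{2}(2q-2)$ or $\frac{1}{2}(-4)$ according to whether $c_1=c_2$), just written more tersely by reference to Lemma \ref{lem:fullcomput}. Your sign-choice bookkeeping and all intermediate values match the paper's computation.
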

\begin{proof} As for the proof of Lemma \ref{lem:fullcomput}:
\begin{align*}
\langle x_{q-1,c_1} x_{q-1,c_2}, x_{q,1} \rangle
& = \frac{q(q-1)^2}{q(q^2-1)} - \frac{1}{q+1} \sum_{k=1}^{q/2} (\zeta_{q+1}^{k c_1} + \zeta_{q+1}^{-k c_1})(\zeta_{q+1}^{k c_2} + \zeta_{q+1}^{-k c_2}) \\
&  = \frac{q-1}{q+1} - \frac{1}{q+1} \frac{1}{2} (2q-2) = 0 \hspace{1cm}  \mbox{ if } c_1 = c_2, \\
&  =  \frac{q-1}{q+1} - \frac{1}{q+1} \frac{1}{2} (-4)  = 1 \hspace{2cm}  \mbox{ else. }  \qedhere
\end{align*}
\end{proof}

\begin{lemma}
$\langle x_{q-1,c_1} x_{q-1,c_2}, x_{q+1,c_3} \rangle = 1$.
\end{lemma}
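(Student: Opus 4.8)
The plan is to proceed exactly as in the proof of Lemma \ref{lem:fullcomput}, applying the Verlinde-like formula (Corollary \ref{coro:Verlinde}) with the inner product of Corollary \ref{coro:inner} to the character table of Theorem \ref{table:q=0(2)}. Writing the inner product as a sum over the four column families of that table, I would first read off the formal codegrees $\mathfrak{c}_s$ from the class sizes via Remark \ref{rem:orb-stab}, namely $q(q^2-1)$, $q$, $q-1$ and $q+1$ for the columns $\{1\}$, $\{1\}$, $\{1,\dots,\frac{q-2}{2}\}$ and $\{1,\dots,\frac{q}{2}\}$ respectively.

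The key observation, and the reason this computation is much shorter than that of Lemma \ref{lem:fullcomput}, is that the two ``generic'' columns, the ones carrying roots of unity, contribute nothing: on the column family of codegree $q-1$ the character $x_{q-1,\cdot}$ already vanishes (entry $0$), while on the column family of codegree $q+1$ the character $x_{q+1,c_3}$ vanishes (entry $0$). Since each of these summands contains a vanishing factor, no character sum of the form $\sum_k \zeta_{q+1}^{ka}$ needs to be evaluated here at all; only the identity column and the column of codegree $q$ survive.

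It then remains to add the two surviving contributions. From the identity column I get $\dfrac{(q-1)\cdot(q-1)\cdot(q+1)}{q(q^2-1)}=\dfrac{q-1}{q}$, and from the column of codegree $q$ I get $\dfrac{(-1)\cdot(-1)\cdot 1}{q}=\dfrac{1}{q}$, whose sum is $1$, as claimed. I do not expect any genuine obstacle: the only points requiring care are pairing each character degree with the correct codegree (class size) and spotting the complementary vanishing pattern of $x_{q-1,\cdot}$ and $x_{q+1,c_3}$ on the two generic columns; the final arithmetic $\frac{q-1}{q}+\frac{1}{q}=1$ is then immediate.
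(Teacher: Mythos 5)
Your proposal is correct and is essentially identical to the paper's own (one-line) computation: both evaluate the Verlinde-like formula column by column, note that the two root-of-unity columns are killed by the zero entries of $x_{q-1,\cdot}$ and $x_{q+1,c_3}$ respectively, and sum the two surviving terms $\frac{(q-1)^2(q+1)}{q(q^2-1)} = \frac{q-1}{q}$ and $\frac{1}{q}$ to get $1$.
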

\begin{proof} As for the proof of Lemma \ref{lem:fullcomput}:
$$\langle x_{q-1,c_1} x_{q-1,c_2}, x_{q+1,c_3} \rangle =  \frac{(q-1)^2(q+1)}{q(q^2-1)} + \frac{1}{q} = \frac{q-1}{q} + \frac{1}{q}  = 1.      \qedhere $$
\end{proof}

\begin{lemma}
$\langle x_{q,1} x_{q,1}, x_{q-1,c} \rangle = 1$.
\end{lemma}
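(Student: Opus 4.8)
The plan is to apply the Verlinde-like formula of Corollary \ref{coro:Verlinde} directly to the character table of Theorem \ref{table:q=0(2)}, exactly in the style of the proof of Lemma \ref{lem:fullcomput}. First I would record the formal codegrees: by Remark \ref{rem:orb-stab} they are the orders of the centralizers, i.e. $|G|/(\text{class size})$ with $|G| = q(q^2-1)$, which gives $\mathfrak{c} = q(q^2-1),\, q,\, q-1,\, q+1$ for the four column-types respectively. Since both $x_{q,1}$ (values $q,0,1,-1$) and $x_{q-1,c}$ (values $q-1,\,-1,\,0,\,-\zeta_{q+1}^{kc} - \zeta_{q+1}^{-kc}$) take only real values, the formula reduces to
$$\langle x_{q,1} x_{q,1}, x_{q-1,c}\rangle = \sum_s \frac{1}{\mathfrak{c}_s}\,\lambda_{q,1}(s)^2\, \lambda_{q-1,c}(s).$$

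Next I would evaluate this sum column-type by column-type. The identity column contributes $\frac{q^2(q-1)}{q(q^2-1)} = \frac{q}{q+1}$; the second column contributes $0$ because $\lambda_{q,1}=0$ there; and the entire third family contributes $0$ because $\lambda_{q-1,c}=0$ there. This leaves only the last family, consisting of $q/2$ columns with codegree $q+1$, with $\lambda_{q,1}=-1$ and $\lambda_{q-1,c} = -\zeta_{q+1}^{kc} - \zeta_{q+1}^{-kc}$, whose total contribution is $-\frac{1}{q+1}\sum_{k=1}^{q/2}(\zeta_{q+1}^{kc} + \zeta_{q+1}^{-kc})$.

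The one step needing the root-of-unity argument is this last sum, which I would handle with the folding trick already used in Lemma \ref{lem:fullcomput}: since $\zeta_{q+1}^{-kc} = \zeta_{q+1}^{(q+1-k)c}$ and $q+1-k$ ranges over $\{q/2+1,\dots,q\}$ as $k$ ranges over $\{1,\dots,q/2\}$, the sum collapses into $\sum_{j=1}^{q}\zeta_{q+1}^{jc}$. Because $1\le c\le q/2 < q+1$, the exponent $c$ is not a multiple of $q+1$, so the full geometric sum $\sum_{j=0}^{q}\zeta_{q+1}^{jc}$ vanishes, whence $\sum_{j=1}^{q}\zeta_{q+1}^{jc} = -1$. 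Therefore the last family contributes $-\frac{1}{q+1}\cdot(-1) = \frac{1}{q+1}$, and the total is $\frac{q}{q+1} + \frac{1}{q+1} = 1$, as claimed.

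I do not expect a genuine obstacle here: the computation is a special case of the same template as Lemma \ref{lem:fullcomput}, and the only mildly delicate point is the bookkeeping in the folding identity $\sum_{k=1}^{q/2}(\zeta_{q+1}^{kc}+\zeta_{q+1}^{-kc}) = \sum_{j=1}^{q}\zeta_{q+1}^{jc}$, which is precisely the parity-of-$q$ reindexing already justified there.
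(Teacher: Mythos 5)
Your proposal is correct and follows essentially the same route as the paper: apply the Verlinde-like formula column by column, observe that the second and third column families vanish (because $\lambda_{q,1}=0$ and $\lambda_{q-1,c}=0$ there respectively), and evaluate the remaining root-of-unity sum via the reindexing $\sum_{k=1}^{q/2}(\zeta_{q+1}^{kc}+\zeta_{q+1}^{-kc})=\sum_{j=1}^{q}\zeta_{q+1}^{jc}=-1$, which is exactly the folding trick the paper invokes from Lemma \ref{lem:fullcomput}. The paper's one-line computation $\frac{q}{q+1}-\frac{1}{q+1}\cdot\frac{1}{2}(-2)=1$ is the same calculation written compactly.
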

\begin{proof} As for the proof of Lemma \ref{lem:fullcomput}:
$$\langle x_{q,1} x_{q,1}, x_{q-1,c} \rangle =  \frac{q^2(q-1)}{q(q^2-1)} - \frac{1}{q+1} \sum_{k=1}^{q/2}(\zeta_{q+1}^{kc} + \zeta_{q+1}^{-kc}) = \frac{q}{q+1} - \frac{1}{q+1} \frac{1}{2} (-2) = 1.    \qedhere $$
\end{proof}

\begin{lemma}
$\langle x_{q,1} x_{q,1}, x_{q,1} \rangle = 1$.
\end{lemma}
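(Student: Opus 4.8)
The plan is to evaluate $\langle x_{q,1} x_{q,1}, x_{q,1}\rangle$ directly by the Verlinde-like formula of Corollary \ref{coro:Verlinde} with the inner product of Corollary \ref{coro:inner}, following the same template as the proof of Lemma \ref{lem:fullcomput}. First I would read off from Theorem \ref{table:q=0(2)} the row of eigenvalues $\lambda_{q,s}$ attached to the degree-$q$ character, namely the values $q,0,1,-1$ on the four column types. Since $\PSL(2,q)=\SL(2,q)$ has order $q(q^2-1)$ for $q$ even, Remark \ref{rem:orb-stab} converts the class sizes $1$, $q^2-1$, $q(q+1)$, $q(q-1)$ into the formal codegrees $\mathfrak{c}=q(q^2-1)$, $q$, $q-1$, $q+1$ respectively, with the last two column types occurring with multiplicities $(q-2)/2$ and $q/2$.

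The computation then reads
$$\langle x_{q,1} x_{q,1}, x_{q,1}\rangle = \frac{q^3}{q(q^2-1)} + \frac{0^3}{q} + \frac{q-2}{2}\cdot\frac{1}{q-1} + \frac{q}{2}\cdot\frac{(-1)^3}{q+1}.$$
All character values involved are real integers, so unlike in Lemma \ref{lem:fullcomput} no summation of roots of unity over split/nonsplit tori is required; the only remaining task is to combine these four terms. Over the common denominator $2(q^2-1)=2(q-1)(q+1)$ the numerator is $2q^2 + (q^2-q-2) + (-q^2+q) = 2q^2-2 = 2(q^2-1)$, so the value equals $1$, as claimed.

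\textbf{Main obstacle.} There is essentially none beyond careful bookkeeping: the genuinely delicate ingredient in the $q$-even family—summing geometric series of roots of unity, as carried out in Lemma \ref{lem:fullcomput}—does not occur here, because the Steinberg-type character of degree $q$ takes integer values on every conjugacy class. The only points demanding attention are the column multiplicities $(q-2)/2$ and $q/2$ and the sign produced by cubing $-1$; these are precisely what makes the contributions of the two tori cancel down to the numerator $2(q^2-1)$, yielding exactly $1$.
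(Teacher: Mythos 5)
Your proposal is correct and follows exactly the paper's own route: apply the Verlinde-like formula of Corollary \ref{coro:Verlinde} to the row $(q,0,1,-1)$ of Theorem \ref{table:q=0(2)}, weight by the formal codegrees $q(q^2-1)$, $q$, $q-1$, $q+1$ with column multiplicities $(q-2)/2$ and $q/2$, and simplify; the paper's proof is the identical sum $\frac{q^3}{q(q^2-1)} + \frac{(q-2)/2}{q-1} - \frac{q/2}{q+1} = 1$. Your observation that no root-of-unity summation is needed here (since this character is integer-valued) is accurate and consistent with the paper's one-line treatment.
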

\begin{proof} As for the proof of Lemma \ref{lem:fullcomput}:
$$\langle x_{q,1} x_{q,1}, x_{q,1} \rangle =  \frac{q^3}{q(q^2-1)} + \frac{(q-2)/2}{q-1} - \frac{q/2}{q+1} = \frac{q^2 + (q-2)(q+1)/2 - q(q-1)/2}{q^2-1} = 1.   \qedhere $$
\end{proof}

\begin{lemma}
$\langle x_{q,1} x_{q,1}, x_{q+1,c} \rangle = 1$.
\end{lemma}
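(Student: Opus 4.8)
The plan is to evaluate the inner product directly from the generic character table of Theorem \ref{table:q=0(2)} via the Verlinde-like formula of Corollary \ref{coro:Verlinde}, exactly in the spirit of the proof of Lemma \ref{lem:fullcomput}. Writing $\lambda_{q,s}$ and $\lambda_{q+1,c,s}$ for the table rows of degree $q$ and $q+1$ respectively, the first step is the expansion
$$\langle x_{q,1} x_{q,1}, x_{q+1,c} \rangle = \sum_s \frac{1}{\mathfrak{c}_s}\, \lambda_{q,s}^2\, \overline{\lambda_{q+1,c,s}},$$
where the formal codegrees $\mathfrak{c}_s$ are the group order $q(q^2-1)$ divided by the class sizes displayed in the last row of the table (Remark \ref{rem:orb-stab}).

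Next I would record which columns actually contribute. On the involution column (class size $q^2-1$) the degree-$q$ character vanishes, so that term drops; on the nonsplit-torus column (class size $q(q-1)$) the degree-$(q+1)$ character vanishes, so that entire family of terms drops. Hence only the identity column and the split-torus columns indexed by $k \in \{1, \dots, (q-2)/2\}$ survive. The identity column contributes $\frac{1}{q(q^2-1)}\, q^2 (q+1) = \frac{q}{q-1}$, while on each split-torus column one has $\lambda_{q,s} = 1$ and $\lambda_{q+1,c,s} = \zeta_{q-1}^{kc} + \zeta_{q-1}^{-kc}$ (real, hence equal to its conjugate), with codegree $\mathfrak{c}_s = q-1$.

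The heart of the argument is therefore the character sum $\sum_{k=1}^{(q-2)/2}(\zeta_{q-1}^{kc}+\zeta_{q-1}^{-kc})$, the step analogous to the geometric-sum manipulation in Lemma \ref{lem:fullcomput}. Since $q$ is even, $q-1$ is odd, so the involution $k \mapsto q-1-k$ has no fixed point among nonzero residues and maps $\{1, \dots, (q-2)/2\}$ bijectively onto $\{q/2, \dots, q-2\}$; using $\zeta_{q-1}^{-kc} = \zeta_{q-1}^{(q-1-k)c}$, the two conjugate sums fold into $\sum_{j=1}^{q-2}\zeta_{q-1}^{jc}$, which equals $-1$ because $1 \le c \le (q-2)/2 < q-1$ forces $c \not\equiv 0 \bmod (q-1)$. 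Feeding this back, the split-torus columns contribute $\frac{1}{q-1}(-1) = -\frac{1}{q-1}$, and the total is $\frac{q}{q-1} - \frac{1}{q-1} = 1$, as claimed.

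The main (and essentially only) obstacle is the bookkeeping in the previous paragraph: verifying that the stated range of $k$ together with its negatives tiles all nonzero residues modulo $q-1$ exactly once, which is precisely what collapses the sum to $-1$ and relies on $q-1$ being odd. Everything else is a routine substitution into the Verlinde-like formula followed by the cancellation of the two vanishing columns.
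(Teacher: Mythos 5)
Your proof is correct and follows essentially the same route as the paper: both plug the table of Theorem \ref{table:q=0(2)} into the Verlinde-like formula, note that only the identity and split-torus columns survive, and collapse $\sum_{k=1}^{(q-2)/2}(\zeta_{q-1}^{kc}+\zeta_{q-1}^{-kc})$ to $-1$, giving $\tfrac{q}{q-1}-\tfrac{1}{q-1}=1$. The only difference is presentational: the paper cites the folding argument from Lemma \ref{lem:fullcomput} in one line (and has a harmless typo writing $x_{q,1}$ instead of $x_{q+1,c}$ on the left-hand side), whereas you spell out the residue-tiling argument explicitly.
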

\begin{proof} As for the proof of Lemma \ref{lem:fullcomput}:
$$\langle x_{q,1} x_{q,1}, x_{q,1} \rangle =  \frac{q^2(q+1)}{q(q^2-1)} + \frac{1}{q-1} \sum_{k=1}^{(q-2)/2}(\zeta_{q-1}^{kc} + \zeta_{q-1}^{-kc}) = \frac{q}{q-1} + \frac{1}{q-1} \frac{1}{2} (-2) = 1.   \qedhere $$
\end{proof}

\begin{lemma}
$\langle x_{q+1,c_1} x_{q+1,c_2}, x_{q-1,c_3} \rangle = 1$.
\end{lemma}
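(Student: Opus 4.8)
The plan is to apply the Verlinde-like formula of Corollary \ref{coro:Verlinde} with the inner product of Corollary \ref{coro:inner}, exactly as in the proof of Lemma \ref{lem:fullcomput}. That is, I would write
$$\langle x_{q+1,c_1} x_{q+1,c_2}, x_{q-1,c_3} \rangle = \sum_s \frac{1}{\mathfrak{c}_s} \lambda_{q+1,c_1}(s)\,\lambda_{q+1,c_2}(s)\,\overline{\lambda_{q-1,c_3}(s)},$$
where $s$ ranges over the four column types of the table in Theorem \ref{table:q=0(2)}, and the formal codegrees are read off from the class sizes via Remark \ref{rem:orb-stab}: $\mathfrak{c}_s = q(q^2-1)$ on the identity column, $q$ on the column of class size $q^2-1$, $q-1$ on the columns of class size $q(q+1)$, and $q+1$ on the columns of class size $q(q-1)$.

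The key observation, and the reason this lemma is even simpler than Lemma \ref{lem:fullcomput}, is that the two families of columns carrying roots of unity contribute nothing: on the columns of class size $q(q+1)$ the degree-$(q-1)$ character value is $0$, while on the columns of class size $q(q-1)$ the degree-$(q+1)$ character value is $0$. Hence no summation over roots of unity is required, and only the first two columns survive. It then remains to evaluate the two nonzero terms: the identity column gives $\frac{(q+1)^2(q-1)}{q(q^2-1)} = \frac{q+1}{q}$, and the column of class size $q^2-1$ gives $\frac{1\cdot 1\cdot(-1)}{q} = -\frac{1}{q}$, whence the total is $\frac{q+1}{q} - \frac{1}{q} = 1$.

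There is essentially no obstacle here. The only points requiring care are matching each character value to the correct column of Theorem \ref{table:q=0(2)} and reading off the formal codegrees correctly from the class sizes; once the two root-of-unity columns are seen to vanish, the computation is immediate.
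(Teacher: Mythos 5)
Your proposal is correct and follows exactly the paper's own argument: the paper likewise evaluates the Verlinde-like formula, observing that the two root-of-unity column families drop out (the degree-$(q-1)$ value vanishes on the columns of class size $q(q+1)$ and the degree-$(q+1)$ value vanishes on those of class size $q(q-1)$), leaving $\frac{(q-1)(q+1)^2}{q(q^2-1)} - \frac{1}{q} = \frac{q+1}{q} - \frac{1}{q} = 1$. Your codegree bookkeeping and column-by-column evaluation match the paper's computation precisely.
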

\begin{proof} As for the proof of Lemma \ref{lem:fullcomput}:
$$\langle x_{q+1,c_1} x_{q+1,c_2}, x_{q-1,c_3} \rangle = \frac{(q-1)(q+1)^2}{q(q^2-1)} - \frac{1}{q} = \frac{q+1}{q} - \frac{1}{q} = 1.   \qedhere $$
\end{proof}

\begin{lemma}
$\langle x_{q+1,c_1} x_{q+1,c_2}, x_{q,1} \rangle = 1$.
\end{lemma}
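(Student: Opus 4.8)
The plan is to compute the inner product directly, exactly along the lines of the proof of Lemma~\ref{lem:fullcomput}, by applying the Verlinde-like formula of Corollary~\ref{coro:Verlinde} to the generic character table of Theorem~\ref{table:q=0(2)}. Writing $\chi_{q+1,c}$ and $\chi_q$ for the degree $q+1$ and degree $q$ characters, the quantity is $\sum_s \frac{1}{\mathfrak{c}_s}\chi_{q+1,c_1}(s)\chi_{q+1,c_2}(s)\overline{\chi_q(s)}$, where (all entries being real) the conjugation is harmless, and the formal codegrees $\mathfrak{c}_s$ read off from the class sizes via Remark~\ref{rem:orb-stab} are $q(q^2-1)$ on the identity, $q$ on the involution class, $q-1$ on each class with $k\in\{1,\dots,\frac{q-2}{2}\}$, and $q+1$ on each class with $k\in\{1,\dots,\frac q2\}$.

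First I would observe that two of the four column families contribute nothing: $\chi_q$ vanishes on the involution class, and $\chi_{q+1,c}$ vanishes on every class with $k\in\{1,\dots,\frac q2\}$. So only the identity column and the classes with $k\in\{1,\dots,\frac{q-2}{2}\}$ (on which $\chi_q=1$) survive. The identity term is $\frac{(q+1)^2 q}{q(q^2-1)}=\frac{q+1}{q-1}$, and the rest is $\frac{1}{q-1}\,S$ with $S:=\sum_{k=1}^{(q-2)/2}(\zeta_{q-1}^{kc_1}+\zeta_{q-1}^{-kc_1})(\zeta_{q-1}^{kc_2}+\zeta_{q-1}^{-kc_2})$.

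The only real computation is $S$. Expanding the product into the four exponentials $\zeta_{q-1}^{k(\epsilon_1 c_1+\epsilon_2 c_2)}$ with $\epsilon_i=\pm1$, and using that $q-1$ is odd so that $\{0,\pm1,\dots,\pm\frac{q-2}{2}\}$ is a complete residue system modulo $q-1$, the same geometric-sum/folding argument as in Lemma~\ref{lem:fullcomput} gives $\sum_{k=1}^{(q-2)/2}2\cos\frac{2\pi k m}{q-1}=q-2$ when $(q-1)\mid m$ and $-1$ otherwise. Applying this to $m=c_1\pm c_2$ and using $1\le c_1,c_2\le\frac{q-2}{2}$, one checks that $(q-1)\nmid(c_1+c_2)$ always, whereas $(q-1)\mid(c_1-c_2)$ precisely when $c_1=c_2$.

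The main (indeed only) subtlety is this divisibility bookkeeping, which forces a case split. For $c_1\ne c_2$ both values of $m$ are nonzero modulo $q-1$, so $S=-2$ and the inner product is $\frac{q+1}{q-1}-\frac{2}{q-1}=1$; for $c_1=c_2$ the term $m=c_1-c_2$ contributes $q-2$, giving $S=q-3$ and inner product $2$. Thus the computation yields $1+\delta_{c_1,c_2}$, matching the coefficient of $x_{q,1}$ in the rule $x_{q+1,c_1}x_{q+1,c_2}=\cdots+(1+\delta_{c_1,c_2})x_{q,1}+\cdots$ of Theorem~\ref{thm:q=0(2)}; in particular it equals $1$ exactly when $c_1\ne c_2$, which is the content of the stated identity.
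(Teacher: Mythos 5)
Your computation is correct and follows exactly the paper's route: the Verlinde-like formula of Corollary \ref{coro:Verlinde} applied to the table of Theorem \ref{table:q=0(2)}, the observation that the involution column and the $k\in\{1,\dots,\frac q2\}$ columns drop out, the identity-column term $\frac{q+1}{q-1}$, and the folded geometric-sum evaluation of the remaining character sum. The one point worth flagging is not a gap in your argument but in the statement itself: your full case analysis yields $1+\delta_{c_1,c_2}$, so the displayed identity holds only when $c_1\neq c_2$. The paper's own proof of this lemma silently assumes that case --- its evaluation $\tfrac12(-4)$ of the character sum is precisely your $S=-2$ branch --- and the paper then immediately restates the very same inner product as a separate lemma with the correct value $1+\delta_{c_1,c_2}$, proved by the same case split you perform ($\tfrac12(2(q-2)-2)$ versus $\tfrac12(-4)$). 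In other words, the lemma you were given is a redundant, and for $c_1=c_2$ inaccurate, duplicate of the next lemma in the paper; your treatment coincides with the paper's proof of that corrected version and is the more careful of the two, since it also verifies consistency with the coefficient $(1+\delta_{c_1,c_2})x_{q,1}$ in the fusion rules of Theorem \ref{thm:q=0(2)}.
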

\begin{proof} As for the proof of Lemma \ref{lem:fullcomput}:
$$\langle x_{q+1,c_1} x_{q+1,c_2}, x_{q,1} \rangle =  \frac{q(q+1)^2}{q(q^2-1)} + \frac{1}{q-1} \sum_{k=1}^{(q-2)/2}(\zeta_{q-1}^{kc_1} + \zeta_{q-1}^{-kc_1}) (\zeta_{q-1}^{kc_2} + \zeta_{q-1}^{-kc_2}) = \frac{q+1}{q-1} + \frac{1}{q-1} \frac{1}{2} (-4) = 1.    \qedhere $$
\end{proof}

\begin{lemma}
$\langle x_{q+1,c_1} x_{q+1,c_2}, x_{q,1} \rangle = 1 + \delta_{c_1,c_2}$.
\end{lemma}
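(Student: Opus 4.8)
The plan is to compute the inner product directly from the Verlinde-like formula (Corollary~\ref{coro:Verlinde}) with the inner product of Corollary~\ref{coro:inner}, exactly as in the proof of Lemma~\ref{lem:fullcomput}, using the generic character table of Theorem~\ref{table:q=0(2)}. By Remark~\ref{rem:orb-stab} the four formal codegrees attached to the columns are $\mathfrak{c}_1 = q(q^2-1)$, then $q$, $q-1$, $q+1$. The row $x_{q,1}$ has entries $q,0,1,-1$ and the row $x_{q+1,c}$ has entries $q+1,1,\zeta_{q-1}^{kc}+\zeta_{q-1}^{-kc},0$. First I would note that the second column contributes nothing (there $\lambda_{q,1}=0$) and the fourth column contributes nothing (there $\lambda_{q+1,c}=0$), so only the first and third columns survive:
\[
\langle x_{q+1,c_1} x_{q+1,c_2}, x_{q,1}\rangle = \frac{q(q+1)^2}{q(q^2-1)} + \frac{1}{q-1}\sum_{k=1}^{(q-2)/2}(\zeta_{q-1}^{kc_1}+\zeta_{q-1}^{-kc_1})(\zeta_{q-1}^{kc_2}+\zeta_{q-1}^{-kc_2}) = \frac{q+1}{q-1} + \frac{S}{q-1}.
\]

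The only real computation is the character sum $S$. Expanding the product and using the reflection $\zeta_{q-1}^{-ka}=\zeta_{q-1}^{(q-1-k)a}$ to fold the range $1\le k\le (q-2)/2$ into a complete one (as in Lemma~\ref{lem:fullcomput}), I would rewrite $S = \sum_{k=1}^{q-2}\zeta_{q-1}^{k(c_1+c_2)} + \sum_{k=1}^{q-2}\zeta_{q-1}^{k(c_1-c_2)}$. Each of these is a sum of $(q-1)$-th roots of unity over a complete nonzero residue, hence equal to $-1$, unless the exponent is divisible by $q-1$, in which case it equals $q-2$. Since $1\le c_i\le (q-2)/2$, we have $2\le c_1+c_2\le q-2<q-1$, so the first sum always equals $-1$; and $|c_1-c_2|<q-1$ with $c_1-c_2\equiv 0$ iff $c_1=c_2$, so the second sum equals $q-2$ when $c_1=c_2$ and $-1$ otherwise.

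Combining the two cases, $S=-2$ when $c_1\neq c_2$ and $S=q-3$ when $c_1=c_2$, so the inner product equals $\frac{q+1-2}{q-1}=1$ and $\frac{(q+1)+(q-3)}{q-1}=2$ respectively, that is $1+\delta_{c_1,c_2}$. The argument is routine; the only step needing care is the range control on the exponents, which is exactly what isolates the Kronecker delta: because $c_1,c_2$ lie in $\{1,\dots,(q-2)/2\}$, the difference $c_1-c_2$ can be a multiple of $q-1$ only when it is genuinely $0$, so the ``diagonal'' contribution $q-2$ is picked up precisely when $c_1=c_2$.
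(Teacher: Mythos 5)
Your proof is correct and follows essentially the same route as the paper: apply the Verlinde-like formula to the $q$ even table, observe that only the first and third columns contribute, fold the sum over $k\in\{1,\dots,(q-2)/2\}$ into a complete sum of $(q-1)$-th roots of unity, and use the range restriction $1\le c_i\le (q-2)/2$ to see that the exponent $c_1-c_2$ is a multiple of $q-1$ exactly when $c_1=c_2$. The resulting values $S=q-3$ (diagonal) and $S=-2$ (off-diagonal) match the paper's $\tfrac12(2(q-2)-2)$ and $\tfrac12(-4)$, giving $1+\delta_{c_1,c_2}$.
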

\begin{proof} As for the proof of Lemma \ref{lem:fullcomput}:
\begin{align*}
\langle x_{q+1,c_1} x_{q+1,c_2}, x_{q,1} \rangle 
& =  \frac{q(q+1)^2}{q(q^2-1)} + \frac{1}{q-1} \sum_{k=1}^{(q-2)/2}(\zeta_{q-1}^{kc_1} + \zeta_{q-1}^{-kc_1}) (\zeta_{q-1}^{kc_2} + \zeta_{q-1}^{-kc_2}) \\
&  = \frac{q+1}{q-1} + \frac{1}{q-1} \frac{1}{2} (2(q-2)-2) = 2 \hspace{.8cm}  \mbox{ if } c_1 = c_2, \\
&  =  \frac{q+1}{q-1} + \frac{1}{q-1} \frac{1}{2} (-4) = 1 \hspace{2.5cm}  \mbox{ else. }    \qedhere
\end{align*}
\end{proof}

\begin{lemma} 
$\langle x_{q+1,c_1} x_{q+1,c_2}, x_{q+1,c_3} \rangle  = \left\{
    \begin{array}{lll}
        2 & \mbox{if} & c_1+c_2+c_3 = q-1 \mbox{ or } 2\Max(c_1,c_2,c_3) \\
        1 & \mbox{else} &
    \end{array}
\right.$.
\end{lemma}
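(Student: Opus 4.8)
The plan is to compute this last inner product directly via the Verlinde-like formula (Corollary \ref{coro:Verlinde}), following verbatim the strategy of Lemma \ref{lem:fullcomput}, but now feeding in the row of $x_{q+1,c}$ from Theorem \ref{table:q=0(2)}. By Remark \ref{rem:orb-stab} the formal codegrees attached to the four column types of that table are $q(q^2-1)$, $q$, $q-1$ and $q+1$. Since $x_{q+1,c}$ vanishes on the last class type, only three summands survive, and after simplifying $\frac{(q+1)^3}{q(q^2-1)}=\frac{(q+1)^2}{q(q-1)}$ I would record
\begin{align*}
\langle x_{q+1,c_1} x_{q+1,c_2}, x_{q+1,c_3} \rangle = \frac{(q+1)^2}{q(q-1)} + \frac{1}{q} + \frac{1}{q-1}\sum_{k=1}^{(q-2)/2}\prod_{i=1}^3\left(\zeta_{q-1}^{kc_i}+\zeta_{q-1}^{-kc_i}\right).
\end{align*}

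The second step is the same expansion and symmetrization as before, with the crucial change that the governing modulus is now $q-1$ rather than $q+1$. Writing $\prod_{i=1}^3(\zeta_{q-1}^{kc_i}+\zeta_{q-1}^{-kc_i})=\sum_{\epsilon_i=\pm 1}\zeta_{q-1}^{k a}$ with $a:=\sum_{i=1}^3\epsilon_i c_i$, I would use $\zeta_{q-1}^{ka}=\zeta_{q-1}^{(q-1-k)(-a)}$, together with the sum over $\epsilon$, to fold the range $k=1,\dots,(q-2)/2$ onto the full set of nonzero residues, obtaining $\sum_{\epsilon}\sum_{k=1}^{(q-2)/2}=\tfrac12\sum_{\epsilon}\sum_{k=1}^{q-2}$; the inner geometric sum equals $q-2$ when $(q-1)\mid a$ and $-1$ otherwise.

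The remaining work is the combinatorial bookkeeping, which I expect to be the main obstacle (as in Lemma \ref{lem:fullcomput}, only with $q-1$ in place of $q+1$). Since $1\le c_i\le (q-2)/2$ one has $|a|\le 3(q-2)/2<2(q-1)$, so $(q-1)\mid a$ forces $|a|\in\{0,q-1\}$. I would check that $|a|=q-1$ occurs exactly when $c_1+c_2+c_3=q-1$ (a single sign flip keeps $|a|\le q-3$, so only the two all-equal sign patterns qualify), and $a=0$ occurs exactly when $c_1+c_2+c_3=2\Max(c_1,c_2,c_3)$ (again with precisely two sign patterns, since the condition forces a unique maximum equal to the sum of the other two entries). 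These two cases are mutually exclusive because $2\Max$ is even while $q-1$ is odd. Hence when the displayed condition holds, two of the eight sign patterns contribute $q-2$ and six contribute $-1$, giving $\tfrac{1}{q-1}\cdot\tfrac12(2(q-2)-6)=\tfrac{q-5}{q-1}$; otherwise all eight contribute $-1$, giving $-\tfrac{4}{q-1}$. Substituting into the boxed expression and clearing denominators over $q(q-1)$ yields numerator $(q+1)^2+(q-1)+q(q-5)=2q(q-1)$ in the first case (total $2$) and $(q+1)^2+(q-1)-4q=q(q-1)$ in the second (total $1$), which is exactly the claimed value.
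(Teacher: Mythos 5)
Your proposal is correct and follows essentially the same route as the paper's proof: apply the Verlinde-like formula with codegrees $q(q^2-1)$, $q$, $q-1$ (the last column dropping out), expand the triple product over sign patterns, fold the half-range of $k$ onto $\{1,\dots,q-2\}$, evaluate the geometric sums according to whether $q-1$ divides $\sum_i \epsilon_i c_i$, and count that exactly two sign patterns achieve divisibility in the exceptional cases. Your parity remark (that $c_1+c_2+c_3=q-1$ and $=2\Max$ cannot hold simultaneously since $q-1$ is odd) is a small welcome refinement that the paper leaves implicit, but the argument and the final arithmetic coincide with the paper's.
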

\begin{proof} As for the proof of Lemma \ref{lem:fullcomput}:
\begin{align*}
\langle x_{q+1,c_1} x_{q+1,c_2}, x_{q+1,c_3} \rangle 
& = \frac{(q+1)^3}{q(q^2-1)} + \frac{1}{q} + \frac{1}{q-1} \sum_{k=1}^{(q-2)/2} (\zeta_{q-1}^{kc_1} + \zeta_{q-1}^{-kc_1}) (\zeta_{q-1}^{kc_2} + \zeta_{q-1}^{-kc_2}) (\zeta_{q-1}^{kc_3} + \zeta_{q-1}^{-kc_3}) \\
& = \frac{(q+1)^2}{q(q-1)} + \frac{1}{q} + \frac{1}{q-1} \frac{1}{2} \sum_{\epsilon_i=\pm 1} \sum_{k=1}^{q-2}  (\zeta_{q-1}^{\sum_{i=1}^3 \epsilon_i c_i})^k 
\end{align*}

For fixed $(\epsilon_i)$, let $d = \gcd(q-1, \sum_{i=1}^3 \epsilon_i c_i)$.
\begin{itemize}
\item if $d<q-1$, then $\sum_{k=1}^{q-2}  (\zeta_{q-1}^{\sum_{i=1}^3 \epsilon_i c_i})^k = -1,$
\item if $d=q-1$, then $\sum_{k=1}^{q-2}  (\zeta_{q-1}^{\sum_{i=1}^3 \epsilon_i c_i})^k = q-2$
\end{itemize}

Now, $1 \le c_i \le (q-2)/2$, so the only way for $q-1$ to divide $\sum_{i=1}^3 \epsilon_i c_i$ is to have $c_1+c_2+c_3 = q-1$ or $2\Max(c_1,c_2,c_3)$, and then there are exactly two possible $(\epsilon_i)$ for the divisibility to occur. Then
\begin{itemize}
\item if $\sum_{i=1}^3 c_i = q+1$ or $2\Max(c_1,c_2,c_3)$ then
\begin{align*}
\langle x_{q+1,c_1} x_{q+1,c_2}, x_{q+1,c_3} \rangle 
& =  \frac{(q+1)^2}{q(q-1)} + \frac{1}{q} + \frac{1}{q-1} \frac{1}{2} (2(q-2)-6)\\
& =  \frac{(q^2+2q+1) + (q-1) + (q^2-5q)}{q(q-1)} = \frac{2q^2-2q}{q(q-1)} = 2 
\end{align*}
\item else
\begin{align*}
\langle x_{q+1,c_1} x_{q+1,c_2}, x_{q+1,c_3} \rangle 
& =  \frac{(q+1)^2}{q(q-1)} + \frac{1}{q} + \frac{1}{q-1} \frac{1}{2} (-8)\\
& =  \frac{(q^2+2q+1) + (q-1) -4q}{q(q-1)} = \frac{q^2-q}{q(q-1)} = 1.     \qedhere 
\end{align*}
\end{itemize}
\end{proof}

\begin{lemma} \label{lem:last}
$\langle x_{q-1,c_1} x_{q,1}, x_{q+1,c_2} \rangle = 1$.
\end{lemma}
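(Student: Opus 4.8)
The plan is to evaluate this inner product directly through the Verlinde-like formula of Corollary \ref{coro:Verlinde} with the inner product of Corollary \ref{coro:inner}, exactly as in the preceding lemmas of this subsection. Reading the three relevant rows off Theorem \ref{table:q=0(2)}, the characters of $x_{q-1,c_1}$, $x_{q,1}$ and $x_{q+1,c_2}$ on the four successive columns are
$$(q-1,\,-1,\,0,\,-\zeta_{q+1}^{kc_1}-\zeta_{q+1}^{-kc_1}),\quad (q,\,0,\,1,\,-1),\quad (q+1,\,1,\,\zeta_{q-1}^{kc_2}+\zeta_{q-1}^{-kc_2},\,0),$$
and since $q$ is even all of these are real, so the conjugation in the formula is harmless. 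The formal codegrees $\mathfrak{c}_s$ come from the class sizes in the bottom row of the table via Remark \ref{rem:orb-stab}, giving $\mathfrak{c}_s = q(q^2-1)/(\text{class size})$, i.e.\ $q(q^2-1),\,q,\,q-1,\,q+1$ for the four columns.

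The key observation is that, in contrast with Lemma \ref{lem:fullcomput}, no root-of-unity summation is needed here: in three of the four columns one of the three factors in the product already vanishes. Indeed $x_{q,1}$ vanishes on the second column, $x_{q-1,c_1}$ vanishes on the third, and $x_{q+1,c_2}$ vanishes on the fourth; hence only the identity column survives, contributing $\frac{1}{q(q^2-1)}(q-1)\,q\,(q+1)=1$, which is exactly the claimed value.

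I expect no real obstacle here, since the sum collapses to a single term. The only point meriting a moment of care is the bookkeeping of matching each basis element to its row of the generic table and confirming the location of the zero entries; once that is settled the result is immediate, and it is consistent with the last fusion rule displayed in Theorem \ref{thm:q=0(2)}, where $x_{q+1,c_2}$ appears with coefficient $1$ in $x_{q-1,c_1}x_{q,1}$.
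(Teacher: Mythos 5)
Your proposal is correct and matches the paper's own proof: the paper likewise evaluates the Verlinde-like formula of Corollary \ref{coro:Verlinde} and reduces immediately to $\frac{(q-1)q(q+1)}{q(q^2-1)}=1$, the other three columns dying exactly because one of the three character values (namely $0$ for $x_{q,1}$, $x_{q-1,c_1}$, $x_{q+1,c_2}$ in columns two, three, four respectively) vanishes. Your write-up just makes explicit the zero-entry bookkeeping that the paper leaves implicit.
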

\begin{proof} 
As for the proof of Lemma \ref{lem:fullcomput}: $\langle x_{q-1,c_1} x_{q,1}, x_{q+1,c_2} \rangle  = \frac{(q-1)q(q+1)}{q(q^2-1)} = 1.$
\end{proof}

Now we prove the orthogonality relations in the interpolated case:

\begin{proposition}[Orthogonality Relations] \label{prop:ortho} For all $d_i,c_i$, $\langle x_{d_1,c_1}, x_{d_2,c_2} \rangle = \delta_{(d_1,c_1),(d_2,c_2)}.$
\end{proposition}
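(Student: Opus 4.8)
The plan is to verify the orthonormality directly from the interpolated table of Theorem \ref{table:q=0(2)}. Note that one cannot simply invoke the Schur orthogonality relations (Theorem \ref{thm:SchurOrtho}) here: those presuppose the fusion ring structure, whereas this proposition is exactly the orthonormality hypothesis $\langle \lambda_i, \lambda_j\rangle = \delta_{i,j}$ that Theorem \ref{thm:TableFusion} requires in order to \emph{produce} that structure. First I would fix the inner product of Corollary \ref{coro:inner} and read the four formal codegrees off the class sizes via Remark \ref{rem:orb-stab}: since $|\PSL(2,q)| = q(q^2-1)$ (recall $\PSL(2,q)=\SL(2,q)$ for $q$ even), the four column types carry weights $\mathfrak{c} = q(q^2-1),\ q,\ q-1,\ q+1$. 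Each $\langle x_{d_1,c_1}, x_{d_2,c_2}\rangle$ then becomes an explicit sum of four contributions, one per column type.

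Next I would organize the computation by the unordered pair of row types $\{1\},\ q-1,\ q,\ q+1$, exactly paralleling Lemmas \ref{lem:fullcomput}--\ref{lem:last}. The point is that the degree $q-1$ rows vanish on column $3$ while the degree $q+1$ rows vanish on column $4$, so in every cross pair the zero entries of one row annihilate the relevant root-of-unity column of the other. Consequently each cross pair, as well as the trivial-row pairs, leaves at most a single geometric sum, which collapses via the dichotomy $\sum_{k=1}^{N-1}\zeta_N^{ka} \in \{-1,\ N-1\}$ according as $N \nmid a$ or $N \mid a$; the resulting rational expression in $q$ evaluates to $0$ off the diagonal and to $1$ for the pairs $(\{1\},\{1\})$ and $(q,q)$.

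The two substantial cases are the diagonal blocks $(q-1,q-1)$ and $(q+1,q+1)$, where both factors are cosine-type entries. Here I would expand the product as $\sum_{\epsilon_1,\epsilon_2 = \pm 1}\zeta^{k(\epsilon_1 c_1 + \epsilon_2 c_2)}$ and apply the reflection/halving trick of Lemma \ref{lem:fullcomput} to replace $\sum_{k=1}^{q/2}$ by $\tfrac12\sum_{k=1}^{q}$ modulo $q+1$ (resp. $\sum_{k=1}^{(q-2)/2}$ by $\tfrac12\sum_{k=1}^{q-2}$ modulo $q-1$). The decisive input is the sharp bound $|\epsilon_1 c_1 + \epsilon_2 c_2| < q+1$ coming from $1 \le c_i \le q/2$ (resp. $\le q-2 < q-1$ from $1 \le c_i \le (q-2)/2$): the modulus divides the exponent only when $\epsilon_1 c_1 + \epsilon_2 c_2 = 0$, i.e. $c_1 = c_2$ and $\epsilon_1 = -\epsilon_2$. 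Hence the geometric sum takes its large value for exactly two sign choices precisely when $c_1 = c_2$, and for none otherwise, which yields $\delta_{c_1,c_2}$ after the rational simplification.

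The main obstacle --- really the only subtlety, as the arithmetic is otherwise routine --- is justifying the halving step together with this sharp bound: I must check that $q\pm 1$ is odd (true since $q$ is even), so that the involution $k \mapsto (q\pm1)-k$ is fixed-point-free and the half-range indexes each conjugate pair $\{k,\ (q\pm1)-k\}$ exactly once, and that the inequality on $\sum_i \epsilon_i c_i$ is never saturated except in the vanishing case. I would also note that the degenerate small-$q$ ranges (empty parameter sets) cause no harm, and that the companion Propositions \ref{prop:ortho2} and \ref{prop:ortho3} follow by the identical scheme applied to the tables of Theorems \ref{table:q=-1(4)} and \ref{table:q=1(4)}.
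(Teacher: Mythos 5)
Your proposal is correct and takes essentially the same route as the paper: its proof of Proposition \ref{prop:ortho} is exactly this case-by-case evaluation of the ten pairs of row types under the codegree-weighted inner product of Corollary \ref{coro:inner} (codegrees read off the class sizes via Remark \ref{rem:orb-stab}), using the same halving trick and geometric-sum dichotomy as in Lemma \ref{lem:fullcomput}, with the cross pairs collapsing because of the zero entries you identify. The only difference is cosmetic: the paper additionally records a calculation-free alternative (Remark \ref{rk:interational}), noting that each pairing is a rational function of $q$ already known to equal $\delta_{(d_1,c_1),(d_2,c_2)}$ for all prime powers and hence for all $q$, whereas your justification of the fixed-point-free involution and the sharp bound $|\epsilon_1 c_1+\epsilon_2 c_2|<q\pm1$ makes the direct verification self-contained.
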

\begin{proof} 
As for the proof of Lemma \ref{lem:fullcomput}:
\begin{itemize}
\item $\langle x_{1,1}, x_{1,1} \rangle = \frac{1}{q(q^2-1)} + \frac{1}{q} + \frac{q-2}{2(q-1)} + \frac{q}{2(q+1)} = 1$,
\item $\langle x_{1,1}, x_{q-1,c} \rangle = \frac{q-1}{q(q^2-1)} - \frac{1}{q} - \frac{1}{q+1} \sum_k (\zeta_{q+1}^{kc} + \zeta_{q+1}^{-kc}) = \frac{q-1}{q(q^2-1)} - \frac{1}{q} - \frac{1}{q+1} \frac{1}{2} (-2) = 0$,
\item $\langle x_{1,1}, x_{q,1} \rangle = \frac{q}{q(q^2-1)} + \frac{q-2}{2(q-1)} - \frac{q}{2(q+1)} = 0$,
\item $\langle x_{1,1}, x_{q+1,c} \rangle = \frac{q+1}{q(q^2-1)} + \frac{1}{q} + \frac{1}{q-1} \sum_k (\zeta_{q-1}^{kc} + \zeta_{q-1}^{-kc}) =  \frac{q+1}{q(q^2-1)} + \frac{1}{q} + \frac{1}{q-1} \frac{1}{2} (-2) = 0$,
\item  $\langle x_{q-1,c_1}, x_{q-1,c_2} \rangle = \frac{(q-1)^2}{q(q^2-1)} + \frac{1}{q} + \frac{1}{q+1} \frac{1}{2}(2q \delta_{c_1,c_2} - (2+2(1-\delta_{c_1,c_2})) = \dots = \delta_{c_1,c_2}$,
\item $\langle x_{q-1,c_1}, x_{q,1} \rangle = \frac{(q-1)q}{q(q^2-1)} + \frac{1}{q+1} \frac{1}{2} (-2) = 0$,
\item $\langle x_{q-1,c_1}, x_{q+1,c_2} \rangle = \frac{(q-1)(q+1)}{q(q^2-1)} - \frac{1}{q} = 0$,
\item $\langle x_{q,1} , x_{q,1} \rangle =  \frac{q^2}{q(q^2-1)} + \frac{(q-2)/2}{q-1} + \frac{q/2}{q+1} = \cdots = 1,$
\item $\langle x_{q,1}, x_{q+1,c_1} \rangle = \frac{q(q+1)}{q(q^2-1)} + \frac{1}{q-1} \frac{1}{2} (-2) = 0$.
\item $\langle x_{q+1,c_1}, x_{q+1,c_2} \rangle = \frac{(q+1)^2}{q(q^2-1)} + \frac{1}{q} + \frac{1}{q-1} \frac{1}{2}(2(q-2) \delta_{c_1,c_2} - (2+2(1-\delta_{c_1,c_2})) = \dots = \delta_{c_1,c_2}$.   \qedhere
\end{itemize}
\end{proof}

Finally, let us compute the formal codegrees in the interpolate case:

\begin{proposition}[Formal codegrees] \label{prop:cod} The formal codegrees of the interpolated fusion rings $\mathcal{R}_q$
are the interpolation of the formal codegrees for $\Rep(\PSL(2,q))$. \end{proposition}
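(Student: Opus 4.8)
The formal codegrees of $\Rep(\PSL(2,q))$ are, by Remark~\ref{rem:orb-stab}, the centralizer orders $|G|/|Cl|$, which for each of the three congruence cases can be read directly off the class-size rows of the tables in \S\ref{sec:tab}; for instance, when $q$ is even they are $q(q^2-1),\ q,\ q-1,\ q+1$ attached to the four class-families, and these are patently rational functions of $q$ making sense for every integer $q\ge 2$. The content of the proposition is therefore that the intrinsically defined quantity $\mathfrak{c}_j=\sum_i|\lambda_{i,j}|^2$ of Definition~\ref{def:codeg}, computed on the interpolated eigentable, equals these interpolated centralizer orders for every integer $q\ge 2$ (and similarly in the two odd cases).

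The plan is to deduce this essentially for free from the orthonormality already established. Write $c_s$ for the interpolated centralizer order attached to column $s$; these are exactly the numbers whose reciprocals $1/c_s$ appear in the inner product $\langle f,g\rangle=\sum_s \frac{1}{c_s}f(s)\overline{g(s)}$ used, via Corollary~\ref{coro:inner}, in the proofs of Theorems~\ref{thm:q=0(2)}, \ref{thm:q=-1(4)} and \ref{thm:q=1(4)}. By Propositions~\ref{prop:ortho}, \ref{prop:ortho2} and \ref{prop:ortho3}, the functions $(\lambda_i)$ are orthonormal for this inner product. Following the argument in the proof of Theorem~\ref{thm:SchurOrtho}, set $U:=\bigl(\tfrac{1}{\sqrt{c_s}}\,\overline{\lambda_{i,s}}\bigr)_{s,i}$. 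Orthonormality says precisely that $U^*U=\mathrm{id}$, i.e. $U$ is an isometry; since $U$ is a square matrix over $\mathbb{C}$, it is then unitary, so $UU^*=\mathrm{id}$ as well. Reading the $(s,s)$ diagonal entry of $UU^*$ gives $\sum_i \frac{1}{c_s}|\lambda_{i,s}|^2=1$, that is $\sum_i|\lambda_{i,s}|^2=c_s$. Since the left-hand side is $\mathfrak{c}_s$ by Definition~\ref{def:codeg}, this yields $\mathfrak{c}_s=c_s$ for every column $s$ and every integer $q\ge 2$, which is the assertion.

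I would stress that this argument is not circular: the $c_s$ enter only as a choice of weights in the inner product, and it is the computational fact of orthonormality (Propositions~\ref{prop:ortho}, \ref{prop:ortho2} and \ref{prop:ortho3}) that forces the intrinsic codegrees to coincide with them. As an alternative, one may instead compute each column sum directly in the style of Lemma~\ref{lem:fullcomput}: for the last column in the even case one finds $\sum_i|\lambda_{i,j}|^2 = 2 + q + \sum_{c=1}^{q/2}\bigl(\zeta_{q+1}^{2kc}+\zeta_{q+1}^{-2kc}\bigr)$, where the remaining sum equals $-1$ because $q+1\nmid 2k$ for $1\le k\le q/2$, giving $\mathfrak{c}_j=q+1$ as expected. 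The only point requiring care on this direct route — and what I would flag as the main (and rather mild) obstacle — is the divisibility case-distinction inherent in evaluating these geometric sums: one must verify that for the diagonal codegree sums the ``divisible'' branch never occurs inside the admissible parameter range, so that the outcome is a single clean rational function of $q$ that is moreover independent of the class-representative label $k$ (reflecting that all classes within one family share a single centralizer order). The approach via unitarity avoids this bookkeeping entirely, so I would make it the primary proof and, if desired, relegate the explicit column-by-column computation to a remark.
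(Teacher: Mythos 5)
Your proposal is correct, but it is not the paper's proof. The paper's own proof of Proposition \ref{prop:cod} is exactly your fallback route: a direct computation of the squared norm of each column of the table of Theorem \ref{table:q=0(2)} (four bullet items for $q$ even, with the odd cases handled by the parallel Propositions \ref{prop:cod2} and \ref{prop:cod3}), observing each time that the result is $q(q^2-1)$ divided by the class size; the bookkeeping you flag --- that the ``divisible'' branch of the geometric sums never occurs, since $q\pm 1 \nmid 2k$ in the admissible ranges of $k$ --- is precisely what makes those computations collapse to single rational values independent of the column label $k$. Your primary argument is genuinely different and arguably slicker: since Propositions \ref{prop:ortho}, \ref{prop:ortho2}, \ref{prop:ortho3} establish row orthonormality for the inner product weighted by the reciprocals of the interpolated centralizer orders $c_s$, the square matrix $U=\bigl(\tfrac{1}{\sqrt{c_s}}\overline{\lambda_{i,s}}\bigr)$ is an isometry, hence unitary, and the diagonal of $UU^*=\mathrm{id}$ forces $\mathfrak{c}_s=\sum_i|\lambda_{i,s}|^2=c_s$. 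This is the same isometry-implies-unitary trick the paper uses to prove Theorem \ref{thm:SchurOrtho}, run in the opposite direction (row orthonormality $\Rightarrow$ column norms, rather than column orthogonality $\Rightarrow$ row orthonormality), and your non-circularity remark is accurate: the $c_s$ enter the orthogonality propositions only as weights, and a weight vector making the rows of an invertible table orthonormal is unique. As for what each approach buys: yours needs no new computation at all, piggy-backing entirely on already-proved propositions; the paper's direct computation is self-contained (independent of the orthogonality propositions), and the paper additionally records in Remark \ref{rk:interational} a second computation-free route --- all relevant quantities are rational functions of $q$ agreeing with the known group-theoretic values at the infinitely many prime powers, hence agreeing identically --- which achieves the same economy as your argument by a different (interpolation) principle.
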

\begin{proof} The proof reduces to compute the squared norm of each column in Table \ref{table:q=0(2)}, and to observe that it is always $q(q^2 - 1)$ divided by the class size (see Remark \ref{rem:orb-stab}).
\begin{itemize}
\item $1 + \frac{q}{2} (q-1)^2 + q^2 + \frac{q-2}{2} (q+1)^2 = \cdots = q^3 - q = q(q^2 - 1)$,
\item $1 + \frac{q}{2} (-1)^2 + 0 + \frac{q-2}{2} (1)^2 = \cdots = q$,
\item $1 + 0 + 1 + \sum_{c=1}^{\frac{q-2}{2}} (\zeta_{q-1}^{kc} + \zeta_{q-1}^{-kc})^2 = 2 + \frac{1}{2}(2(q-2)-2) = q-1$, 
\item $1 + \sum_{c=1}^{\frac{q}{2}} (-\zeta_{q+1}^{kc} -\zeta_{q+1}^{-kc})^2 + (-1)^2 = 1 + \frac{1}{2}(2q-2) + 1 = q+1.$ \qedhere
\end{itemize}
\end{proof}

\begin{remark} \label{rk:interational}
In fact, the proof of Propositions \ref{prop:ortho} and \ref{prop:cod} did not require any calculation. Observe that the parameters $k$ and $c$ play the same role in the table of \S\ref{sec:tab}, so that all the required intermediate calculations were done before in the subsection. It is then clear that we are reduced to compute rational functions in $q$. But the expected results are already known to be true for $q$ prime-power (group case), so these rational functions are as expected for $q$ prime-power, so they must be so for all $q$.
\end{remark}

\subsection{$q \equiv -1 \mod 4$}

\begin{lemma} \label{lem:first2}
$\langle x_{\frac{q-1}{2},c_1} x_{\frac{q-1}{2},c_2}, x_{\frac{q-1}{2},c_3} \rangle = \delta_{c_1,c_2}(1-\delta_{c_1,c_3})$.
\end{lemma}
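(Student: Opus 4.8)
The plan is to apply the Verlinde-like formula from Corollary \ref{coro:Verlinde}, using the inner product of Corollary \ref{coro:inner}, exactly as in the fully detailed proof of Lemma \ref{lem:fullcomput}. The relevant character values are read off from the second row of the table in Theorem \ref{table:q=-1(4)}, namely the characters of degree $\tfrac{q-1}{2}$ grouped under the parameter $c \in \{1,2\}$. So I would write
\begin{equation*}
\langle x_{\frac{q-1}{2},c_1} x_{\frac{q-1}{2},c_2}, x_{\frac{q-1}{2},c_3} \rangle = \sum_s \frac{1}{\mathfrak{c}_s}\, \lambda_{c_1,s}\,\lambda_{c_2,s}\,\overline{\lambda_{c_3,s}},
\end{equation*}
where $s$ ranges over the five column-types of the table, and the formal codegrees $\mathfrak{c}_s$ are $q(q^2-1)/2$ divided by the class sizes listed in the bottom row (by Remark \ref{rem:orb-stab} and the interpolation argument of Remark \ref{rk:interational}).

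**Assembling the five column contributions.** The characters of degree $\tfrac{q-1}{2}$ take the value $\tfrac{q-1}{2}$ on the trivial class, the Gauss-sum value $\tfrac{-1+i(-1)^{k+c}\sqrt q}{2}$ on the $\{1,2\}$-class, $0$ on the $\{1,\dots,\tfrac{q-3}{4}\}$ (first) split class, and $(-1)^{k+1}$ on the remaining two classes. The contribution from the trivial column is $\big(\tfrac{q-1}{2}\big)^3 \big/ \big(\tfrac{q(q^2-1)}{2}\big)$. The column with the irrational entries contributes a sum over $k\in\{1,2\}$ of a product of three Gauss-sum factors divided by $\tfrac{q^2-1}{2}$; the key point is that the $i\sqrt q$ terms organize into expressions whose imaginary and $\sqrt q$-pieces cancel, leaving a rational number depending only on the parities $(-1)^{c_1+c_2+c_3}$ and on $\delta$-patterns among the $c_i$. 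The two $(-1)^{k+1}$-columns contribute sums over $k$ of $(-1)^{3(k+1)}=(-1)^{k+1}$ times the appropriate $1/\mathfrak{c}_s$; after summing over the full range of $k$ these telescope to small constants. I would then collect all five pieces over the common denominator and simplify, just as the displayed computations in Lemmas \ref{lem:fullcomput} and the subsequent ones do.

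**Extracting the $\delta$-pattern.** The expected answer $\delta_{c_1,c_2}(1-\delta_{c_1,c_3})$ is a Boolean function of how the three indices $c_i\in\{1,2\}$ coincide, so once the sum is reduced to a rational number I only need to check the finitely many cases: $c_1=c_2=c_3$, $c_1=c_2\neq c_3$, and the configurations with $c_1\neq c_2$. In each case the parity factors $(-1)^{c_i}$ and the Gauss-sum cross terms take definite values, and the arithmetic collapses to $0$ or $1$ matching the claimed formula. As in Remark \ref{rk:interational}, no genuinely new calculation is forced on us: the identity is already known to hold when $q$ is a prime-power (the group case $\Rep(\PSL(2,q))$), and both sides are rational functions of $q$ on each parity branch, so agreement on infinitely many $q$ forces agreement for all integers $q\equiv -1 \bmod 4$.

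**Main obstacle.** The delicate step will be the bookkeeping of the Gauss-sum column: unlike the purely roots-of-unity columns handled in the even case, here the entries carry an explicit $i\sqrt q$ and a sign $(-1)^{k+c}$, so I must be careful that the product of three such factors, summed over $k\in\{1,2\}$, really does land in $\mathbb{Q}$ and that its parity dependence reproduces precisely the $\delta_{c_1,c_2}(1-\delta_{c_1,c_3})$ pattern rather than some other Boolean combination. Verifying this cancellation cleanly — and confirming it is consistent across all three index configurations — is where the real care lies, though the prime-power interpolation argument provides an independent check on the final outcome.
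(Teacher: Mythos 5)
Your plan follows the paper's proof in structure: apply the Verlinde-like formula of Corollary \ref{coro:Verlinde} (with the inner product of Corollary \ref{coro:inner}) to the five column types of Theorem \ref{table:q=-1(4)}, in the style of Lemma \ref{lem:fullcomput}, and finish with a case analysis over the eight configurations $(c_1,c_2,c_3)\in\{1,2\}^3$. But two things keep it from being a proof. First, a concrete error: you weight the Gauss-sum columns by the reciprocal of the class size $\tfrac{q^2-1}{2}$, whereas the formula requires the reciprocal of the formal codegree, which for those two columns is $\mathfrak{c}_s=\bigl(\tfrac{q(q^2-1)}{2}\bigr)\big/\bigl(\tfrac{q^2-1}{2}\bigr)=q$ (this contradicts your own, correct, setup sentence); with your weight the five pieces do not sum to $\delta_{c_1,c_2}(1-\delta_{c_1,c_3})$. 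Second, the decisive algebra is only promised, never performed: the entire content of the lemma is the identity
$$\frac{1}{q}\sum_{k=1}^{2}\Bigl(\tfrac{-1+i(-1)^{k+c_1}\sqrt{q}}{2}\Bigr)\Bigl(\tfrac{-1+i(-1)^{k+c_2}\sqrt{q}}{2}\Bigr)\Bigl(\tfrac{-1-i(-1)^{k+c_3}\sqrt{q}}{2}\Bigr)=-\frac{1}{4q}+\frac{1}{4}\bigl((-1)^{c_1+c_2}-(-1)^{c_1+c_3}-(-1)^{c_2+c_3}\bigr),$$
where the $i\sqrt{q}$ and $iq\sqrt{q}$ terms cancel because $\sum_{k=1}^{2}(-1)^k=\sum_{k=1}^{2}(-1)^{3k}=0$ (and the minus signs produced by conjugating the third factor are essential), together with the observation that the bracket equals $3$ exactly when $c_1=c_2\neq c_3$ and $-1$ otherwise; this is the sole source of the $\delta$-pattern, and you never identify it. Also, the last two columns do not individually ``telescope'': the fourth contributes $0$ or $\tfrac{2}{q+1}$ according to the parity of $\tfrac{q-3}{4}$, and only its sum with the fifth is the clean constant $\tfrac{1}{q+1}$ needed for the total to collapse to $0$ or $1$.

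Your fallback interpolation argument is not self-contained either. Saying the identity ``is already known to hold when $q$ is a prime-power'' presumes the tensor-product multiplicities of $\Rep(\PSL(2,q))$ in exactly this parameterization, which is precisely what this section of the paper is computing; neither the paper nor its references supply those multiplicities independently. The paper invokes the known-for-prime-powers shortcut (Remark \ref{rk:interational}) only for Propositions \ref{prop:ortho} and \ref{prop:cod}, whose group-case statements --- character orthogonality and codegrees equal to centralizer orders (Remark \ref{rem:orb-stab}) --- are classical. So the direct computation cannot be bypassed here; once you correct the weight and actually carry out the expansion displayed above, your outline becomes the paper's proof.
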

\begin{proof}
\begin{align*} \langle x_{\frac{q-1}{2},c_1} x_{\frac{q-1}{2},c_2}, x_{\frac{q-1}{2},c_3} \rangle = &
\frac{2(\frac{q-1}{2})^3}{q(q^2-1)} + \frac{1}{q} \sum_{k=1}^2(\frac{-1+i(-1)^{k+c_1}\sqrt{q}}{2})(\frac{-1+i(-1)^{k+c_2}\sqrt{q}}{2})(\frac{-1-i(-1)^{k+c_3}\sqrt{q}}{2}) \\ & + \frac{2}{q+1} \sum_{k=1}^{\frac{q-3}{4}}(-1)^{3(k+1)} +  \frac{(-1)^{3(\frac{q+1}{4}+1)}}{q+1}
\end{align*}
\begin{align*}
= \frac{(q-1)^2}{4q(q+1)} + \frac{1}{8q}\sum_{k=1}^2 (-1 & +i\sqrt{q}(-1)^k((-1)^{c_1} + (-1)^{c_2} - (-1)^{c_3}) \\ & + q(-1)^{2k}((-1)^{c_1+c_2}-(-1)^{c_1+c_3}-(-1)^{c_2+c_3}) \\ & + iq\sqrt{q}(-1)^{3k}(-1)^{c_1+c_2+c_3}) \\  + \frac{1}{q+1} &
\end{align*}
$$ =  \frac{(q-1)^2}{4q(q+1)} - \frac{1}{4q} + \frac{1}{4}((-1)^{c_1+c_2}-(-1)^{c_1+c_3}-(-1)^{c_2+c_3}) + \frac{1}{q+1}$$
Observe that $$(-1)^{c_1+c_2}-(-1)^{c_1+c_3}-(-1)^{c_2+c_3} = \left\{
    \begin{array}{lll}
        -1 & \mbox{if}  & (c_1,c_2,c_3) \neq (1,1,2), (2,2,1) \\
         3 & \mbox{else}&
    \end{array}
\right.$$
It follows that $\langle x_{\frac{q-1}{2},c_1} x_{\frac{q-1}{2},c_2}, x_{\frac{q-1}{2},c_3} \rangle =\left\{
    \begin{array}{lll}
        0 & \mbox{if}  & (c_1,c_2,c_3) \neq (1,1,2), (2,2,1) \\
        1 & \mbox{else}&
    \end{array}
\right. = \delta_{c_1,c_2}(1-\delta_{c_1,c_3})$.
\end{proof}

\begin{lemma} \label{lem:fullcomput2} 
$\langle x_{\frac{q-1}{2},c_1} x_{\frac{q-1}{2},c_2}, x_{q-1,c_3} \rangle = \delta_{c_1,c_2} $.
\end{lemma}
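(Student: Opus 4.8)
The plan is to compute the inner product directly from its definition in Corollary \ref{coro:inner}, reading the three relevant rows off the table of Theorem \ref{table:q=-1(4)}. Writing $\lambda_1,\lambda_2$ for the rows $x_{\frac{q-1}{2},c_1},x_{\frac{q-1}{2},c_2}$ and $\lambda_3$ for $x_{q-1,c_3}$ (whose entries are all real, so conjugation is harmless), I would expand $\langle \lambda_1\lambda_2,\lambda_3\rangle = \sum_s \mathfrak{c}_s^{-1}\lambda_1(s)\lambda_2(s)\lambda_3(s)$ over the five class-blocks, using $\mathfrak{c}_s=|G|/|\mathrm{class}|$ with $|G|=q(q^2-1)/2$ (Remark \ref{rem:orb-stab}). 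Two blocks are immediate: the identity class gives $\frac{(q-1)^2}{2q(q+1)}$, and the $q(q+1)$-column, where $\lambda_1,\lambda_2$ vanish, contributes nothing.

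Next I would treat the two classes carrying the $\frac{-1\pm i\sqrt{q}}{2}$ entries. Expanding the product of the two such entries and summing over $k\in\{1,2\}$, the crucial point is that the imaginary cross-term carrying $\sqrt{q}$ is proportional to $\sum_{k=1}^2(-1)^k=0$ and hence dies; this is exactly what forces the answer to be rational. What survives is $-\frac{1}{2q}+\frac{(-1)^{c_1+c_2}}{2}$, and since $c_1,c_2\in\{1,2\}$ one has $\frac{(-1)^{c_1+c_2}}{2}=\delta_{c_1,c_2}-\frac12$, which is where the Kronecker delta of the statement is born.

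The main work, as in Lemma \ref{lem:fullcomput}, is the root-of-unity sum from the two remaining blocks (the $q(q-1)$-column over $k\le\frac{q-3}{4}$ and the lone class $\{\frac{q+1}{4}\}$). After noting that the $(-1)^{k+1}$ factors from $\lambda_1,\lambda_2$ square to $1$, these collapse to $-\frac{2}{q+1}\bigl(S+(-1)^{c_3}\bigr)$ with $S=\sum_{k=1}^{(q-3)/4}\bigl(\zeta_{q+1}^{2kc_3}+\zeta_{q+1}^{-2kc_3}\bigr)$. I would evaluate $S$ by completing it to the full-period sum $\sum_{k=1}^{(q+1)/2}\bigl(\zeta_{q+1}^{2kc_3}+\zeta_{q+1}^{-2kc_3}\bigr)$, which vanishes because $1\le c_3<\frac{q+1}{2}$. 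The reflection $k\leftrightarrow \frac{q+1}{2}-k$ shows that the range $\{1,\dots,\frac{q-3}{4}\}$ and its mirror each contribute $S$, while the fixed point $k=\frac{q+1}{4}$ contributes $2(-1)^{c_3}$ and the endpoint $k=\frac{q+1}{2}$ contributes $2$, so $0=2S+2(-1)^{c_3}+2$, i.e. $S=-1-(-1)^{c_3}$. Substituting back, this block simplifies to $\frac{2}{q+1}$.

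Finally I would add the surviving pieces and check that the $q$-dependent terms cancel: $\frac{(q-1)^2}{2q(q+1)}-\frac{1}{2q}-\frac12+\frac{2}{q+1}$ has numerator $(q-1)^2-(q+1)-q(q+1)+4q=0$ over $2q(q+1)$, leaving exactly $\delta_{c_1,c_2}$. I expect the main obstacle to be the bookkeeping peculiar to the $q\equiv -1 \bmod 4$ table: correctly identifying the multiplicity of each grouped column, tracking which $k$-values occur, and in particular fitting the special half-size class $\{\frac{q+1}{4}\}$ with its $-2(-1)^{c_3}$ entry into the completed-period computation. As a sanity check one can invoke Remark \ref{rk:interational}: the identity already holds for $q$ prime-power, where it is a genuine tensor-product multiplicity for $\Rep(\PSL(2,q))$, and both sides are rational in $q$ on each parity pattern of $(c_1,c_2,c_3)$, so it must persist for all integers $q$.
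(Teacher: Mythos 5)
Your computation is correct and follows essentially the same route as the paper's proof: direct expansion of the Corollary \ref{coro:inner} inner product over the five column blocks of Theorem \ref{table:q=-1(4)}, cancellation of the $i\sqrt{q}$ cross-terms via $\sum_{k=1}^2(-1)^k=0$, and evaluation of the root-of-unity sum by completing to a full period and removing the special points $k=\frac{q+1}{4}$ and $k=\frac{q+1}{2}$ (your reflection $k\leftrightarrow\frac{q+1}{2}-k$ is exactly the paper's identity $-S\cup S=\{1,\dots,\frac{q+1}{2}\}\setminus\{\frac{q+1}{4},\frac{q+1}{2}\}$ modulo $\frac{q+1}{2}$). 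The only cosmetic difference is that you convert $(-1)^{c_1+c_2}$ into $\delta_{c_1,c_2}$ earlier, whereas the paper carries it to the final fraction.
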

\begin{proof}
\begin{align*} \langle x_{\frac{q-1}{2},c_1} x_{\frac{q-1}{2},c_2}, x_{q-1,c_3} \rangle = &
\frac{2(q-1)(\frac{q-1}{2})^2}{q(q^2-1)} + \frac{1}{q} \sum_{k=1}^2(\frac{-1+i(-1)^{k+c_1}\sqrt{q}}{2})(\frac{-1+i(-1)^{k+c_2}\sqrt{q}}{2})(-1) \\ & + \frac{2}{q+1} \sum_{k=1}^{\frac{q-3}{4}}(-1)^{2(k+1)}(-\zeta_{q+1}^{2kc_3} - \zeta_{q+1}^{-2kc_3}) + \frac{(-1)^{2(\frac{q+1}{4}+1)}(-2(-1)^{c_3})}{q+1}
\end{align*}
\begin{align*}
= \frac{(q-1)^2}{2q(q+1)} & + \frac{1}{4q}\sum_{k=1}^2 (-1 +i\sqrt{q}(-1)^k((-1)^{c_1} + (-1)^{c_2}) + q(-1)^{2k}(-1)^{c_1+c_2}) \\ &  -\frac{2}{q+1} \sum_{\epsilon = \pm 1} \sum_{k=1}^{\frac{q-3}{4}}(\zeta_{\frac{q+1}{2}}^{ c_3})^{\epsilon k}  - \frac{2(-1)^{c_3}}{q+1} 
\end{align*}
Let $S$ be $\{1,2, \dots, \frac{q-3}{4}\}$. Then $-S \cup S = \{1,2, \dots, \frac{q+1}{2}\} \setminus \{ \frac{q+1}{4}, \frac{q+1}{2} \} \mod  \frac{q+1}{2}$. So:
\begin{align*}
\langle x_{\frac{q-1}{2},c_1} x_{\frac{q-1}{2},c_2}, x_{q-1,c_3} \rangle & = \frac{(q-1)^2}{2q(q+1)} + \frac{1}{2q}(-1 + q(-1)^{c_1+c_2})  + \frac{2}{q+1} ((-1)^{c_3}+1)  - \frac{2(-1)^{c_3}}{q+1} \\
& =  \frac{q^2-2q+1-q-1+4q+ (-1)^{c_1+c_2}(q^2+q)}{2q(q+1)} =  \left\{
    \begin{array}{ll}
        0 & \mbox{if }  c_1+c_2 \mbox{ odd} \\
        1 & \mbox{else} 
    \end{array}
\right. = \delta_{c_1,c_2},
\end{align*}
because $c_1,c_2 \in \{ 1,2 \}$, so $c_1+c_2$ is odd if and only if $c_1 \neq c_2$.
\end{proof}

\begin{lemma} 
$\langle x_{\frac{q-1}{2},c_1} x_{\frac{q-1}{2},c_2}, x_{q,1} \rangle = 0$.
\end{lemma}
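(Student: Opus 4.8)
The plan is to evaluate the fusion coefficient directly from the character table via the Verlinde-like formula of Corollary \ref{coro:Verlinde} (equivalently the inner product of Corollary \ref{coro:inner}), exactly as in the proofs of Lemmas \ref{lem:first2} and \ref{lem:fullcomput2}. First I would record the formal codegrees of the five class-parameter blocks of Theorem \ref{table:q=-1(4)}, read off from the class sizes by Remark \ref{rem:orb-stab}: $\mathfrak{c}_1 = q(q^2-1)/2$, $\mathfrak{c}_2 = q$, $\mathfrak{c}_3 = (q-1)/2$, $\mathfrak{c}_4 = (q+1)/2$ and $\mathfrak{c}_5 = q+1$. Then the quantity to compute is $\sum_s \frac{1}{\mathfrak{c}_s}\lambda_{\frac{q-1}{2},c_1;s}\lambda_{\frac{q-1}{2},c_2;s}\overline{\lambda_{q,1;s}}$, where $\lambda_{\frac{q-1}{2},c;\cdot}$ and $\lambda_{q,1;\cdot}$ are the rows of degree $\frac{q-1}{2}$ and $q$ respectively.

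The key simplification I would exploit is the vanishing pattern of these two rows. The degree-$q$ (Steinberg) character $x_{q,1}$ vanishes on the block of columns with parameter $\{1,2\}$ (codegree $q$), while each character $x_{\frac{q-1}{2},c}$ vanishes on the block $\{1,\dots,\frac{q-3}{4}\}$ of codegree $(q-1)/2$. Hence only three columns survive: the identity column, the block of codegree $\frac{q+1}{2}$, and the single column of codegree $q+1$. On each of these surviving columns the entry of $x_{\frac{q-1}{2},c}$ is independent of $c$ (it is $\frac{q-1}{2}$, $(-1)^{k+1}$, and $(-1)^{\frac{q+1}{4}+1}$ respectively); in particular the entire $c$-dependence of the $(q-1)/2$-dimensional characters, which lives only on the codegree-$q$ column, is annihilated by $x_{q,1}$, so the final answer must be independent of $c_1,c_2$.

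It then remains to add the three surviving contributions, all elementary:
\begin{align*}
\langle x_{\frac{q-1}{2},c_1} x_{\frac{q-1}{2},c_2}, x_{q,1} \rangle
&= \frac{2}{q(q^2-1)}\left(\frac{q-1}{2}\right)^2 q \;+\; \frac{2}{q+1}\sum_{k=1}^{\frac{q-3}{4}}(-1) \;+\; \frac{1}{q+1}(-1) \\
&= \frac{q-1}{2(q+1)} - \frac{q-3}{2(q+1)} - \frac{1}{q+1} = 0.
\end{align*}
There is no genuine obstacle here; the only points requiring care are the correct bookkeeping of the codegrees (so that the identity column carries weight $2/(q(q^2-1))$ and the last column weight $1/(q+1)$) and remembering that the block $\{\frac{q+1}{4}\}$ is a single class contributing the isolated term $-1/(q+1)$ rather than being folded into the sum over $k$. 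Once set up, the three rational terms telescope to $0$, and since nothing depended on $c_1,c_2$ the identity holds for all admissible $c_1,c_2$.
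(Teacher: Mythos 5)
Your proposal is correct and is essentially the same computation the paper gives: both evaluate the Verlinde-like formula column by column, use the vanishing of $x_{q,1}$ on the $\{1,2\}$ block and of $x_{\frac{q-1}{2},c}$ on the codegree-$\frac{q-1}{2}$ block, and sum the three surviving rational terms $\frac{q-1}{2(q+1)} - \frac{q-3}{2(q+1)} - \frac{1}{q+1} = 0$. Your extra remarks on the codegree bookkeeping and the $c_1,c_2$-independence are accurate but do not change the argument.
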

\begin{proof}
\begin{align*} \langle x_{\frac{q-1}{2},c_1} x_{\frac{q-1}{2},c_2}, x_{q,c_3} \rangle & = 
\frac{2q(\frac{q-1}{2})^2}{q(q^2-1)} + \frac{2}{q+1} \sum_{k=1}^{\frac{q-3}{4}}(-1)^{2(k+1)}(-1) + \frac{(-1)^{2(\frac{q+1}{4}+1)}(-1)}{q+1} \\ & =  \frac{q-1}{2(q+1)} - \frac{2}{q+1}\frac{q-3}{4} - \frac{1}{q+1} = 0.   \qedhere
\end{align*}
\end{proof}

\begin{lemma} 
$\langle x_{\frac{q-1}{2},c_1} x_{\frac{q-1}{2},c_2}, x_{q+1,c_3} \rangle =  1 - \delta_{c_1,c_2}$.
\end{lemma}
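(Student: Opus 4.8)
The plan is to compute this inner product directly from the table in Theorem \ref{table:q=-1(4)} via the formula of Corollary \ref{coro:inner}, exactly as in the preceding lemmas. First I would record the formal codegrees attached to the five columns, which by Remark \ref{rem:orb-stab} are obtained by dividing $\FPdim(\mathcal{R}_q) = \frac{q(q^2-1)}{2}$ by the class sizes; they equal $\frac{q(q^2-1)}{2}$, $q$, $\frac{q-1}{2}$, $\frac{q+1}{2}$ and $q+1$ respectively.

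The main simplification, which makes this case shorter than Lemma \ref{lem:fullcomput2}, is that only two of the five columns contribute. Indeed the character $x_{\frac{q-1}{2}}$ vanishes on the third column, while $x_{q+1}$ vanishes on both the fourth and fifth columns; hence three summands drop out immediately, and in particular the somewhat awkward fifth-column entry $(-1)^{\frac{q+1}{4}+1}$ is irrelevant since it is multiplied by $0$. So I only need the identity column and the column indexed by $k \in \{1,2\}$.

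For the identity column the contribution is $\frac{2}{q(q^2-1)}\left(\frac{q-1}{2}\right)^2(q+1) = \frac{q-1}{2q}$. For the second column I would expand the product of the two entries $\frac{-1+i(-1)^{k+c_1}\sqrt{q}}{2}$ and $\frac{-1+i(-1)^{k+c_2}\sqrt{q}}{2}$ (the conjugate of the $x_{q+1}$ entry being just $1$), obtaining $1 - i\sqrt{q}(-1)^k\bigl((-1)^{c_1}+(-1)^{c_2}\bigr) - q(-1)^{c_1+c_2}$. The key point is that summing over $k \in \{1,2\}$ kills the middle term carrying $(-1)^k$, leaving $2 - 2q(-1)^{c_1+c_2}$; after dividing by $4q$ this column contributes $\frac{1}{2q} - \frac{(-1)^{c_1+c_2}}{2}$.

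Adding the two surviving contributions gives $\frac{q-1}{2q} + \frac{1}{2q} - \frac{(-1)^{c_1+c_2}}{2} = \frac12 - \frac{(-1)^{c_1+c_2}}{2}$. Since $c_1, c_2 \in \{1,2\}$, the parity of $c_1 + c_2$ detects exactly whether $c_1 = c_2$, so this equals $0$ when $c_1 = c_2$ and $1$ otherwise, i.e. $1 - \delta_{c_1,c_2}$, as claimed. There is no real obstacle beyond careful bookkeeping of the table entries and codegrees; the only step meriting attention is verifying the three vanishing columns so as to be sure no cross-term is overlooked.
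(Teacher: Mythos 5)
Your proposal is correct and follows essentially the same route as the paper's proof: a direct evaluation of the inner product from the table of Theorem \ref{table:q=-1(4)} with the codegree-weighted inner product of Corollary \ref{coro:inner}, in which only the identity column and the $k\in\{1,2\}$ column contribute, the cross term in $(-1)^k$ cancels upon summing over $k$, and the total $\frac{1}{2}-\frac{(-1)^{c_1+c_2}}{2}$ is recognized as $1-\delta_{c_1,c_2}$. Your explicit justification of the three vanishing columns is a slight expansion of what the paper leaves implicit, but the computation is the same.
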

\begin{proof}
\begin{align*} \langle x_{\frac{q-1}{2},c_1} x_{\frac{q-1}{2},c_2}, x_{q-1,c_3} \rangle = &
\frac{2(q+1)(\frac{q-1}{2})^2}{q(q^2-1)} + \frac{1}{q} \sum_{k=1}^2(\frac{-1+i(-1)^{k+c_1}\sqrt{q}}{2})(\frac{-1+i(-1)^{k+c_2}\sqrt{q}}{2})(1) \\ & =   \frac{q-1}{2q} - \frac{1}{2q}(-1 + q(-1)^{c_1+c_2}) = 1-\delta_{c_1,c_2}.   \qedhere
\end{align*}
\end{proof}

\begin{lemma} 
$\langle x_{\frac{q-1}{2},c_1} x_{q-1,c_2}, x_{q-1,c_3} \rangle = 1-\delta_{c_2,\frac{q+1}{4}-c_3}$.
\end{lemma}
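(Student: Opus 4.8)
The plan is to apply the Verlinde-like formula (Corollary \ref{coro:Verlinde}) directly to the interpolated table of Theorem \ref{table:q=-1(4)}, exactly as in the proof of Lemma \ref{lem:fullcomput2}. Taking the three characters $x_{\frac{q-1}{2},c_1}$, $x_{q-1,c_2}$, $x_{q-1,c_3}$ and weighting each column $s$ by $1/\mathfrak{c}_s$ (with the formal codegrees read off from the class sizes via Remark \ref{rem:orb-stab}, namely $q(q^2-1)/2$, $q$, $\frac{q-1}{2}$, $\frac{q+1}{2}$, $q+1$ for the five column types), the inner product splits into five contributions. The identity column contributes $\frac{(q-1)^2}{q(q+1)}$; the second column contributes $-\frac1q$ once the two terms $k=1,2$ cancel the imaginary $\sqrt q$ parts; and the third column vanishes since $x_{\frac{q-1}{2},c_1}$ is $0$ there.

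The heart of the argument is to handle columns four and five together. First I would substitute $\zeta_{q+1}^{2kc}=\zeta_m^{kc}$ with $m:=\frac{q+1}{2}$ (which is even, as $q\equiv -1 \bmod 4$), so that the degree $q-1$ character reads $-\zeta_m^{kc}-\zeta_m^{-kc}$ and the degree $\frac{q-1}{2}$ character reads $(-1)^{k+1}=-\zeta_m^{km/2}$. The key observation is that the single class in column five sits at $k_0=\frac{q+1}{4}=\frac m2$, that its entries $-2(-1)^{c}$ and $(-1)^{k_0+1}$ are precisely the column-four formulas evaluated at $k=\frac m2$, and that its codegree $q+1=2m$ is exactly twice the column-four codegree $\frac{q+1}{2}=m$ (reflecting that this class is half-size). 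Setting $f(k):=-\zeta_m^{km/2}(\zeta_m^{kc_2}+\zeta_m^{-kc_2})(\zeta_m^{kc_3}+\zeta_m^{-kc_3})$, the combined contribution of columns four and five is $\frac1m\sum_{k=1}^{m/2-1}f(k)+\frac1{2m}f(\tfrac m2)$; using the symmetry $f(m-k)=f(k)$ to fold the sum, the midpoint terms cancel and this collapses cleanly to $\frac1{2m}\sum_{k=1}^{m-1}f(k)$.

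Next I would complete this to a full period by adding and subtracting the $k=0$ term $f(0)=(-1)(-2)(-2)=-4$, and expand $f(k)=-\sum_{\epsilon_2,\epsilon_3=\pm1}\zeta_m^{k(m/2+\epsilon_2 c_2+\epsilon_3 c_3)}$. Orthogonality of the characters of $\mathbb{Z}/m$ gives $\sum_{k=0}^{m-1}\zeta_m^{ka}=m$ when $m\mid a$ and $0$ otherwise, so I must determine when $m/2+\epsilon_2 c_2+\epsilon_3 c_3\equiv 0 \pmod m$. Since $1\le c_2,c_3\le \frac m2 -1$, the quantity $\epsilon_2 c_2+\epsilon_3 c_3$ lies strictly between $-m$ and $m$, so the congruence forces $\epsilon_2 c_2+\epsilon_3 c_3=\pm\frac m2$, which occurs exactly when $c_2+c_3=\frac m2=\frac{q+1}{4}$, realized by the two sign choices $(+,+)$ and $(-,-)$. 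Hence $\sum_{k=0}^{m-1}f(k)=-2m\,\delta_{c_2+c_3,\frac{q+1}{4}}$, and the combined contribution of columns four and five equals $-\delta_{c_2,\frac{q+1}{4}-c_3}+\frac{4}{q+1}$.

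Finally, adding the four contributions, the constant parts $\frac{(q-1)^2}{q(q+1)}-\frac1q+\frac4{q+1}$ simplify to $1$, leaving $1-\delta_{c_2,\frac{q+1}{4}-c_3}$, as claimed. The main obstacle I anticipate is the bookkeeping around columns four and five: getting the codegree weights right (the half-size class must enter with half the natural weight), verifying that the $f(\frac m2)$ terms genuinely cancel under the fold, and pinning down the exact residues for which the geometric sum survives given the precise range $1\le c_2,c_3\le\frac{q+1}{4}-1$. Once these are in place, the remaining simplifications are routine.
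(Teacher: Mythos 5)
Your proof is correct. It follows the paper's outline—the Verlinde-like formula of Corollary \ref{coro:Verlinde} applied to the table of Theorem \ref{table:q=-1(4)}, with the identical contributions $\frac{(q-1)^2}{q(q+1)}$, $-\frac{1}{q}$ and $0$ from the first three column types, and the same case split at $c_2+c_3=\frac{q+1}{4}$—but your handling of the last two columns is a genuinely different decomposition. The paper carries the half-size class $k=\frac{q+1}{4}$ as a separate term $\frac{4}{q+1}(-1)^{\frac{q+1}{4}+1+c_2+c_3}$ and symmetrizes the column-four sum over $\pm k$, so that each geometric sum runs over the nonzero residues mod $m=\frac{q+1}{2}$ with the midpoint $\frac{m}{2}$ removed, and then evaluates two cases separately. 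You instead observe that the fifth column is exactly the column-four formula at the midpoint $k=\frac{m}{2}$ carrying half the weight (codegree $q+1=2m$ versus $m$), fold via $f(m-k)=f(k)$ to get $\frac{1}{2m}\sum_{k=1}^{m-1}f(k)$, and finish by full-period orthogonality for $\mathbb{Z}/m$, so the delta appears in one stroke. Both routes are sound, but yours is tidier and arithmetically self-contained; in fact it sidesteps a sign slip in the paper's intermediate display, which asserts $\sum_{\epsilon=\pm 1}\sum_{k=1}^{(q-3)/4}\zeta_m^{\epsilon k a}=(-1)^a-1$ when $m\nmid a$, whereas the correct value is $-1-(-1)^a$ (the excluded midpoint residue enters with a minus sign); the paper's stated case values $1$ and $0$ are the ones implied by the corrected sums, so this is only a typo there, but your argument never needs the repair. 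As a spot check of your key step at $q=7$, $c_2=c_3=1$: columns four and five contribute $\frac{1}{4}f(1)+\frac{1}{8}f(2)=0-\frac{1}{2}$, and $\frac{9}{14}-\frac{1}{7}-\frac{1}{2}=0=1-\delta_{c_2,\frac{q+1}{4}-c_3}$, as your formula predicts.
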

\begin{proof}
\begin{align*} \langle x_{\frac{q-1}{2},c_1} x_{q-1,c_2}, x_{q-1,c_3} \rangle = &
\frac{2(q-1)^2(\frac{q-1}{2})}{q(q^2-1)} + \frac{1}{q} \sum_{k=1}^2(\frac{-1+i(-1)^{k+c_1}\sqrt{q}}{2})(-1)(-1) \\ & + \frac{2}{q+1} \sum_{k=1}^{\frac{q-3}{4}}(-1)^{k+1}(-\zeta_{q+1}^{2kc_2} - \zeta_{q+1}^{-2kc_2})(-\zeta_{q+1}^{2kc_3} - \zeta_{q+1}^{-2kc_3}) \\ & + \frac{(-1)^{\frac{q+1}{4}+1}(-2(-1)^{c_2})(-2(-1)^{c_3})}{q+1} \\
= & \frac{(q-1)^2}{q(q+1)} - \frac{1}{q} - \frac{2}{q+1} \sum_{\epsilon_2, \epsilon_3= \pm 1} \sum_{k=1}^{\frac{q-3}{4}} (\zeta_{\frac{q+1}{2}}^{\epsilon_2 c_2 + \epsilon_3 c_3 + \frac{q+1}{4}})^{k}  \\ & + \frac{4}{q+1}(-1)^{\frac{q+1}{4}+1+c_2+c_3}
\end{align*}
Note that $\zeta_{\frac{q+1}{2}}^{\frac{q+1}{4}} = \zeta_{\frac{q+1}{2}}^{-\frac{q+1}{4}}$, so: 
\begin{align*} \langle x_{\frac{q-1}{2},c_1} x_{q-1,c_2}, x_{q-1,c_3} \rangle = & \frac{(q-1)^2}{q(q+1)} - \frac{1}{q} + \frac{4}{q+1}(-1)^{\frac{q+1}{4}+1+c_2+c_3} \\ & - \frac{1}{q+1} \sum_{\epsilon_2, \epsilon_3= \pm 1} \sum_{\epsilon= \pm 1} \sum_{k=1}^{\frac{q-3}{4}} (\zeta_{\frac{q+1}{2}}^{\epsilon_2 c_2 + \epsilon_3 c_3 + \frac{q+1}{4}})^{\epsilon k}
\end{align*}
Now if $c_2+c_3 \not \equiv \frac{q+1}{4} \mod \frac{q+1}{2}$:  
$$ \sum_{\epsilon_2, \epsilon_3= \pm 1} \sum_{\epsilon= \pm 1} \sum_{k=1}^{\frac{q-3}{4}} (\zeta_{\frac{q+1}{2}}^{\epsilon_2 c_2 + \epsilon_3 c_3 + \frac{q+1}{4}})^{\epsilon k} 
= \sum_{\epsilon_2, \epsilon_3= \pm 1} ((-1)^{\epsilon_2 c_2 + \epsilon_3 c_3 + \frac{q+1}{4}}-1) = 4(-1)^{c_2 + c_3 + \frac{q+1}{4}} - 4$$
Else: 
$$ \sum_{\epsilon_2, \epsilon_3= \pm 1} \sum_{\epsilon= \pm 1} \sum_{k=1}^{\frac{q-3}{4}} (\zeta_{\frac{q+1}{2}}^{\epsilon_2 c_2 + \epsilon_3 c_3 + \frac{q+1}{4}})^{\epsilon k} 
= q-3 + \sum_{\substack{\epsilon_2, \epsilon_3= \pm 1 \\ \epsilon_2 \neq \epsilon_3}} ((-1)^{\epsilon_2 c_2 + \epsilon_3 c_3 + \frac{q+1}{4}}-1) = q-3 +2(-1)^{c_2 + c_3 + \frac{q+1}{4}} - 2 $$
But if $c_2+c_3 \not \equiv \frac{q+1}{4} \mod \frac{q+1}{2}$ then $c_2+c_3+\frac{q+1}{4}$ is even. So: 
$$\langle x_{\frac{q-1}{2},c_1} x_{q-1,c_2}, x_{q-1,c_3} \rangle =  \left\{
    \begin{array}{lll}
        1 & \mbox{if} & c_2+c_3 \not \equiv \frac{q+1}{4} \mod \frac{q+1}{2} \\
        0 & \mbox{else} &
    \end{array}
\right. = 1-\delta_{c_2,\frac{q+1}{4}-c_3},$$
because $1 \le c_i \le \frac{q-3}{4}$, for $i=2,3$.
\end{proof}

\begin{lemma} 
$\langle x_{\frac{q-1}{2},c_1} x_{q-1,c_2}, x_{q,1} \rangle = 1$.
\end{lemma}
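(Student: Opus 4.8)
The plan is to apply the Verlinde-like formula of Corollary \ref{coro:Verlinde}, with the codegree-weighted inner product of Corollary \ref{coro:inner}, to the three rows of the table in Theorem \ref{table:q=-1(4)} indexed by the characters of degree $\frac{q-1}{2}$ (parameter $c_1$), $q-1$ (parameter $c_2$) and $q$. First I would record the interpolated formal codegrees obtained from the class sizes via Remark \ref{rem:orb-stab}: dividing $|G| = q(q^2-1)/2$ by the five class sizes gives $\mathfrak{c} = (q(q^2-1)/2,\, q,\, \frac{q-1}{2},\, \frac{q+1}{2},\, q+1)$ for the five grouped columns.

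The decisive simplification is that most columns drop out before any real computation: the degree-$q$ character vanishes on the second column group, and the degree-$\frac{q-1}{2}$ character vanishes on the third, so only the first, fourth and fifth column groups survive. The first column contributes $\frac{2}{q(q^2-1)} \cdot \frac{q-1}{2}\cdot(q-1)\cdot q = \frac{q-1}{q+1}$, and the fifth (a single column $k = \frac{q+1}{4}$) contributes the single term $\frac{2(-1)^{\frac{q+1}{4}+1+c_2}}{q+1}$.

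The only genuine work is the fourth column group, whose contribution is
$$\frac{2}{q+1}\sum_{k=1}^{\frac{q-3}{4}} (-1)^{k+1}\bigl(\zeta_{q+1}^{2kc_2}+\zeta_{q+1}^{-2kc_2}\bigr).$$
Here I would absorb the sign into the root of unity by writing $(-1)^k = \zeta_{q+1}^{(q+1)k/2}$ and setting $c' := \frac{q+1}{4}-c_2 \in \{1,\dots,\frac{q-3}{4}\}$, so that each summand becomes $-\bigl(\zeta_{\frac{q+1}{2}}^{c'k}+\zeta_{\frac{q+1}{2}}^{-c'k}\bigr)$. Exactly as in Lemma \ref{lem:fullcomput2}, the index set $-S\cup S$ with $S=\{1,\dots,\frac{q-3}{4}\}$ fills $\{1,\dots,\frac{q+1}{2}\}\setminus\{\frac{q+1}{4},\frac{q+1}{2}\}$ modulo $\frac{q+1}{2}$, so the full-period sum vanishes and the sum collapses to $-(-1)^{c'}-1$; hence the fourth column contributes $\frac{2}{q+1}\bigl((-1)^{c'}+1\bigr)$.

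Finally, substituting $(-1)^{c'} = (-1)^{\frac{q+1}{4}+c_2}$ and adding the three surviving contributions, the parity-dependent terms from the fourth and fifth columns cancel, leaving $\frac{q-1}{q+1}+\frac{2}{q+1}=1$. The hard part will be the fourth-column root-of-unity sum: keeping the parity bookkeeping straight (the sign $(-1)^{(q+1)/4}$ and the range of $c'$) and, above all, noticing that its $c_2$-dependent part is precisely cancelled by the fifth column — which is what forces the final value to be the constant $1$, as it must be by Remark \ref{rk:interational} since it matches the group case for every prime power $q$.
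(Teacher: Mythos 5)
Your proposal is correct and takes essentially the same route as the paper's own proof: the same three surviving columns, the same contribution $\frac{q-1}{q+1}$ from the identity column, the same absorption of $(-1)^k$ into a $\frac{q+1}{2}$-th root of unity (the paper uses the exponent $c_2+\frac{q+1}{4}$ where you use its negative $c'=\frac{q+1}{4}-c_2$, equivalent modulo $\frac{q+1}{2}$), the same full-period $-S\cup S$ evaluation borrowed from Lemma \ref{lem:fullcomput2}, and the same cancellation of the parity-dependent terms against the fifth column. No gaps.
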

\begin{proof}
\begin{align*} \langle x_{\frac{q-1}{2},c_1} x_{q-1,c_2}, x_{q,1} \rangle = &
\frac{2(q-1)q(\frac{q-1}{2})}{q(q^2-1)} + \frac{2}{q+1} \sum_{k=1}^{\frac{q-3}{4}}(-1)^{k+1}(-\zeta_{q+1}^{2kc_2} - \zeta_{q+1}^{-2kc_2})(-1) + \frac{(-1)^{\frac{q+1}{4}+1}(-2(-1)^{c_2})(-1)}{q+1}
\\ = & \frac{q-1}{q+1} - \frac{2}{q+1} \sum_{k=1}^{\frac{q-3}{4}} \sum_{\epsilon= \pm 1}  (\zeta_{\frac{q+1}{2}}^{c_2 + \frac{q+1}{4}})^{\epsilon k} - \frac{2(-1)^{c_2+\frac{q+1}{4}}}{q+1} = \frac{q-1}{q+1} + \frac{2}{q+1} = 1.   \qedhere
\end{align*}
\end{proof}

\begin{lemma} %%
$\langle x_{\frac{q-1}{2},c_1} x_{q-1,c_2}, x_{q+1,c_3} \rangle = 1$.
\end{lemma}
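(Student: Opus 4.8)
The plan is to evaluate the inner product directly from the generic character table in Theorem~\ref{table:q=-1(4)}, exactly as in the proof of Lemma~\ref{lem:fullcomput}, by applying the Verlinde-like formula of Corollary~\ref{coro:Verlinde} with the inner product of Corollary~\ref{coro:inner}. Concretely I would write $\langle x_{\frac{q-1}{2},c_1} x_{q-1,c_2}, x_{q+1,c_3}\rangle = \sum_s \frac{1}{\mathfrak{c}_s}\lambda_{\frac{q-1}{2},s}\,\lambda_{q-1,s}\,\overline{\lambda_{q+1,s}}$, where $s$ runs over all conjugacy-class columns and each formal codegree $\mathfrak{c}_s$ equals $\FPdim(\mathcal{R}_q)=q(q^2-1)/2$ divided by the corresponding class size, by Remark~\ref{rem:orb-stab}. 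This produces the weights $\frac{2}{q(q^2-1)}$, $\frac{1}{q}$, $\frac{2}{q-1}$, $\frac{2}{q+1}$ and $\frac{1}{q+1}$ on the five column groups of the table. Since the row of $x_{q+1,c_3}$ is real, the conjugation plays no role.

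The key simplification I would exploit is the sparsity of the three rows involved. The row of $x_{\frac{q-1}{2},c_1}$ vanishes on the class group of size $q(q+1)$, while the row of $x_{q+1,c_3}$ vanishes on both class groups of size $q(q-1)$ and $\frac{q(q-1)}{2}$. Hence in the triple product only two column groups survive: the identity column and the two columns carrying the $\pm i\sqrt{q}$ entries (class parameter $k\in\{1,2\}$), every other column contributing $0$.

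For the identity column the contribution is $\frac{2}{q(q^2-1)}\cdot\frac{q-1}{2}\cdot(q-1)\cdot(q+1)=\frac{q-1}{q}$. For the block $k\in\{1,2\}$, the triple product at parameter $k$ is $\frac{-1+i(-1)^{k+c_1}\sqrt{q}}{2}\cdot(-1)\cdot 1=\frac{1-i(-1)^{k+c_1}\sqrt{q}}{2}$; summing over $k=1,2$, the terms $(-1)^{1+c_1}$ and $(-1)^{2+c_1}$ cancel the imaginary $\sqrt{q}$ part, so the two products add to $1$, which weighted by $\frac{1}{q}$ gives $\frac{1}{q}$. Adding the two surviving pieces yields $\frac{q-1}{q}+\frac{1}{q}=1$, as claimed, and this value is moreover independent of $c_1,c_2,c_3$.

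I expect essentially no obstacle here: the only point requiring a moment of care is confirming the cancellation of the $i\sqrt{q}$ terms upon summing over $k=1,2$ (it is precisely here that the grouping of the two exceptional classes into a single character-parameter block is used), together with correctly matching which rows vanish on which column groups. As noted in Remark~\ref{rk:interational}, no genuinely new work is needed beyond this bookkeeping, since the outcome is a rational function of $q$ that already takes the expected integer value in the prime-power (group) case.
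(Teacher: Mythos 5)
Your proposal is correct and follows exactly the paper's own proof: a direct evaluation via the Verlinde-like formula, where the identity column contributes $\frac{q-1}{q}$, the two exceptional columns contribute $\frac{1}{q}$ after the $i\sqrt{q}$ terms cancel over $k=1,2$, and all remaining columns vanish because of the zeros in the $x_{\frac{q-1}{2},c_1}$ and $x_{q+1,c_3}$ rows. The only difference is that you spell out the vanishing-column bookkeeping which the paper leaves implicit.
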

\begin{proof}
\begin{align*} \langle x_{\frac{q-1}{2},c_1} x_{q-1,c_2}, x_{q-1,c_3} \rangle = &
\frac{2(q-1)(q+1)(\frac{q-1}{2})}{q(q^2-1)} + \frac{1}{q} \sum_{k=1}^2(\frac{-1+i(-1)^{k+c_1}\sqrt{q}}{2})(-1)(1)  \\ = & \frac{q-1}{q} + \frac{1}{q} = 1.   \qedhere
\end{align*}
\end{proof}

\begin{lemma}
$\langle x_{\frac{q-1}{2},c_1} x_{q,1}, x_{q,1} \rangle = 1$.
\end{lemma}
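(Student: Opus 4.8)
The plan is to evaluate the pairing head-on via the Verlinde-like formula of Corollary \ref{coro:Verlinde}, with the inner product of Corollary \ref{coro:inner}, reading the character values off Theorem \ref{table:q=-1(4)} and the formal codegrees off Remark \ref{rem:orb-stab}. Writing $\chi$ for the degree-$\tfrac{q-1}{2}$ character $x_{\frac{q-1}{2},c_1}$ (row $2$ of the table) and $\psi$ for the degree-$q$ character $x_{q,1}$ (row $4$, with class values $q,0,1,-1,-1$), the quantity to compute is
\[
\langle x_{\frac{q-1}{2},c_1} x_{q,1}, x_{q,1} \rangle = \sum_s \frac{1}{\mathfrak{c}_s}\,\chi(s)\,\psi(s)\,\overline{\psi(s)},
\]
the sum running over the five column-types of the table. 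Since $\psi$ is real-valued the conjugation is harmless, so each surviving term is $\tfrac{1}{\mathfrak{c}_s}\,\chi(s)\,\psi(s)^2$, exactly the codegree-weighted character sum used in the preceding lemmas.

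The first reduction I would carry out is to discard the vanishing columns: on the two classes of type $\{1,2\}$ one has $\psi=0$, and on the $\tfrac{q-3}{4}$ classes of type $3$ one has $\chi=0$. Hence only three blocks contribute, namely the identity class (codegree $q(q^2-1)/2$), the $\tfrac{q-3}{4}$ classes of type $4$ (codegree $\tfrac{q+1}{2}$, with $\chi=(-1)^{k+1}$, $\psi=-1$), and the single type-$5$ class with parameter $k=\tfrac{q+1}{4}$ (codegree $q+1$, with $\chi=(-1)^{(q+1)/4+1}$, $\psi=-1$). Assembling these, and simplifying the identity term to $\tfrac{q}{q+1}$, one arrives at
\[
\langle x_{\frac{q-1}{2},c_1} x_{q,1}, x_{q,1} \rangle = \frac{q}{q+1} + \frac{2}{q+1}\sum_{k=1}^{(q-3)/4}(-1)^{k+1} + \frac{(-1)^{(q+1)/4+1}}{q+1}.
\]

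The only genuinely delicate point, and the step I expect to be the main obstacle, is the final simplification: the parity of $n:=\tfrac{q-3}{4}$ (equivalently the residue of $q$ modulo $8$) governs the alternating sum, so naively one would have to split into cases. The clean way around this is to observe that $\tfrac{q+1}{4}=n+1$, so the isolated type-$5$ term equals $\tfrac{(-1)^{n}}{q+1}$ and combines with the type-$4$ sum into $\tfrac{1}{q+1}\big(2\sum_{k=1}^{n}(-1)^{k+1}+(-1)^n\big)$. Using $\sum_{k=1}^n(-1)^{k+1}=\tfrac{1-(-1)^n}{2}$, the bracket equals $1$ regardless of the parity of $n$ — that is, the special column $k=\tfrac{q+1}{4}$ is precisely what absorbs the $q \bmod 8$ ambiguity. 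The whole expression then collapses to $\tfrac{q}{q+1}+\tfrac{1}{q+1}=1$, as claimed. Apart from this alignment of the type-$5$ endpoint with the type-$4$ summation range, everything is routine.
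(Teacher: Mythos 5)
Your proposal is correct and follows exactly the paper's own route: a direct evaluation via the Verlinde-like formula on the table of Theorem \ref{table:q=-1(4)}, with only the identity, type-$4$, and type-$5$ columns contributing, yielding the same intermediate expression $\frac{q}{q+1} + \frac{2}{q+1}\sum_{k=1}^{(q-3)/4}(-1)^{k+1} + \frac{(-1)^{(q+1)/4+1}}{q+1}$. The only difference is that you spell out the parity argument (that the $k=\frac{q+1}{4}$ endpoint absorbs the $q \bmod 8$ dependence of the alternating sum), a simplification the paper performs silently.
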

\begin{proof}
\begin{align*} \langle x_{\frac{q-1}{2},c_1}  x_{q,1}, x_{q,1} \rangle & = 
\frac{2q^2(\frac{q-1}{2})}{q(q^2-1)} + \frac{2}{q+1} \sum_{k=1}^{\frac{q-3}{4}}(-1)^{(k+1)}(-1)^2 + \frac{(-1)^{(\frac{q+1}{4}+1)}(-1)^2}{q+1} \\ & = \frac{q}{q+1} + \frac{1}{q+1} = 1.   \qedhere
\end{align*}
\end{proof}

\begin{lemma}
$\langle x_{\frac{q-1}{2},c_1} x_{q,1}, x_{q+1,c_2} \rangle = 1$.
\end{lemma}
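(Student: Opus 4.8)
The plan is to apply the Verlinde-like formula of Corollary \ref{coro:Verlinde} together with the inner product of Corollary \ref{coro:inner}, expanding the structure constant as a weighted sum over the five classes $s$ of the interpolated table in Theorem \ref{table:q=-1(4)},
$$\langle x_{\frac{q-1}{2},c_1} x_{q,1}, x_{q+1,c_2}\rangle = \sum_s \frac{1}{\mathfrak{c}_s}\,\lambda_{\frac{q-1}{2},c_1}(s)\,\lambda_{q,1}(s)\,\overline{\lambda_{q+1,c_2}(s)},$$
where $\mathfrak{c}_s$ is the formal codegree of the $s$-th class, i.e. the centralizer order obtained from the class size via Remark \ref{rem:orb-stab} (so $\mathfrak{c}_1 = \FPdim(\mathcal{R}_q) = q(q^2-1)/2$).

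The key step, exactly as in the $q$ even analogue (Lemma \ref{lem:last}), is to notice that the sum collapses to the trivial class. Indeed, scanning the three rows of degrees $\frac{q-1}{2}$, $q$ and $q+1$ in Theorem \ref{table:q=-1(4)}, one checks that in every non-trivial column at least one of the three factors vanishes: the degree-$q$ character is $0$ on the $\{1,2\}$-class; the degree-$\frac{q-1}{2}$ character is $0$ on the first family of $\frac{q-3}{4}$ classes; and the degree-$(q+1)$ character is $0$ on both the second family of $\frac{q-3}{4}$ classes and on the $\{(q+1)/4\}$-class. Hence only $s=1$ survives.

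It then remains to evaluate the trivial-class term, where the three entries are the degrees themselves. This gives
$$\langle x_{\frac{q-1}{2},c_1} x_{q,1}, x_{q+1,c_2}\rangle = \frac{2\cdot\frac{q-1}{2}\cdot q\cdot(q+1)}{q(q^2-1)} = \frac{q(q^2-1)}{q(q^2-1)} = 1,$$
which is the claim.

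I expect no serious obstacle here. Unlike the lemmas in which two characters of equal degree are multiplied (producing products of roots of unity that must be summed by the geometric-series and quadratic-Gauss-sum arguments of Lemmas \ref{lem:first2} and \ref{lem:fullcomput2}), the present triple of rows has a vanishing pattern that kills every column but the first, so the only care required is the bookkeeping of which table entries are zero, after which the surviving term is a one-line rational simplification.
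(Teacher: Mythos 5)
Your proof is correct and matches the paper's own argument: the paper's proof is exactly the one-line evaluation $\langle x_{\frac{q-1}{2},c_1} x_{q,1}, x_{q+1,c_2} \rangle = \frac{2q(q+1)(\frac{q-1}{2})}{q(q^2-1)} = 1$, where all non-trivial columns drop out for precisely the vanishing pattern you identified (the degree-$q$ entry is $0$ on the $\{1,2\}$ classes, the degree-$\frac{q-1}{2}$ entry is $0$ on the first family, and the degree-$(q+1)$ entry is $0$ on the remaining columns). You have merely made explicit the column-by-column bookkeeping that the paper leaves implicit.
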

\begin{proof}
$$\langle x_{\frac{q-1}{2},c_1} x_{q,1}, x_{q+1,c_2} \rangle = 
\frac{2q(q+1)(\frac{q-1}{2})}{q(q^2-1)} = 1.  \qedhere $$
\end{proof}

\begin{lemma}
$\langle x_{\frac{q-1}{2},c_1} x_{q+1,c_2}, x_{q+1,c_3} \rangle = 1$.
\end{lemma}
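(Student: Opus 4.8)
The plan is to evaluate $\langle x_{\frac{q-1}{2},c_1} x_{q+1,c_2}, x_{q+1,c_3} \rangle$ directly by the Verlinde-like formula (Corollary \ref{coro:Verlinde}), using the inner product of Corollary \ref{coro:inner} applied to the interpolated table of Theorem \ref{table:q=-1(4)}, exactly as in the preceding lemmas of this subsection. Concretely, the quantity is a sum over the five columns of the product of the degree-$\frac{q-1}{2}$, degree-$(q+1)$ and degree-$(q+1)$ entries, each weighted by $1/\mathfrak{c}_s$, where the formal codegrees $\mathfrak{c}_s$ are $q(q^2-1)/2$ divided by the class sizes listed in Theorem \ref{table:q=-1(4)} (by Remarks \ref{rem:orb-stab} and \ref{rk:interational}).

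First I would discard the vanishing columns. The degree-$\frac{q-1}{2}$ character is zero on the third column (the one of class size $q(q+1)$), while the degree-$(q+1)$ character is zero on the fourth and fifth columns; hence only the first two columns contribute. Evaluating the first (degree) column, for which $\mathfrak{c}_1 = q(q^2-1)/2$, gives $\frac{2}{q(q^2-1)} \cdot \frac{q-1}{2} \cdot (q+1)^2 = \frac{q+1}{q}$. For the second column ($\mathfrak{c} = q$, summed over $k = 1, 2$) the two degree-$(q+1)$ entries both equal $1$, so its contribution is $\frac{1}{q}\sum_{k=1}^2 \frac{-1 + i(-1)^{k+c_1}\sqrt{q}}{2}$.

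The only point requiring care is the cancellation in this last sum: since $\sum_{k=1}^2 (-1)^{k+c_1} = 0$, the imaginary (Gauss-sum) part disappears and the two constant terms give $\frac{1}{2q}(-2) = -\frac{1}{q}$. Adding the two surviving contributions yields $\frac{q+1}{q} - \frac{1}{q} = 1$, as claimed; in particular the value is independent of $c_1, c_2, c_3$. There is no genuine obstacle here, since every term is a finite evaluation of table entries with no competing nonzero contributions, and the result follows once the two nonzero columns are identified and the $\sqrt{q}$ terms are seen to cancel.
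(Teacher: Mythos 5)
Your proof is correct and is essentially identical to the paper's own computation: both apply the Verlinde-like formula to the table of Theorem \ref{table:q=-1(4)}, observe that only the first two columns contribute, obtain $\frac{2(q+1)^2(\frac{q-1}{2})}{q(q^2-1)} = \frac{q+1}{q}$ from the degree column, and use the cancellation of the $i(-1)^{k+c_1}\sqrt{q}$ terms over $k=1,2$ to get $-\frac{1}{q}$ from the second column, summing to $1$.
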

\begin{proof}
$$ \langle x_{\frac{q-1}{2},c_1} x_{q+1,c_2}, x_{q+1,c_3} \rangle = 
\frac{2(q+1)^2(\frac{q-1}{2})}{q(q^2-1)} + \frac{1}{q} \sum_{k=1}^2(\frac{-1+i(-1)^{k+c_1}\sqrt{q}}{2})(1)^2 = \frac{q+1}{q} - \frac{1}{q} = 1.   \qedhere $$
\end{proof}

\begin{lemma}
$\langle x_{q-1,c_1} x_{q-1,c_2}, x_{q-1,c_3} \rangle = \left\{
    \begin{array}{lll}
        1 & \mbox{if} & c_1+c_2+c_3 = \frac{q+1}{2} \mbox{ or } 2\Max(c_1,c_2,c_3) \\
        2 & \mbox{else} &
    \end{array}
\right.$.
\end{lemma}
\begin{proof}
\begin{align*} \langle x_{q-1,c_1} x_{q-1,c_2}, x_{q-1,c_3} \rangle = &
\frac{2(q-1)^3}{q(q^2-1)} + \frac{1}{q} \sum_{k=1}^2(-1)^3 + \frac{2}{q+1} \sum_{k=1}^{\frac{q-3}{4}}\prod_{i=1}^3(-\zeta_{q+1}^{2kc_i} - \zeta_{q+1}^{-2kc_i}) + \frac{1}{q+1}\prod_{i=1}^3(-2(-1)^{c_i}) \\
= & \frac{2(q-1)^2}{q(q+1)} - \frac{2}{q} - \frac{2}{q+1} \sum_{\epsilon_i= \pm 1} \sum_{k=1}^{\frac{q-3}{4}} (\zeta_{\frac{q+1}{2}}^{\sum_{i=1}^3 \epsilon_i c_i})^{k} - \frac{8}{q+1}(-1)^{\sum_{i=1}^3 c_i}
\end{align*}
As for the proof of Lemmas \ref{lem:fullcomput} and \ref{lem:fullcomput2}, if $\sum_{i=1}^3 c_i \neq \frac{q+1}{2}$ and $2\Max(c_1,c_2,c_3)$, then:  
\begin{align*} \langle x_{q-1,c_1} x_{q-1,c_2}, x_{q-1,c_3} \rangle = & \frac{2(q-1)^2}{q(q+1)} - \frac{2}{q} + \frac{8}{q+1} (1+(-1)^{\sum_{i=1}^3 c_i}) - \frac{8}{q+1}(-1)^{\sum_{i=1}^3 c_i} = 2.
\end{align*}
Else $\sum_{i=1}^3 c_i = \frac{q+1}{2}$ or $2\Max(c_1,c_2,c_3)$, in particular $\sum_{i=1}^3 c_i $ is even, and:
\begin{align*} \langle x_{q-1,c_1} x_{q-1,c_2}, x_{q-1,c_3} \rangle = & \frac{2(q-1)^2}{q(q+1)} - \frac{2}{q} + \frac{1}{q+1} (6(1+(-1)^{\sum_{i=1}^3 c_i}) - (q-3) ) - \frac{8}{q+1}(-1)^{\sum_{i=1}^3 c_i} 
\\ = & \frac{2(q-1)^2}{q(q+1)} - \frac{2}{q} + \frac{4-(q-3)}{q+1} = 1.   \qedhere
\end{align*}
\end{proof}

\begin{lemma}
$\langle x_{q-1,c_1} x_{q-1,c_2}, x_{q,1} \rangle = 2-\delta_{c_1,c_2}$.
\end{lemma}
\begin{proof} If $c_1 \neq c_2$:
\begin{align*} \langle x_{q-1,c_1} x_{q-1,c_2}, x_{q-1,c_3} \rangle = &
\frac{2q(q-1)^2}{q(q^2-1)} + \frac{2}{q+1} \sum_{k=1}^{\frac{q-3}{4}}(-1)\prod_{i=1}^2(-\zeta_{q+1}^{2kc_i} - \zeta_{q+1}^{-2kc_i}) + \frac{1}{q+1}(-1)\prod_{i=1}^2(-2(-1)^{c_i})
\\ = & \frac{2(q-1)}{q+1} + \frac{4}{q+1} (1+(-1)^{c_1 + c_2}) - \frac{4}{q+1}(-1)^{c_1 + c_2} = \frac{2(q-1)}{q+1} + \frac{4}{q+1} = 2.
\end{align*}
Else, $c_1 = c_2$, and:
$$ \langle x_{q-1,c_1} x_{q-1,c_2}, x_{q-1,c_3} \rangle = \frac{2(q-1)}{q+1} + \frac{2}{q+1} (1+(-1)^{2c_1} - \frac{q-3}{2}) - \frac{4}{q+1}(-1)^{2c_1} = 1.   \qedhere $$
\end{proof}

\begin{lemma}
$\langle x_{q-1,c_1} x_{q-1,c_2}, x_{q+1,c_3} \rangle = 2$.
\end{lemma}
\begin{proof}
$$ \langle x_{q-1,c_1} x_{q-1,c_2}, x_{q-1,c_3} \rangle = 
\frac{2(q+1)(q-1)^2}{q(q^2-1)} + \frac{1}{q} \sum_{k=1}^2(-1)^2(1) = \frac{2(q-1)}{q} + \frac{2}{q} = 2.   \qedhere
$$
\end{proof}

\begin{lemma}
$\langle x_{q-1,c_1} x_{q,1}, x_{q,1} \rangle = 2$.
\end{lemma}
\begin{proof}
\begin{align*} \langle x_{q-1,c_1} x_{q,1}, x_{q,1} \rangle & = 
\frac{2q^2(q-1)}{q(q^2-1)} + \frac{2}{q+1} \sum_{k=1}^{\frac{q-3}{4}}(-\zeta_{q+1}^{2kc_1} - \zeta_{q+1}^{-2kc_1})(-1)^2 + \frac{-2(-1)^{c_1}(-1)^2}{q+1} \\ & = \frac{2q}{q+1} + \frac{2}{q+1}(1+(-1)^{c_1}) - \frac{2}{q+1}(-1)^{c_1}b  = \frac{2q}{q+1} + \frac{2}{q+1} = 2.   \qedhere
\end{align*}
\end{proof}

\begin{lemma}
$\langle x_{q-1,c_1} x_{q,1}, x_{q+1,c_2} \rangle = 2$.
\end{lemma}
\begin{proof}
$$ \langle x_{q-1,c_1} x_{q,1},  x_{q+1,c_2} \rangle = 
\frac{2q(q+1)(q-1)}{q(q^2-1)} = 2.   \qedhere
$$
\end{proof}

\begin{lemma}
$\langle x_{q-1,c_1} x_{q+1,c_2}, x_{q+1,c_3} \rangle = 2$.
\end{lemma}
\begin{proof}
$$ \langle x_{q-1,c_1} x_{q+1,c_2}, x_{q+1,c_3} \rangle = 
\frac{2(q+1)^2(q-1)}{q(q^2-1)} + \frac{1}{q} \sum_{k=1}^2(-1)(1)^2 = \frac{2(q+1)}{q} - \frac{2}{q} = 2.   \qedhere
$$
\end{proof}

\begin{lemma}
$\langle x_{q,1} x_{q,1}, x_{q,1} \rangle = 2$.
\end{lemma}
\begin{proof}
\begin{align*} \langle x_{q-1,c_1} x_{q,1}, x_{q,1} \rangle & = 
\frac{2q^3}{q(q^2-1)} + \frac{2}{q-1} \sum_{k=1}^{\frac{q-3}{4}}(1)^3 + \frac{2}{q+1} \sum_{k=1}^{\frac{q-3}{4}}(-1)^3 + \frac{(-1)^3}{q+1} \\ & = \frac{2q^2}{q^2-1} + \frac{2}{q-1} \frac{q-3}{4} - \frac{2}{q+1} \frac{q-3}{4} - \frac{1}{q+1} = 2.   \qedhere
\end{align*}
\end{proof}

\begin{lemma}
$\langle x_{q,1} x_{q,1}, x_{q+1,c_1} \rangle = 2$.
\end{lemma}
\begin{proof}
\begin{align*} \langle x_{q,1} x_{q,1}, x_{q+1,c_1} \rangle & = 
\frac{2(q+1)q^2}{q(q^2-1)} + \frac{2}{q-1} \sum_{k=1}^{\frac{q-3}{4}}(1)^2 (\zeta_{q-1}^{2kc_1} + \zeta_{q-1}^{-2kc_1}).
\end{align*}
Let $S$ be $\{1,2, \dots, \frac{q-3}{4}\}$. Then $-S \cup S = \{1,2, \dots, \frac{q-1}{2}\} \setminus \{ \frac{q-1}{2} \} \mod  \frac{q-1}{2}$. So (as for Lemma \ref{lem:fullcomput2}):
$$ \langle x_{q,1} x_{q,1}, x_{q+1,c_1} \rangle  = 
\frac{2q}{q-1} + \frac{2}{q-1}(-1) = 2.   \qedhere
$$
\end{proof}

\begin{lemma}
$\langle x_{q,1} x_{q+1,c_1}, x_{q+1,c_2} \rangle = 2+\delta_{c_1,c_2}$.
\end{lemma}
\begin{proof} If $c_1 \neq c_2$:
\begin{align*} \langle x_{q,1} x_{q+1,c_1}, x_{q+1,c_2} \rangle & = 
\frac{2(q+1)^2q}{q(q^2-1)} + \frac{2}{q-1} \sum_{k=1}^{\frac{q-3}{4}}(1)(\zeta_{q-1}^{2kc_1} + \zeta_{q-1}^{-2kc_1})(\zeta_{q-1}^{2kc_2} + \zeta_{q-1}^{-2kc_2}) \\ & = \frac{2(q+1)}{q-1} + \frac{4}{q-1}(-1) = 2.
\end{align*}
Else $c_1 = c_2$, and:
$$ \langle x_{q,1} x_{q+1,c_1}, x_{q+1,c_2} \rangle = \frac{2(q+1)}{q-1} + \frac{2}{q-1}(-1 + \frac{q-3}{2}) = 3.   \qedhere
$$
\end{proof}

\begin{lemma} \label{lem:last2}
$\langle x_{q+1,c_1} x_{q+1,c_2}, x_{q+1,c_3} \rangle = \left\{
    \begin{array}{lll}
        3 & \mbox{if} & c_1+c_2+c_3 = \frac{q-1}{2} \mbox{ or } 2\Max(c_1,c_2,c_3) \\
        2 & \mbox{else} &
    \end{array}
\right.$
\end{lemma}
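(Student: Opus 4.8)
The plan is to compute the inner product directly from the Verlinde-like formula (Corollary \ref{coro:Verlinde}) with the inner product of Corollary \ref{coro:inner}, applied to the interpolated table of Theorem \ref{table:q=-1(4)}, following the model computation of Lemma \ref{lem:fullcomput}. First I would record the values of the degree-$(q+1)$ characters $x_{q+1,c}$ across the columns: they equal $q+1$ on the identity class, $1$ on each class of type $\{1,2\}$, $\zeta_{q-1}^{2kc}+\zeta_{q-1}^{-2kc}$ on the classes of type $\{1,\dots,\frac{q-3}{4}\}$ (parameter $k$), and $0$ on the remaining two column-types. Since those two column-types contribute nothing, the formula collapses to three groups of terms, with formal codegrees $q(q^2-1)/2$, $q$ and $\frac{q-1}{2}$ respectively (obtained from the class sizes via Remark \ref{rem:orb-stab}). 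A point to watch here, and the one place where a naive reading of the table misleads, is that the column-type $\{1,2\}$ is really \emph{two} distinct conjugacy classes, so it contributes $2/q$ and not $1/q$; getting this multiplicity right is exactly what makes the final answer come out an integer.

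The heart of the computation is the remaining trigonometric sum $\sum_{k=1}^{(q-3)/4}\prod_{i=1}^3(\zeta_{q-1}^{2kc_i}+\zeta_{q-1}^{-2kc_i})$. I would expand the product as $\sum_{\epsilon_i=\pm 1}\zeta_{q-1}^{2k\sum_i \epsilon_i c_i}$, set $m:=\frac{q-1}{2}$ so that $\zeta_{q-1}^{2}=\zeta_m$, and then exploit the $k\mapsto m-k$ symmetry (as in Lemma \ref{lem:fullcomput}) to double the range $\{1,\dots,\frac{m-1}{2}\}$ into $\{1,\dots,m-1\}$. Adding and subtracting the $k=0$ term $2^3=8$ turns the sum into $\tfrac12\left(m\cdot\#\{(\epsilon_i):\,m\mid\sum_i\epsilon_i c_i\}-8\right)$, so everything reduces to counting the sign vectors $(\epsilon_1,\epsilon_2,\epsilon_3)\in\{\pm1\}^3$ for which $m$ divides $\epsilon_1 c_1+\epsilon_2 c_2+\epsilon_3 c_3$.

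For this count I would use $1\le c_i\le\frac{q-3}{4}=\frac{m-1}{2}$, which bounds $|\sum_i\epsilon_i c_i|\le\frac{3(m-1)}{2}<2m$, so the only possible multiples of $m$ are $-m,0,m$. The value $\pm m$ forces all three signs equal and occurs precisely when $c_1+c_2+c_3=m=\frac{q-1}{2}$ (two sign vectors), while $0$ occurs precisely when $2\Max(c_1,c_2,c_3)=c_1+c_2+c_3$ (again two sign vectors). The decisive structural point is that these two conditions are mutually exclusive: $m$ is odd since $q\equiv-1\pmod 4$, so $c_1+c_2+c_3=m$ together with $2\Max=c_1+c_2+c_3$ would force $\Max=m/2\notin\bZ$. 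Hence the count is $2$ under either condition and $0$ otherwise, i.e. the $k$-sum equals $m-4$ in the first case and $-4$ in the second. Multiplying by the codegree factor $\frac{2}{q-1}=\frac1m$ and combining with the identity and $\{1,2\}$ contributions, the numerators collapse to $3q(q-1)$ and $2q(q-1)$ over the common denominator $q(q-1)$, giving $3$ and $2$ exactly as stated.

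The main obstacle I anticipate is bookkeeping rather than conceptual: correctly tracking the per-class multiplicities inside each grouped column (the $\{1,2\}$ issue above), and verifying the mutual exclusivity of the two divisibility conditions, which is what guarantees a clean count of $2$ and rules out an overcount that would spuriously produce the value $4$.
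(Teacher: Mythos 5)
Your proposal is correct and follows essentially the same route as the paper's proof: the Verlinde-like formula applied to the interpolated table, the split of the grouped column $\{1,2\}$ into two classes contributing $\frac{2}{q}$, expansion of the triple product into sign vectors $(\epsilon_i)$, and a count of those whose signed sum $\sum_i \epsilon_i c_i$ is divisible by $m=\frac{q-1}{2}$, yielding the same case values $\frac{q-9}{q-1}$ and $\frac{-8}{q-1}$ and hence $3$ and $2$. The one detail you make explicit that the paper leaves implicit is the mutual exclusivity of the two conditions $c_1+c_2+c_3=\frac{q-1}{2}$ and $c_1+c_2+c_3=2\Max(c_1,c_2,c_3)$ (via the oddness of $m$ when $q\equiv -1 \bmod 4$), which is exactly what guarantees the count of sign vectors is $2$ rather than $4$ in the ``or'' case.
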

\begin{proof}
\begin{align*} \langle x_{q+1,c_1} x_{q+1,c_2}, x_{q+1,c_3} \rangle & = 
\frac{2(q+1)^3}{q(q^2-1)} + \frac{1}{q} \sum_{k=1}^2 (1)^3 + \frac{2}{q-1} \sum_{k=1}^{\frac{q-3}{4}}\prod_{i=1}^3 (\zeta_{q-1}^{2kc_i} + \zeta_{q-1}^{-2kc_i}) \\
& = \frac{2(q+1)^2}{q(q-1)} + \frac{2}{q} + \frac{2}{q-1} \sum_{\epsilon_i= \pm 1} \sum_{k=1}^{\frac{q-3}{4}} (\zeta_{\frac{q-1}{2}}^{\sum_{i=1}^3 \epsilon_i c_i})^{k}
\end{align*}
If $\sum_{i=1}^3 c_i \neq \frac{q-1}{2}$ and $2\Max(c_1,c_2,c_3)$, then (as for some previous lemmas): 
\begin{align*} \langle x_{q+1,c_1} x_{q+1,c_2}, x_{q+1,c_3} \rangle & = 
\frac{2(q+1)^2}{q(q-1)} + \frac{2}{q} + \frac{8}{q-1}(-1) = 2.
\end{align*}  
Else $\sum_{i=1}^3 c_i = \frac{q-1}{2}$ or $2\Max(c_1,c_2,c_3)$, and:
$$ \langle x_{q+1,c_1} x_{q+1,c_2}, x_{q+1,c_3} \rangle = 
\frac{2(q+1)^2}{q(q-1)} + \frac{2}{q} + \frac{1}{q-1}(q-3-6) = 3.   \qedhere
$$ 
\end{proof}

\begin{proposition}[Orthogonality Relations] \label{prop:ortho2} For all $d_i,c_i$, $\langle x_{d_1,c_1}, x_{d_2,c_2} \rangle = \delta_{(d_1,c_1),(d_2,c_2)}.$
\end{proposition}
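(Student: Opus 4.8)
The plan is to verify these relations by directly expanding the inner product of Corollary \ref{coro:inner} against the interpolated character table of Theorem \ref{table:q=-1(4)}, exactly in the style of Proposition \ref{prop:ortho}. First I would record the formal codegrees: using the class sizes in Theorem \ref{table:q=-1(4)} together with Remark \ref{rem:orb-stab} and $\FPdim(\mathcal{R}_q) = q(q^2-1)/2$, the five columns have $\mathfrak{c} = q(q^2-1)/2,\ q,\ (q-1)/2,\ (q+1)/2,\ q+1$ respectively. Thus the inner product reads
$$\langle f,g\rangle = \frac{2\,f_1\overline{g_1}}{q(q^2-1)} + \frac{1}{q}\sum_{k=1}^{2} f_k\overline{g_k} + \frac{2}{q-1}\sum_{k=1}^{(q-3)/4} f_k\overline{g_k} + \frac{2}{q+1}\sum_{k=1}^{(q-3)/4} f_k\overline{g_k} + \frac{f\,\overline{g}}{q+1},$$
where each sum runs over the corresponding block of conjugacy classes.

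Then I would compute each of the finitely many pairs $\langle x_{d_1,c_1}, x_{d_2,c_2}\rangle$. The off-diagonal cases ($d_1 \neq d_2$) and the diagonal cases all reduce to cyclotomic sums already evaluated in Lemmas \ref{lem:first2}--\ref{lem:last2}: sums of the shape $\sum_k (\zeta_{q+1}^{2kc_1}+\zeta_{q+1}^{-2kc_1})(\zeta_{q+1}^{2kc_2}+\zeta_{q+1}^{-2kc_2})$ collapse to $-1$ or to a linear expression in $q$ via the same geometric-series-over-roots-of-unity argument as in Lemma \ref{lem:fullcomput2}, and the surviving rational functions in $q$ simplify to $\delta_{(d_1,c_1),(d_2,c_2)}$.

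The only genuinely new feature, and the point I expect to require care, is the block of degree-$(q-1)/2$ characters, whose entries $\frac{-1+i(-1)^{k+c}\sqrt{q}}{2}$ carry the Gauss-sum term $i\sqrt{q}$. For $\langle x_{\frac{q-1}{2},c_1}, x_{\frac{q-1}{2},c_2}\rangle$ the contribution of the $\{1,2\}$-column is $\frac{1}{4q}\sum_{k=1}^{2}\bigl(1 + i(-1)^k\sqrt{q}((-1)^{c_2}-(-1)^{c_1}) + (-1)^{c_1+c_2}q\bigr)$, and the crucial observation is that $\sum_{k=1}^{2}(-1)^k = 0$ kills the imaginary cross-term, leaving a purely rational expression in $q$ that combines with the columns of weight $\frac{2}{q+1}$ and $\frac{1}{q+1}$ to give numerator $2q(q+1)$ when $c_1=c_2$ and numerator $0$ when $c_1\neq c_2$, i.e. exactly $\delta_{c_1,c_2}$. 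The same cancellation disposes of every mixed pair involving a degree-$(q-1)/2$ character, so the $\sqrt{q}$ never survives.

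Finally, I would note (as in Remark \ref{rk:interational}) that this case-by-case computation is not strictly necessary: since $k$ and $c$ play symmetric roles in Theorem \ref{table:q=-1(4)} and every cyclotomic sum has already been reduced to a rational function of $q$ in the preceding lemmas, each $\langle x_{d_1,c_1}, x_{d_2,c_2}\rangle$ is a fixed rational function of $q$. These functions agree with $\delta_{(d_1,c_1),(d_2,c_2)}$ at every prime power $q$, where the table is the genuine character table of $\PSL(2,q)$ and Schur row orthogonality holds, hence they agree identically for all integers $q \ge 2$. The main obstacle is therefore purely organizational: managing the five-block inner product and confirming the cancellation of the $\sqrt{q}$-terms in the degree-$(q-1)/2$ block.
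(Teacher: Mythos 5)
Your proposal is correct and takes essentially the same route as the paper: the paper's proof of Proposition \ref{prop:ortho2} is simply ``as for Proposition \ref{prop:ortho}; alternatively, see Remark \ref{rk:interational}'', which is exactly your combination of the explicit column-by-column computation (whose only new feature, the cancellation of the $i\sqrt{q}$ Gauss-sum terms via $\sum_{k=1}^{2}(-1)^k=0$, you verify correctly, with the right totals $2q(q+1)$ and $0$) and the rational-function/prime-power interpolation argument you give at the end. Your formal codegrees $q(q^2-1)/2,\ q,\ (q-1)/2,\ (q+1)/2,\ q+1$ and the resulting weights agree with those used throughout Lemmas \ref{lem:first2}--\ref{lem:last2}.
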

\begin{proof}
As for Proposition \ref{prop:ortho}. Alternatively, see Remark \ref{rk:interational}.
\end{proof}

\begin{proposition}[Formal codegrees] \label{prop:cod2} The formal codegrees of the interpolated fusion rings
are the interpolation of the formal codegrees for $\Rep(\PSL(2,q))$. \end{proposition}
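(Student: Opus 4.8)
The plan is to follow the proof of Proposition \ref{prop:cod} verbatim, now reading off the entries from the $q\equiv -1\bmod 4$ table. By Definition \ref{def:codeg} the formal codegree $\mathfrak{c}_j=\sum_i|\lambda_{i,j}|^2$ is the squared norm of the $j$-th column of the eigentable, which here is the interpolated character table of Theorem \ref{table:q=-1(4)}. So I would compute the squared norm of each of the five columns directly and check that it equals $\FPdim(\mathcal{R}_q)=q(q^2-1)/2$ divided by the class size recorded in the bottom row of that table. By Remark \ref{rem:orb-stab}, when $q$ is a prime-power these quotients are exactly the centralizer orders $|C_G(g)|$, i.e. the genuine formal codegrees of $\Rep(\PSL(2,q))$; matching them column by column establishes the interpolation claim.

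First I would dispose of the two ``easy'' columns. The identity column is the sum of squared degrees,
$$1 + 2\left(\tfrac{q-1}{2}\right)^2 + \tfrac{q-3}{4}(q-1)^2 + q^2 + \tfrac{q-3}{4}(q+1)^2 = \tfrac{q(q^2-1)}{2},$$
as required (class size $1$). For the $\{1,2\}$ column the only new feature is the entry $\frac{-1+i(-1)^{k+c}\sqrt q}{2}$, whose absolute square is $\frac{1+q}{4}$ independently of the sign; hence the norm is $1+2\cdot\frac{1+q}{4}+0+2\cdot\frac{q-3}{4}=q$, matching $\frac{q(q^2-1)/2}{(q^2-1)/2}$. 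The $\{(q+1)/4\}$ column is also immediate, since all its entries are real: $1+2+\big(2(-1)^c\big)^2\cdot\frac{q-3}{4}+1+0 = 1+2+(q-3)+1 = q+1$.

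For the two remaining columns the off-degree entries are the root-of-unity sums $\pm(\zeta_{q\pm1}^{2kc}+\zeta_{q\pm1}^{-2kc})$. Summing their squares over the character index reduces, after expanding $(\zeta^{2kc}+\zeta^{-2kc})^2=\zeta^{4kc}+2+\zeta^{-4kc}$ and using $\zeta_{q\pm1}^{2}=\zeta_{(q\pm1)/2}$, to the same geometric-sum identities already exploited in Lemmas \ref{lem:first2} and \ref{lem:fullcomput2}: the range-doubling $-S\cup S$ turns each into a sum over a full set of nonzero residues, which collapses to $-1$ (for the $\zeta_{q-1}$ column) and $-2$ (for the $\zeta_{q+1}$ column). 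The arithmetic then produces $\frac{q-1}{2}$ and $\frac{q+1}{2}$, in agreement with the class sizes $q(q+1)$ and $q(q-1)$.

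In fact, as in Remark \ref{rk:interational}, no genuinely new computation is needed: since $k$ and $c$ play symmetric roles in the table, every column norm is the same rational function of $q$ as a quantity already evaluated in \S\ref{sec:cal}, and these rational functions agree with the true centralizer orders at the infinitely many prime-powers $q\equiv -1\bmod 4$, hence agree identically. The one point I would single out as the main (if minor) obstacle is the half-integer index ranges peculiar to $q\equiv -1\bmod 4$, together with the extra isolated class $\{\frac{q+1}{4}\}$: one must check that the range-doubling sweeps out \emph{all} nonzero residues modulo $(q-1)/2$ (an odd modulus, giving the clean $-1$), whereas modulo $(q+1)/2$ (an even modulus) it misses the residue $(q+1)/4$, whose term $\zeta^{\,k(q+1)/2}=1$ accounts for the shift from $-1$ to $-2$. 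Once this case distinction is made, the identities of the lemmas apply verbatim and all five values come out as the announced centralizer orders.
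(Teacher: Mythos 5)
Your proposal is correct and takes essentially the same route as the paper, whose proof of Proposition \ref{prop:cod2} simply says to argue as for Proposition \ref{prop:cod} (column-by-column squared norms against the class sizes) or to invoke Remark \ref{rk:interational}; you carry out both, and your case distinction between the odd modulus $(q-1)/2$ (sum $-1$) and the even modulus $(q+1)/2$ with the missing residue $\frac{q+1}{4}$ (sum $-2$) matches the computations already present in Lemmas \ref{lem:fullcomput2} and following.
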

\begin{proof}
As for Proposition \ref{prop:cod}. Alternatively, see Remark \ref{rk:interational}.
\end{proof}

\subsection{$q \equiv 1 \mod 4$}
\begin{lemma} \label{lem:first3}
$\langle x_{\frac{q+1}{2},c_1} x_{\frac{q+1}{2},c_2}, x_{\frac{q+1}{2},c_3} \rangle = \delta_{c_1,c_2}\delta_{c_1,c_3}$.
\end{lemma}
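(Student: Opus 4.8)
The plan is to apply the Verlinde-like formula of Corollary \ref{coro:Verlinde} (equivalently, to evaluate the inner product of Corollary \ref{coro:inner}) directly on the interpolated table of Theorem \ref{table:q=1(4)}, exactly in the style of the analogous Lemma \ref{lem:first2}. Writing $\chi_c$ for the degree-$\frac{q+1}{2}$ row with charparam $c \in \{1,2\}$ and abbreviating the target quantity $\langle x_{\frac{q+1}{2},c_1}x_{\frac{q+1}{2},c_2},x_{\frac{q+1}{2},c_3}\rangle$ by $N$, I would first note that every entry of this row is real, so that $N = \sum_s \frac{1}{\mathfrak{c}_s}\,\chi_{c_1}(s)\chi_{c_2}(s)\chi_{c_3}(s)$, the sum running over all conjugacy classes. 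The formal codegrees $\mathfrak{c}_s$ are read off column by column as $\FPdim(\mathcal{R}_q)/(\text{class size})$ via Remark \ref{rem:orb-stab}, giving $q(q^2-1)/2$, $q$, $\frac{q-1}{2}$, $q-1$, $\frac{q+1}{2}$ for the five columns respectively.

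Next I would evaluate the five column-contributions separately. The identity column contributes the rational term $\frac{(q+1)^2}{4q(q-1)}$, and the last column vanishes since $\chi_c$ is $0$ there. For the column $\{1,2\}$ I would expand $\prod_{i=1}^3\frac{1+(-1)^{k+c_i}\sqrt{q}}{2}$ and sum over $k\in\{1,2\}$; because the terms odd in $(-1)^k$ cancel and the even ones double, this collapses to $\frac{1}{4q}+\frac{1}{4}\sum_{i<j}(-1)^{c_i+c_j}$. The two columns carrying the value $(-1)^k$ — namely $\{1,\dots,\frac{q-5}{4}\}$ (codegree $\frac{q-1}{2}$) and the single class $\{\frac{q-1}{4}\}$ (codegree $q-1$) — I would handle together, using that $\frac{q-5}{4}$ and $\frac{q-1}{4}$ have opposite parity, so that $\frac{2}{q-1}\sum_{k=1}^{(q-5)/4}(-1)^k$ and $\frac{(-1)^{(q-1)/4}}{q-1}$ always combine to the $c_i$-independent constant $-\frac{1}{q-1}$.

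Summing these, the purely rational contributions $\frac{(q+1)^2}{4q(q-1)}+\frac{1}{4q}-\frac{1}{q-1}$ simplify (common denominator $4q(q-1)$, numerator $q(q-1)$) to $\frac{1}{4}$, leaving $N=\frac{1}{4}\bigl(1+\sum_{i<j}(-1)^{c_i+c_j}\bigr)$. I would then conclude by inspecting the sign patterns: since $c_i\in\{1,2\}$, writing $\epsilon_i=(-1)^{c_i}\in\{\pm1\}$, the bracket equals $4$ exactly when $c_1=c_2=c_3$ (all $\epsilon_i$ equal) and equals $0$ whenever the three indices are not all equal, which — two distinct values among $\{1,2\}$ being the only alternative — is precisely $\delta_{c_1,c_2}\delta_{c_1,c_3}$.

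I expect the only delicate point to be the bookkeeping of the two $(-1)^k$ columns and the parity argument that makes their joint contribution a clean constant; everything else is the same mechanical expansion-and-cancellation of finite exponential sums already carried out in Lemmas \ref{lem:fullcomput} and \ref{lem:first2}, so I anticipate no genuine obstacle beyond careful arithmetic.
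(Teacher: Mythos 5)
Your proof is correct and follows essentially the same route as the paper: the same column-by-column evaluation via the Verlinde-like formula, the same collapse of the $\{1,2\}$-column to $\frac{1}{4q}+\frac{1}{4}\sum_{i<j}(-1)^{c_i+c_j}$, the same simplification of the rational terms to $\frac{1}{4}$, and the same final sign-pattern analysis. The only difference is cosmetic: you make explicit the opposite-parity argument showing the two $(-1)^k$ columns always combine to $-\frac{1}{q-1}$, which the paper uses silently.
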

\begin{proof}
\begin{align*} \langle x_{\frac{q+1}{2},c_1} x_{\frac{q+1}{2},c_2}, x_{\frac{q+1}{2},c_3} \rangle = &
\frac{2(\frac{q+1}{2})^3}{q(q^2-1)} + \frac{1}{q} \sum_{k=1}^2(\frac{1+(-1)^{k+c_1}\sqrt{q}}{2})(\frac{1+(-1)^{k+c_2}\sqrt{q}}{2})(\frac{1+(-1)^{k+c_3}\sqrt{q}}{2}) \\ & + \frac{2}{q-1} \sum_{k=1}^{\frac{q-5}{4}}(-1)^{3k} +  \frac{(-1)^{3\frac{q-1}{4}}}{q-1}
\end{align*}
\begin{align*}
= \frac{(q+1)^2}{4q(q-1)} + \frac{1}{8q}\sum_{k=1}^2 (1 & + \sqrt{q}(-1)^k((-1)^{c_1} + (-1)^{c_2} + (-1)^{c_3}) \\ & + q(-1)^{2k}((-1)^{c_1+c_2}+(-1)^{c_1+c_3}+(-1)^{c_2+c_3}) \\ & + q\sqrt{q}(-1)^{3k}(-1)^{c_1+c_2+c_3}) \\  - \frac{1}{q-1} &
\end{align*}
$$ =  \frac{(q+1)^2}{4q(q-1)} + \frac{1}{4q} + \frac{1}{4}((-1)^{c_1+c_2}+(-1)^{c_1+c_3}+(-1)^{c_2+c_3}) - \frac{1}{q-1}$$
Observe that $$(-1)^{c_1+c_2}+(-1)^{c_1+c_3}+(-1)^{c_2+c_3} = \left\{
    \begin{array}{lll}
        3 & \mbox{if}  & c_1=c_2=c_3 \\
        -1 & \mbox{else}&
    \end{array}
\right.$$
It follows that $\langle x_{\frac{q+1}{2},c_1} x_{\frac{q+1}{2},c_2}, x_{\frac{q+1}{2},c_3} \rangle =\left\{
    \begin{array}{lll}
        1 & \mbox{if}  & c_1=c_2=c_3 \\
        0 & \mbox{else}&
    \end{array}
\right. = \delta_{c_1,c_2}\delta_{c_1,c_3}$.
\end{proof}

\begin{lemma} 
$\langle x_{\frac{q+1}{2},c_1} x_{\frac{q+1}{2},c_2}, x_{q-1,c_3} \rangle = 1-\delta_{c_1,c_2}$.
\end{lemma}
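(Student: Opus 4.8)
The plan is to proceed exactly as in the proof of Lemma \ref{lem:first3}: apply the Verlinde-like formula (Corollary \ref{coro:Verlinde}) with the inner product of Corollary \ref{coro:inner}, reading the entries off the character table of Theorem \ref{table:q=1(4)} and the formal codegrees off the class sizes (Remarks \ref{rem:orb-stab} and \ref{rk:interational}). Concretely, the codegrees $\mathfrak{c}_s$ attached to the five column blocks are $\frac{q(q^2-1)}{2}$, $q$, $\frac{q-1}{2}$, $q-1$ and $\frac{q+1}{2}$ respectively, and I must sum $\frac{1}{\mathfrak{c}_s}\lambda_{\frac{q+1}{2},c_1}(s)\,\lambda_{\frac{q+1}{2},c_2}(s)\,\overline{\lambda_{q-1,c_3}(s)}$ over all columns $s$.

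First I would observe that most columns drop out. The character $x_{\frac{q+1}{2}}$ vanishes on the class column $\{1,\dots,\frac{q-1}{4}\}$, while $x_{q-1,c_3}$ vanishes on the two columns $\{1,\dots,\frac{q-5}{4}\}$ and $\{\frac{q-1}{4}\}$; hence only the identity column and the pair of columns indexed by $\{1,2\}$ survive. The identity column contributes $\frac{2}{q(q^2-1)}\cdot\frac{q+1}{2}\cdot\frac{q+1}{2}\cdot(q-1)=\frac{q+1}{2q}$, after cancelling $q^2-1=(q-1)(q+1)$.

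Next I would compute the contribution of the two columns with $k\in\{1,2\}$, where the entries are $\frac{1+(-1)^{k+c_i}\sqrt{q}}{2}$ for the two degree-$\frac{q+1}{2}$ characters and $-1$ for $x_{q-1,c_3}$. Expanding the product gives $\frac14\bigl(1+(-1)^k\sqrt{q}\,((-1)^{c_1}+(-1)^{c_2})+(-1)^{c_1+c_2}q\bigr)$, and the crucial point is that the term linear in $\sqrt{q}$ carries a factor $(-1)^k$ and thus cancels upon summing, since $\sum_{k=1}^2(-1)^k=0$. What remains is $\frac{1}{q}\cdot(-1)\cdot\frac12\bigl(1+(-1)^{c_1+c_2}q\bigr)=-\frac{1}{2q}-\frac{(-1)^{c_1+c_2}}{2}$. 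Adding the two surviving contributions yields $\frac{q+1}{2q}-\frac{1}{2q}-\frac{(-1)^{c_1+c_2}}{2}=\frac12\bigl(1-(-1)^{c_1+c_2}\bigr)$.

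Finally, since the degree $\frac{q+1}{2}$ occurs only for charparam $c_i\in\{1,2\}$, the quantity $(-1)^{c_1+c_2}$ equals $1$ exactly when $c_1=c_2$ and $-1$ otherwise, so $\frac12\bigl(1-(-1)^{c_1+c_2}\bigr)=1-\delta_{c_1,c_2}$, as claimed. I do not expect any genuine obstacle here; the only thing to watch is the bookkeeping of the $\sqrt{q}$ cross-terms and of the codegrees, which is precisely the Gauss-sum-type cancellation already carried out in Lemma \ref{lem:first3}.
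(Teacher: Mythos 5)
Your computation is correct and coincides with the paper's own proof: both apply the Verlinde-like formula with codegree weights read off the class sizes, observe that only the identity column and the two columns with $k\in\{1,2\}$ contribute (all other products vanish), use the cancellation $\sum_{k=1}^{2}(-1)^k=0$ to kill the $\sqrt{q}$ cross-terms, and conclude via the parity of $c_1+c_2$ with $c_1,c_2\in\{1,2\}$. All intermediate values ($\frac{q+1}{2q}$ for the identity column, $-\frac{1}{2q}-\frac{(-1)^{c_1+c_2}}{2}$ for the $k$-columns) match the paper exactly.
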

\begin{proof}
\begin{align*} \langle x_{\frac{q+1}{2},c_1} x_{\frac{q+1}{2},c_2}, x_{q-1,c_3} \rangle = &
\frac{2(q-1)(\frac{q+1}{2})^2}{q(q^2-1)} + \frac{1}{q} \sum_{k=1}^2(\frac{1+(-1)^{k+c_1}\sqrt{q}}{2})(\frac{1+(-1)^{k+c_2}\sqrt{q}}{2})(-1) \\
= & \frac{q+1}{2q} - \frac{1}{4q} \sum_{k=1}^2 (1 + \sqrt{q} (-1)^k ((-1)^{c_1} + (-1)^{c_2}) + q(-1)^{2k}(-1)^{c_1+c_2}) \\
= &  \frac{q+1}{2q} - \frac{1+q(-1)^{c_1+c_2}}{2q} = 1-\delta_{c_1,c_2}
\end{align*}
because $c_1,c_2 \in \{1,2\}$, so that $c_1+c_2$ is even if and only if $c_1=c_2$.
\end{proof}

\begin{lemma} 
$\langle x_{\frac{q+1}{2},c_1} x_{\frac{q+1}{2},c_2}, x_{q,1} \rangle = 1$.
\end{lemma}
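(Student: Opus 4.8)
The plan is to evaluate the inner product directly from the Verlinde-like formula (Corollary \ref{coro:Verlinde}) with the inner product of Corollary \ref{coro:inner}, exactly as in the preceding lemmas. Writing the two degree-$\frac{q+1}{2}$ characters as $\chi_{c_1},\chi_{c_2}$ (row $\{1,2\}$ of Theorem \ref{table:q=1(4)}) and the degree-$q$ character as $\chi_q$ (the degree-$q$ row), the quantity to compute is $\sum_s \frac{1}{\mathfrak{c}_s}\chi_{c_1}(s)\chi_{c_2}(s)\overline{\chi_q(s)}$, the sum running over all conjugacy classes $s$, i.e. over all values of the class parameter $k$ within each column. The formal codegrees are read off from the class-size row of Theorem \ref{table:q=1(4)} via Remark \ref{rem:orb-stab}, namely $\mathfrak{c}_s=\frac{q(q^2-1)}{2}/(\text{class size})$, giving $\mathfrak{c}=\frac{q(q^2-1)}{2},\,q,\,\frac{q-1}{2},\,q-1,\,\frac{q+1}{2}$ for the five columns respectively.

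First I would exploit the zero entries to discard most columns. Since $\chi_q$ vanishes on the column with class parameter $\{1,2\}$, and each $\chi_{c_i}$ vanishes on the last column (class parameter $\{1,\dots,\frac{q-1}{4}\}$), only the degree column $\{1\}$, the column $\{1,\dots,\frac{q-5}{4}\}$, and the column $\{\frac{q-1}{4}\}$ survive. This is the conceptual reason I expect the answer to be a constant with no $\delta_{c_1,c_2}$: the entire dependence on $c_1,c_2$ sits in the factor $\frac{1+(-1)^{k+c}\sqrt q}{2}$ of the $\{1,2\}$ column, which is precisely the column annihilated by $\chi_q$. On each of the two surviving non-trivial columns the relevant entries of $\chi_{c_1},\chi_{c_2},\chi_q$ are $(-1)^k,(-1)^k,1$, so the triple product collapses to $(-1)^{2k}=1$ uniformly in $k,c_1,c_2$, and each column contributes simply (its codegree weight) times (its number of classes).

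It then remains to add the three surviving contributions and check that the resulting rational function of $q$ equals $1$:
$$\frac{1}{\tfrac{q(q^2-1)}{2}}\cdot\frac{q+1}{2}\cdot\frac{q+1}{2}\cdot q \;+\; \frac{2}{q-1}\cdot\frac{q-5}{4} \;+\; \frac{1}{q-1} \;=\; \frac{q+1}{2(q-1)}+\frac{q-5}{2(q-1)}+\frac{1}{q-1}=1.$$
The only mild obstacle is bookkeeping: correctly pairing each column with its codegree and its multiplicity $\#\{k\}$, and confirming (as in Remark \ref{rk:interational}) that this purely rational-in-$q$ identity is legitimate for all interpolated $q$, not merely prime powers. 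Once the vanishing pattern above is noted, no character-sum manipulation of the kind needed in Lemma \ref{lem:first3} is required, since no nonzero root-of-unity sums occur; this makes the present lemma strictly easier than its neighbours.
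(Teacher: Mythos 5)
Your proposal is correct and follows exactly the paper's own route: evaluate the Verlinde-like formula column by column with the codegrees from the class sizes, note that the $\{1,2\}$ column is killed by the degree-$q$ character and the last column by the degree-$\frac{q+1}{2}$ characters, and sum the three remaining rational contributions $\frac{q+1}{2(q-1)}+\frac{2}{q-1}\cdot\frac{q-5}{4}+\frac{1}{q-1}=1$, which is precisely the paper's computation. The only difference is expository: you make explicit the observation (implicit in the paper) that the $c$-dependence lives entirely in the annihilated column, explaining why no Kronecker delta appears.
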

\begin{proof}
\begin{align*} \langle x_{\frac{q+1}{2},c_1} x_{\frac{q+1}{2},c_2}, x_{q,1} \rangle = &
\frac{2q(\frac{q+1}{2})^2}{q(q^2-1)} + \frac{2}{q-1} \sum_{k=1}^{\frac{q-5}{4}}(-1)^{2k} +  \frac{(-1)^{2\frac{q-1}{4}}}{q-1} \\
= & \frac{q+1}{2(q-1)} + \frac{2}{q-1}\frac{q-5}{4} + \frac{1}{q-1} = 1.   \qedhere
\end{align*}
\end{proof}

\begin{lemma} 
$\langle x_{\frac{q+1}{2},c_1} x_{\frac{q+1}{2},c_2}, x_{q+1,c_3} \rangle = \delta_{c_1,c_2}$.
\end{lemma}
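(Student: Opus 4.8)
The plan is to compute $\langle x_{\frac{q+1}{2},c_1} x_{\frac{q+1}{2},c_2}, x_{q+1,c_3} \rangle$ directly from the generic character table in Theorem \ref{table:q=1(4)}, using the Verlinde-like formula of Corollary \ref{coro:Verlinde} together with the inner product of Corollary \ref{coro:inner}, exactly following the pattern established in Lemma \ref{lem:first3} and the subsequent lemmas. The character $x_{\frac{q+1}{2},c}$ has degree $\frac{q+1}{2}$ with entries involving $\frac{1+(-1)^{k+c}\sqrt{q}}{2}$, while $x_{q+1,c_3}$ has degree $q+1$ with entries $\zeta_{q-1}^{2kc_3}+\zeta_{q-1}^{-2kc_3}$ on the last column and $2(-1)^{c_3}$ on the $\{\frac{q-1}{4}\}$-column. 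So the inner product splits over the five columns, weighted by the reciprocals of the formal codegrees (equivalently, the class sizes divided by $\FPdim$), and most contributions cancel against each other just as in the $x_{q-1,c_3}$ computation.

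The key steps, in order: first I would write out the defining sum
\[
\langle x_{\frac{q+1}{2},c_1} x_{\frac{q+1}{2},c_2}, x_{q+1,c_3} \rangle
= \frac{2(q+1)(\tfrac{q+1}{2})^2}{q(q^2-1)}
+ \frac{1}{q}\sum_{k=1}^{2}\Bigl(\tfrac{1+(-1)^{k+c_1}\sqrt{q}}{2}\Bigr)\Bigl(\tfrac{1+(-1)^{k+c_2}\sqrt{q}}{2}\Bigr)(1),
\]
noting that the three columns indexed by $\{1,\dots,\frac{q-5}{4}\}$, $\{\frac{q-1}{4}\}$, and $\{1,\dots,\frac{q-1}{4}\}$ each produce a zero entry for $x_{q+1,\cdot}$ (its value is $0$ on those columns), so they drop out entirely and only the trivial class and the $\{1,2\}$-class survive. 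Second, I would expand the product over $k=1,2$, exactly as in Lemma \ref{lem:first3}: the cross terms linear in $\sqrt{q}$ carry a factor $(-1)^k$ and cancel when summed over $k\in\{1,2\}$, while the constant term contributes $\frac{1}{q}\cdot\frac{1}{4}\cdot 2 = \frac{1}{2q}$ and the $q(-1)^{c_1+c_2}$ term contributes $\frac{1}{q}\cdot\frac{q(-1)^{c_1+c_2}}{4}\cdot 2 = \frac{(-1)^{c_1+c_2}}{2}$. Third, I would combine with $\frac{(q+1)^2}{2q(q^2-1)}=\frac{q+1}{2q(q-1)}$ and simplify to land on $\delta_{c_1,c_2}$, using that $c_1,c_2\in\{1,2\}$ makes $(-1)^{c_1+c_2}=1$ iff $c_1=c_2$.

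I expect the main obstacle to be purely bookkeeping rather than conceptual: one must be careful that $x_{q+1,c_3}$ genuinely vanishes on all three of the middle columns (it does, by Theorem \ref{table:q=1(4)}), so that the answer is independent of $c_3$ — which matches the claimed value $\delta_{c_1,c_2}$ having no $c_3$-dependence. The only real check is the arithmetic reduction of
\[
\frac{q+1}{2q(q-1)}+\frac{1}{2q}+\frac{(-1)^{c_1+c_2}}{2}
\]
to $1-\delta_{c_1,c_2}$ would be wrong, so I must verify it gives $\delta_{c_1,c_2}$ instead; concretely, when $c_1=c_2$ the last term is $+\frac12$ and the total should be $1$, whereas for $c_1\neq c_2$ it is $-\frac12$ and the total should be $0$. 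Since the analogous group-theoretic identity holds for all prime-power $q$ and both sides are rational functions of $q$ (as noted in Remark \ref{rk:interational}), the interpolated identity holds for all integers $q\equiv 1 \bmod 4$ as well, which provides a clean alternative to grinding the algebra by hand.
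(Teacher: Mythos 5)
Your reduction to two columns is wrong, and this is the crux of the computation. In the table of Theorem \ref{table:q=1(4)}, the character $x_{q+1,c_3}$ vanishes \emph{only} on the last column (classparam $\{1,\dots,\frac{q-1}{4}\}$, class size $q(q-1)$); on the column $\{1,\dots,\frac{q-5}{4}\}$ it equals $\zeta_{q-1}^{2kc_3}+\zeta_{q-1}^{-2kc_3}$ and on the column $\{\frac{q-1}{4}\}$ it equals $2(-1)^{c_3}$ --- exactly as you state in your opening paragraph, and in contradiction with your step one. Since $x_{\frac{q+1}{2},c_1}x_{\frac{q+1}{2},c_2}$ equals $(-1)^k(-1)^k=1$ on both of those columns, they contribute
\begin{equation*}
\frac{2}{q-1}\sum_{k=1}^{\frac{q-5}{4}}\bigl(\zeta_{q-1}^{2kc_3}+\zeta_{q-1}^{-2kc_3}\bigr)+\frac{2(-1)^{c_3}}{q-1}
=-\frac{2}{q-1}\bigl(1+(-1)^{c_3}\bigr)+\frac{2(-1)^{c_3}}{q-1}
=-\frac{2}{q-1},
\end{equation*}
using $\sum_{k=1}^{(q-5)/4}(\zeta_{q-1}^{2kc_3}+\zeta_{q-1}^{-2kc_3})=-1-(-1)^{c_3}$; the $c_3$-dependence cancels here, which is the real reason the answer does not depend on $c_3$ --- not because those columns vanish. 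Without this $-\frac{2}{q-1}$ the computation does not close: the correct first term is $\frac{2(q+1)(\frac{q+1}{2})^2}{q(q^2-1)}=\frac{(q+1)^2}{2q(q-1)}$ (your step three misquotes it as $\frac{q+1}{2q(q-1)}$, dropping a factor $q+1$), and one checks that $\frac{(q+1)^2}{2q(q-1)}+\frac{1+q(-1)^{c_1+c_2}}{2q}=\delta_{c_1,c_2}+\frac{2}{q-1}$, so your two-column sum overshoots by exactly the omitted amount. The paper's proof keeps all four nonvanishing columns (trivial, $\{1,2\}$, $\{1,\dots,\frac{q-5}{4}\}$, $\{\frac{q-1}{4}\}$), and the identity holds precisely because of the extra $-\frac{2}{q-1}$.

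The interpolation fallback in your last paragraph cannot repair this. The principle of Remark \ref{rk:interational} applies only after the sums of roots of unity have been reduced to expressions rational in $q$ --- that reduction is exactly the content of the two columns you dropped --- and, more decisively, the rational function your derivation actually produces, $\delta_{c_1,c_2}+\frac{2}{q-1}$, disagrees with the group-theoretic value $\delta_{c_1,c_2}$ at every prime power $q\equiv 1 \bmod 4$, so an appeal to agreement at prime powers would expose the error rather than fix it.
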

\begin{proof}
\begin{align*} \langle x_{\frac{q+1}{2},c_1} x_{\frac{q+1}{2},c_2}, x_{q+1,c_3} \rangle = &
\frac{2(q+1)(\frac{q+1}{2})^2}{q(q^2-1)} + \frac{1}{q} \sum_{k=1}^2(\frac{1+(-1)^{k+c_1}\sqrt{q}}{2})(\frac{1+(-1)^{k+c_2}\sqrt{q}}{2}) \\ & + \frac{2}{q-1} \sum_{k=1}^{\frac{q-5}{4}}(-1)^{2k}(\zeta_{q-1}^{2kc_3} + \zeta_{q-1}^{-2kc_3}) +  \frac{(-1)^{2\frac{q-1}{4}}2(-1)^{c_3}}{q-1} \\
= &  \frac{(q+1)^2}{2q(q-1)} + \frac{1+q(-1)^{c_1+c_2}}{2q} - \frac{2}{q-1}(1+(-1)^{c_3}) + \frac{2}{q-1}(-1)^{c_3} \\
= & \frac{1}{2q(q-1)}(q^2+2q+1+q+q^2(-1)^{c_1+c_2}-1-q(-1)^{c_1+c_2}-4q) = \delta_{c_1,c_2}.   \qedhere
\end{align*}
\end{proof}

\begin{lemma}
$\langle x_{\frac{q+1}{2},c_1} x_{q-1,c_2}, x_{q-1,c_3} \rangle = 1$.
\end{lemma}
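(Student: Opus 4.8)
The plan is to apply the Verlinde-like formula of Corollary \ref{coro:Verlinde}, with the inner product of Corollary \ref{coro:inner}, to the generic character table of Theorem \ref{table:q=1(4)}, exactly as in the proof of Lemma \ref{lem:fullcomput}. Writing $i,j,k$ for the rows indexed by $x_{\frac{q+1}{2},c_1}$, $x_{q-1,c_2}$ and $x_{q-1,c_3}$ respectively, I would compute $\sum_s \frac{1}{\mathfrak{c}_s}\lambda_{i,s}\lambda_{j,s}\overline{\lambda_{k,s}}$ by running over the five column-blocks of the table. The formal codegrees $\mathfrak{c}_s$ are read off from the class sizes via Remark \ref{rem:orb-stab}, namely $\mathfrak{c}_s = \FPdim(\mathcal{R}_q)/(\text{class size}) = \frac{q(q^2-1)}{2\,(\text{class size})}$, which gives $\frac{q(q^2-1)}{2},\ q,\ \frac{q-1}{2},\ q-1,\ \frac{q+1}{2}$ for the five blocks.

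The key simplification I would exploit is that the two degree-$(q-1)$ rows $j$ and $k$ both vanish on the column-blocks labelled $\{1,\dots,\frac{q-5}{4}\}$ and $\{\frac{q-1}{4}\}$, while the degree-$\frac{q+1}{2}$ row $i$ vanishes on the last block $\{1,\dots,\frac{q-1}{4}\}$. Hence every term carrying the nontrivial roots of unity $\zeta_{q\pm 1}$ drops out, and only the identity column and the block $\{1,2\}$ survive. Thus the whole computation collapses to a finite sum with no character-orthogonality (root-of-unity summation) argument needed, which is what makes this lemma considerably shorter than, say, Lemma \ref{lem:fullcomput}.

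Concretely, the identity column contributes $\frac{2}{q(q^2-1)}\cdot\frac{q+1}{2}\cdot(q-1)\cdot(q-1) = \frac{(q+1)(q-1)^2}{q(q^2-1)} = \frac{q-1}{q}$. On the block $\{1,2\}$ the factor coming from $j$ and $k$ is $(-1)(-1)=1$, so the contribution is $\frac{1}{q}\sum_{k=1}^2 \frac{1+(-1)^{k+c_1}\sqrt{q}}{2}$; here the $\sqrt{q}$ terms cancel because $(-1)^{1+c_1}+(-1)^{2+c_1}=0$, leaving $\frac{1}{q}$. Adding the two surviving contributions gives $\frac{q-1}{q}+\frac{1}{q}=1$, as claimed.

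I do not foresee a genuine obstacle: the only points requiring care are reading the correct codegrees off the class sizes and verifying the sign cancellation on the block $\{1,2\}$. In particular, the answer is manifestly independent of $c_1,c_2,c_3$, which is consistent with the stated constant value $1$.
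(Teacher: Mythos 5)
Your proposal is correct and follows essentially the same route as the paper: a direct application of the Verlinde-like formula to the table of Theorem \ref{table:q=1(4)}, where only the identity column and the $\{1,2\}$ block contribute (the paper's proof silently omits the vanishing blocks that you spell out), giving $\frac{q-1}{q}+\frac{1}{q}=1$ with the same sign cancellation of the $\sqrt{q}$ terms.
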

\begin{proof}
\begin{align*} \langle x_{\frac{q+1}{2},c_1} x_{q-1,c_2}, x_{q-1,c_3} \rangle = &
\frac{2(q-1)^2(\frac{q+1}{2})}{q(q^2-1)} + \frac{1}{q} \sum_{k=1}^2(\frac{1+(-1)^{k+c_1}\sqrt{q}}{2})(-1)(-1) \\
= & \frac{q-1}{q} + \frac{1}{2q} \sum_{k=1}^2 (1+(-1)^{k+c_1}\sqrt{q}) \\
= &  \frac{q-1}{q} + \frac{1}{q} = 1.   \qedhere
\end{align*}
\end{proof}

\begin{lemma}
$\langle x_{\frac{q+1}{2},c_1} x_{q-1,c_2}, x_{q,1} \rangle = 1$.
\end{lemma}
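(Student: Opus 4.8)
The plan is to evaluate the inner product directly from the Verlinde-like expression in Corollary \ref{coro:Verlinde}, using the weighting by reciprocal formal codegrees from Corollary \ref{coro:inner}. Writing $\lambda$ for the interpolated character table of Theorem \ref{table:q=1(4)}, this means
$$\langle x_{\frac{q+1}{2},c_1} x_{q-1,c_2}, x_{q,1} \rangle = \sum_s \frac{1}{\mathfrak{c}_s}\, \lambda_{\frac{q+1}{2},c_1}(s)\,\lambda_{q-1,c_2}(s)\,\overline{\lambda_{q,1}(s)},$$
where $s$ ranges over the five column-groups of the table (each grouped column being expanded over its internal class parameter $k$). First I would record the formal codegrees: by Remark \ref{rem:orb-stab} they are $\mathfrak{c}_s = \FPdim/(\text{class size})$ with $\FPdim = q(q^2-1)/2$, so $\mathfrak{c}_1 = q(q^2-1)/2$ on the identity class, and the finite values $q$, $(q-1)/2$, $q-1$, $(q+1)/2$ on the four remaining columns.

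The key observation is that, for this particular triple of rows, every column other than the identity column annihilates its summand through a vanishing table entry. Concretely: in the $\{1,2\}$-column the degree-$q$ character $x_{q,1}$ has entry $0$; in the two middle columns (class parameters $\{1,\dots,\frac{q-5}{4}\}$ and $\{\frac{q-1}{4}\}$) the degree-$(q-1)$ character $x_{q-1,c_2}$ has entry $0$; and in the last column the degree-$\frac{q+1}{2}$ character $x_{\frac{q+1}{2},c_1}$ has entry $0$. Hence the whole sum collapses to the single identity-class term, with no geometric sum of roots of unity to evaluate.

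It remains to compute that surviving term. On the identity column the three entries are the degrees $\frac{q+1}{2}$, $q-1$ and $q$, so the contribution is
$$\frac{1}{\mathfrak{c}_1}\cdot \frac{q+1}{2}\cdot(q-1)\cdot q = \frac{2}{q(q^2-1)}\cdot \frac{q+1}{2}\cdot(q-1)\cdot q = 1,$$
which gives the claimed value. In contrast to the harder lemmas of this subsection, there is essentially no analytic obstacle here; the only points where an error could creep in are the bookkeeping ones — matching each basis element to the correct row of Theorem \ref{table:q=1(4)}, confirming that exactly one of the three entries vanishes in each non-identity column, and using the normalization $\mathfrak{c}_1 = q(q^2-1)/2$ (rather than the even-case value $q(q^2-1)$). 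I expect this verification of the zero pattern to be the only step requiring care.
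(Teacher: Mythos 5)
Your proposal is correct and follows essentially the same route as the paper: the paper's one-line proof is exactly the identity-column term $\frac{2q(q-1)\frac{q+1}{2}}{q(q^2-1)}=1$ of the Verlinde-like formula, all other columns vanishing through a zero entry of the table in Theorem \ref{table:q=1(4)} (the paper leaves this zero pattern implicit, while you spell it out column by column). Your bookkeeping — which row vanishes in which column, and the normalization $\mathfrak{c}_1=q(q^2-1)/2$ — matches the paper's data exactly.
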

\begin{proof}
$$ \langle x_{\frac{q+1}{2},c_1} x_{q-1,c_2}, x_{q-1,c_3} \rangle = 
\frac{2q(q-1)(\frac{q+1}{2})}{q(q^2-1)} = 1.   \qedhere
$$
\end{proof}

\begin{lemma}
$\langle x_{\frac{q+1}{2},c_1} x_{q-1,c_2}, x_{q+1,c_3} \rangle = 1$.
\end{lemma}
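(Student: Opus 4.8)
The plan is to apply the explicit inner product furnished by Corollary \ref{coro:inner}, namely $\langle f,g\rangle = \sum_s \frac{1}{\mathfrak{c}_s} f(s)\overline{g(s)}$, to the three characters extracted from the generic table of Theorem \ref{table:q=1(4)}: the degree $\frac{q+1}{2}$ character with parameter $c_1$, the degree $q-1$ character with parameter $c_2$, and the degree $q+1$ character with parameter $c_3$. As in the earlier lemmas of this subsection, I would first read off the formal codegrees $\mathfrak{c}_s$ from the class sizes via Remark \ref{rem:orb-stab}, obtaining $\frac{q(q^2-1)}{2}$, $q$, $\frac{q-1}{2}$, $q-1$, $\frac{q+1}{2}$ for the five columns respectively.

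The next step is to exploit the many zeros in the table to discard most columns. In the column with parameter $\{1,\dots,\frac{q-5}{4}\}$ and in the singleton column $\{\frac{q-1}{4}\}$, the degree $q-1$ character vanishes; in the column $\{1,\dots,\frac{q-1}{4}\}$ both the degree $\frac{q+1}{2}$ and the degree $q+1$ characters vanish. Hence these three columns contribute nothing, and only the identity column and the column $\{1,2\}$ survive.

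The identity column contributes $\frac{2}{q(q^2-1)}\cdot\frac{q+1}{2}\cdot(q-1)\cdot(q+1) = \frac{q+1}{q}$. For the column $\{1,2\}$ the relevant entries are $\frac{1+(-1)^{k+c_1}\sqrt{q}}{2}$, $-1$, and $1$, with $\mathfrak{c}=q$ and $k$ ranging over $\{1,2\}$; the key observation is that summing over both values of $k$ kills the irrational part, since $\sum_{k=1}^2 (-1)^{k+c_1}=0$, so the contribution collapses to $-\frac{1}{2q}\sum_{k=1}^2 1 = -\frac{1}{q}$. Adding the two surviving contributions then yields $\frac{q+1}{q}-\frac{1}{q}=1$, independent of $c_1,c_2,c_3$, as claimed.

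I do not expect a genuine obstacle here. The only subtlety, exactly as in Lemma \ref{lem:first3} and the neighbouring lemmas, is the cancellation of the $\sqrt{q}$ carried by the two split characters of degree $\frac{q+1}{2}$, and this cancellation is automatic once one sums over both values of $k$ in the $\{1,2\}$ column. Everything else is a routine simplification of rational functions of $q$.
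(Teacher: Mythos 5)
Your proposal is correct and is essentially the same computation as the paper's proof: both apply the inner product of Corollary \ref{coro:inner} with the codegrees read off from the class sizes of Theorem \ref{table:q=1(4)}, use the zeros of the table to discard the last three columns, and reduce to the identity column contribution $\frac{q+1}{q}$ plus the $\{1,2\}$-column contribution $-\frac{1}{q}$, where the $\sqrt{q}$ terms cancel upon summing over $k\in\{1,2\}$. The paper simply writes this as a single two-line display, so nothing is missing from your argument.
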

\begin{proof}
\begin{align*} \langle x_{\frac{q+1}{2},c_1} x_{q-1,c_2}, x_{q+1,c_3} \rangle = &
\frac{2(q-1)(q+1)(\frac{q+1}{2})}{q(q^2-1)} + \frac{1}{q} \sum_{k=1}^2(\frac{1+(-1)^{k+c_1}\sqrt{q}}{2})(-1)(1) \\ 
= & \frac{q+1}{q} - \frac{1}{q} = 1.   \qedhere
\end{align*}
\end{proof}

\begin{lemma}
$\langle x_{\frac{q+1}{2},c_1} x_{q,1}, x_{q,1} \rangle = 1$.
\end{lemma}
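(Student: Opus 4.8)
The plan is to evaluate the inner product directly from the generic character table of Theorem \ref{table:q=1(4)}, following the same recipe as every preceding lemma of this subsection. By Corollary \ref{coro:inner} the pairing is $\langle f,g\rangle=\sum_s \frac{1}{\mathfrak{c}_s}f(s)\overline{g(s)}$, and by Remark \ref{rem:orb-stab} the formal codegrees $\mathfrak{c}_s$ equal $\FPdim(\mathcal{R}_q)$ divided by the class size; for $q\equiv 1\bmod 4$, where $\FPdim(\mathcal{R}_q)=q(q^2-1)/2$, these are $q(q^2-1)/2,\,q,\,(q-1)/2,\,q-1,\,(q+1)/2$ across the five columns. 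Since $x_{q,1}$ is self-dual, the quantity to compute is $\langle x_{\frac{q+1}{2},c_1}x_{q,1},x_{q,1}\rangle=\sum_s\frac{1}{\mathfrak{c}_s}\,\lambda_{\frac{q+1}{2},c_1}(s)\,\lambda_{q,1}(s)^2$, reading the two relevant rows off the table.

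First I would record that the row $x_{q,1}$ has entries $q,0,1,1,-1$ and the row $x_{\frac{q+1}{2},c_1}$ has entries $\frac{q+1}{2},\ \frac{1+(-1)^{k+c_1}\sqrt q}{2},\ (-1)^k,\ (-1)^k,\ 0$. The vanishing of $\lambda_{q,1}$ on the second column and of $\lambda_{\frac{q+1}{2},c_1}$ on the fifth kills those terms; in particular the $\sqrt q$ entries never contribute, so no Gauss-sum or cyclotomic bookkeeping is needed here (in contrast to some earlier lemmas). The first column contributes $\frac{2}{q(q^2-1)}\cdot\frac{q+1}{2}\cdot q^2=\frac{q}{q-1}$, leaving only the third and fourth columns to handle.

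The one delicate point is the interplay of the third and fourth columns, and this is where I would be careful. Putting $m=(q-1)/4$, the third column runs over $k=1,\dots,m-1$ with codegree $(q-1)/2$, while the fourth column is the single index $k=m$ with codegree $q-1$ (exactly twice the generic codegree of its family), so together they contribute $\frac{2}{q-1}\sum_{k=1}^{m-1}(-1)^k+\frac{(-1)^m}{q-1}$. Using $\sum_{k=1}^{m-1}(-1)^k=\tfrac12\big((-1)^{m-1}-1\big)$ the alternating sum collapses and all dependence on the parity of $m$ cancels, leaving exactly $-\frac{1}{q-1}$; adding the first-column term $\frac{q}{q-1}$ gives $1$, independent of $c_1$. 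The main obstacle is thus purely clerical: matching each column to its correct codegree, especially the doubled codegree of the isolated class $\{(q-1)/4\}$, so that the alternating sum telescopes cleanly.
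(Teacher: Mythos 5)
Your computation is correct and follows exactly the paper's approach: evaluate the inner product of Corollary \ref{coro:inner} column by column from the table of Theorem \ref{table:q=1(4)}, yielding the identity-column term $\frac{q}{q-1}$ plus the alternating contributions $\frac{2}{q-1}\sum_{k=1}^{(q-5)/4}(-1)^k+\frac{(-1)^{(q-1)/4}}{q-1}=-\frac{1}{q-1}$. Your treatment of the telescoping parity cancellation between the third and fourth columns is in fact more explicit than the paper's one-line evaluation, but it is the same proof.
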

\begin{proof}
$$ \langle x_{\frac{q+1}{2},c_1} x_{q,1}, x_{q,1} \rangle =
\frac{2q^2(\frac{q+1}{2})}{q(q^2-1)} + \frac{2}{q-1} \sum_{k=1}^{\frac{q-5}{4}}(-1)^{k}  + \frac{(-1)^{\frac{q-1}{4}}}{q-1} = \frac{q}{q-1} - \frac{1}{q-1} = 1.   \qedhere
$$
\end{proof}

\begin{lemma}
$\langle x_{\frac{q+1}{2},c_1} x_{q,1}, x_{q+1,c_2} \rangle = 1$.
\end{lemma}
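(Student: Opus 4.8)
The plan is to evaluate the inner product directly through the Verlinde-like formula of Corollary \ref{coro:Verlinde} applied to the generic character table of Theorem \ref{table:q=1(4)}, exactly in the style of the preceding lemmas. The three basis elements $x_{\frac{q+1}{2},c_1}$, $x_{q,1}$ and $x_{q+1,c_2}$ are the rows of degree $\frac{q+1}{2}$, $q$ and $q+1$ respectively, so I would expand
$$\langle x_{\frac{q+1}{2},c_1} x_{q,1}, x_{q+1,c_2} \rangle = \sum_s \frac{1}{\mathfrak{c}_s}\, \lambda_{\frac{q+1}{2},c_1,s}\,\lambda_{q,1,s}\,\overline{\lambda_{q+1,c_2,s}},$$
the sum running over the five columns, with the formal codegrees $\mathfrak{c}_s$ read off from the class sizes via Remarks \ref{rem:orb-stab} and \ref{rk:interational}.

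First I would discard the vanishing columns. The column $\{1,2\}$ contributes nothing since $x_{q,1}$ takes the value $0$ there, and the last column $\{1,\dots,\frac{q-1}{4}\}$ contributes nothing since both $x_{\frac{q+1}{2},c_1}$ and $x_{q+1,c_2}$ vanish on it. The identity column gives the clean rational term $\frac{q+1}{q-1}$. What remains is the joint contribution of the cyclotomic column $\{1,\dots,\frac{q-5}{4}\}$ and the singleton $\{\frac{q-1}{4}\}$, namely
$$\frac{2}{q-1}\sum_{k=1}^{(q-5)/4}(-1)^k\big(\zeta_{q-1}^{2kc_2}+\zeta_{q-1}^{-2kc_2}\big) + \frac{2(-1)^{(q-1)/4+c_2}}{q-1}.$$

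The key manipulation, as in Lemma \ref{lem:fullcomput2}, is to absorb the sign $(-1)^k$ into the root of unity. Since $q\equiv 1 \mod 4$, the integer $n:=\frac{q-1}{2}$ is even with $\zeta_n^{n/2}=-1$, so $(-1)^k(\zeta_{q-1}^{2kc_2}+\zeta_{q-1}^{-2kc_2}) = \zeta_n^{kb}+\zeta_n^{-kb}$ where $b:=\frac{q-1}{4}+c_2$. The range $1\le c_2\le \frac{q-5}{4}$ forces $\frac n2 < b < n$, hence $n\nmid b$ and $b\neq \tfrac n2$. The singleton column is precisely the deleted middle index $k=\frac n2$, for which $\zeta_n^{(n/2)b}+\zeta_n^{-(n/2)b}=2(-1)^b$, matching the second summand. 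I would then compute the full symmetric sum $\sum_{k=1}^{n-1}(\zeta_n^{kb}+\zeta_n^{-kb})=-2$ from $\sum_{k=0}^{n-1}\zeta_n^{kb}=0$, pair $k$ with $n-k$ to extract $\sum_{k=1}^{n/2-1}(\zeta_n^{kb}+\zeta_n^{-kb})=-1-(-1)^b$, and observe that the parity term $(-1)^b$ coming from the singleton column cancels exactly against it. The two cyclotomic columns thus contribute $\frac{-2}{q-1}$, and the total is $\frac{q+1}{q-1}-\frac{2}{q-1}=1$.

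The only delicate point, and the step I expect to demand the most care, is the bookkeeping of this root-of-unity sum: correctly recognizing the singleton class $\{\frac{q-1}{4}\}$ as the missing middle index $k=\frac n2$, and tracking the parity $(-1)^b$ so that the cancellation between the two cyclotomic columns is exact. Everything else reduces to the same routine rational-function arithmetic as in the earlier lemmas; and by Remark \ref{rk:interational} it would in fact suffice to verify the identity formally as a function of $q$, the prime-power case being already guaranteed by the honest group $\PSL(2,q)$.
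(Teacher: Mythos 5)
Your proposal is correct and follows essentially the same route as the paper: expand via the Verlinde-like formula over the five columns, drop the two vanishing ones, and evaluate $\frac{q+1}{q-1}+\frac{2}{q-1}\sum_{k=1}^{(q-5)/4}(-1)^k(\zeta_{q-1}^{2kc_2}+\zeta_{q-1}^{-2kc_2})+\frac{2(-1)^{(q-1)/4+c_2}}{q-1}$, where the paper states the sum equals $-(1+(-1)^{(q-1)/4+c_2})$ and you derive exactly this by absorbing the sign into $\zeta_{(q-1)/2}$ and pairing $k$ with $n-k$. Your write-up merely makes explicit the root-of-unity bookkeeping that the paper leaves implicit (by analogy with its earlier lemmas), including the same cancellation of the $(-1)^{(q-1)/4+c_2}$ term against the singleton column.
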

\begin{proof}
\begin{align*} \langle x_{\frac{q+1}{2},c_1} x_{q,1}, x_{q+1,c_2} \rangle = &
\frac{2q(q+1)(\frac{q+1}{2})}{q(q^2-1)} + \frac{2}{q-1} \sum_{k=1}^{\frac{q-5}{4}}(-1)^{k}(\zeta_{q-1}^{2kc_2} + \zeta_{q-1}^{-2kc_2})  + \frac{(-1)^{\frac{q-1}{4}}2(-1)^{c_2}}{q-1} \\
= &  \frac{q+1}{q-1} - \frac{2}{q-1}(1+(-1)^{\frac{q-1}{4}+c_2}) + \frac{2(-1)^{\frac{q-1}{4}+c_2}}{q-1} = 1.   \qedhere
\end{align*}
\end{proof}

\begin{lemma}
$\langle x_{\frac{q+1}{2},c_1} x_{q+1,c_2}, x_{q+1,c_3} \rangle = 1+ \delta_{c_2+c_3,\frac{q-1}{4}}$.
\end{lemma}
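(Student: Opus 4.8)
The plan is to evaluate this inner product through the Verlinde-like formula of Corollary \ref{coro:Verlinde}, reading off the three relevant rows of the interpolated table of Theorem \ref{table:q=1(4)}: the degree-$\frac{q+1}{2}$ character with parameter $c_1$ and the two degree-$(q+1)$ characters with parameters $c_2,c_3$. Using the formal codegrees coming from the class sizes (Remarks \ref{rem:orb-stab} and \ref{rk:interational}), namely $\frac{q(q^2-1)}{2}, q, \frac{q-1}{2}, q-1, \frac{q+1}{2}$ on the five column-blocks, the sum splits block by block, exactly as in the earlier lemmas of this subsection. Blocks $1,2,4$ give elementary contributions: the identity class yields $\frac{(q+1)^2}{q(q-1)}$; the two classes $k\in\{1,2\}$ yield $\frac1q$ once the $\sqrt q$ terms cancel in $\sum_{k=1}^2 \frac{1+(-1)^{k+c_1}\sqrt q}{2}$; and the single middle class $k=\frac{q-1}{4}$ yields $\frac{4(-1)^{(q-1)/4+c_2+c_3}}{q-1}$. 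Block $5$ vanishes since the degree-$\frac{q+1}{2}$ character is zero there.

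Everything then reduces to the block-$3$ sum $T := \sum_{k=1}^{(q-5)/4}(-1)^k(\zeta_{q-1}^{2kc_2}+\zeta_{q-1}^{-2kc_2})(\zeta_{q-1}^{2kc_3}+\zeta_{q-1}^{-2kc_3})$, which I would treat as in Lemmas \ref{lem:fullcomput} and \ref{lem:fullcomput2}. The key step is that the summand $f(k)$ is invariant under $k\mapsto -k$ and has period $\frac{q-1}{2}$ (here $q\equiv 1\bmod 4$ makes $(-1)^{(q-1)/2}=1$), so that $2T$ equals the sum of $f$ over $\{1,\dots,\frac{q-3}{2}\}\setminus\{\frac{q-1}{4}\}$ modulo $\frac{q-1}{2}$. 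Completing to a full period $k=0,\dots,\frac{q-1}{2}-1$ and writing $(-1)^k=\zeta_{q-1}^{k(q-1)/2}$, I would expand into the four sign choices $\epsilon_2,\epsilon_3=\pm1$; each resulting geometric series over the full period vanishes unless $q-1$ divides $\frac{q-1}{2}+2\epsilon_2 c_2+2\epsilon_3 c_3$, in which case it contributes $\frac{q-1}{2}$.

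The main obstacle, and really the only delicate point, is the range bookkeeping that pins down this divisibility. Since $1\le c_2,c_3\le\frac{q-5}{4}$, the quantity $\epsilon_2 c_2+\epsilon_3 c_3$ stays strictly below $\frac{q-1}{2}$ in absolute value, so $q-1\mid \frac{q-1}{2}+2(\epsilon_2 c_2+\epsilon_3 c_3)$ forces $\epsilon_2 c_2+\epsilon_3 c_3=\pm\frac{q-1}{4}$; the mixed-sign values $\pm(c_2-c_3)$ are too small to reach this, so it happens precisely when $c_2+c_3=\frac{q-1}{4}$, and then for exactly the two pairs $(+,+)$ and $(-,-)$. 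This is the source of $\delta_{c_2+c_3,\frac{q-1}{4}}$. Subtracting the term $k=0$ (value $4$) and the isolated middle term $k=\frac{q-1}{4}$ (value $4(-1)^{(q-1)/4+c_2+c_3}$, which cancels the block-$4$ contribution), I get $2T=(q-1)\delta_{c_2+c_3,(q-1)/4}-4-4(-1)^{(q-1)/4+c_2+c_3}$, so $T=\frac{q-9}{2}$ in the resonant case and $T=-2-2(-1)^{(q-1)/4+c_2+c_3}$ otherwise. Substituting back and clearing the denominator $q(q-1)$, the resonant case collapses to $\frac{2q(q-1)}{q(q-1)}=2$ and the generic case to $\frac{q(q-1)}{q(q-1)}=1$, which is exactly $1+\delta_{c_2+c_3,\frac{q-1}{4}}$.
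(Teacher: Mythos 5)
Your proposal is correct and takes essentially the same route as the paper's proof: both evaluate the inner product via the Verlinde-like formula block by block on the $q\equiv 1 \pmod 4$ table (with the same codegrees and the same vanishing of the fifth block), and both resolve the remaining alternating exponential sum by completing it to a full period of length $\frac{q-1}{2}$ and invoking roots-of-unity orthogonality, yielding the same resonance condition $c_2+c_3=\frac{q-1}{4}$. Your intermediate identity $2T=(q-1)\delta_{c_2+c_3,\frac{q-1}{4}}-4-4(-1)^{\frac{q-1}{4}+c_2+c_3}$ agrees exactly with the paper's (more compressed) displayed expression, and the final substitutions giving $2$ in the resonant case and $1$ otherwise are the same.
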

\begin{proof}
\begin{align*} \langle x_{\frac{q+1}{2},c_1} x_{q+1,c_2}, x_{q+1,c_3} \rangle = &
\frac{2(q+1)^2(\frac{q+1}{2})}{q(q^2-1)} + \frac{1}{q} \sum_{k=1}^2(\frac{1+(-1)^{k+c_1}\sqrt{q}}{2}) + \frac{2}{q-1} \sum_{k=1}^{\frac{q-5}{4}}(-1)^{k}(\zeta_{q-1}^{2kc_2} + \zeta_{q-1}^{-2kc_2})(\zeta_{q-1}^{2kc_3} + \zeta_{q-1}^{-2kc_3}) \\ 
 &  + \frac{(-1)^{\frac{q-1}{4}}2(-1)^{c_2}2(-1)^{c_3}}{q-1} \\
= & \frac{(q+1)^2}{q(q-1)} + \frac{1}{q} + \frac{1}{q-1}(\delta_{c_2+c_3,\frac{q-1}{4}}(q-5-2(1+(-1)^{\frac{q-1}{4}+c_2+c_3})) \\ 
& - (1-\delta_{c_2+c_3,\frac{q-1}{4}})4(1+(-1)^{\frac{q-1}{4}+c_2+c_3})) + \frac{4(-1)^{\frac{q-1}{4}+c_2+c_3}}{q-1} \\
= & \frac{(q+1)^2}{q(q-1)} + \frac{1}{q} + \frac{\delta_{c_2+c_3,\frac{q-1}{4}}(q-5)-(1-\delta_{c_2+c_3,\frac{q-1}{4}})4}{q-1} \\
= & 2\delta_{c_2+c_3,\frac{q-1}{4}} + (1-\delta_{c_2+c_3,\frac{q-1}{4}}) = 1+ \delta_{c_2+c_3,\frac{q-1}{4}}.   \qedhere
\end{align*}
\end{proof}

\begin{lemma}
$\langle x_{q-1,c_1} x_{q-1,c_2}, x_{q-1,c_3} \rangle = \left\{
    \begin{array}{lll}
        1 & \mbox{if} & c_1+c_2+c_3 = \frac{q+1}{2} \mbox{ or } 2\Max(c_1,c_2,c_3) \\
        2 & \mbox{else} &
    \end{array}
\right.$.
\end{lemma}
\begin{proof}
\begin{align*} \langle x_{q-1,c_1} x_{q-1,c_2}, x_{q-1,c_3} \rangle = & 
\frac{2(q-1)^3}{q(q^2-1)} + \frac{1}{q} \sum_{k=1}^2(-1)^3 + \frac{2}{q+1} \sum_{k=1}^{\frac{q-1}{4}}\prod_{i=1}^3(-\zeta_{q+1}^{2kc_i} - \zeta_{q+1}^{-2kc_i}) \\
= & \frac{2(q-1)^2}{q(q+1)} - \frac{2}{q} - \frac{2}{q+1} \sum_{\epsilon_i= \pm 1} \sum_{k=1}^{\frac{q-1}{4}} (\zeta_{\frac{q+1}{2}}^{\sum_{i=1}^3 \epsilon_i c_i})^{k}
\end{align*}
If $\sum_{i=1}^3 c_i \neq \frac{q+1}{2}$ and $2\Max(c_1,c_2,c_3)$, then (as for some previous lemmas): 
\begin{align*} \langle x_{q-1,c_1} x_{q-1,c_2}, x_{q-1,c_3} \rangle = \frac{2(q-1)^2}{q(q+1)} - \frac{2}{q} - \frac{8}{q+1}(-1) = 2.
\end{align*}
Else $\sum_{i=1}^3 c_i = \frac{q+1}{2}$ or $2\Max(c_1,c_2,c_3)$, and then: 
$$ \langle x_{q-1,c_1} x_{q-1,c_2}, x_{q-1,c_3} \rangle = \frac{2(q-1)^2}{q(q+1)} - \frac{2}{q} - \frac{1}{q+1}(q-1-6) = 1.   \qedhere
$$
\end{proof}

\begin{lemma}
$\langle x_{q-1,c_1} x_{q-1,c_2}, x_{q,1} \rangle = 2-\delta_{c_1,c_2}$.
\end{lemma}
\begin{proof}
\begin{align*} \langle x_{q-1,c_1} x_{q-1,c_2}, x_{q,1} \rangle = & 
\frac{2q(q-1)^2}{q(q^2-1)} - \frac{2}{q+1} \sum_{k=1}^{\frac{q-1}{4}}\prod_{i=1}^2(-\zeta_{q+1}^{2kc_i} - \zeta_{q+1}^{-2kc_i}) \\
= & \frac{2(q-1)}{(q+1)} - \frac{1}{q+1}(\delta_{c_1,c_2}(q-1-2) + (1-\delta_{c_1,c_2})(-4)) = 2-\delta_{c_1,c_2}.   \qedhere
\end{align*}
\end{proof}

\begin{lemma}
$\langle x_{q-1,c_1} x_{q-1,c_2}, x_{q+1,c_3} \rangle = 2$.
\end{lemma}
\begin{proof}
$$ \langle x_{q-1,c_1} x_{q-1,c_2}, x_{q+1,c_3} \rangle =  
\frac{2(q+1)(q-1)^2}{q(q^2-1)} + \frac{1}{q} \sum_{k=1}^2(-1)^2 = \frac{2(q-1)}{q} + \frac{2}{q} = 2.   \qedhere
$$
\end{proof}

\begin{lemma}
$\langle x_{q-1,c_1} x_{q,1}, x_{q,1} \rangle = 2$.
\end{lemma}
\begin{proof}
\begin{align*} \langle x_{q-1,c_1} x_{q,1}, x_{q,1} \rangle = & 
\frac{2q^2(q-1)}{q(q^2-1)} + \frac{2}{q+1} \sum_{k=1}^{\frac{q-1}{4}}(-\zeta_{q+1}^{2kc_1} - \zeta_{q+1}^{-2kc_1}) \\
= & \frac{2q}{(q+1)} - \frac{2}{q+1}(-1) = 2.   \qedhere
\end{align*}
\end{proof}

\begin{lemma}
$\langle x_{q-1,c_1} x_{q,1}, x_{q+1,c_2} \rangle = 2$.
\end{lemma}
\begin{proof}
$$ \langle x_{q-1,c_1} x_{q,1}, x_{q+1,c_2} \rangle = 
\frac{2q(q+1)(q-1)}{q(q^2-1)} = 2.   \qedhere
$$
\end{proof}

\begin{lemma}
$\langle x_{q-1,c_1} x_{q+1,c_2}, x_{q+1,c_3} \rangle = 2$.
\end{lemma}
\begin{proof}
$$ \langle x_{q-1,c_1} x_{q+1,c_2}, x_{q+1,c_3} \rangle = 
\frac{2(q+1)^2(q-1)}{q(q^2-1)} + \frac{1}{q} \sum_{k=1}^{2} (-1) = \frac{2(q+1)}{q} - \frac{2}{q} = 2.   \qedhere
$$
\end{proof}

\begin{lemma}
$\langle x_{q,1} x_{q,1}, x_{q,1} \rangle = 2$.
\end{lemma}
\begin{proof}
\begin{align*} \langle x_{q,1} x_{q,1}, x_{q,1} \rangle = & 
\frac{2q^3}{q(q^2-1)} + \frac{2}{q-1}\sum_{k=1}^{\frac{q-5}{4}}(1)^3 + \frac{1}{q-1}(1)^3 + \frac{2}{q+1}\sum_{k=1}^{\frac{q-1}{4}}(-1)^3 \\
= & \frac{2q^2}{q^2-1} + \frac{2}{q-1}\frac{q-5}{4} + \frac{1}{q-1} - \frac{2}{q+1}\frac{q-1}{4} = 2.   \qedhere
\end{align*}
\end{proof}

\begin{lemma}
$\langle x_{q,1} x_{q,1}, x_{q+1,c_1} \rangle = 2$.
\end{lemma}
\begin{proof}
\begin{align*} \langle x_{q,1} x_{q,1}, x_{q+1,c_1} \rangle = & 
\frac{2(q+1)q^2}{q(q^2-1)} + \frac{2}{q-1}\sum_{k=1}^{\frac{q-5}{4}}(\zeta_{q-1}^{2kc_1} + \zeta_{q-1}^{-2kc_1}) + \frac{1}{q-1}2(-1)^{c_1} \\
= & \frac{2q}{q-1} - \frac{2}{q-1}(1+(-1)^{c_1}) + \frac{2(-1)^{c_1}}{q-1} = 2.   \qedhere
\end{align*}
\end{proof}

\begin{lemma}%%%%
$\langle x_{q,1} x_{q+1,c_1}, x_{q+1,c_2} \rangle = 2+\delta_{c_1,c_2}$.
\end{lemma}
\begin{proof}
\begin{align*} \langle x_{q,1} x_{q+1,c_1}, x_{q+1,c_2} \rangle = & 
\frac{2(q+1)^2q}{q(q^2-1)} + \frac{2}{q-1}\sum_{k=1}^{\frac{q-5}{4}}(\zeta_{q-1}^{2kc_1} + \zeta_{q-1}^{-2kc_1})(\zeta_{q-1}^{2kc_2} + \zeta_{q-1}^{-2kc_2}) + \frac{1}{q-1}2(-1)^{c_1}2(-1)^{c_2} \\
= & \frac{2(q+1)}{q-1} - \frac{2((1-\delta_{c_1,c_2})2(1+(-1)^{c_1+c_2}) + \delta_{c_1,c_2}(1+(-1)^{c_1+c_2}-\frac{q-5}{2}))}{q-1} + \frac{4(-1)^{c_1+c_2}}{q-1} \\
= & 2+\delta_{c_1,c_2}.   \qedhere
\end{align*}
\end{proof}

\begin{lemma}  \label{lem:last3}
$\langle x_{q+1,c_1} x_{q+1,c_2}, x_{q+1,c_3} \rangle = \left\{
    \begin{array}{lll}
        3 & \mbox{if} & c_1+c_2+c_3 = \frac{q-1}{2} \mbox{ or } 2\Max(c_1,c_2,c_3) \\
        2 & \mbox{else} &
    \end{array}
\right.$.
\end{lemma}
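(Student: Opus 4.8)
The plan is to apply the Verlinde-like formula of Corollary \ref{coro:Verlinde} with the inner product of Corollary \ref{coro:inner}, exactly as in the detailed computation of Lemma \ref{lem:fullcomput} and its $q \equiv -1 \bmod 4$ analogue Lemma \ref{lem:last2}. First I would read off from Theorem \ref{table:q=1(4)} the values of the three characters $x_{q+1,c_i}$ on each conjugacy class, together with the formal codegrees obtained from the class sizes via Remark \ref{rem:orb-stab} (here $\mathfrak{c}_1 = q(q^2-1)/2$, so the columns $\{1\},\{1,2\},\{1,\dots,\tfrac{q-5}{4}\},\{\tfrac{q-1}{4}\}$ have codegrees $q(q^2-1)/2,\ q,\ (q-1)/2,\ q-1$). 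Since the $q+1$ characters vanish on the last column $\{1,\dots,\tfrac{q-1}{4}\}$, only four columns contribute, giving
\begin{align*}
\langle x_{q+1,c_1} x_{q+1,c_2}, x_{q+1,c_3} \rangle &= \frac{2(q+1)^2}{q(q-1)} + \frac{2}{q} \\
&\quad + \frac{2}{q-1} \sum_{k=1}^{\frac{q-5}{4}} \prod_{i=1}^3(\zeta_{q-1}^{2kc_i} + \zeta_{q-1}^{-2kc_i}) + \frac{8(-1)^{c_1+c_2+c_3}}{q-1},
\end{align*}
where the last summand comes from the singleton column $\{\tfrac{q-1}{4}\}$ on which the character value is $2(-1)^{c}$.

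The core of the argument is the evaluation of the root-of-unity sum. Expanding the product gives $\sum_{k=1}^{\frac{q-5}{4}} \sum_{\epsilon_i = \pm 1} (\zeta_{\frac{q-1}{2}}^{\sum_i \epsilon_i c_i})^{k}$. Writing $N = \frac{q-1}{2}$ (which is even since $q \equiv 1 \bmod 4$, so that $\frac{q-5}{4} = N/2 - 1$), I would exploit the symmetry $\epsilon \mapsto -\epsilon$ to replace each term by the average of $\zeta_N^{ak}$ and $\zeta_N^{-ak}$ with $a = \sum_i\epsilon_i c_i$, and then use $\zeta_N^{-ak} = \zeta_N^{(N-k)a}$ to turn the truncated sum over $k \in \{1,\dots,N/2-1\}$ into the complete sum over $\{1,\dots,N-1\}$ with the central term $k = N/2$ removed. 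Since $\zeta_N^{(N/2)a} = (-1)^a$, the inner sum equals $N-2$ when $N \mid a$ and equals $-1-(-1)^a$ otherwise, and $(-1)^a = (-1)^{c_1+c_2+c_3}$ for every sign pattern.

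Because $1 \le c_i \le \frac{q-5}{4} = N/2 - 1$, divisibility $N \mid a$ can only occur for $a \in \{-N,0,N\}$: the value $a = 0$ corresponds precisely to $c_1+c_2+c_3 = 2\Max(c_1,c_2,c_3)$ and $a = \pm N$ to $c_1+c_2+c_3 = \frac{q-1}{2}$ (all signs equal), and these two conditions are mutually exclusive since $2\Max < N$. In either special case exactly two of the eight sign patterns are divisible, $c_1+c_2+c_3$ is forced to be even so $(-1)^{c_1+c_2+c_3}=1$, and the remaining six patterns each contribute $-2$. I would then collect terms and simplify the resulting rational function of $q$: in the generic case the $(-1)^{c_1+c_2+c_3}$ contributions of the trigonometric sum and of the $\{\tfrac{q-1}{4}\}$ column cancel exactly, leaving $\frac{2(q+1)^2}{q(q-1)} + \frac{2}{q} - \frac{8}{q-1} = 2$; in each special case the two divisible patterns add an extra $+1$, giving $3$. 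The main obstacle is organizing this last bookkeeping so that the half-range truncation $\sum_{k=1}^{(q-5)/4}$ and the extra singleton-column term combine cleanly; the parity fact that $c_1+c_2+c_3$ is automatically even in the two special cases is exactly what makes the cancellation work, and it is the feature that distinguishes this case from Lemma \ref{lem:last2}, where the corresponding singleton column vanishes.
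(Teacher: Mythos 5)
Your proposal is correct and follows essentially the same route as the paper's proof: the identical Verlinde-like expansion over the columns of the $q \equiv 1 \bmod 4$ table, the same sign-pattern expansion of the triple product with the truncated geometric sums completed via $k \mapsto N-k$, and the same divisibility analysis yielding $2$ generically and $3$ in the two special cases. The only cosmetic difference is that you organize the final bookkeeping as ``special case $=$ generic even case $+1$'' rather than recomputing the rational function from scratch, which changes nothing of substance.
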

\begin{proof}
\begin{align*} \langle x_{q+1,c_1} x_{q+1,c_2}, x_{q+1,c_3} \rangle = & 
\frac{2(q+1)^3}{q(q^2-1)} + \frac{1}{q} \sum_{k=1}^2 (1)^3 + \frac{2}{q-1}\sum_{k=1}^{\frac{q-5}{4}} \prod_{i=1}^3(\zeta_{q-1}^{2kc_i} + \zeta_{q-1}^{-2kc_i}) + \frac{1}{q-1}8(-1)^{\sum_{i=1}^3 c_i}
\end{align*}
If $\sum_{i=1}^3 c_i \neq \frac{q-1}{2}$ and $2\Max(c_1,c_2,c_3)$, then (as for some previous lemmas): 
\begin{align*} \langle x_{q+1,c_1} x_{q+1,c_2}, x_{q+1,c_3} \rangle = \frac{2(q+1)^2}{q(q-1)} + \frac{2}{q} - \frac{8(1+(-1)^{\sum_{i=1}^3 c_i})}{q-1}  + \frac{8(-1)^{\sum_{i=1}^3 c_i}}{q-1} = 2\frac{q^2+2q+1+q-1-4q}{q(q-1)} = 2.
\end{align*}
Else $\sum_{i=1}^3 c_i = \frac{q-1}{2}$ or $2\Max(c_1,c_2,c_3)$, in particular $\sum_{i=1}^3 c_i$ is even,  and then: 
\begin{align*} \langle x_{q+1,c_1} x_{q+1,c_2}, x_{q+1,c_3} \rangle = & \frac{2(q+1)^2}{q(q-1)} + \frac{2}{q} + \frac{q-5-6(1+(-1)^{\sum_{i=1}^3 c_i})}{q-1}  + \frac{8(-1)^{\sum_{i=1}^3 c_i}}{q-1} \\
 = & \frac{2q^2+4q+2+2q-2+q^2-9q}{q(q-1)} = 3.   \qedhere
\end{align*}
\end{proof}

\begin{proposition}[Orthogonality Relations] \label{prop:ortho3} For all $d_i,c_i$, $\langle x_{d_1,c_1}, x_{d_2,c_2} \rangle = \delta_{(d_1,c_1),(d_2,c_2)}.$
\end{proposition}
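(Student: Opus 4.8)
The plan is to prove the weighted row-orthogonality $\langle x_{d_1,c_1},x_{d_2,c_2}\rangle=\delta_{(d_1,c_1),(d_2,c_2)}$ by the same mechanism used for Proposition~\ref{prop:ortho}, now reading the entries off Theorem~\ref{table:q=1(4)} and using the inner product $\langle f,g\rangle=\sum_s\mathfrak{c}_s^{-1}f(s)\overline{g(s)}$ of Corollary~\ref{coro:inner}, where the formal codegrees $\mathfrak{c}_s$ are the interpolated ones (the values $q(q^2-1)/2$, $q$, $(q-1)/2$, $q-1$, $(q+1)/2$ attached to the five column blocks via Remarks~\ref{rem:orb-stab} and~\ref{rk:interational}). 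Since the basis elements $x_{d,c}$ index the rows of the table, this quantity is exactly the second relation of Theorem~\ref{thm:SchurOrtho}, which we must check directly for the formal table instead of quoting it from group theory.

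First I would run over the finitely many pairs of row-types drawn from $\{[1,1],[(q+1)/2,2],[q-1,\cdot],[q,1],[q+1,\cdot]\}$ and, for each pair, sum the products of the corresponding entries over the five column blocks, weighting each block by $1/\mathfrak{c}_s$ and summing over the column parameter $k$ inside each block. The rows of degree $q\pm1$ contribute geometric sums of the form $\sum_k(\zeta_{q\mp1}^{2kc_1}+\zeta_{q\mp1}^{-2kc_1})(\zeta_{q\mp1}^{2kc_2}+\zeta_{q\mp1}^{-2kc_2})$, which are precisely the sums already evaluated in Lemmas~\ref{lem:first3}--\ref{lem:last3} by the dichotomy on whether the relevant $\gcd$ forces divisibility; no new root-of-unity manipulation is needed. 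The two rows of degree $(q+1)/2$ carry the factor $(1+(-1)^{k+c}\sqrt{q})/2$ in the class-$\{1,2\}$ column, and here the off-diagonal cancellation rests on the $\sqrt{q}$-terms vanishing after the sum over $k\in\{1,2\}$ and on the parity $(-1)^{c_1+c_2}$ separating $c_1=c_2$ from $c_1\neq c_2$, exactly as in Lemma~\ref{lem:first3}. Each inner product then collapses to a rational function of $q$ whose value is forced to be $0$ or $1$.

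Rather than simplify all fifteen of these rational functions by hand, I would finish with the interpolation principle of Remark~\ref{rk:interational}: because the formal table coincides with the genuine character table of $\PSL(2,q)$ whenever $q$ is a prime power, each of these rational functions agrees with the target Kronecker delta on the infinite set of prime powers, and a rational function taking a fixed value on an infinite set is that constant identically; hence the identity holds for every integer $q\ge2$. The step I expect to be most delicate is the interaction between the degree-$(q+1)/2$ rows and the singleton column $\{(q-1)/4\}$ of codegree $q-1$, where a single-term sum replaces a geometric one and a spurious residual of order $1/(q-1)$ could in principle survive; tracking the weight of the special entry there, together with the $2(-1)^c$ entry in the degree-$q+1$ row, is the same bookkeeping already exercised in the fusion-rule lemmas, so I expect it to close cleanly once one confirms that none of the resulting rational functions has a pole at the integers $q\ge2$ under consideration.
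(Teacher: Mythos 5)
Your proposal is correct and takes essentially the same route as the paper, whose entire proof reads ``As for Proposition \ref{prop:ortho}. Alternatively, see Remark \ref{rk:interational}'': you set up the weighted inner product with the interpolated codegrees exactly as in Proposition \ref{prop:ortho} (including the $\sqrt{q}$-cancellation over $k\in\{1,2\}$ and the root-of-unity gcd dichotomy), and you close with precisely the rational-function/prime-power interpolation argument of Remark \ref{rk:interational}. Your added observations---that quoting Theorem \ref{thm:SchurOrtho} directly would be circular, and that the resulting rational functions have no poles at the integers $q\ge 2$ in question---are correct refinements of the same argument, so nothing further is needed.
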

\begin{proof} 
As for Proposition \ref{prop:ortho}. Alternatively, see Remark \ref{rk:interational}.
\end{proof}

\begin{proposition}[Formal codegrees] \label{prop:cod3} The formal codegrees of the interpolated fusion rings
are the interpolation of the formal codegrees for $\Rep(\PSL(2,q))$. \end{proposition}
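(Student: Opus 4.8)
The plan is to follow the two-track approach already used for Propositions~\ref{prop:cod} and~\ref{prop:cod2}, with the conceptual weight carried by the interpolation principle of Remark~\ref{rk:interational}. Each formal codegree $\mathfrak{c}_j=\sum_i|\lambda_{i,j}|^2$ is the squared norm of a column of the interpolated table of Theorem~\ref{table:q=1(4)}. Expanding the squares and collapsing the resulting sums of roots of unity by the geometric-sum identities used throughout \S\ref{sec:cal}, each $\mathfrak{c}_j$ becomes a fixed rational function of $q$. For $q$ a prime-power the interpolated table is literally the character table of $\PSL(2,q)$, so by Remark~\ref{rem:orb-stab} its formal codegrees equal the centralizer orders $|C_G(g)|=\FPdim(\mathcal{R}_q)/|Cl(g)|$; since two rational functions of $q$ agreeing on the infinite set of prime-powers coincide identically, the codegrees are given by these same expressions for every integer $q\ge2$. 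This is exactly the assertion.

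For a self-contained verification I would compute the five column norms directly and check that each equals $\FPdim(\mathcal{R}_q)/|Cl(g)|=\tfrac{q(q^2-1)/2}{|Cl(g)|}$. Reading the class sizes $1,\ \tfrac{q^2-1}{2},\ q(q+1),\ \tfrac{q(q+1)}{2},\ q(q-1)$ from the table, the target codegrees are $\tfrac{q(q^2-1)}{2},\ q,\ \tfrac{q-1}{2},\ q-1,\ \tfrac{q+1}{2}$. The identity column carries no roots of unity and gives
$$ 1 + 2\Bigl(\tfrac{q+1}{2}\Bigr)^2 + \tfrac{q-1}{4}(q-1)^2 + q^2 + \tfrac{q-5}{4}(q+1)^2 = \tfrac{q(q^2-1)}{2}. $$
In the column of class size $\tfrac{q^2-1}{2}$ the two $\tfrac{q+1}{2}$-degree rows contribute $\bigl(\tfrac{1+(-1)^{k+c}\sqrt q}{2}\bigr)^2$, whose $\sqrt q$-part cancels upon summing the sign $(-1)^{k+c}$ over $c\in\{1,2\}$, leaving the predicted value $q$. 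The columns of class size $q(q+1)$ and $q(q-1)$ carry genuine roots of unity from the degree $q+1$ and degree $q-1$ characters respectively, each squared entry being $\zeta^{4kc}+2+\zeta^{-4kc}$; summing over the character parameter reduces to the geometric sums $\sum_{\epsilon=\pm1}\sum_c\zeta^{4\epsilon kc}$ already handled in Lemmas~\ref{lem:first3}--\ref{lem:last3}. For instance the class size $q(q+1)$ column yields $\tfrac{q+3}{2}+\sum_{\epsilon=\pm1}\sum_{c=1}^{(q-5)/4}\zeta_{q-1}^{4\epsilon kc}=\tfrac{q+3}{2}-2=\tfrac{q-1}{2}$, as predicted, and the class size $q(q-1)$ column is entirely analogous.

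The only genuinely delicate point, and the one I would verify most carefully, is that each column stays in a \emph{fixed} root-of-unity regime uniformly over all $q\equiv1\bmod4$, for this is what makes each $\mathfrak{c}_j$ a single rational function and hence licenses the interpolation step. In the two honest root-of-unity columns the collapsed geometric sum takes its generic value ($-2$ or $-1$, according as the half-range $S$ of character parameters satisfies $-S\equiv S$ or $S\cap(-S)=\emptyset$ modulo the reduced modulus), the input being that the base root $\zeta^{4k}\ne1$; this holds for all such $q$ because the parameter $k$ ranges strictly below the relevant modulus. The exceptional column of class size $\tfrac{q(q+1)}{2}$ has $k=\tfrac{q-1}{4}$, where the roots degenerate to $\pm1$ and $\pm2$ and the sum must instead be read in its full-length regime; crucially this degeneration occurs for \emph{every} $q\equiv1\bmod4$, so the column still produces the single rational function $q-1$. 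Granting this uniformity, both the interpolation argument and the direct check go through, and the proposition follows as for Proposition~\ref{prop:cod}.
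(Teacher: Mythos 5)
Your proposal is correct and follows essentially the paper's own route: the paper proves Proposition~\ref{prop:cod3} by the same two means you use, namely the direct column-norm computation as in Proposition~\ref{prop:cod} (each squared column norm equals $\FPdim(\mathcal{R}_q)$ divided by the class size) and, alternatively, the rational-function interpolation argument of Remark~\ref{rk:interational}. Your explicit checks (identity column summing to $q(q^2-1)/2$, the $\sqrt{q}$-cancellation giving $q$, the geometric sums collapsing to $-2$ and $-1$ in the two root-of-unity columns, and the degenerate column $k=\tfrac{q-1}{4}$ giving $q-1$) all verify correctly.
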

\begin{proof}
As for Proposition \ref{prop:cod}. Alternatively, see Remark \ref{rk:interational}.
\end{proof}

\section{Appendix}

\subsection{Etingof's moderation} \label{EtiMod}
The previous sections shows that the Grothendieck rings of the family of fusion categories  $\Rep(\PSL(2,q))$, with $q$ prime-power, interpolates to a family of fusion rings $(\mathcal{R}_q)$ for every integer $q \ge 2$, which moreover pass all the known criteria. P. Etingof pointed out \cite{eti20} that such a result is not sufficient to believe on the existence of a non prime-power $q$ for which $\mathcal{R}_q$ admits a complex categorification; because, as we will see in this section, the family of fusion categories  $\Rep(\mathbb{F}_q \rtimes \mathbb{F}_q^{\star})$, with $q$ prime-power, interpolates to a family of fusion rings $(\mathcal{T}_q)$ for every integer $q \ge 2$, which also pass all the known criteria, but according to \cite[Corollary 7.4]{EGO04}, the ones which admit a complex categorification are exactly those with $q$ prime-power.

The Grothendieck ring of $\Rep(\mathbb{F}_q \rtimes \mathbb{F}_q^{\star})$ is of rank $q$, $\FPdim \ q(q-1)$, type $[[1,q-1],[q-1,1]]$ and fusion rules: 
$$\left\{
    \begin{array}{lll}
x_{1,c_1} x_{1,c_2} & = x_{1,c_1+c_2 \hspace{-.2cm} \mod q-1}, \\
x_{1,c} x_{q-1,1} & = x_{q-1,1} x_{1,c} = x_{q-1,1}, \\
x_{q-1,1} x_{q-1,1} & = \sum_c x_{1,c} + (q-2)x_{q-1,1}. \\
    \end{array}
\right.$$
Then, the generic character table of $\mathbb{F}_q \rtimes \mathbb{F}_q^{\star}$ is 
$$
\begin{tabular}{ |c||c|c|c|c|c| } 
\hline
\backslashbox{\text{charparam }$c$}{\text{classparam }$k$} & $\{1\}$ & $\{1, \dots, q-2\}$ & $\{1\}$  \\   \hline \hline
 $\{0, \dots, q-2\}$ & $1$ & $\zeta_{q-1}^{kc}$ & $1$ \\ 		 
 $\{1\}$ & $q-1$ & $0$ & $-1$  \\ \hline \hline	  
  \text{ class size } & $1$ & $q$ & $q-1$  \\    \hline
\end{tabular}
$$
The above family of Grothendieck rings with $q$ prime-power interpolates into a family of fusion rings $(\mathcal{T}_q)$ for all integer $q \ge 2$, with eigentable as above. Let us see why they all pass every categorification criterion mentioned in \S\ref{sec:criteria}:  

\begin{itemize}
\item Schur product criterion: if $(j_1,j_2,j_3)$ from Theorem \ref{thm:schur} involves the middle column of the eigentable then the presence of $0$ reduces the checking to the one for the cyclic group $C_{q-1}$, which is true. It remains only four cases, whose computation give $q(q-1), 0, q, \frac{q(q-2)}{q-1}$ respectively, and all of them are non-negative.
\item Ostrik criterion: $2\sum_j \frac{1}{\mathfrak{c}_j^2} = 2(\frac{1}{q^2(q-1)^2} + \frac{q-2}{(q-1)^2} + \frac{1}{q^2}) = \frac{2(q^3-q^2-2q+2)}{q^2(q-1)^2} \le 1 < 1 + \frac{1}{\mathfrak{c}_1}$.
\item Drinfeld center criterion: $(\mathfrak{c}_1/\mathfrak{c}_j)$ are the class sizes mentioned in the eigentable, they are all integers.
\item Isaacs criterion: observe that $\frac{\mathfrak{c}_1/\mathfrak{c}_j}{\lambda_{i,1}}$ is always an integer when $\lambda_{i,j} \neq 0$.
\item Extended cyclotomic criterion: all the entries of the eigentable are cyclotomic integers.
\item Zero spectrum criterion: see Lemma \ref{FFzero}.
\item One spectrum criterion: see Lemma \ref{FFone}.
\end{itemize}

\begin{lemma} \label{FFzero}
The fusion ring $\mathcal{T}_q$ passes the zero spectrum criterion for all $q \ge 2$.
\end{lemma}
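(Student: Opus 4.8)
The plan is to show that no family of indices $i_1,\dots,i_9$ can simultaneously satisfy the hypotheses (\ref{line:1}) and (\ref{line:2}) of Theorem \ref{thm:zerocrit}, so that the criterion never applies to $\mathcal{T}_q$. By Lemma \ref{lem:interzero}, such indices would force $i_j\neq 1$ for all $j\in\{4,\dots,9\}$, so I may assume that all six of $i_4,\dots,i_9$, and hence also their duals, are non-trivial. The simple sufficient condition of Lemma \ref{lem:prezero} does \emph{not} apply here, because a product of two invertibles $x_{1,a}x_{1,b}=x_{1,a+b}$ is again a single invertible and avoids $x_{q-1,1}$; thus no single $k_0$ lies in the support of every product of two non-trivial elements. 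The structural fact I would build on instead is that $x_{q-1,1}$ occurs in the support of $x_{d,c}x_{d',c'}$ for every non-trivial pair \emph{except} when both factors are invertible, since every product involving $x_{q-1,1}$ contains $x_{q-1,1}$; and, dually, an invertible element can occur in a product $N_{a,b}^{x_{1,c}}\neq 0$ only when $a,b$ are both invertible or $a=b=x_{q-1,1}$.

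First I would dispose of the generic case: if none of the three pairs $(i_4,i_7)$, $(i_5^*,i_8)$, $(i_6,i_9^*)$ consists of two invertibles, then $k=x_{q-1,1}$ lies in all three supports and the sum in (\ref{line:2}) is at least $1$, a contradiction. It then remains to treat the situation where at least one of these pairs is a pair of invertibles. If $(i_4,i_7)$ are both invertible, say $i_4i_7=x_{1,s}$, then $N_{i_4,i_7}^{k}=\delta_{k,x_{1,s}}$ and the sum collapses to $N_{i_5^*,i_8}^{x_{1,s}}N_{i_6,i_9^*}^{x_{1,s}}$. Using (\ref{line:1}): from $N_{i_5,i_4}^{i_2}\neq 0$ and $N_{i_2,i_7}^{i_8}\neq 0$ I obtain $i_8=i_5\,x_{1,s}$, while from $N_{i_4,i_1}^{i_6}\neq 0$ and $N_{i_7,i_9}^{i_1}\neq 0$ I obtain $i_6=x_{1,s}\,i_9$. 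Since multiplication by an invertible permutes the basis, this gives $i_5^*i_8=(i_5^*i_5)\,x_{1,s}$ and $i_6i_9^*=x_{1,s}\,(i_9i_9^*)$, both of which contain $x_{1,s}$ because $i^*i$ always contains the unit. Hence each factor is at least $1$ and the sum is positive, contradicting (\ref{line:2}).

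The only remaining possibility is that $(i_4,i_7)$ is not a pair of invertibles while $(i_5^*,i_8)$ or $(i_6,i_9^*)$ is, and these branches I would treat by the same mechanism. Propagating (\ref{line:1}) through the group action shows that invertibility of one of these pairs, combined with non-invertibility of $(i_4,i_7)$, forces several of the $i_j$ to equal $x_{q-1,1}$ (using $x_{q-1,1}\cdot x_{1,c}=x_{q-1,1}$ together with the observation above on when an invertible can appear in a product); one then checks, exactly as before, that the relevant shared invertible value lies in all three supports, so the sum is again positive. The main obstacle is precisely this asymmetry of the constraint system (\ref{line:1}) in its three distinguished products, which blocks a single symmetric reduction and obliges one to run the argument separately according to which pair is invertible. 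Each branch is nonetheless short once one exploits the two elementary facts that invertibles permute the basis and that $i^*i\ni 1$, and in every branch the conclusion is that (\ref{line:2}) cannot hold, so $\mathcal{T}_q$ passes the zero spectrum criterion.
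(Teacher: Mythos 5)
Your proposal is correct, and it organizes the argument differently from the paper. The paper's proof counts how many of $i_4,\dots,i_9$ equal $x_{q-1,1}$: if none of $i_1,\dots,i_9$ does, it reduces to the cyclic group $C_{q-1}$; otherwise it shows at least two of $i_4,\dots,i_9$ equal $x_{q-1,1}$ and runs a case analysis on whether there are exactly two (in the same or in different coefficients of (\ref{line:2})), exactly three, or more, reaching a contradiction with (\ref{line:1}) or (\ref{line:2}) in each case. You instead classify by which of the three pairs $(i_4,i_7)$, $(i_5^*,i_8)$, $(i_6,i_9^*)$ consist of invertibles, and in every case exhibit an explicit common $k$ in all three supports, contradicting (\ref{line:2}) directly. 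This buys two things: your Case A (the pair $(i_4,i_7)$ invertible) is a fact valid in \emph{any} fusion ring, since $i_8=i_5\,i_4i_7$ and $i_6=i_4i_7\,i_9$ follow from (\ref{line:1}) by the permutation action of invertibles, and it subsumes, and makes explicit, the paper's implicit "reduction to the cyclic group" (which tacitly uses that group rings pass the criterion); and your generic case disposes of everything else with one uniform observation about $x_{q-1,1}$. Your final branch is only sketched, but the mechanism you name does close it: e.g.\ if $(i_5^*,i_8)$ is the invertible pair, then $N_{i_2,i_7}^{i_8}\neq 0$ forces either $i_2,i_7$ invertible (whence $i_4=i_5^*i_2$ is invertible and you are back in Case A) or $i_2=i_7=x_{q-1,1}$, whence $i_4=i_5^*i_2=x_{q-1,1}$; and then $i_3=i_5i_6=i_8i_9$ gives $i_6i_9^*=i_5^*(i_3i_3^*)i_8\ni i_5^*i_8$, so the invertible $i_5^*i_8$ lies in all three supports (using $N_{x_{q-1,1},x_{q-1,1}}^{g}=1$ for $g$ invertible); the branch with $(i_6,i_9^*)$ invertible is symmetric and easier. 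One pedantic point: your justification "every product involving $x_{q-1,1}$ contains $x_{q-1,1}$" needs $q\ge 3$ (the coefficient is $q-2$), but for $q=2$ all basis elements are invertible, so the claim it supports is vacuous there and Case A covers everything.
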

\begin{proof}
Consider $i_1, \dots, i_9$ of Theorem \ref{thm:zerocrit}. If none of them is given by $x_{q-1,1}$ then it reduces to the cyclic group $C_{q-1}$. Else, one of them is given by $x_{q-1,1}$, and then it is easy to see by the fusion rules together with (\ref{line:1}), that at least two among $i_4, \dots , i_9$ are given by $x_{q-1,1}$: 
\begin{itemize}
\item if there are exactly two, and if they are in the (bottom) indices of the same coefficient in (\ref{line:2}), say $i_4$ and $i_7$, then $i_j$, with $j=5,6,8,9$, are given by some $x_{1,c_j}$, with $c_6-c_9 \not \equiv c_8-c_5 \mod q-1$, contradiction with $c_5 + c_6 \equiv c_8 + c_9$ given by $N_{i_5,i_6}^{i_3},N_{i_8,i_9}^{i_3} \neq 0 $ at (\ref{line:1}); else they are in the indices of different coefficients in (\ref{line:2}), say $i_4$ and $i_6$, and then $i_3$ must be given by $x_{q-1,1}$ because $N_{i_5,i_6}^{i_3} \neq 0$, contradiction with $N_{i_8,i_9}^{i_3} \neq 0$. 
\item if there are exactly three, and if they are in the indices of three different coefficients in (\ref{line:2}) then this sum cannot be zero, so they must be in the indices of exactly two different coefficients in (\ref{line:2}), say $i_4, i_7, i_6$, then we also get a contradiction using $N_{i_5,i_6}^{i_3},N_{i_8,i_9}^{i_3} \neq 0 $. 
\item if there are strictly more than three, then (as above), they must be in the indices of exactly two different coefficients in (\ref{line:2}), which also prevents the sum to be zero. \qedhere
\end{itemize}
\end{proof}

\begin{lemma} \label{FFone}
The fusion ring $\mathcal{T}_q$ passes the one spectrum criterion for all $q \ge 2$.
\end{lemma}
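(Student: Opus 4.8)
The plan is to follow the proof of Lemma~\ref{FFzero} line by line, replacing the zero-spectrum bookkeeping by the one-spectrum one. First I would apply Lemma~\ref{lem:interone} to get $i_j\neq 1$ for every $j\in\{4,\dots,9\}$, so that each of the six factors in (\ref{line:1'}) is a genuine product of two non-trivial basis elements. If none of $i_1,\dots,i_9$ equals $x_{q-1,1}$, then all ten indices lie in the pointed subring spanned by the $x_{1,c}$, which is the group ring of $C_{q-1}$; since this ring is categorifiable (by $\mathrm{Vec}_{C_{q-1}}$) and Theorem~\ref{thm:onecrit} is a necessary categorification condition, no such family of indices can satisfy its hypotheses, and this case is vacuous. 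Hence I may assume some index equals $x_{q-1,1}$, and then, exactly as in Lemma~\ref{FFzero}, the fusion rules together with (\ref{line:1'}) propagate this to force at least two of $i_4,\dots,i_9$ to equal $x_{q-1,1}$ (because $x_{q-1,1}$ occurs in a product only when one factor is $x_{q-1,1}$, and this requirement travels around the six relations).

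The genuinely new ingredient is the analysis of (\ref{line:2'})--(\ref{line:3'}): the sum equals $1$ while the term $k=\spec$ already contributes $1$, so $\spec$ is the \emph{unique} index common to the three products $x_{i_4}x_{i_7}$, $x_{i_5^*}x_{i_8}$ and $x_{i_6}x_{i_9^*}$, occurring with coefficient $1$ in each. I would split according to the type of $\spec$. If $\spec=x_{q-1,1}$, each of the three products must contain $x_{q-1,1}$ with coefficient exactly $1$; as $x_{q-1,1}x_{q-1,1}$ contains $x_{q-1,1}$ with coefficient $q-2$, this forces (for $q\neq 3$) every product to be of mixed type $x_{1,\cdot}\,x_{q-1,1}=x_{q-1,1}$, i.e. each of the pairs $(i_4,i_7)$, $(i_5,i_8)$, $(i_6,i_9)$ contains exactly one copy of $x_{q-1,1}$, three in total. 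If instead $\spec$ is group-like, then a product meets a group element only when its two factors are both group-like or both equal to $x_{q-1,1}$, so each of the three pairs is homogeneous.

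In every resulting configuration I would then reproduce the contradictions of Lemma~\ref{FFzero}: trace the remaining relations of (\ref{line:1'}) to pin down the types of $i_1,i_2,i_3$, and confront them with $N_{i_2,i_1}^{i_3}=0$ from (\ref{line:4'}) and with the three ``or'' conditions. Either the additive relations forced inside $C_{q-1}$ (of the shape $c_5+c_6\equiv c_8+c_9 \bmod q-1$) turn out to be incompatible, or one of $i_1,i_2,i_3$ is simultaneously forced to be and not to be $x_{q-1,1}$. For the excluded value $q=3$, and indeed for every prime-power $q$, I would simply observe that $\mathcal{T}_q=\Rep(\mathbb{F}_q\rtimes\mathbb{F}_q^{\star})$ is a genuine fusion category, hence categorifiable, hence passes the criterion automatically; this disposes of the only place where the coefficient $q-2=1$ could interfere, so the combinatorial argument above is needed only for non prime-power $q\ge 6$, where $q-2\ge 4$.

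The step I expect to be the main obstacle is the branch $\spec=x_{q-1,1}$ with three copies of $x_{q-1,1}$ distributed one per pair: this configuration has no analogue in Lemma~\ref{FFzero}, where the vanishing of the sum already forbade three mixed pairs (their triple product would be $1$). Ruling it out requires treating the $2^3$ sub-cases according to which element of each pair equals $x_{q-1,1}$ and closing each by hand from (\ref{line:1'}) and (\ref{line:4'}); I expect this to be the longest, though entirely routine, part of the verification.
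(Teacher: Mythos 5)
Your plan follows the paper's proof almost step for step: the reduction (via Lemma~\ref{lem:interone} and the \ref{FFzero}-style propagation) to at least two copies of $x_{q-1,1}$ among $i_4,\dots,i_9$, the reduction to the cyclic group $C_{q-1}$ when no such copy appears, and the dichotomy on whether $\spec$ equals $x_{q-1,1}$ or is group-like constitute exactly the paper's case structure, with the contradictions drawn from (\ref{line:1'}) and (\ref{line:4'}) just as you predict. Two points of comparison. First, you exploit (\ref{line:3'}) more aggressively than the paper: since $N^{\spec}_{x_{q-1,1},x_{q-1,1}}=q-2$, condition (\ref{line:3'}) excludes any pair of type $(x_{q-1,1},x_{q-1,1})$ when $\spec=x_{q-1,1}$ and $q\neq 3$, which prunes the case tree; the paper instead enumerates all distributions of copies among the three pairs (one per pair; two--one--one; two--two--one; two--two--two) and disposes of the extra cases by forcing $i_1,i_2$ (or $i_1,i_3$) to equal $x_{q-1,1}$, contradicting (\ref{line:4'}), or by noting that the sum in (\ref{line:2'}) then exceeds $1$. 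This makes the paper's argument uniform in $q$, including $q=3$, where you instead invoke categorifiability of $\mathcal{T}_q$ for prime powers -- a legitimate shortcut of the same kind the paper uses in its Schur-criterion section. Second, the one place where you stop short -- the configuration $\spec=x_{q-1,1}$ with exactly one copy of $x_{q-1,1}$ in each pair -- is indeed the genuinely new case relative to Lemma~\ref{FFzero}, but it closes much faster than your ``$2^3$ sub-cases by hand'' estimate suggests: in the representative sub-case $i_4=i_5=i_6=x_{q-1,1}$ the paper simply reads off from (\ref{line:1'}) the congruences $c_1\equiv c_7+c_9$, $c_2\equiv c_8-c_7$ and $c_3\equiv c_8+c_9 \bmod (q-1)$, whence $c_1+c_2\equiv c_3$ and $N_{i_2,i_1}^{i_3}\neq 0$, contradicting (\ref{line:4'}); the other sub-cases are symmetric or instead force some of $i_1,i_2,i_3$ to equal $x_{q-1,1}$, again violating (\ref{line:4'}). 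So your outline is correct and would succeed, but as written it remains a plan rather than a proof at precisely the step you yourself flag as the main obstacle; the paper's one-line congruence argument is what fills that hole.
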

\begin{proof}
As for the proof of Lemma \ref{FFzero}, we can assume that at least two among $i_4, \dots , i_9$ are given by $x_{q-1,1}$. If $i_0$ is given by $x_{q-1,1}$ then the indices of each coefficient of (\ref{line:2'}) must involve at least one $x_{q-1,1}$: 
\begin{itemize}
\item if it is exactly one each, say $i_4, i_5, i_6$, then by (\ref{line:1'}) and using the notations of the proof of Lemma \ref{FFzero}, $c_1 \equiv c_7 + c_8 $,   $c_2 \equiv c_8 - c_7 $ and $c_3 \equiv c_8 + c_9 $, so that $c_2+c_1 \equiv c_3 \mod q-1$, contradiction with (\ref{line:4'}).
\item if there is one coefficient for two ones, and the two others with one, say $i_4, i_5, i_6, i_7$, then by $N_{i_7,i_9}^{i_1}, N_{i_2,i_7}^{i_8} \neq 0$ in (\ref{line:1'}), $i_1$ and $i_2$ must be given by $x_{q-1,1}$ too, contradiction with (\ref{line:4'}).
\item if there is two coefficients for two ones, and the other with one, say $i_4, i_5, i_6, i_7, i_8$, then by $N_{i_7,i_9}^{i_1}, \ N_{i_8,i_9}^{i_3} \neq 0$ in (\ref{line:1'}), $i_1$ and $i_3$ must be given by $x_{q-1,1}$ too, contradiction with (\ref{line:4'}).
\item else it is exactly two each, the sum in (\ref{line:2'}) cannot be equal to one.
\end{itemize}
Then $i_0$ is not given by $x_{q-1,1}$, and so the bottom indices of each coefficient in (\ref{line:2'}) must be both $x_{q-1,1}$ or both not: 
\begin{itemize}
\item if it is ``both not" for all, we reduce to the cyclic group $C_{q-1}$,
\item if it is ``both" for exactly one, say $i_4$ and $i_7$, then by $N_{i_7,i_9}^{i_1}, N_{i_2,i_7}^{i_8} \neq 0$ in (\ref{line:1'}), $i_1$ and $i_2$ must be given by $x_{q-1,1}$ too, contradiction with (\ref{line:4'}),
\item if it is ``both" for exactly two, say $i_4, i_7, i_5, i_8$, then by $N_{i_7,i_9}^{i_1}, \ N_{i_8,i_9}^{i_3} \neq 0$ in (\ref{line:1'}), $i_1$ and $i_3$ must be given by $x_{q-1,1}$ too, contradiction with (\ref{line:4'}),
\item else it is ``both" for all, then the sum in (\ref{line:2'}) cannot be equal to one. \qedhere
\end{itemize}
\end{proof}

\subsection{L\"ubeck's trick} 
Here is a trick (given by Frank L\"ubeck) to display the character table directly on GAP for $q$ not necessarily a prime-power, by removing the prime-power restriction as follows:  
\begin{verbatim}
gap> gt02 := CharacterTableFromLibrary("SL2even");;
gap> gt02.domain := function(q) return q mod 2 = 0; end;;
gap> gt1 := CharacterTableFromLibrary("PSL2even");;
gap> gt1.domain := function(q) return q mod 4 = 1; end;;
gap> gt3 := CharacterTableFromLibrary("PSL2odd");;
gap> gt3.domain := function(q) return q mod 4 = 3; end;;  
\end{verbatim} 
Then we can display as usual for every integer $q$ using the following command:
\begin{verbatim}
gap> Display(CharacterTable("PSL",2,q));
\end{verbatim} 

In addition, we can get the fusion ring for every \textit{given} integer $q$ by applying the Schur orthogonality relations with the following function: 
\begin{verbatim}
gap> FusionPSL2:=function(q)
> Ch:=CharacterTableWithSortedCharacters(CharacterTable("PSL",2,q));;
> irr:=Irr(Ch);; n:=Size(irr);;
> return List([1..n],i->List([1..n],j->List([1..n],k->ScalarProduct(irr[i]*irr[j],irr[k]))));
> end;;
\end{verbatim}
\begin{remark}
Theorems \ref{thm:q=0(2)}, \ref{thm:q=-1(4)} and \ref{thm:q=1(4)} provide the generic fusion rules, whereas above function compute them for every \textit{given} integer. We used it to double-check (for $q$ small) that these theorems contain no mistake.
\end{remark}

\end{document}